\let\nc\newcommand \let\rnc\renewcommand \let\dmo\DeclareMathOperator
\nc\prodl[1]{{\prod\limits_{#1}}}
\nc\bunderline[1]{\underline{#1\mkern-3mu}\mkern3mu }
\nc\boverline[1]{\overline{#1\mkern-3mu}\mkern3mu}
\nc\hooklongrightarrow{\lhook\joinrel\longrightarrow}
\nc\hooklongleftarrow{\longleftarrow\joinrel\rhook}
\DeclareRobustCommand\twoheadlongrightarrow{\relbar\joinrel\twoheadrightarrow}
\nc\cL{\mathcal L}
\nc\cO{\mathcal O}
\nc\cK{\mathcal K}
\nc\cF{\mathcal F}
\nc\cE{\mathcal E}
\nc\kk{\kappa} 
\rnc\Box{\square}
\nc\isom{\simeq} 
\nc\isomarrow{\xrightarrow{\,\lowsim\,}}
\nc\isomarrowby[1]{
  \xrightarrow[\smash{\raise0.75ex\hbox{$\ \scriptstyle\sim\ $}}]{#1}}
\let\surject\twoheadlongrightarrow
\nc\Gm{\mathbb{G}_{\mathrm m}}
\nc\ZZ{\mathbb{Z}}
\nc\ZZpm{\ZZ_\fm} 
\nc\QQ{\mathbb{Q}}
\nc\RR{\mathbb{R}}
\nc\CC{\mathbb{C}}
\nc\Ha{\mathbb{H}}
\nc\comp{\chi} 
\nc\id{\mathrm{id}}
\nc\triv{\mathrm{triv}}
\nc\sgn{\mathrm{sgn}}
\nc\eval{\mathrm{eval}}
\nc\tors{\mathrm{tors}}
\nc\sat{\mathrm{sat}}
\nc\diff{\mathrm{diff}}    
\nc\ediff{\mathrm{diff}_E} 
\nc\incl{\mathrm{incl}}
\nc\Pont{\mathrm{Pont}}
\nc\KAb{\mathcal{K}(\mathbf{Ab})}
\nc\fm{\mathfrak m}
\nc\gfm{\fm} 
\nc\Gr{{Gr}} 
\nc\Grg{\abs{\Gr}}
\nc\Grm{{\Gr_\fm}}
\nc\Zdual{^\vee}  
\nc\bull{\bullet}
\nc\defeq{\colonequals}
\let\oBox\Box
\rnc\Box{\raisebox{-0.25pt}{$\oBox$}}
\dmo\coker{coker}
\dmo\im{im}
\dmo\coim{coim}
\let\div\undefined
\dmo\div{div}
\dmo\supp{supp}
\dmo\cone{Cone}
\dmo\tot{Tot}
\dmo\proj{proj}
\dmo\AJ{AJ}
\dmo\AJt{\widetilde{\mathrm{AJ}}}
\dmo\T{T}
\dmo\Hom{Hom}
\dmo\Ext{Ext}
\dmo\End{End}
\dmo\J{J}
\dmo\Pic{Pic}
\let\P\undefined 
\dmo\P{P}
\dmo\Prin{Prin}
\dmo\Prinhat{\widehat{\Prin}}
\dmo\Div{Div}
\dmo\Divhat{\widehat{Div}}
\dmo\Cl{Cl}
\dmo\Clhat{\widehat{Cl}} 
\nc\Cech{\check{\mathcal{C}}}
\nc\Clb{\mathbf{Cl}}
\nc\Clhb{\widehat{\mathbf{Cl}}{}} 
\nc\Jb{\mathbf{J}}
\nc\Pb{\mathbf{P}}
\nc\Pbbar{\overline{\Pb}}
\nc\f{\phi}
\nc\lp{{}^\backprime}
\nc\PL{\underline{\mathrm{PL}}}
\nc\PLm{\PL_\fm} 
\nc\PLp{\PL'_\fm}
\nc\Ling{\mathrm{Lin}}
\nc\Lins{\underline{\mathrm{Lin}}}
\nc\Harm{\underline{\mathrm{Harm}}}
\nc\Harmp{\Harm_\fm}
\nc\Harmpv[1][v]{\Harm_{\fm,#1}}
\nc\dual{^\vee} 
\nc\tr[1]{{}^t#1} 
\nc\adj{^\dagger_{\vphantom{a}}} 
\nc\adjp{^\dagger} 
\def\lowerit#1#2{\smash{\mathrel{\hbox{\lower.35ex\hbox{$#1#2$}}}}}
\nc\lowsim{\mathpalette\lowerit\sim}
\def\raiseit#1#2{\smash{\mathrel{\hbox{\raise.75ex\hbox{$#1#2$}}}}}
\nc\highsim{\mathpalette\raiseit\sim}
\nc\mapsfrom{\leftarrow\!\mapstochar\,}
\let\setminus\smallsetminus 
\nc\abs[1]{\left|#1\right|}
\nc\apairing[3][]{\left\langle#2,#3\right\rangle_{#1}}
\nc\pairing[3][]{\left(#2,#3\right)_{#1}}
\nc\inner[2]{(#1,#2)}
\nc\stab{\vspace*{1ex}$\bull$\enspace}
\nc\marp[1]{\marginpar{\raggedright\color{blue}#1}}
\nc\marpb[1]{\marginpar{\raggedright\color{red}Bruce: #1}}
\nc\done{\marp{CORRECTED OR ANNOTATED UP TO HERE}}
\nc\bruce[1]{\textsf{\color{blue} $\spadesuit$ Bruce: #1}}
\nc\tony[1]{\textsf{\color{red} $\clubsuit$ Tony: #1}}
\nc\ken[1]{\textsf{\color{red} $\heartsuit$ Ken: #1}}
\definecolor{refkey}{HTML}{0000ff}
\definecolor{labelkey}{HTML}{0000ff}
\numberwithin{equation}{subsection} 
\theoremstyle{plain}
\newtheorem{thm}{Theorem}[subsection]
\newtheorem*{thm*}{Theorem}
\newtheorem{lem}[thm]{Lemma}
\newtheorem{cor}[thm]{Corollary}
\newtheorem{prop}[thm]{Proposition}
\theoremstyle{definition}
\newtheorem{defn}[thm]{Definition}
\theoremstyle{remark}
\newtheorem{rem}[thm]{Remark}
\newtheorem*{rem*}{Remark}
\rnc\i{^{-1}} 
\title{Generalized Jacobians of graphs}
\begin{document}

\subjclass[2020]{Primary 31C20; Secondary 05C38}
\keywords{graph Jacobians, generalized Jacobians, graph Laplacians}

\author{Bruce~W.~Jordan}
\address{Department of Mathematics, Baruch
  College, The City University of New York, One Bernard Baruch Way,
  New York, NY 10010-5526, USA}
\email{bruce.jordan@baruch.cuny.edu}

\author{Kenneth~A.~Ribet}
\address{Mathematics Department, University of
  California, Berkeley, CA 94720, USA}
\email{ribet@math.berkeley.edu}

\author{Anthony~J.~Scholl}
\address{Department of Pure Mathematics and Mathematical Statistics,
  Centre for Mathematical Sciences, Wilberforce Road, Cambridge CB3
  0WB, England}
\email{a.j.scholl@dpmms.cam.ac.uk}

\begin{abstract}
  We define a generalized Jacobian $\J_{\fm}(\Gr)$ and a generalized
  Picard group $\P_\fm(\Gr)$ of a graph $\Gr$ with respect to a
  modulus $\fm=\sum_{i=1}^sm_iw_i$ with $w_i$ vertices of $\Gr$ and
  $m_i\geq 1$. These groups occur as the component groups of the N\'eron
  models of generalized Jacobians. We prove a universal mapping property for
  $\J_\fm(\Gr)$ and show that an Abel--Jacobi map in this context
  induces an isomorphism from $\P_\fm(\Gr)$ to
  $\J_\fm(\Gr)$. We also reinterpret $\P_\fm(\Gr)$ in terms of
  sheaves on the geometric realization of $\Grg$, making a connection
  with tropical geometry.
\end{abstract}

\maketitle
\tableofcontents
\section*{Introduction}

The Jacobian of a graph has been independently discovered numerous
times during the last 60 years.  It arises in chip-firing in discrete
dynamical systems where it is called the sandpile group; see, for
example, \cite{Bz} or \cite{Kli}.  It also arises from the
well-known analogy between an algebraic curve $X/\CC$ and a finite
graph $\Gr$.  An example of this is the foundational work \cite{BdN}
of Bacher, de la Harpe, and Nagnibeda.

In this paper we further develop the analogy between curves and
graphs, striving to make the analogy tight, precise, and complete.
The classical theory of curves (summarized in Section \ref{LABEL000})
attaches to a curve $X/\CC$ two (isomorphic) abstract groups and two
(isomorphic) complex abelian varieties. The abstract groups are the
divisor class group (together with its degree-zero subgroup)
\[
\Cl(X)\colonequals \Div(X)/\Prin(X)\supset \Cl^{0}(X)=\Div^0(X)/\Prin(X)
\]
and the Picard group of isomorphism classes of line bundles
\[
\Pic(X)\colonequals H^{1}(X,\cO_X^\times)\supset\Pic^{0}(X)=H^1(X,\cO_X^\times)
^{{\rm deg} = 0}.
\]
The complex abelian varieties are the Jacobian (or Albanese) variety
$\J(X)$ and the Picard variety
$\P(X)\colonequals H^1(X,\cO_X)/H^1(X,\ZZ(1))$.  Basic classical
results of the subject concern these four groups and the maps between
them:
  \begin{enumerate}[\upshape (a)]
  \item 
    \textup{(Abel's theorem)}
    The Abel-Jacobi map $\AJt:\Div^{0}(X)\rightarrow \J(X)$ induces
    an isomorphism $\AJ:\Cl^{0}(X)\isomarrow\J(X)$.
  \item
    \textup{(Universal mapping property)}
    Suppose $h:X\rightarrow A$ maps $X$ into an abelian variety $A/\CC$
    with $P_0\in X$ satisfying $h(P_0)=0$. Then there is a unique
    homomorphism of abelian varieties $h':\J(X)\rightarrow A$
    with $h(P)=h'(\AJt(P-P_0))$.
  \item The exponential sequence induces an isomorphism
    $\P(X)\isomarrow \Pic^0(X)$.
  \item
    \textup{(Duality)}
    The Jacobian $\J(X)$ is self-dual.  There is an isomorphism
    $\J(X)\isomarrow \P(X)$ induced by the Serre duality
    pairing $H^0(X,\Omega^1_X)\times H^1(X,\cO_X)\rightarrow \CC$.
  \end{enumerate}
We begin our treatment of graphs in Section \ref{LABEL200}.  We give
the analogues for a graph $\Gr$ of the four groups $\J(X)$, $\P(X)$,
$\Cl^0(X)$, and $\Pic^0(X)$ attached to a curve $X/\CC$.  For
$\J(\Gr)$ and $\Cl^0(\Gr)$ our definition agrees with \cite{BdN}, but
note that \cite{BdN} calls $\Cl^0(\Gr)$ the Picard group of the graph
in contrast with our terminology.  The Picard group $\P(Gr)$ is the
analogue of the Picard variety $\P(X)$ and the geometric Picard group
$\Pic^{0}(\left|\Gr\right| )$ is the analogue of the group $\Pic^0(X)$
of isomorphism classes of degree-zero line bundles on $X$. The
definition of $\P(\Gr)$ (Definition \ref{LABEL405}) is elementary. In
contrast, defining $\Pic^0(\left|\Gr\right|)$ and making precise the
analogies to curves, which is the content of Section \ref{LABEL470},
is more complicated, relying on the geometric realization
$\left|\Gr\right|$ of the graph $\Gr$ and certain sheaves on it
defined by Molcho and Wise \cite{MW1} in the context of tropical
geometry.  We prove the analogues for graphs of Abel's theorem (a) and the
universal mapping property (b) following \cite{BdN}. We also prove 
results for graphs analogous to (c) and (d).
Many results continue to hold for infinite graphs under the assumption
that they are locally finite (every vertex has finite degree).

We now turn to the main topic of our paper---extending the theory to
add a modulus $\fm$.  For curves this was done in the 1950's by
Rosenlicht, Lang, and Serre -- we review this case in Section
\ref{LABEL080}. A curve $X/\CC$ has modulus $\fm=\sum_{i=1}^{s}m_iP_i$
where the $P_i$ are points on $X$ and $m_i\geq 1$. There are two
abstract groups attached to $(X,\fm)$: the divisor ray class group
$\Cl_\fm(X)$ and the generalized Picard group $\Pic_\fm(X)$
classifying isomorphism classes of line bundles on $X$ together with
an $\fm$-rigidification. Likewise there are two algebraic groups over
$\CC$ attached to $(X,\fm)$: the generalized Jacobian variety
$\J_\fm(X)$ and the generalized Picard variety $\P_\fm(X)$. If $\fm$
is reduced (i.e., all $m_i=1$), then $\J_\fm(X)$ is an extension of
the $g$-dimensional abelian variety $\J(X)$ by a torus of dimension
$s-1$. Abel's theorem extends to generalized Jacobians (Theorem
\ref{LABEL105}): the Abel-Jacobi map induces an isomorphism
$\AJ_\fm: \Cl^0_\fm(X)\isomarrow \J_\fm(X)$.  There is the universal
mapping property for generalized Jacobians given in Theorem
\ref{LABEL150} and once again Serre Duality induces an isomorphism
$\J_{\fm}(X)\rightarrow \P_{\fm}(X)$.  However the generalized
Jacobian $\J_\fm(X)$ is not self-dual.

A finite graph $\Gr$ with vertices $V$ and edges $E$ has modulus
$\fm=\sum_{i\in I}w_i$ with $w_i\in V$.  Our task is to give
constructions and theorems which recast the theory of curves with
modulus for graphs. We begin in Section \ref{LABEL620} by developing
$\fm$-harmonic theory for graphs.  This is done using the auxiliary
construction of the {\emph{extended graph}} $\Gr_\fm$ which contains
$\Gr$ as a subgraph. Assume $\Gr$ is connected. The $\fm$-divisors on
$\Gr$ are $\Div_\fm^0(\Gr)\colonequals \ZZ[V]_0\oplus \ZZ[I]$ and we
define $\fm$-principal divisors
$\Prin_\fm(\Gr)\subseteq \Div_\fm^0(\Gr)$.  The divisor ray class
group is then $\Cl^{0}_\fm(\Gr)=\Div_\fm^0(\Gr)/ \Prin_\fm(\Gr)$.
Integral Hodge theory on $\Grm$ is used to define the $\fm$-Jacobian
$\J_\fm(\Gr)$ and the $\fm$-Picard group $\P_\fm(\Gr)$.  Analogous to
curves, the $\fm$-Picard group $\P_\fm(\Gr)$ is an extension of
$\P(\Gr)$ by a free abelian group of rank $\#I-1$; this is proved in
Proposition \ref{LABEL1055}.  We use duality to give the class of the
extension and to give a canonical isomorphism
$\zeta_\fm:\J_\fm(\Gr)\isomarrow \P_\fm(\Gr)$ (Theorems
\ref{LABEL1070}, \ref{LABEL830}).  The Abel--Jacobi map induces an
isomorphism $\AJ_\fm:\Cl^0_\fm(\Gr)\isomarrow \J_\fm(\Gr)$. Hence
$\J_\fm(\Gr)$ is an extension of $\J(\Gr)$ by a free abelian group of
rank $\#I-1$.  As was the case with empty modulus, the graph analogue
of $\Pic_{\fm'}(X)$ for a curve $X$ with modulus $\fm'$ is more
coomplicated, involving constructions from tropical geometry and the
geometric realization $\Grg$ of $\Gr$.  We define $\Pic_\fm(\Grg)$ for
a modulus $\fm$ on $\Gr$ in terms of $\fm$-rigidified $\Harm$--torsors
on $\Grg$ and show that we have an isomorphism
$\P_\fm(\Gr)\isomarrow \Pic^0_\fm(\Grg)$. We also extend most of these
results to infinite graphs.

These constructions for a graph $\Gr$ with modulus $\fm$ have their
origin in arithmetic geometry, just as the presentations of $\J(\Gr)$
and $\Cl^0(\Gr)\isom \P(\Gr)$ appeared in the work of Grothendieck and
Raynaud, respectively. In \cite{jrs} we extended the work of Raynaud
by giving two presentations of the component group $\Phi_{\fm '}(X)$
of the N\'{e}ron model of the generalized Jacobian of a proper regular
curve $X$ over a suitable discrete valuation ring with reduced modulus
$\fm '$.  In terms of the dual graph $\Gr$ of the special fiber with
modulus $\fm$ defined from the modulus $\fm '$ on $X$, our first
presentation of $\Phi_{\fm '}(X)$ was $\Cl^0_\fm(\Gr)$.  We introduced
the extended graph $\Grm$ in \cite{jrs}, and the second presentation
we gave of $\Phi_{\fm '}(X)$ is $\J_\fm(\Gr)$, which was defined using
$\Grm$. Since both constructions present the same group, we must have
$\Cl^0_\fm(\Gr)\simeq \J_\fm(Gr)$, which is Abel's theorem in the
context of graphs.

In \cite{GJRWW} the authors study which finite abelian groups can
occur as the Jacobians of graphs. It would be interesting to
investigate the corresponding problem for generalized Jacobians.

We would like to thank Dhruv Ranganathan for drawing our attention to
\cite{MW1}, which led us to the definitions of the generalized Picard
groups $\Pic_\fm(\Grg)$ of the geometric realization of $\Gr$, and to
the results of \cite{GJRWW}.

\noindent \emph{Notations:} For a set $S$, we write $\ZZ[S]$ for the
free abelian group on $A$ and $\ZZ[S]_0$ for the kernel of the
augmentation $\deg\colon \ZZ[S] \to \ZZ$. If $S$ is finite, we write
$\ZZ^{S,0}$ for the group of functions $S \to \ZZ$ whose sum is zero. For an
abelian group $M$ we write $M\Zdual=\Hom(M,\ZZ)$.

\section{Jacobians and generalized Jacobians of curves}
\label{LABEL000}

In this section, $X/\CC$ will be a nonsingular projective irreducible
curve of genus $g$ with function field $\kk(X)$, $\Div(X)$ its
divisor group, $\Div^0(X)$ the subgroup of divisors of degree $0$,
and $\Prin(X)=\div(\kk(X)^\times)$ the subgroup of principal
divisors. All sheaf cohomology groups will be for the classical
(analytic) topology on $X$.

\subsection{Jacobians of curves}
\label{LABEL010}

The classical theory of curves attaches to $X$ two (isomorphic)
abstract groups and two (isomorphic) complex abelian varieties. The
abstract groups are the divisor class group (together with its degree
zero subgroup)
\[
  \Cl(X)\defeq \Div(X)/\Prin(X)\supset
  \Cl^0(X)=\Div^0(X)/\Prin(X) 
\]
and the Picard group of isomorphism classes of line bundles
\[
  \Pic(X)\defeq H^1(X,\cO_X^\times)\supset
  \Pic^0(X)=H^1(X,\cO_X^\times)^{\mathrm{deg}=0}.
\]
For $D\in \Div(X)$, denote by $[D]$ the equivalence class in $\Cl(X)$
containing $D$. The map $[D] \to \cO_X(D)$ gives a
degree-preserving isomorphism $\Cl(X) \isomarrow \Pic(X)$, which up to sign
is the boundary map for the cohomology of the exact sequence of
sheaves
\begin{equation}
  \label{LABEL020}
  0 \to \cO_X^\times \to \cK_X^\times \xrightarrow{\div}
  \bigoplus_{x\in X} x_*\ZZ \to 0.
\end{equation}
(Here $x_*\ZZ$ is the skyscraper sheaf supported on $x\in X$.)
The abelian varieties are the Jacobian (or Albanese) variety $\J(X)$
and the Picard variety $\P(X)$, which as complex tori are:
\begin{equation}
  \label{LABEL030}
  \J(X)\defeq\frac{H^0(X,\Omega^1_X)^\ast}{H_1(X,\ZZ)}
  \quad\text{and}
  \quad
  \P(X)\defeq \frac{H^1(X,\cO_X)}{H^1(X,\ZZ(1))}
\end{equation}
(where $(-)^\ast$ here denotes $\CC$-dual).  On the one hand, we have
the Abel--Jacobi map
\begin{equation}
  \label{LABEL040}
  \AJt\colon\Div^0(X)\to \J(X)
\end{equation}
induced by $P-Q\mapsto \left(\omega\mapsto \int_{Q}^{P}\omega\right)$
for $P$,$Q$ points on $X$ and $\omega\in H^0(X,\Omega^1_X)$, which
satisfies:

\begin{thm}[Abel's Theorem]
  $\AJt$ is surjective with kernel $\Prin(X)$, inducing an
  isomorphism $\AJ\colon\Cl^0(X)\isomarrow \J(X)$.
\end{thm}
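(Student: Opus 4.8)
The plan is to prove the two assertions separately: that $\Prin(X)=\ker\AJt$ (Abel's theorem proper) and that $\AJt$ is surjective (Jacobi inversion). Throughout I would work with the period description $\J(X)=H^0(X,\Omega^1_X)^\ast/H_1(X,\ZZ)$, where a class represented by $D=\sum_i n_i P_i$ with $\sum_i n_i=0$ acts on $\omega\in H^0(X,\Omega^1_X)$ by $\omega\mapsto\sum_i n_i\int_Q^{P_i}\omega$ for a fixed basepoint $Q$, the value being well-defined modulo the periods $\int_\gamma\omega$, $\gamma\in H_1(X,\ZZ)$. A preliminary step is to record that this makes $\AJt$ a homomorphism independent of the chosen paths.

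The inclusion $\Prin(X)\subseteq\ker\AJt$ is the easier half. Given $f\in\kk(X)^\times$ with $\div(f)=D$, the logarithmic differential $\eta=df/f$ is a differential of the third kind whose only singularities are simple poles at the $P_i$ with residue $n_i$. For holomorphic $\omega$, I would evaluate $\sum_i n_i\int_Q^{P_i}\omega$ using the reciprocity law for differentials of the first and third kind (Riemann's bilinear relations): applying the residue theorem to $\omega\cdot\int\eta$ on the cut-open surface expresses this sum as a $\ZZ$-linear combination of periods of $\omega$, since single-valuedness of $f=\exp\int\eta$ forces the periods of $\eta$ into $2\pi i\ZZ=\ZZ(1)$. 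Hence $\AJt(D)=0$ in the quotient by $H_1(X,\ZZ)$.

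The reverse inclusion $\ker\AJt\subseteq\Prin(X)$ is the crux and the main obstacle; here I would run the previous computation backwards. Starting from $D\in\Div^0(X)$, the existence theory for meromorphic differentials (Riemann--Roch, or the Hodge-theoretic solution of $\bar\partial$ with prescribed singularities) produces a differential of the third kind $\eta$ with simple poles of residue $n_i$ at each $P_i$ and no other singularity, normalized to have purely imaginary or vanishing $a$-periods. The candidate primitive is $f=\exp\bigl(\int\eta\bigr)$, which is a genuine meromorphic function with $\div(f)=D$ exactly when every period $\int_\gamma\eta$ lies in $\ZZ(1)$. The bilinear relations again convert the periods of $\eta$ into the pairing of $\AJt(D)$ against a basis of $H^0(X,\Omega^1_X)$, so the hypothesis $\AJt(D)=0$—that these pairings lie in the lattice $H_1(X,\ZZ)$—is precisely what forces the remaining ($b$-)periods of $\eta$ into $\ZZ(1)$, after correcting $\eta$ by a holomorphic form with integral periods. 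Matching the lattice $H_1(X,\ZZ)$ against $2\pi i\ZZ$ through the period matrix, and verifying this correction is compatible with the Riemann relations and the chosen normalization, is where the real work lies.

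Finally, for surjectivity I would show the image of $\AJt$ is an open subgroup of the connected torus $\J(X)$, hence everything. Consider $\mu\colon X^g\to\J(X)$ sending $(P_1,\dots,P_g)\mapsto\sum_i\AJt(P_i-Q)$; in local coordinates its differential is given by the matrix $\bigl(\omega_j(P_i)\bigr)$ for a basis $\omega_1,\dots,\omega_g$ of $H^0(X,\Omega^1_X)$. For generic $P_i$ this matrix is nonsingular, since no nonzero holomorphic form can vanish at $g$ general points (Riemann--Roch), so $\mu$ is a local diffeomorphism there and its image contains a nonempty open set. As the image of $\AJt$ is a subgroup containing this open set and $\J(X)$ is connected, $\AJt$ is surjective; together with the kernel computation this yields the induced isomorphism $\AJ\colon\Cl^0(X)\isomarrow\J(X)$.
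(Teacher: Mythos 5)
Your proposal is essentially correct, but there is no proof in the paper to measure it against: the paper states Abel's theorem without proof, as classical background for its graph-theoretic development (the only curve-theoretic proof it supplies is of the duality statement, Theorem \ref{LABEL130}, in the Appendix, and that argument presupposes the Abel--Jacobi isomorphism rather than reproving it). What you have written is the standard Riemann-surface argument, and its skeleton is sound: the inclusion $\Prin(X)\subseteq\ker\AJt$ via the reciprocity law pairing $\omega$ against $\eta=df/f$, whose periods lie in $\ZZ(1)$ by single-valuedness of $f$; the converse by producing a differential of the third kind with residues $n_i$ and correcting it by a holomorphic form; and surjectivity by Jacobi inversion through $\mu\colon X^g\to\J(X)$, whose differential $\bigl(\omega_j(P_i)\bigr)$ is generically nonsingular, so that the image of $\AJt$ is an open subgroup of the connected torus and hence everything. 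The one step you flag as ``the real work'' is indeed the crux, but it closes in a few lines once you fix the normalization of vanishing $a$-periods: reciprocity then gives $\int_{b_k}\eta = 2\pi i\sum_i n_i\int_Q^{P_i}\omega_k$ for the normalized basis $(\omega_k)$, the hypothesis $\AJt(D)=0$ lets you write this vector as $2\pi i(m+\tau l)$ with $m,l$ integral, and replacing $\eta$ by $\eta-2\pi i\sum_j l_j\omega_j$ puts all periods in $2\pi i\ZZ$ precisely because the period matrix $\tau$ is symmetric (Riemann's first bilinear relation) --- so the ``compatibility'' you worry about is exactly this symmetry, nothing more. (Your alternative of purely imaginary normalization also works but makes the bookkeeping heavier.) It is worth noting that the paper's graph analogue, Theorem \ref{LABEL400}, is proved by an entirely different mechanism --- a purely algebraic diagram chase through \eqref{LABEL390} identifying $\ker\pi$ with $\im(\partial\adj)$ --- which has no need of an existence theorem for third-kind differentials; the transcendental input (solving $\bar\partial$ with prescribed singularities, or Riemann--Roch) is what your analytic route requires and what the combinatorial setting dispenses with.
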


On the other hand, the exponential sequence of sheaves on $X$
\begin{equation}
  \label{LABEL050}
  0 \to \ZZ(1) \to \cO_X \xrightarrow{\exp} \cO_X^\times
  \to 0
\end{equation}
induces an isomorphism $\P(X) \isomarrow \Pic^0(X)$.  One then has 
compatibility with duality:
\begin{thm}\label{LABEL060}
  The following diagram is commutative up to sign:
  \begin{equation}
    \label{LABEL070}
    \begin{tikzcd}
      \Cl(X) \arrow[rrr, "{[D]\mapsto \cO_X(D)}", "{\sim}" below] &&& \Pic(X)
      \\
      \Cl^0(X) \arrow[u, hook] \arrow[rrr, "{\sim}"]
      \arrow[d, "{\mathrm{AJ}}" left, "{\wr}"]
      &&& \Pic^0(X) \arrow[u, hook] 
      \\
      \J(X) \arrow[rrr, "{\sim}", "{\text{\upshape Serre/Poincar\'e duality}}" below]
      &&& \P(X) \arrow[u, "{\wr}", "{\mathrm{exp}}" right]
    \end{tikzcd}
  \end{equation}
  where the bottom map is induced by the Serre duality pairing
  $H^0(X,\Omega_X^1) \times H^1(X,\cO_X)\to \CC$.
\end{thm}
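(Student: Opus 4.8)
The plan is to treat the two squares of \eqref{LABEL070} separately, since the top one is essentially formal and all the content sits in the bottom one. For the top square, I would note that the connecting homomorphism $\Div(X)\to\Pic(X)$ attached to \eqref{LABEL020} is degree-preserving: the degree on $\Pic(X)$ is computed by the further boundary map $H^1(X,\cO_X^\times)\to H^2(X,\ZZ(1))\cong\ZZ$, and the composite $\Div(X)\to H^2(X,\ZZ(1))$ is literally $D\mapsto\deg D$. Hence $[D]\mapsto\cO_X(D)$ carries $\Cl^0(X)$ into $\Pic^0(X)$ and the middle horizontal arrow is its restriction, so the top square commutes on the nose.

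For the bottom square I would first make the Serre/Poincar\'e isomorphism $\J(X)\isomarrow\P(X)$ fully explicit. Serre duality identifies $H^1(X,\cO_X)$ with $H^0(X,\Omega^1_X)^\ast$ through the pairing $(\omega,\eta)\mapsto\int_X\omega\wedge\eta$ on Dolbeault representatives, so its inverse carries the space $H^0(X,\Omega^1_X)^\ast$ underlying $\J(X)$ isomorphically onto $H^1(X,\cO_X)$. The task here is to check that this complex-linear isomorphism sends the period lattice $H_1(X,\ZZ)\subset H^0(X,\Omega^1_X)^\ast$ onto $H^1(X,\ZZ(1))\subset H^1(X,\cO_X)$; this is exactly Poincar\'e duality combined with the Hodge decomposition $H^1(X,\CC)=H^{1,0}\oplus H^{0,1}$, and is where Riemann's bilinear relations enter. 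Granting this, $\J(X)\isomarrow\P(X)$ is a well-defined isomorphism of complex tori.

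Next I would trace a degree-zero divisor $D=P-Q$ around the bottom square. Its Abel--Jacobi image is the functional $\omega\mapsto\int_Q^P\omega$ modulo periods, and applying the inverse Serre isomorphism produces a class $\xi_D\in H^1(X,\cO_X)$, well defined modulo $H^1(X,\ZZ(1))$; since the Serre pairing is perfect, $\xi_D$ is characterized by $\int_X\omega\wedge\xi_D=\int_Q^P\omega$ for all $\omega\in H^0(X,\Omega^1_X)$. The whole square then reduces to the single identity that the $\exp$-preimage of $\cO_X(D)$ agrees with $\xi_D$ up to sign in $\P(X)$; equivalently, that for a Dolbeault representative $\eta$ of the class $\xi_{\cO_X(D)}\in H^1(X,\cO_X)$ lifting $\cO_X(D)$ under \eqref{LABEL050}, one has $\int_X\omega\wedge\eta\equiv\pm\int_Q^P\omega$ modulo periods. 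To prove this I would produce $\eta$ explicitly: choose a \v{C}ech cocycle $g_{\alpha\beta}=f_\alpha/f_\beta$ for $\cO_X(D)$, pass to branches of $\tfrac{1}{2\pi i}\log g_{\alpha\beta}$ and a partition of unity to manufacture a smooth $(0,1)$-form $\eta=\bar\partial u$ representing $\xi_{\cO_X(D)}$---concretely, a logarithmic potential for $D$ with a branch cut along a path $\gamma$ from $Q$ to $P$---and then apply Stokes' theorem and the residue theorem to collapse $\int_X\omega\wedge\eta$ to the contribution along $\gamma$, namely $\pm\int_\gamma\omega=\pm\int_Q^P\omega$.

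The hard part will be precisely this last comparison: reconciling the transcendental Abel--Jacobi integral with the algebraic, \v{C}ech-theoretic description of the line bundle $\cO_X(D)$. Carrying it out requires (i) choosing the logarithmic potential so that its singularities at $P$ and $Q$ are controlled and Stokes applies on $X$ with small discs removed; (ii) checking that the branch ambiguities in $\log g_{\alpha\beta}$ contribute exactly the lattice $H^1(X,\ZZ(1))$, matching the period indeterminacy $H_1(X,\ZZ)$ of the Abel--Jacobi functional under the Poincar\'e duality of the second step; and (iii) pinning down the global sign, which is the source of the ``up to sign'' in the statement and depends on the orientation conventions in Serre/Poincar\'e duality and in the connecting maps of \eqref{LABEL020} and \eqref{LABEL050}. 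Once this identity is established, commutativity of the bottom square, and hence of \eqref{LABEL070} up to sign, follows at once.
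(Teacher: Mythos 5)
Your proposal is correct, and its overall architecture is the same as the paper's (the paper proves the statement in the Appendix, as the special case $\fm=0$ of Theorem \ref{LABEL130}): treat the top square as formal, and reduce the bottom square to producing an explicit Dolbeault representative of the class of $\cO_X(D)$ from a \v{C}ech cocycle built out of logarithms and a cutoff function, then comparing $\int_X\omega\wedge\eta$ with the Abel--Jacobi integral. The genuine divergence is in how the final identity is established. The paper sidesteps your difficulties (i)--(iii) by a special choice of cover: $X=U\cup V$ with $U$ simply connected containing $\supp(D)$ and $U\cap V$ an annulus, and $f$ meromorphic on $U$ with divisor $D$; since $\deg D=0$, the cocycle $f^{-1}|_{U\cap V}$ admits a \emph{single-valued} logarithm on the annulus, so no branch cuts or multivaluedness ever appear, and the \v{C}ech-to-Dolbeault transfer is the clean general claim that a cocycle $s\in\cE(U\cap V)$ maps to $s\,\bar\partial b$ for a cutoff $b$, verified by a short bicomplex computation. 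The resulting identity $\int_\gamma\omega=\int_U\omega\wedge(-\log f)\,\bar\partial b$ is then proved not by Stokes and residues along a cut from $Q$ to $P$ (your step (iii)), but by fixing $D=P-P_0$, noting both sides are holomorphic in $P$ and agree at $P=P_0$, differentiating in the endpoint, and invoking the Cauchy integral formula for the $C^\infty$-function $(1-b)h$. Your branch-cut/Stokes route is the classical reciprocity computation and does work, but it is precisely where the delicate analysis concentrates (controlling the potential's singularities at $P$ and $Q$, the $2\pi i$ jump across $\gamma$, and matching the lattice ambiguities); the paper's cover trick buys freedom from all of that at the mild cost of the holomorphy-in-$P$ argument, and it also lets the identical proof handle the modulus case, where $\omega$ may have poles along $\fm$ away from $U$, without any change.
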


The constructions in the left column of the diagram \eqref{LABEL070} are
covariant in $X$ (Albanese functoriality), whereas those in the right
column of \eqref{LABEL070} are contravariant in $X$ (Picard
functoriality).
  
Finally, we have the universal property of the Jacobian:

\begin{thm}
  Let $P_0\in X$, and let $h\colon X \to A$ be a morphism from $X$ to
  an abelian variety $A/\CC$, with $h(P_0)=0$. Then there is a unique
  homomorphism of abelian varieties $h' \colon \J(X) \to A$ such that
  $h(P)=h'(\AJt(P-P_0))$.
\end{thm}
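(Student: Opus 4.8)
The plan is to build $h'$ by Hodge theory and to verify its properties through period integrals. Regard $A$ as a complex torus $A = H^0(A,\Omega^1_A)^\ast/H_1(A,\ZZ)$, just as $\J(X)$ is written in \eqref{LABEL030}; under this description the Abel--Jacobi map of $A$ (viewed as its own Jacobian) is the identity. First I would pull back holomorphic $1$-forms along the morphism $h$ to obtain a $\CC$-linear map $h^\ast\colon H^0(A,\Omega^1_A)\to H^0(X,\Omega^1_X)$, and then pass to its transpose
\[
  \lambda \mapsto \bigl(\eta\mapsto \lambda(h^\ast\eta)\bigr),
  \qquad H^0(X,\Omega^1_X)^\ast \longrightarrow H^0(A,\Omega^1_A)^\ast ,
\]
which is a $\CC$-linear map between the universal covers of $\J(X)$ and of $A$.

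The main point is to check that this transpose carries the period lattice $H_1(X,\ZZ)$ into $H_1(A,\ZZ)$, so that it descends to the tori. For a cycle $\gamma\in H_1(X,\ZZ)$, embedded in $H^0(X,\Omega^1_X)^\ast$ by $\eta\mapsto \int_\gamma\eta$, its image sends $\eta\in H^0(A,\Omega^1_A)$ to $\int_\gamma h^\ast\eta = \int_{h_\ast\gamma}\eta$, which is exactly the period of $\eta$ over $h_\ast\gamma\in H_1(A,\ZZ)$. Thus the transpose restricts to $h_\ast$ on first homology and descends to a holomorphic group homomorphism $h'\colon \J(X)\to A$, hence to a homomorphism of abelian varieties. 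I expect this lattice compatibility to be the crux of the argument; once the change-of-variables identity $\int_\gamma h^\ast\eta=\int_{h_\ast\gamma}\eta$ is in hand, everything else is formal.

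To verify $h(P)=h'(\AJt(P-P_0))$, note that $\AJt(P-P_0)$ is the class of the functional $\eta\mapsto\int_{P_0}^{P}\eta$ on $H^0(X,\Omega^1_X)$, so that $h'(\AJt(P-P_0))$ is represented by $\eta\mapsto \int_{P_0}^{P}h^\ast\eta=\int_{h(P_0)}^{h(P)}\eta=\int_{0}^{h(P)}\eta$, using $h(P_0)=0$ and pushing the chosen path forward by $h$. Under the identification $A=H^0(A,\Omega^1_A)^\ast/H_1(A,\ZZ)$ this functional is precisely the point $h(P)$, which gives the desired identity. Finally, uniqueness follows because the image of $X$ under $P\mapsto \AJt(P-P_0)$ generates $\J(X)$: by Abel's Theorem $\AJt$ is surjective, and $\AJt\bigl(\sum_i n_i P_i\bigr)=\sum_i n_i\,\AJt(P_i-P_0)$ for degree-zero divisors, so any two homomorphisms of abelian varieties realizing $h$ in the stated way agree on a generating set and therefore coincide.
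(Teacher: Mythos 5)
Your proof is correct, and it is the standard transcendental argument: uniformize $A$ as $H^0(A,\Omega^1_A)^\ast/H_1(A,\ZZ)$, descend the transpose of $h^\ast$ using the period identity $\int_\gamma h^\ast\eta=\int_{h_\ast\gamma}\eta$, verify the factorization by change of variables, and get uniqueness from Abel's theorem plus linearity. Note that the paper itself states this theorem as classical background in Section \ref{LABEL010} and gives no proof (its Appendix only proves the duality Theorems \ref{LABEL060} and \ref{LABEL130}), so there is no in-paper argument to compare against; the only step you leave implicit is that the holomorphic homomorphism of tori you construct is automatically a morphism of abelian varieties, which requires a word of justification (Chow/GAGA, or the classical fact that holomorphic homomorphisms of abelian varieties are algebraic).
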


\subsection{Generalized Jacobians of curves}
\label{LABEL080}

Next we recall the extension of these results to generalized
Jacobians, due to Rosenlicht \cite{Ro1}, Lang \cite{Lang1,Lang2} and
Serre \cite{SeAG}.  Let $S=\{P_i\mid 1\le i\le s\}$ be a nonempty
finite set of points of $X$, and $\fm=\sum m_iP_i$ , $m_i\geq 1$, an
effective divisor (``modulus'') with support $S$. Write $j\colon X
\setminus S\hookrightarrow X$ for the inclusion.

One has two abstract groups attached to the data
$(X,\fm)$ and two algebraic groups over $\CC$.  Set
\begin{equation}
  \Prin_{\fm}(X)=\{\div(\varphi)\mid \varphi\in \kk(X)^\times, \,
  \varphi\equiv1\text{ mod }\fm\}\subset \Div^0(X\setminus S).
\end{equation}
The first abstract group is the divisor ray class group 
\[
  \Cl_\fm(X)\defeq\frac{\Div(X\setminus
    S)}{\Prin_{\fm}(X)}\supset \Cl_\fm^0(X) = \frac{\Div^0(X\setminus
    S)}{\Prin_{\fm}(X)}.
\]
Write $\cK_X^{\times,\fm}$ for the subsheaf of
$\cK_X^\times$ whose sections are nonzero rational functions
which are locally $\equiv 1$~mod~$\fm$, and
$\cO_X^{\times,\fm}=\cO_X^\times \cap \cK_X^{\times,\fm}$.
The generalized Picard group
\[
  \Pic_\fm(X)=H^1(X,\cO_X^{\times,\fm})\supset \Pic_\fm^0(X)
\]
classifies isomorphism classes of line bundles $\mathcal L$ on $X$
together with an $\fm$-rigidification, i.e., a trivialization of the
restriction of $\mathcal L$ to the closed subscheme $T\subset X$
determined by $\fm$.  If $D\in\Div(X\setminus S)$, then $\cO_X(D)$ has
a canonical trivialization $\triv$ along $T$ given by
$\cO_X(D)|_{T}=\cO_X|_{T}=\cO_T$. The map $D \mapsto (\cO_X(D),\triv)$
then gives a degree-preserving isomorphism
$\Cl_\fm(X) \isomarrow \Pic_\fm(X)$, which (up to sign) equals the
boundary map for the exact sequence of Zariski sheaves
\begin{equation}
  \label{LABEL090}
  0 \to \cO_X^{\times,\fm} \to \cK_X^{\times,\fm}
  \xrightarrow{\mathrm{div}} \smash[b]{\coprod_{x\in X \setminus S} x_*\ZZ}
  \to 0
\end{equation}
on $X$.

The generalized Jacobian variety
$\J_{\fm}(X)$ is an algebraic group over $\CC$ which is analytically 
given by \cite[Chap.~V, \S 3, 19]{SeAG}
\begin{equation}
  \label{LABEL100}
  \J_{\fm}(X)=\frac{H^0(X, \Omega^1_X(-\fm))^*}{H_1(X\setminus S,\ZZ)}.
\end{equation}
(But recall that in general the analytic structure on $\J_\fm(X)$ does not
determine its algebraic structure.) If $\fm$ is reduced
(i.e.,~all $m_i=1$), then $\J_\fm(X)$ is an extension of the
$g$-dimensional abelian variety $\J(X)$ by a torus of dimension $s-1$.

Integration gives a homomorphism
$\AJt_\fm \colon \Div^0(X\setminus S) \to \J_\fm(X)$, and one
has the extension of Abel's Theorem to generalized Jacobians:

\begin{thm}
  \label{LABEL105}
  $\AJt_\fm$ induces an isomorphism
  $\AJ_\fm\colon \Cl^0_\fm(X)\isomarrow \J_\fm(X)$.
\end{thm}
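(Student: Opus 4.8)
The plan is to deduce the generalized statement from the classical Abel theorem (recalled above) by comparing $\J_\fm(X)$ with the ordinary Jacobian $\J(X)$ through their common abelian-variety quotient, so that the only genuinely new input concerns the toric (more generally affine) part contributed by the modulus. Recall that $\AJt_\fm$ sends $D=\partial\gamma$, for a $1$-chain $\gamma$ in $X\setminus S$ with $\partial\gamma=D$, to the functional $\omega\mapsto\int_\gamma\omega$ on $H^0(X,\Omega^1_X(-\fm))$. That this is well defined modulo the period lattice $H_1(X\setminus S,\ZZ)$ is routine: changing $\gamma$ by a cycle changes each integral by a period. The substance of the theorem is that $\AJt_\fm$ is surjective with kernel exactly $\Prin_\fm(X)$, and my strategy is to package both claims into a short five lemma whose one nontrivial ingredient is a residue/reciprocity identity.

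First I would set up two short exact sequences and a map between them. On the divisor side, viewing a divisor on $X\setminus S$ as a divisor on $X$ and forgetting the rigidification gives a surjection $\Cl^0_\fm(X)\surject\Cl^0(X)$; its kernel is represented by classes $\div(f)$ with $f$ invertible along $S$, and sending such a class to the $\fm$-jet of $f$ identifies this kernel with $L_\fm/\CC^\times$, where $L_\fm$ is the group of units on the thickened points cut out by $\fm$ (for reduced $\fm$, $L_\fm=\prod_{i=1}^s\CC^\times$) and $\CC^\times$ sits diagonally as the constants. On the analytic side, the inclusion of regular differentials into admissible ones dualizes to $H^0(X,\Omega^1_X(-\fm))^*\surject H^0(X,\Omega^1_X)^*$, while $H_1(X\setminus S,\ZZ)\to H_1(X,\ZZ)$ is surjective with kernel of rank $s-1$ spanned by the small loops $\delta_i$ around the $P_i$ (subject to $\sum_i\delta_i=0$); together these yield an extension $0\to G_\fm\to\J_\fm(X)\to\J(X)\to 0$ with $G_\fm$ the toric part (a torus $(\CC^\times)^{s-1}$ when $\fm$ is reduced). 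The map $\AJt_\fm$ is compatible with the projections to ordinary divisor and Jacobian data, so once we know it kills $\Prin_\fm(X)$ it descends and fits into a commutative ladder whose right-hand vertical arrow is the classical $\AJ\colon\Cl^0(X)\isomarrow\J(X)$.

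The heart of the matter, and the step I expect to be the main obstacle, is the induced map on kernels $L_\fm/\CC^\times\to G_\fm$. For $D=\div(f)=\partial\gamma$ with $f$ invertible along $S$ and $\omega$ an admissible differential whose poles lie on $S$, applying the residue theorem to $\log f\cdot\omega$ gives $\int_\gamma\omega\equiv\sum_{i=1}^s\mathrm{Res}_{P_i}(\log f\cdot\omega)\pmod{\text{periods}}$; in the reduced case this reads $\sum_i(\mathrm{Res}_{P_i}\omega)\log f(P_i)$. This single identity does two things at once. Taking $f\equiv 1\bmod\fm$ forces every $\mathrm{Res}_{P_i}(\log f\cdot\omega)$ to vanish, so $\AJt_\fm(\Prin_\fm(X))=0$ and the descent used above is justified; in general it exhibits $L_\fm/\CC^\times\to G_\fm$ as the map attached to the residue pairing between the $\fm$-jets of $f$ and the polar parts of admissible differentials. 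The remaining content is that this pairing is perfect — equivalently a local/global reciprocity law (Weil reciprocity, or Serre's local symbols) — whence the map on kernels is an isomorphism: injectivity says a class pairing trivially with all admissible $\omega$ has trivial $\fm$-jet, hence lies in $\Prin_\fm(X)$, and surjectivity follows from non-degeneracy together with the rank count. The classical Abel theorem gives the right-hand isomorphism, so the short five lemma then forces $\AJ_\fm\colon\Cl^0_\fm(X)\isomarrow\J_\fm(X)$.

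Finally I would note that a non-reduced modulus requires only bookkeeping changes: $L_\fm$ becomes a product of higher jet groups carrying unipotent (additive) parts, the admissible differentials in $H^0(X,\Omega^1_X(-\fm))$ acquire higher-order poles along $\fm$, and the simple residue pairing is replaced by the corresponding pairing of higher jets against higher polar parts, encoded by Serre's local symbols $(f,\omega)_{P_i}$; the non-degeneracy of these symbols plays the role of the reduced residue pairing. The only points demanding genuine care are the well-definedness modulo periods, the exactness of the two rows of the ladder, and the perfectness of the local pairing; granting these and the classical Abel theorem, the argument reduces cleanly to that non-degeneracy.
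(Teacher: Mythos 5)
Your proposal is correct in outline, but there is no in-paper proof to measure it against: the paper states Theorem \ref{LABEL105} as recalled classical material, credited to Rosenlicht, Lang and Serre, and proves only its graph analogue (Theorem \ref{LABEL760}). What you have written is a faithful reconstruction of the standard analytic proof. The two extensions
$0 \to L_\fm/\CC^\times \to \Cl^0_\fm(X) \to \Cl^0(X) \to 0$ and
$0 \to G_\fm \to \J_\fm(X) \to \J(X) \to 0$
are as you say, granting the standard facts you use implicitly: a moving lemma so that every class in $\Cl^0(X)$ has a representative avoiding $S$, surjectivity of the jet map onto $L_\fm$ (both from Riemann--Roch/approximation), and the injectivity of the period map $H_1(X,\ZZ)\to H^0(X,\Omega^1_X)^*$, which is what guarantees the lattice $H_1(X\setminus S,\ZZ)$ meets the toric summand exactly in the span of the loops $\delta_i$. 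The identity $\int_\gamma\omega \equiv \sum_i \mathrm{Res}_{P_i}(\log f\,\omega)$ mod periods is the classical key computation, and it is self-consistent: the branch ambiguity in $\log f$ shifts the right-hand side precisely by periods over the $\delta_i$. Your residue pairing between unit jets modulo constants and polar parts of admissible differentials, whose perfectness finishes the kernel comparison, is exactly the analytic avatar of Serre's local symbols, so your route is in spirit the one in \cite{SeAG}, specialized to $\CC$. It is worth contrasting this dévissage with how the paper handles the graph case: there, Theorem \ref{LABEL760} is proved by a single snake-lemma chase on the ladder \eqref{LABEL770}, computing $\ker(\pi_\fm)$ via the orthogonality Lemma \ref{LABEL300}(b) and obtaining the kernel of the Abel--Jacobi map as $\Prin_\fm(\Gr)$ in one stroke, with no reduction to the modulus-free theorem. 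Your approach buys a clean isolation of the genuinely new content (non-degeneracy of the local jet/residue pairing) while reusing classical Abel; the direct chase avoids the auxiliary moving and jet-surjectivity lemmas and treats non-reduced moduli uniformly, whereas in your sketch the unipotent bookkeeping for higher jets is deferred rather than carried out.
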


We also have an analogue of the exponential sequence \eqref{LABEL050}
with modulus
\begin{equation}
  \label{LABEL110}
  0\to j_!\ZZ(1)\to \cO_X(-\fm)\xrightarrow{\exp}
  \cO_X^{\times,\fm}\to 0 
\end{equation}
inducing an inclusion $H^1_c(X\setminus S,\ZZ(1))=H^1(X,j_!\ZZ(1))
\hookrightarrow H^1(X,\cO_X(-\fm))$. The quotient is then the
underlying complex Lie group of the algebraic group over $\CC$
\begin{equation}
  \label{LABEL120}
  \P_{\fm}(X)=\frac{H^1(X,\cO_X(-\fm))}{H^1_c(X\setminus S,\ZZ(1))}
\end{equation}
and the exponential sequence \eqref{LABEL110} gives an
isomorphism $\P_\fm(X)\isom \Pic^0_\fm(X)$.
\begin{thm}
  \label{LABEL130}
  The following diagram is commutative up to sign
  \begin{equation}
    \label{LABEL140}
    \begin{tikzcd}
      \Cl_\fm(X) \arrow[rrr, "{[D]\mapsto \cO_X(D)}", "{\sim}" below] &&& \Pic_\fm(X)
      \\
      \Cl^0_\fm(X) \arrow[u, hook] \arrow[rrr, "{\sim}"]
      \arrow[d, "{\mathrm{AJ}}" left, "{\wr}"]
      &&& \Pic^0_\fm(X) \arrow[u, hook] 
      \\
      \J_\fm(X) \arrow[rrr, "{\sim}", "{\text{\upshape Serre/Poincar\'e duality}}" below]
      &&& \P_\fm(X) \arrow[u, "{\wr}", "{\mathrm{exp}}" right]
    \end{tikzcd}
  \end{equation}
  where the bottom map is induced by Serre
  duality $H^0(X,\Omega^1_X(\fm)) \times H^1(X,\cO_X(-\fm)) \to \CC$.
\end{thm}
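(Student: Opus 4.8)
The plan is to observe first that the top square of \eqref{LABEL140} commutes on the nose: the middle horizontal arrow is by definition the restriction of the top one to the degree-zero subgroups, and the two vertical arrows are the inclusions of those subgroups, so there is nothing to check there. All the content is therefore in the lower square, and since every arrow in it is an isomorphism it suffices to prove that
\[
  \exp\circ(\text{Serre duality})\circ\AJ_\fm
  \;=\;\pm\big([D]\mapsto(\cO_X(D),\triv)\big)
\]
as homomorphisms $\Cl^0_\fm(X)\to\Pic^0_\fm(X)$, where the right-hand side is the restriction of the top isomorphism and $\exp$ is the isomorphism coming from \eqref{LABEL110}. Because $\Cl^0_\fm(X)$ is generated by the classes of divisors $P-Q$ with $P,Q\in X\setminus S$, it is enough to verify this identity on such a generator.

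Next I would make all three maps explicit on the universal covers. By the analytic description of $\J_\fm(X)$ around \eqref{LABEL100} together with Theorem~\ref{LABEL105}, the Abel--Jacobi image $\AJt_\fm(P-Q)$ is the functional $\omega\mapsto\int_Q^P\omega$ on the space $H^0(X,\Omega^1_X(\fm))$ of invariant differentials entering the Serre pairing, taken modulo the period lattice $H_1(X\setminus S,\ZZ)$ (the path being chosen in $X\setminus S$). The bottom map is induced by the Serre-duality pairing $H^0(X,\Omega^1_X(\fm))\times H^1(X,\cO_X(-\fm))\to\CC$, which on the vector-space level identifies $H^0(X,\Omega^1_X(\fm))^\ast$ with $H^1(X,\cO_X(-\fm))$, together with the Poincar\'e--Lefschetz identification of $H_1(X\setminus S,\ZZ)$ with $H^1_c(X\setminus S,\ZZ(1))=H^1(X,j_!\ZZ(1))$ that makes it descend to the quotient \eqref{LABEL120}. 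Finally $\exp$ sends the resulting class in $H^1(X,\cO_X(-\fm))$ to its image in $\Pic^0_\fm(X)$.

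With these descriptions in hand, the identity to be proved becomes a reciprocity statement. I would represent $(\cO_X(P-Q),\triv)$ by the differential of the third kind $\eta$ with residue divisor $P-Q$ and with the Laurent behaviour along the subscheme $T$ dictated by the trivialization $\triv$, so that its class under $\exp\i$ is read off from the periods of $\eta$. The required equality
\[
  \apairing{\omega}{\exp\i(\cO_X(P-Q),\triv)}=\pm\int_Q^P\omega\pmod{\text{periods}}
\]
for all $\omega\in H^0(X,\Omega^1_X(\fm))$ is precisely the Riemann bilinear relation between $\omega$ and the third-kind differential $\eta$, now carried out on $X\setminus S$ and keeping track of the higher-order poles permitted by $\fm$. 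I expect this reciprocity computation to be the main obstacle: compared with the classical case (Theorem~\ref{LABEL060}) one must account not only for the residues of $\eta$ at $S$ but for the full Laurent tails up to order $m_i$, and check that the data recorded by the $\fm$-rigidification $\triv$ matches exactly the higher Laurent coefficients that enter the bilinear relation. This is what forces the use of $H^0(X,\Omega^1_X(\fm))$ on the Jacobian side and of the compactly-supported lattice $H^1_c(X\setminus S,\ZZ(1))$, whose classes include the small loops encircling the points of $S$, on the Picard side.

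Finally I would verify the bookkeeping of lattices: the Poincar\'e--Lefschetz duality between $H_1(X\setminus S,\ZZ)$ and $H^1_c(X\setminus S,\ZZ(1))$ is de Rham-compatible with the Serre pairing, so the vector-space isomorphism of the previous step does carry the one lattice onto the other and the bottom map is well defined. Combined with the reciprocity identity on the generators $P-Q$, this gives commutativity of the lower square up to the sign inherited from the two duality conventions, which completes the proof. As a consistency check, specialising to reduced $\fm$ recovers the torus extension of $\J(X)$, and setting $\fm=\varnothing$ recovers Theorem~\ref{LABEL060}.
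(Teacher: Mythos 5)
Your reduction to the lower square, evaluated on generators $[P-Q]$ with $P,Q\in X\setminus S$, is fine, and your description of $\AJ_\fm(P-Q)$ as $\omega\mapsto\int_Q^P\omega$ modulo the periods of $H_1(X\setminus S,\ZZ)$ matches the paper. But the two steps that carry all the content of the theorem are asserted rather than proved. First, you posit that $\exp^{-1}(\cO_X(P-Q),\triv)$ can be ``read off from the periods'' of a third-kind differential $\eta$ with residue divisor $P-Q$ and Laurent behaviour along $T$ dictated by the rigidification; neither the existence of such an $\eta$ adapted to $\triv$, nor the dictionary between such differentials and classes in $H^1(X,\cO_X(-\fm))$ modulo $H^1_c(X\setminus S,\ZZ(1))$, is constructed, and making that dictionary precise is essentially equivalent to the statement being proved. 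Second, the identity you need is a modulus version of the Riemann bilinear relation in which $\omega\in H^0(X,\Omega^1_X(\fm))$ itself has poles of order up to $m_i$ at $S$, so the cut-surface computation acquires boundary terms at $S$ involving the full Laurent tails of $\omega$ against the local data of $\eta$; you explicitly defer this check (``I expect this reciprocity computation to be the main obstacle''). Since that computation is precisely the analytic heart of Theorem \ref{LABEL130}, what you have is a plausible programme, not a proof.

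It is also worth noting that the paper's Appendix proof is organized to avoid exactly these difficulties, and never introduces third-kind differentials. It covers $X=U\cup V$ with $U$ a simply connected neighbourhood of $\supp(D)$ whose closure is disjoint from $S$, represents $(\cO_X(D),\triv)$ by the \v Cech cocycle $f^{-1}|_{U\cap V}$ where $\div(f)|_U=D$, lifts it under $\exp$ to the $\cO_X(-\fm)$-valued cocycle $-\log f|_{U\cap V}$ (single-valued because $\deg D=0$), converts \v Cech to Dolbeault by the bump-function lemma (a cocycle $s$ goes to $s\,\bar\partial b$, proved via the \v Cech--Dolbeault bicomplex), and finally verifies $\int_\gamma\omega=\int_U\omega\wedge(-\log f)\,\bar\partial b$ by differentiating both sides in the point $P$ and invoking Cauchy's integral formula. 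Because the whole computation is localized in $U$, away from $S$, the higher-order poles of $\omega$ and the rigidification data never have to be confronted: they are carried automatically by the sheaves $\cO_X^{\times,\fm}$ and $\cO_X(-\fm)$ and the choice of cover. To salvage your route you would have to actually establish the generalized reciprocity law you anticipate, including the vanishing or matching of the Laurent-tail boundary terms at $S$, which is considerably more delicate than the paper's localized argument.
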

The constructions in the left-hand column of \eqref{LABEL390} are
covariant functorial for morphisms $f\colon X'\to X$ and moduli
$\fm'$, $\fm$ such that $\fm'-f^*\fm\ge0$. Those in the right-hand
column are contravariant functorial for maps $f$ such that
$f^*\fm-\fm'\ge0$.

Both Theorems \ref{LABEL060} and \ref{LABEL130} are well known, but
we are not aware of a direct proof in the literature.  For
completeness we give one in the Appendix.

Finally we have the universal property for generalized Jacobians: 
\begin{thm}
  \label{LABEL150}
  Let $P_0\in X$, $S\subset X\setminus\{P_0\}$ be a finite set of
  points, and $h\colon X\setminus S \to G$ a morphism with $G$ an
  algebraic group over $\CC$ and $h(P_0)=0$. Then there exists a
  modulus $\fm$ supported on $S$, and a unique homomorphism of
  algebraic groups $h'\colon \J_\fm\to G$ such that
  $h(P)=h'(\AJ_\fm([P-P_0]))$.
\end{thm}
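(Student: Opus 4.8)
The plan is to combine Abel's theorem for generalized Jacobians (Theorem \ref{LABEL105}), which is already available, with Serre's theory of local symbols \cite{SeAG}; the latter supplies the one genuinely nontrivial input, namely that every morphism to a commutative algebraic group admits a finite modulus. First I would reduce to the case that $G$ is connected and commutative. Since $X\setminus S$ is connected and $h(P_0)=0$, the image $h(X\setminus S)$ lies in the identity component $G^0$; and because the factorization we seek is through the commutative group $\J_\fm(X)$, it suffices to treat commutative $G$, exactly as in Serre's setting. I would also record, for later use in the uniqueness statement, that the image of the Abel--Jacobi morphism $\varphi_\fm\colon P\mapsto \AJ_\fm([P-P_0])$ generates $\J_\fm(X)$ as an algebraic group.

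The heart of the matter is the existence of a suitable modulus. For $P\in X$ and $g\in\kk(X)^\times$ one has Serre's local symbol $(h,g)_P\in G$, and $h$ is said to admit the modulus $\fm=\sum m_iP_i$ if $(h,g)_{P_i}=0$ whenever $g\equiv 1 \bmod\fm$. The key input is that \emph{some} finite $\fm$ supported on $S$ works, and I would deduce this from the Chevalley structure $0\to L\to G\to B\to 0$, where $B$ is an abelian variety and $L=T\times U$ with $T$ a torus and $U\cong\mathbb{G}_{\mathrm a}^r$ (since we are in characteristic zero). The composite $X\setminus S\to B$ extends to a morphism on all of $X$ and so needs no modulus; the torus part is controlled by the reduced divisor $\sum_{P\in S}P$; and the unipotent part is bounded in terms of the pole orders of the associated rational functions. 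Assembling these contributions across the extension yields a finite $\fm$ that bounds $h$.

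Once such an $\fm$ is fixed, the factorization is essentially formal. Extending $h$ linearly to a map $\Div^0(X\setminus S)\to G$, the reciprocity formula $\sum_P (h,g)_P=0$, together with $(h,g)_P=v_P(g)\,h(P)$ at the points where $h$ is regular, shows that $h$ annihilates $\Prin_\fm(X)$: for $g\equiv 1\bmod\fm$ the symbols at the points of $S$ vanish and $\div(g)$ is supported on $X\setminus S$, whence $h(\div g)=0$. Thus $h$ descends to a homomorphism $\Cl^0_\fm(X)\to G$, and composing with the inverse of the isomorphism $\AJ_\fm\colon \Cl^0_\fm(X)\isomarrow\J_\fm(X)$ of Theorem \ref{LABEL105} produces a homomorphism of abstract groups $h'\colon \J_\fm(X)\to G$ satisfying $h(P)=h'(\AJ_\fm([P-P_0]))$ by construction. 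To see that $h'$ is a morphism of algebraic groups I would use that $h=h'\circ\varphi_\fm$ is a morphism and that the $n$-fold sum map $(X\setminus S)^n\to\J_\fm(X)$ is surjective (indeed smooth) for $n$ large, so that $h'$ is regular by descent; uniqueness is then immediate, since $h'$ is pinned down on the generating set $\varphi_\fm(X\setminus S)$.

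I expect the main obstacle to be the existence step: controlling the conductor of $h$ at the points of $S$, i.e.\ producing a finite modulus. This is the technical core, resting on the local-symbol formalism and the structure theory of commutative algebraic groups, whereas everything downstream---the descent to $\Cl^0_\fm(X)$, the transport through Abel's theorem, and the regularity of $h'$---is comparatively routine. A secondary point demanding care is the passage from an abstract group homomorphism to a morphism of algebraic groups: this relies on the surjectivity of the symmetric-power Abel--Jacobi map and cannot be read off from the analytic description \eqref{LABEL100} alone, precisely because that description does not determine the algebraic structure of $\J_\fm(X)$.
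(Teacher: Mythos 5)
The paper contains no proof of Theorem \ref{LABEL150} to compare yours against: it sits in the expository Section \ref{LABEL080} as recalled classical material due to Rosenlicht, Lang and Serre, and the only background statements the authors prove themselves are Theorems \ref{LABEL060} and \ref{LABEL130} (in the Appendix). The relevant benchmark is therefore Serre's argument in \cite{SeAG}, Chapters III and V, and your outline is in substance exactly that argument: local symbols and the existence of a finite modulus via the Chevalley d\'evissage $0\to T\times U\to G\to B\to 0$ (Serre's Th\'eor\`eme 1 of Ch.~III), the reciprocity law $\sum_P(h,g)_P=0$ together with $(h,g)_P=v_P(g)\,h(P)$ off $S$ to show $h$ kills $\Prin_\fm(X)$ and descends to $\Cl^0_\fm(X)$, transport through Theorem \ref{LABEL105}, and regularity plus uniqueness of $h'$ from the generic smoothness/surjectivity of the symmetric-power Abel--Jacobi map. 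Your closing remarks correctly identify the conductor bound at the points of $S$ as the technical core, and correctly observe that the algebraicity of $h'$ cannot be extracted from the analytic model \eqref{LABEL100}.

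Two points need repair. First, your ``reduction to commutative $G$'' is circular: observing that the sought factorization passes through the commutative group $\J_\fm(X)$ shows only that a factorization \emph{forces} the subgroup generated by $h(X\setminus S)$ to be commutative --- it does not reduce the general case to the commutative one. Indeed the statement as printed (for arbitrary $G$, an imprecision the paper inherits from compressing Serre) is false without a commutativity hypothesis: take $G=\mathbb{G}_{\mathrm a}\rtimes\Gm$ the $ax+b$ group, $X=\mathbb{P}^1$, $S=\{0,\infty\}$, $P_0=1$, and $h(t)=(t^2-1,t)$; one checks $h(t)h(s)\ne h(s)h(t)$ in general, so the image generates a noncommutative subgroup and no homomorphism from any $\J_\fm$ can recover $h$. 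In \cite{SeAG} commutativity of $G$ is a standing hypothesis, and you should state it as such rather than ``reduce'' to it. Second, ``assembling these contributions across the extension'' needs one more idea, since the Chevalley extension need not split: the correct glue is that $T\times U$ is a special group, so the torsor $G\to B$ is Zariski-locally trivial, and the modulus condition expressed through local symbols is local at each $P\in S$; one can therefore split $h$ into linear and abelian parts locally near each $P$ and take the maximum of the local bounds (pole orders of pullbacks of invariant differentials for the unipotent part, the reduced divisor for the toric part). With those two corrections your plan is a faithful reconstruction of the classical proof that the paper cites.
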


The rest of this paper is devoted to the analogue of Theorem
\ref{LABEL130} for graphs. In Section \ref{LABEL620} we will attach to
a ``graph with modulus'' groups analogous to those in the Theorem, and
prove a corresponding Theorem \ref{LABEL840} relating them. The
analogy is perfect except that there is no obvious analogue of the
exponential sequences \eqref{LABEL050}, \eqref{LABEL110} for
graphs; however the following alternative description of 
$\P_\fm(X)$ does have a direct analogue. Let us assume that
$\fm$ is reduced. We have exact sequences
\begin{gather*}
  0 \to H^0(X,\Omega_X^1) \to H^1(X,\CC) \to H^1(X,\cO_X) \to 0
  \\
  0 \to H^0(X,\Omega_X^1) \to H^1_c(X\setminus S,\CC) \to
  H^1(X,\cO_X(-\fm)) \to 0,
\end{gather*}
associated to the exact sequences of sheaves
\begin{gather*}
  0 \to \CC \to \cO_X \xrightarrow{\ d\ } \Omega^1_X \to 0 \\
  0 \to j_!\CC \to \cO_X(-\fm) \xrightarrow{\ d\ } \Omega^1_X \to 0 \\  
\end{gather*}
and we may then rewrite
\begin{equation}
  \label{LABEL160}
  \P(X)=\frac{H^1(X,\CC)}{H^0(X,\Omega_X^1)+H^1(X,\ZZ(1))}
  \xrightarrow[\exp]{\lowsim} \frac{H^1(X,\CC^\times)}{H^0(X,\Omega_X^1)}.
\end{equation}
The map $\P(X) \to \Pic(X)$ is now given simply by the
inclusion of sheaves $q\colon \CC^\times \hookrightarrow
\cO_X^\times$.  In place of the exponential sequence \eqref{LABEL050}
we have the exact sequence
\begin{equation}
  \label{LABEL170}
  0 \to \CC^\times \xrightarrow{\ q\ } \cO_X^\times
  \overset{\mathrm{dlog}}\longrightarrow \Omega_X^1 \to 0
\end{equation}
whose long exact cohomology sequence induces the isomorphism
$q_*\colon \P(X)\isomarrow \Pic^0(X)$.

Likewise we may rewrite
\begin{equation}
  \label{LABEL180}
  \P_\fm(X)=\frac{H^1_c(X\setminus S,\CC)}{H^0(X,\Omega_X^1)+H^1_c(X,\ZZ(1))}
  \xrightarrow[\exp]{\lowsim} \frac{H^1_c(X\setminus S,\CC^\times)}{H^0(X,\Omega_X^1)}.
\end{equation}
and consider the analogue of \eqref{LABEL170} with modulus:
\begin{equation}
  \label{LABEL190}
  0 \to j_!\CC^\times \xrightarrow{\ q\ } \cO_X^{\times,\fm}
  \xrightarrow{\mathrm{dlog}} \Omega_X^1 \to 0.
\end{equation}
The corresponding long exact sequence of cohomology then gives an
isomorphism $q_*\colon \P_\fm(X) \isomarrow
\Pic^0_\fm(X)$. The exact sequences \eqref{LABEL170} and
\eqref{LABEL190} will turn out to have precise analogues
\eqref{LABEL580}, \eqref{LABEL880} for graphs.

\begin{rem}
  We give another description of $\Cl_\fm(X)$ which will be a
  motivation for the definition of the class group $\Cl_\fm(\Gr)$ of a
  graph with modulus (see section \ref{LABEL620}).
  Suppose that $\fm =\sum P_i$ is reduced.  Set
  \[
    \Div_\fm(X)\defeq \Div(X\setminus S)\times\prod_{i=1}^{s}\CC^\times .
  \]
  Define a homomorphism 
  \[
    \{f\in\kk(X)^\times\mid f(P_i)\notin \{0,\infty\}\}\longrightarrow
    \Div_\fm(X)
  \]
  by $f\mapsto (\div(f),(f(P_i))_i)$; the cokernel of this map is $\Cl_\fm(X)$.
\end{rem}


\section{Jacobians and Picard groups of graphs}
\label{LABEL200}
In this section we review definitions and mostly known results on Jacobians
and Picard groups of graphs. Much of these can be found in the papers
\cite{BdN}, \cite{bn1} and \cite{bn2}. Our exposition is designed to
parallel the theory of curves as much as possible --- in particular
Theorem \ref{LABEL460} is a direct analogue for graphs of Theorem
\ref{LABEL060}. Where possible we have also given appropriate
generalizations to infinite graphs.

\subsection{Lattices and their duals}
\label{LABEL210}

Let $L$ be a free $\ZZ$-module of finite rank, and set
$V=L\otimes\QQ$. Let $\pairing{-}{-}\colon L\times L \to \ZZ$ be a
positive definite symmetric bilinear form (``$\ZZ$-valued inner
product"), which we extend by linearity to a$\QQ$-valued bilinear form on $V$.  The \emph{dual
  lattice} is
$L^\#=\{v\in V\mid \pairing vw\in\ZZ \text{ for all } w\in L\}$; it
contains $L$ as a subgroup of finite index, and we have the usual
isomorphism $L^\#\isomarrow L\Zdual \defeq \Hom(L,\ZZ)$ by
$m\mapsto \pairing m-$ for $m\in L^\#$.

The inner product then induces a perfect pairing of finite abelian groups
\begin{equation}
  \label{LABEL220}
  L^\#/L \times L^\#/L\longrightarrow \QQ/\ZZ
\end{equation}
so that there is a canonical isomorphism from $L^\#/L$ to its
Pontryagin dual $\Hom(L^\#/L,\QQ/\ZZ)$.

If $M\subset L$ is a submodule of finite index, then $\pairing{-}{-}$
induces a perfect pairing $L/M \times M^\#/L^\# \to \QQ/\ZZ$.


\subsection{Integral Hodge theory on graphs}
\label{LABEL230}

We begin by recalling familiar elementary facts about graphs and their
homology.  By a graph we mean a tuple $\Gr=(V,E,o,t)$ where $V$, $E$
are sets (of vertices and edges) with $V\ne\emptyset$, and
$o,\,t\colon E \to V$ are maps (origin and terminus). So our graphs
are always oriented, and loops and multiple edges are
permitted\footnote{This is the definition of \cite[\S2.1]{SeT}.}.  To
a graph $\Gr$ we can attach its geometric realization $\Grg$, which is
the CW complex obtained from $E\times [0,1] \,\sqcup\, V$ by
identifying vertices in the obvious way. Graphs will always be assumed
to be locally finite (meaning that all fibres of the maps $o$ and $t$
are finite) and countable. Often we shall require $\Gr$ to be finite
(i.e.~$V$ and $E$ are finite) and connected. We write $\pi_0(\Gr)$ for
the set of conected components of $\Gr$.

We define the degree of a vertex $v\in V$ to be
$\deg(v)=\# o\i(v) + \# t\i(v)$ (so that a loop contributes $2$
to the degree). A vertex $v$ is \emph{isolated} if $\deg(v)=0$.

As $\Gr$ is a complex of dimension 1, the integral homology
$H_*(\Grg)=H_*(\Gr)$ is the homology of the standard chain complex
\[
  C_0(\Gr) = \ZZ[V] \xleftarrow{\ \partial\ } C_1(\Gr) = \ZZ[E],\qquad
  \partial=t-o
\]
and its cohomology $H^*(\Grg,A)=H^*(\Gr,A)$ with coefficients in any
abelian group $A$ is the cohomology of
\begin{equation}
  \label{LABEL240}
  C^0(\Gr,A) = A^V \xrightarrow{d=\tr\partial} C^1(\Gr,A) = A^E,\qquad
  df=f\circ\partial.
\end{equation}
We will simply write $C^\bullet(\Gr)=C^\bullet(\Gr,\ZZ)$, and
similarly for cohomology groups. We write $\apairing[p]{-}{-} \colon
C_p(\Gr)\times C^p(\Gr) \to \ZZ$ for the pairing.

The complex $C_\bullet(\Gr)$ splits in degree $0$, since for any
subset $S\subset V$ such that the map $S \to \pi_0(\Gr)$ is bijective,
$\ZZ[S]$ is a complement to $\im(\partial)$. It splits in degree $1$
since $\im(\partial)$ is free, whence $\partial \colon C_1(\Gr) \to
\im(\partial)$ has a section. In other words, we have splittings
\begin{align}
  \label{LABEL250}
  C_\bullet(\Gr) &= [\, \ZZ[S]\oplus \im(\partial) \longleftarrow
                   \coim(\partial)\oplus H_1(\Gr)\,] \\
  \label{LABEL260}
  C^\bullet(\Gr,A) &= [\, A^S \oplus \coim(d) \longrightarrow
                     \im(d)\oplus H^1(\Gr,A)\,]
\end{align}
and therefore the abelian groups $H_p(\Gr)$ are free, and for every $A$,
$H^p(\Gr,A)=\Hom(H_p(\Gr),A)$. If $\Gr$ is finite, then moreover
$H^p(\Gr,A)=H^p(\Gr)\otimes_\ZZ A$.

In particular, since both $H_0(\Gr)$ and $H_1(\Gr)$ are free of
countable rank, we have by Specker's theorem \cite{Sp} the reflexivity
$H_p(\Gr) \isomarrow H^p(\Gr)\Zdual$.

There is a canonical $\ZZ$-valued inner product
$\pairing{-}{-}=\pairing[p]{-}{-}$ on $C_p(\Gr)$ for which an
orthonormal basis is $V$ (resp.~$E$) if $p=0$ (resp.~$1$). We write
$\iota=\iota_p \colon C_p(\Gr) \hookrightarrow C^p(\Gr)$ for the
homomorphisms taking a vertex or edge to its characteristic
function. We often write $\hat x$ for $\iota(x)$. Then for every $x$,
$y\in C_p(\Gr)$,
\begin{equation}
  \label{LABEL270}
  \pairing[p]{x}{y} = \apairing[p]{x}{\hat y}.
\end{equation}
If $\Gr$ is finite then the maps $\iota_p$ are isomorphisms, and
induce inner products on the groups $C^p(\Gr)$.

Since $\Gr$ is locally finite, the homomorphism
\[
  \partial\adj \colon C_0(\Gr) \to C_1(\Gr),\quad v \mapsto
  \sum_{t(e)=v}e-\sum_{o(e)=v}e
\]
is well defined, and is adjoint to $\partial$, in the sense
that for all $D\in C_0(\Gr)$, $\gamma\in C_1(\Gr)$, we have
$\pairing[0]{\partial \gamma}{D}=\pairing[1]{\gamma}{\partial\adj D}$.
Write $d\adj=\tr{(\partial\adj)}$ for its transpose (which is adjoint
to $d$ is $\Gr$ is finite).

The homological and cohomological Laplacians
$\Delta_p\in\End C_p(\Gr)$, $\Box_p\in \End C^p(\Gr)$ are defined as
\[
  \Delta_0 = \partial\partial\adj,\quad \Delta_1=\partial\adj\partial,\quad
  \Box_0 = d\adj d =\tr{\Delta_0},\quad \Box_1=dd\adj =\tr{\Delta_1}.
\]
Explicitly, if $v\in V$, then
$\Delta_0(v)=\sum_{t(e)=v}(v-o(e)) +\sum_{o(e)=v}(v-t(v))$. In
particular, $\Delta_0$ and its transpose $\Box_0$ are independent of
the orientation of $\Gr$.

Since $\iota$ takes adjoint to transpose, we have
\begin{lem}
  \label{LABEL280}
  \begin{enumerate}[\upshape(a)]
  \item $\iota\circ \partial = d\adj\circ\iota$,
    $\iota\circ\partial\adj = d\circ\iota$, and
    $\iota\circ\Delta_p=\Box_p\circ\iota$ \textup($p=0$, $1$\textup).
  \item If $f\in C^p(\Gr)$ and $x\in C_p(\Gr)$, then
    $\Box_pf(x)=f(\Delta_px)$.
    \qed
  \end{enumerate}    
\end{lem}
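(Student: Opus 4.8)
The plan is to isolate the single mechanism behind both parts, namely the principle invoked in the sentence preceding the statement: $\iota$ carries adjoints to transposes. Precisely, I would first establish that for any $\ZZ$-linear map $T\colon C_p(\Gr)\to C_q(\Gr)$ admitting an adjoint $T\adj\colon C_q(\Gr)\to C_p(\Gr)$, one has
\[
  \iota_p\circ T\adj = \tr{T}\circ\iota_q.
\]
To see this I would fix $x\in C_p(\Gr)$ and $y\in C_q(\Gr)$ and read off the chain of equalities
\[
  \apairing[p]{x}{\iota_p(T\adj y)} = \pairing[p]{x}{T\adj y} = \pairing[q]{Tx}{y} = \apairing[q]{Tx}{\iota_q y} = \apairing[p]{x}{\tr{T}(\iota_q y)},
\]
in which the two outer equalities are \eqref{LABEL270}, the middle one is the defining property of the adjoint, and the last is the defining property of the transpose. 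As this holds for every $x$, and a cochain in $C^p(\Gr)$ is determined by its values on vertices (resp.\ edges), the two cochains $\iota_p(T\adj y)$ and $\tr{T}(\iota_q y)$ coincide.

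Granting the principle, part (a) is pure bookkeeping. Taking $T=\partial$ yields $\iota\circ\partial\adj = d\circ\iota$, since $\tr{\partial}=d$; taking $T=\partial\adj$, and using that forming the adjoint is an involution (so that $(\partial\adj)\adj=\partial$) together with $d\adj=\tr{(\partial\adj)}$, yields $\iota\circ\partial = d\adj\circ\iota$. The two Laplacian identities then drop out by composing these:
\[
  \iota\circ\Delta_0=\iota\circ\partial\circ\partial\adj = d\adj\circ\iota\circ\partial\adj = d\adj\circ d\circ\iota=\Box_0\circ\iota,
\]
and symmetrically $\iota\circ\Delta_1 = \iota\circ\partial\adj\circ\partial = d\circ\iota\circ\partial = d\circ d\adj\circ\iota=\Box_1\circ\iota$.

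Part (b) then needs no new input: since $\Box_p=\tr{\Delta_p}$ by definition, and the transpose sends $f$ to $f\circ\Delta_p$, we get $(\Box_pf)(x)=f(\Delta_px)$ at once; equivalently this is the identity $\apairing[p]{x}{\Box_p f}=\apairing[p]{\Delta_p x}{f}$ that expresses $\Box_p=\tr{\Delta_p}$.

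The one point I would treat carefully, rather than any hard step, is the passage to the conclusion in the general principle in the locally finite but possibly infinite setting, where $\iota$ is only an inclusion and not an isomorphism: here one relies on the left-nondegeneracy of the evaluation pairing, and on local finiteness to guarantee that the adjoint $\partial\adj$ (hence $T\adj$ for the $T$ in play) is well defined, its defining sums being finite. Everything else is formal manipulation with \eqref{LABEL270}, transposes, and adjoints.
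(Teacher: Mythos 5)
Your proof is correct and is essentially the paper's own argument: the paper states Lemma \ref{LABEL280} without proof, as an immediate consequence of the preceding remark that $\iota$ takes adjoints to transposes, and your general principle $\iota_p\circ T\adj=\tr{T}\circ\iota_q$ (verified by evaluating both sides against arbitrary chains via \eqref{LABEL270}) is exactly the computation the paper writes out explicitly in the abstract setting in Lemma \ref{LABEL1090}, with (b) likewise immediate from $\Box_p=\tr{\Delta_p}$. Your added care in the locally finite infinite case --- left-nondegeneracy of the evaluation pairing, and local finiteness making the sums defining $\partial\adj$ finite --- is sound and matches the paper's conventions.
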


\begin{defn}
  \begin{enumerate}[\upshape(a)]
  \item The group of harmonic functions on $\Gr$ is
    $\Ha^0(\Gr)=\ker(\Box_0) \subset \ZZ^V$.
  \item \cite[\S4.3]{bn2} A 1-cochain $\omega\in C^1(\Gr)$ is said to
    be harmonic if $d\adj\omega=0$. The group of harmonic $1$-cochains
    (or 1-forms) is denoted $\Ha^1(\Gr)$.
  \end{enumerate}
\end{defn}

\begin{rem}
  Obviously $\Ha^1(\Gr) \subset \ker(\Box_1)$, and equality holds when
  $\Gr$ is a finite graph, by Hodge theory (Proposition \ref{LABEL320}(a)
  below). It seems that for infinite graphs, it is $\ker(d\adj)$ rather
  than $\ker(\Box_1)$ which has better properties, and so following
  \cite{bn2} we have taken this
  as our definition for $\Ha^1$ in general.
\end{rem}

\begin{lem}\label{LABEL290}
  The subgroup $\im (\partial\adj) \subset C_1(\Gr)$ is a direct
  summand, and the complexes $(C_\bullet(\Gr), \partial\adj)$ and
  $(C^\bullet(\Gr), d\adj)$ are split.
\end{lem}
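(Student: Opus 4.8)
The plan is to reduce the whole statement to the single assertion that $\im(\partial\adj)$ is a direct summand of $C_1(\Gr)$, to dispose of that assertion for finite $\Gr$ by transporting the splitting \eqref{LABEL260} through $\iota$, and to isolate the genuinely infinite-rank difficulty as the main obstacle.

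\emph{Reductions.} First I would reduce to $\Gr$ connected: since $\partial\adj v$ involves only the finitely many edges incident to $v$, the complex $(C_\bullet,\partial\adj)$ is the direct sum over $\pi_0(\Gr)$ of the corresponding complexes of the components, and both ``split'' and ``free quotient'' are preserved under arbitrary direct sums. Next, because $C^p=\Hom(C_p,\ZZ)$ and $d\adj=\tr{(\partial\adj)}=\Hom(\partial\adj,\ZZ)$, the complex $(C^\bullet,d\adj)$ is the $\ZZ$-linear dual of $(C_\bullet,\partial\adj)$. A splitting of the two-term complex $C_0\xrightarrow{\partial\adj}C_1$ is a decomposition into just two summands in each degree, so $\Hom(-,\ZZ)$ turns it into a direct sum again and hence carries it to a splitting of $(C^\bullet,d\adj)$; thus it suffices to split $(C_\bullet,\partial\adj)$. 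Finally, for a two-term complex of free abelian groups, splitting is equivalent to $\im(\partial\adj)$ being a direct summand of $C_1$: the complementary half, that $\ker(\partial\adj)$ is a summand of $C_0$, is automatic, since $C_0/\ker(\partial\adj)\cong\im(\partial\adj)$ is a subgroup of the free group $C_1$ and hence free, so the projection $C_0\twoheadrightarrow\im(\partial\adj)$ splits.

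\emph{The summand claim, finite case.} When $\Gr$ is finite, $\iota\colon C_\bullet\to C^\bullet$ is an isomorphism and, by Lemma \ref{LABEL280}(a), satisfies $\iota\circ\partial\adj=d\circ\iota$. Hence $\iota$ carries $\im(\partial\adj)$ isomorphically onto $d(\iota C_0)=\im(d)$, which is a direct summand of $C^1$ by \eqref{LABEL260}; pulling back along the isomorphism $\iota$ shows $\im(\partial\adj)$ is a direct summand of $C_1$, completing the finite case.

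\emph{The infinite case and the main obstacle.} For infinite $\Gr$ this transport fails, because $\iota$ is only injective and $d(\iota C_0)$ is strictly smaller than $\im(d)\cap\iota(C_1)$ (already for a two-ended path, where the former is the degree-zero part of $\ZZ[E]$ and the latter is all of $\ZZ[E]$). What remains is to prove directly that $C_1/\im(\partial\adj)$ is free. The approach I would take is to describe $\im(\partial\adj)$ as the annihilator $\{x\in C_1 : \omega(x)=0\text{ for all }\omega\in\Ha^1(\Gr)\}$ of the harmonic $1$-cochains: the inclusion $\subseteq$ is immediate, since for $\omega\in\Ha^1(\Gr)=\ker(d\adj)$ one has $\omega(\partial\adj D)=(d\adj\omega)(D)=0$. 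The reverse inclusion would identify $\im(\partial\adj)$ with a saturated subgroup, hence make the quotient torsion-free, after which one still needs freeness. This is exactly where the difficulty lies: in infinite rank a torsion-free quotient of a free group need not be free, and establishing both the reverse inclusion and freeness requires controlling the cokernel of the Laplacian $\Delta_0$, equivalently the contribution of the ends of $\Gr$. Concretely I would work with the spanning-tree splitting $C_1=\ZZ[E_T]\oplus H_1(\Gr)$, where $H_1(\Gr)$ is free on the fundamental cycles of a spanning tree $T$, and combine it with the Specker reflexivity invoked earlier in this section to exhibit $C_1/\im(\partial\adj)$ as free, assembled from $H^1(\Gr)$ and the free group of ``bounded-coboundary'' functions modulo finitely supported ones. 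Pinning down this last free-ness is the step I expect to be the main obstacle.
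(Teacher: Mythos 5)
Your reductions and your finite case are sound and coincide with the paper's: the paper likewise handles finite $\Gr$ by transporting the splitting \eqref{LABEL260} through the isomorphism $\iota$ (via Lemma \ref{LABEL280}), and it likewise gets the degree-zero splitting for free from the freeness of $\im(\partial\adj)$ as a subgroup of the free group $C_1(\Gr)$, and the splitting of $(C^\bullet(\Gr),d\adj)$ by dualizing. So everything up to and including the finite case is fine.

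The genuine gap is exactly the step you flag and then leave open: for infinite $\Gr$ you never prove that $\im(\partial\adj)$ is a direct summand of $C_1(\Gr)$, and that assertion is the entire content of the Lemma beyond the finite case. The paper closes it by citing Theorem \ref{LABEL1160} (proved later, independently of this Lemma), which constructs an explicit \emph{monomial} complement: there is a subset $E'\subset E$ with $C_1(\Gr)=\im(\partial\adj)\oplus\ZZ[E']$. The proof there is purely combinatorial --- strip loops, pass to a spanning tree using the projection $\ZZ[E]\to\ZZ[E_0]$, split off finite pendant subtrees (on a finite tree, $\partial\adj$ restricted to $\ZZ[V\setminus\{v\}]$ is an isomorphism onto $C_1$), reduce to a rooted tree all of whose non-root vertices have degree $>1$, and read off a complement from the explicit formula for $\partial\adj(v)$ --- and none of your proposed ingredients substitute for it. Indeed, your suggested route is worse than incomplete: the identification of $\im(\partial\adj)$ with the annihilator of $\Ha^1(\Gr)$ (your "reverse inclusion") is precisely Lemma \ref{LABEL300}(b), whose proof in the paper \emph{uses} Lemma \ref{LABEL290}, so leaning on it here would be circular; and Specker reflexivity, which does appear in the paper, is applied only afterwards (Corollary \ref{LABEL1170}) and takes the complement $\ZZ[E_0]$ from Theorem \ref{LABEL1160} as input rather than producing it. As you correctly note, saturation/torsion-freeness cannot finish the job in infinite rank, so without the tree-combinatorial construction of $E'$ (or some equivalent), the proof does not go through.
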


\begin{proof}
  If $\Gr$ is finite then $\iota$ gives isomorphisms of complexes
  \[
    (C_\bullet(\Gr),\partial\adj) \isomarrow (C^\bullet(\Gr), d), \text{
      and } (C_\bullet(\Gr),\partial) \isomarrow (C^\bullet(\Gr), d\adj)
  \]
  by Lemma \ref{LABEL280}, so by \eqref{LABEL250} and \eqref{LABEL260} we
  obtain the desired splittings. In general, Theorem \ref{LABEL1160}
  below (which does not depend on this Lemma) shows that there is a
  subset $E'\subset E$ such that
  $C_1(\Gr)=\im(\partial\adj) \oplus \ZZ[E']$. As $\im(\partial\adj)$
  is free, the complex $(C_\bullet(\Gr),\partial\adj)$ also splits in
  degree $0$, and hence the dual complex $(C^\bullet(\Gr), d\adj)$
  splits as well.
\end{proof}

\begin{lem}
  \label{LABEL300}
  \begin{enumerate}[\upshape(a)]
  \item $\partial C_1(\Gr)$ and $\ker (d)=H^0(\Gr)$ are mutual
    orthogonal complements with respect to $\apairing[0]{-}{-}$.
  \item $\partial\adj C_0(\Gr)$
    and $\ker (d\adj)\subset C^1(\Gr)$ are mutual orthogonal
    complements with respect to $\apairing[1]{-}{-}$.
  \end{enumerate}
\end{lem}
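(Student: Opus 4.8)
The plan is to treat (a) and (b) as two instances of a single statement about a homomorphism of free abelian groups and its transpose. In both cases we have a map $\phi$ (namely $\phi=\partial\colon C_1(\Gr)\to C_0(\Gr)$ for (a), and $\phi=\partial\adj\colon C_0(\Gr)\to C_1(\Gr)$ for (b)), its transpose $\tr\phi$ (equal to $d$, resp.\ $d\adj$), and the two subgroups in question are $\im\phi$ inside the target and $\ker\tr\phi$ inside the corresponding dual group $C^p(\Gr)$, paired by $\apairing[p]{-}{-}$. Since each $C_p(\Gr)$ is free with $C^p(\Gr)=\Hom(C_p(\Gr),\ZZ)$ and the pairing is evaluation, the pairing is nondegenerate in each variable (a basis chain detects any nonzero functional, and a coordinate functional detects any nonzero chain). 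So I would prove the following general fact and apply it twice: \emph{if $\phi$ is a homomorphism of free abelian groups of countable rank whose image is a direct summand of the target, then $\im\phi$ and $\ker\tr\phi$ are mutual orthogonal complements under the evaluation pairing.}

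First I would establish $\ker\tr\phi=(\im\phi)^\perp$, together with the trivial inclusion $\im\phi\subseteq(\ker\tr\phi)^\perp$. Both follow at once from the defining relation of the transpose, $\apairing{\phi x}{\xi}=\apairing{x}{\tr\phi\,\xi}$: a functional $\xi$ kills $\im\phi$ iff $\apairing{x}{\tr\phi\,\xi}=0$ for all $x$, iff $\tr\phi\,\xi=0$ by nondegeneracy; and for $\xi\in\ker\tr\phi$ one has $\apairing{\phi x}{\xi}=\apairing{x}{\tr\phi\,\xi}=0$, giving the trivial inclusion. This already proves one of the two orthogonality statements in each of (a) and (b).

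The remaining inclusion $(\ker\tr\phi)^\perp\subseteq\im\phi$ is where the direct-summand hypothesis enters, and is the main point. Writing the target as $\im\phi\oplus K$ with $K$ free (a direct summand of a free group of countable rank is free), the identity $\ker\tr\phi=(\im\phi)^\perp$ identifies $\ker\tr\phi$ with the functionals vanishing on $\im\phi$, i.e.\ with $\Hom(K,\ZZ)$. Given $\gamma=\gamma_1+\gamma_2$ with $\gamma_1\in\im\phi$ and $\gamma_2\in K$, orthogonality of $\gamma$ to all of $\ker\tr\phi$ forces $g(\gamma_2)=0$ for every $g\in\Hom(K,\ZZ)$; since $\gamma_2$ is a finite linear combination of basis elements of $K$, the coordinate functionals separate it from $0$, so $\gamma_2=0$ and $\gamma\in\im\phi$. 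The subtlety to flag is the infinite-rank case: because every chain is a finite combination and $K$ is free, the evaluation map $K\to\Hom(\Hom(K,\ZZ),\ZZ)$ is injective by this elementary argument, so no appeal to reflexivity beyond this is needed (this plays here the role that Specker's theorem plays for the cohomology groups).

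Finally I would supply the direct-summand hypothesis in each case. For (a), $\im\partial$ is a direct summand of $C_0(\Gr)$ by the splitting \eqref{LABEL250}, with explicit complement $\ZZ[S]$ for $S$ meeting each component of $\Gr$ once; applying the general fact with $\phi=\partial$ and $\tr\phi=d$ shows that $\partial C_1(\Gr)$ and $\ker d=H^0(\Gr)$ are mutual orthogonal complements for $\apairing[0]{-}{-}$. For (b), $\im\partial\adj$ is a direct summand of $C_1(\Gr)$ by Lemma \ref{LABEL290}; applying the general fact with $\phi=\partial\adj$ and $\tr\phi=d\adj$ shows that $\partial\adj C_0(\Gr)$ and $\ker d\adj$ are mutual orthogonal complements for $\apairing[1]{-}{-}$. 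As a sanity check, (a) can also be seen concretely: $\ker d$ consists of the functions that are constant on each connected component, $\im\partial=\bigoplus_{C}\ZZ[C]_0$ consists of the chains of degree $0$ on every component, and pairing an arbitrary divisor against the indicator function of a component computes its degree there, which exhibits the two groups as mutual annihilators.
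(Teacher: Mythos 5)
Your proof is correct and follows essentially the same route as the paper: one inclusion comes from the transpose relation $\apairing{\phi x}{\xi}=\apairing{x}{\tr\phi\,\xi}$, and the reverse inclusion uses that the image is a direct summand --- via the splitting \eqref{LABEL250} for $\im\partial$ in case (a) and via Lemma \ref{LABEL290} for $\im\partial\adj$ in case (b), exactly as in the paper's proof. Your only addition is to package the argument as a general lemma and to spell out the step the paper leaves implicit, namely that a free complement $K$ together with its coordinate functionals (extended by zero across the splitting) forces the $K$-component of any element of $(\ker\tr\phi)^\perp$ to vanish.
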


\begin{proof}
  (a) Since $\apairing[0]{\partial x}{f}=\apairing[1]{x}{df}$ for
  $x\in C_1(\Gr)$, $f\in C^0(\Gr)$, the orthogonal complement to
  $\im(\partial)$ is $\ker(d)$. As $\im(\partial)$ is a direct summand
  of $C_0(\Gr)$, the orthogonal complement of $\ker(d)$ is
  $\im(\partial)$.

  (b) Same argument with $(\partial\adj,d\adj=\tr{(\partial\adj)})$,
  using Lemma \ref{LABEL290}.
\end{proof}

\begin{defn}
  \label{LABEL310}
  Write $\beta\colon \Ha^1(\Gr) \to H^1(\Gr)$ for the composite
  $\Ha^1(\Gr) \hooklongrightarrow C^1(\Gr) \to H^1(\Gr)$. Write
  $\alpha\colon H_1(\Gr) \to \Ha^1(\Gr)\Zdual$ for the transpose of
  $\beta$.
\end{defn}

Suppose now that $\Gr$ is finite. Then $\iota_p\colon C_p \to C^p$ is an isomorphism for $p=0$,
$1$, and we have inner products $\pairing{-}{-}=\pairing{-}{-}^p$ on
$C^p(\Gr)$ such that
\[
  \pairing[p] xy = \pairing{\hat x}{\hat y}^p\qquad(x,\ y\in C_p(\Gr))
\]
and for which the sets $\iota(V)$, $\iota(E)$ are orthonormal bases.

Let $(-)^\perp$ denote orthogonal complement with respect to
$\pairing{-}{-}$.

\begin{prop}
  \label{LABEL320}
  Assume that $\Gr$ is finite. Then:
  \begin{enumerate}[\upshape(a)]
  \item $\ker (\Box_0)=\ker(d)=H^0(\Gr)\subset C^0(\Gr)$, and
    $\Ha^1(\Gr) \defeq \ker (d\adj) = \ker(\Box_1) \subset C^1(\Gr)$.
  \item $\iota_1$ induces an isomorphism
    $\iota\colon H_1(\Gr) \isomarrow \Ha^1(\Gr)$.
  \item $(d\adj C^1(\Gr))^\perp = H^0(\Gr)$ and
    $H^0(\Gr)^\perp= d\adj C^1(\Gr)$.
  \item $(d C^0(\Gr))^\perp = \Ha^1(\Gr)$ and
    $\Ha^1(\Gr)^\perp= d C^0(\Gr)$.
  \item $\beta$ is injective with finite cokernel.
  \item $\alpha$ is injective with finite cokernel.
  \item The map $H_1(\Gr) \to H^1(\Gr)$, taking $\gamma\in
    H_1(\Gr)\subset C_1(\Gr)$ to the class of $\hat\gamma$, is
    injective with finite cokernel, and with image $\beta(\Ha^1(\Gr))$.
  \item If $\Gr$ is connected, then $\im (d\adj)=C^0(\Gr)^0$ and
    $\coker (d\adj) \isom \ZZ$.
  \end{enumerate}
\end{prop}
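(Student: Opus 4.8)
The plan is to work throughout in the finite-dimensional inner product modules $C^p(\Gr)$, using that for finite $\Gr$ the map $\iota_p\colon C_p(\Gr)\isomarrow C^p(\Gr)$ is an isometric isomorphism carrying $\partial$, $\partial\adj$ to $d\adj$, $d$ (Lemma \ref{LABEL280}). The eight parts are interdependent, so I would establish them essentially in the stated order and deduce (e)--(h) from (a)--(d). For (a), the inclusions $\ker(d)\subseteq\ker(\Box_0)$ and $\ker(d\adj)\subseteq\ker(\Box_1)$ are immediate from $\Box_0=d\adj d$ and $\Box_1=dd\adj$; the reverse inclusions are the standard positive-definiteness argument, e.g.\ $d\adj d f=0$ gives $0=\pairing{d\adj d f}{f}=\pairing{df}{df}$, whence $df=0$, and symmetrically for $d\adj$. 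That $\ker(d)=H^0(\Gr)$ is the definition of degree-zero cohomology. For (b), since $\iota$ is bijective and $\iota\circ\partial=d\adj\circ\iota$, it identifies $\ker(\partial)=H_1(\Gr)$ with $\ker(d\adj)=\Ha^1(\Gr)$.

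For (c) and (d), one inclusion in each is formal: by nondegeneracy of the inner product, the orthogonal complement of the image of an operator equals the kernel of its adjoint, giving $(d\adj C^1(\Gr))^\perp=\ker(d)=H^0(\Gr)$ and $(dC^0(\Gr))^\perp=\ker(d\adj)=\Ha^1(\Gr)$. The reverse equalities require care over $\ZZ$, where in general one only has $(W^\perp)^\perp=W^\sat$; I must therefore know that $d\adj C^1(\Gr)$ and $dC^0(\Gr)$ are saturated, i.e.\ direct summands. This is precisely what the splittings \eqref{LABEL260} and Lemma \ref{LABEL290} supply. Granting saturation and applying $(-)^\perp$ to the two formal equalities yields $(H^0(\Gr))^\perp=d\adj C^1(\Gr)$ and $(\Ha^1(\Gr))^\perp=dC^0(\Gr)$. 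This integrality point is the one real obstacle: over $\QQ$ the Hodge-theoretic statements are routine, but the proposition asserts equalities of lattices, so everything hinges on the splitting lemmas rather than dimension counts.

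The remaining parts are then formal consequences. For (e), injectivity of $\beta$ amounts to $\Ha^1(\Gr)\cap dC^0(\Gr)=0$, which follows from (d) since $\Ha^1(\Gr)=(dC^0(\Gr))^\perp$ and a positive-definite form satisfies $W\cap W^\perp=0$; the cokernel is finite because $\operatorname{rank}\Ha^1(\Gr)=\operatorname{rank}H_1(\Gr)=\operatorname{rank}H^1(\Gr)$ (by (b) and $H^1(\Gr)=H_1(\Gr)\Zdual$), so $\beta$ is an injection of free modules of equal rank. For (f), $\alpha=\tr\beta$ under the reflexivity identification $H_1(\Gr)=(H^1(\Gr))\Zdual$, and dualizing an injection of free modules of equal finite rank with finite cokernel gives another such; so (f) follows from (e). For (g), by (b) the map $\gamma\mapsto[\hat\gamma]$ factors as $H_1(\Gr)\xrightarrow{\iota}\Ha^1(\Gr)\xrightarrow{\beta}H^1(\Gr)$, hence equals $\beta\circ\iota$, which is injective with finite cokernel and image $\beta(\Ha^1(\Gr))$ by (b) and (e). Finally, for (h), connectedness gives $H^0(\Gr)=\ZZ\cdot\mathbf 1$ with $\mathbf 1$ the all-ones function, so (c) yields $d\adj C^1(\Gr)=(\ZZ\cdot\mathbf 1)^\perp=C^0(\Gr)^0$, the functions of vanishing sum, and $\coker(d\adj)\isom\ZZ$ via the augmentation.
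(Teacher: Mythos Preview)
Your argument is correct and follows essentially the same route as the paper: the positive-definiteness argument for (a), the transfer via $\iota$ for (b), the splitting/saturation input for (c) and (d), and then (e)--(h) as formal consequences. The only cosmetic difference is that the paper deduces (c) and (d) by first invoking Lemma~\ref{LABEL300} (mutual orthogonality of $\partial C_1$ and $H^0$, resp.\ $\partial\adj C_0$ and $\ker d\adj$, under the duality pairing $\apairing[p]{-}{-}$) and then transferring via the isometry $\iota$, whereas you work directly with $(-,-)^p$ on $C^p$ and appeal to saturation of $\im d$ and $\im d\adj$; both routes rest on the same splittings \eqref{LABEL260} and Lemma~\ref{LABEL290}, so there is no substantive difference.
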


\begin{proof}
  (a) is standard Hodge theory: for the first statement, if $f\in
  C^0(\Gr)$,  then
  \[
    \Box_0 f= d\adj df=0
    \implies 0=\inner{d\adj df}f = \inner{df}{df} \iff df =0.
  \]
  (The other inclusion is trivial.) The proof of the second statement
  is the same.

  Then $\iota\colon H_1(\Gr)=\ker (\partial) \isomarrow \ker (d\adj)
  =\Ha^1(\Gr)$ by Lemma \ref{LABEL280}(i), giving (b).

  By (a) and Lemma \ref{LABEL300}(a),  $\partial C_1(\Gr)$
  and $H^0(\Gr)$ are mutual orthogonal complements with respect to
  $\pairing{-}{-}$, and so by Lemma \ref{LABEL280}, (c) follows.

  The same argument using Lemma \ref{LABEL300}(b) gives (d). In
  particular,
  \[
  C^1(\Gr,\QQ) = dC^0(\Gr,\QQ)\oplus (\Ha^1(\Gr)\otimes\QQ),
  \]
  so $\Ha^1(\Gr)\otimes\QQ\isomarrow H^1(\Gr,\QQ)$.  Since $C^1(\Gr)$
  is finitely generated, (e) then follows, and by duality we have
  (f). For (g), combine (b) and (e). Finally, (h) follows from Lemma
  \ref{LABEL280}(a) and the corresponding statements for $\partial$.
\end{proof}


\subsection{Class groups and Jacobians of graphs; Abel's theorem}
\label{LABEL330}
 
By analogy with curves, the divisor group of the graph $\Gr$ is
defined \cite{bn1} to be
\[
  \Div(\Gr)\defeq C_0(\Gr)=\ZZ[V]
\]
and if $\Gr$ is connected, the degree zero divisor group to be
\[
  \Div^0(\Gr)\defeq \ZZ[V]_0.
\]
We will also define the codivisor group to be
$\Divhat(\Gr)=C^0(\Gr)=\ZZ^V$. If $\Gr$ is finite and connected, we set
$\Divhat^0(\Gr)=\ZZ^{V,0}$, so that
\[
  \deg\colon
  \Divhat(\Gr)/\Divhat^0(\Gr)\isomarrow\ZZ.
\]
In general, we define
$\Divhat^0(\Gr)=\im d\adj \subset \ZZ^V$. By Proposition
\ref{LABEL320}(h) these definitions agree for $\Gr$ finite and
connected.

Write $\Prin(\Gr)=\Delta_0(\Div(\Gr))$. From the explicit formula for
$\Delta_0$, if $\Gr$ is connected then $\Prin(\Gr)\subset \Div^0(\Gr)$.

If $h\colon V \to A$ is any map to an abelian group, we denote also by $h$ its
$\ZZ$-linear extension $\ZZ[V] \to A$.

\begin{defn}
  \begin{enumerate}[(a)]
  \item A \emph{harmonic map} from $\Gr$ to an abelian group $A$ is a map
    $h\colon V \to A$ such that $h\circ\Delta_0=0$.
  \item The \emph{divisor class group} of $\Gr$ is the quotient
    $\Cl(\Gr)\defeq\Div(\Gr)/\Prin(\Gr)$. If $\Gr$ is
    connected, the \emph{degree zero divisor class group}
    is $\Cl^0(\Gr)\defeq \Div^0(\Gr)/\Prin(\Gr)$. 
  \end{enumerate}
\end{defn}

Equivalently, $h\colon V \to A$ is a harmonic map if $h$ is an element
of
\[
  \Ha^0(\Gr,A)\defeq \ker(\Box_0\otimes\id_A\colon A^V \to
  A^V).
\]
Observe that $\Ha^0(\Gr,A)\ne \Ha^0(\Gr,\ZZ)\otimes_\ZZ A$ in
general. For example, if $\Gr$ is an $n$-gon with $n\ge 3$, and
$A=\ZZ/n\ZZ$, then
$\Ha^0(\Gr,A)\isom A^{n-1} \ne \Ha^0(\Gr,\ZZ)\otimes_\ZZ A=A$.

Likewise, we define $\Prinhat(\Gr)=\Box_0(\Divhat(\Gr))$ and
$\Clhat(\Gr)=\Divhat(\Gr)/\Prinhat(\Gr)$,
$\Clhat^0(\Gr)=\Divhat^0(\Gr)/\Prinhat(\Gr)$. The map $\iota_0$
induces by Lemma \ref{LABEL280} a homomorphism
$\iota\colon \Cl(\Gr) \to \Clhat(\Gr)$, which is an isomorphism if
$\Gr$ is finite.  If $\Gr$ is connected, then $\Div^0(\Gr)=\im\partial$
so by Lemma \ref{LABEL280}, $\iota$ restricts to a homomorphism
\begin{equation}
  \label{LABEL340}
  \iota^0\colon \Cl^0(\Gr) \to \Clhat^0(\Gr)
\end{equation}
which is an isomorphism for $\Gr$ finite and connected.

\begin{rem*}
  The introduction of $\Clhat(\Gr)$ may seem pedantic and superfluous,
  but for infinite graphs, it is $\Clhat(\Gr)$, rather than
  $\Cl(\Gr)$, that is isomorphic to the Picard group of harmonic
  torsors on $\Grg$ --- see section \ref{LABEL470} below).
\end{rem*}

Essentially by definition, $\Cl(\Gr)$ and $\Cl^0(\Gr)$ are
universal for harmonic maps to abelian groups:

\begin{prop}
  \label{LABEL350}
  Let $v_0\in V$. Then there are bijections
  \[
    \{\text{harmonic  }h\colon V\to A\} \isomarrow
    \Hom(\Cl(\Gr),A)
  \]
  and, for $\Gr$ connected,
  \[
    \pushQED{\qed}
    \{\text{harmonic  }h\colon V\to A \mid h(v_0)=0\} \isomarrow
    \Hom(\Cl^0(\Gr),A)\,. 
    \qedhere
    \popQED
  \]
\end{prop}

\begin{rem}
  The component group as determined by Raynaud \cite{Ra} is $\Cl^0(\Gr)$ for
  $\Gr$ the dual graph of the special fibre of a semistable curve (see Introduction).
\end{rem}

\begin{defn}
\label{LABEL360}
The \emph{Jacobian} $\J(\Gr)$ of the graph $\Gr$ is the group
\[ 
\J(\Gr)=\Ha^1(\Gr)\Zdual/\alpha(H_1(\Gr))
\]
with $\alpha$ as in Definition \ref{LABEL310}.
\end{defn}

In the next Proposition, $\Ha^1(\Gr)^\#\subset
\Ha^1(\Gr)\otimes\QQ\subset C^1(\Gr,\QQ)$ denotes the dual lattice
(with respect to the restriction of the inner product
$\pairing{-}{-}^1$ on $C^1(\Gr)$ to $\Ha^1(\Gr)$) as in Section
\ref{LABEL210}.

\begin{prop}
  \label{LABEL370}
  Assume that $\Gr$ is finite.
  \begin{enumerate}[\upshape(a)]
  \item $\J(\Gr)$ is finite.
  \item The isomorphism $\Ha^1(\Gr)^\# \isomarrow \Ha^1(\Gr)\Zdual$
    induces an isomorphism
    \[
      \Ha^1(\Gr)^\#/\Ha^1(\Gr) \isomarrow \J(\Gr).
    \]
  \item There is a canonical isomorphism $\J(\Gr)\isomarrow
    \Hom(\J(\Gr),\QQ/\ZZ)$.
  \end{enumerate}
\end{prop}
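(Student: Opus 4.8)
The plan is to derive all three parts from the finiteness and duality already established for the lattice $\Ha^1(\Gr)$, on which the inner product $\pairing[1]{-}{-}$ restricts to a positive-definite symmetric form, so that the formalism of Section \ref{LABEL210} applies with $L=\Ha^1(\Gr)$. Part (a) is immediate: by Definition \ref{LABEL360} one has $\J(\Gr)=\coker(\alpha)$, and Proposition \ref{LABEL320}(f) asserts that $\alpha$ is injective with finite cokernel, whence $\J(\Gr)$ is finite. The heart of the matter is part (b); part (c) will then follow formally from the lattice duality \eqref{LABEL220}.

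For part (b), the goal is to establish commutativity of the square
\[
  \begin{tikzcd}
    H_1(\Gr) \arrow[r, "\iota"] \arrow[d, "\alpha"'] & \Ha^1(\Gr) \arrow[d, hook] \\
    \Ha^1(\Gr)\Zdual & \Ha^1(\Gr)^\# \arrow[l, "\sim"']
  \end{tikzcd}
\]
in which the top arrow is the isometry $\iota\colon H_1(\Gr)\isomarrow\Ha^1(\Gr)$ of Proposition \ref{LABEL320}(b), the bottom arrow is the canonical isomorphism $m\mapsto\pairing[1]{m}{-}$ of Section \ref{LABEL210}, and the right arrow is the inclusion of the lattice $\Ha^1(\Gr)$ into its dual lattice $\Ha^1(\Gr)^\#$. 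Granting this, the bottom isomorphism carries $\Ha^1(\Gr)\subset\Ha^1(\Gr)^\#$ onto $\alpha(H_1(\Gr))\subset\Ha^1(\Gr)\Zdual$, and hence descends to quotients to give the asserted isomorphism $\Ha^1(\Gr)^\#/\Ha^1(\Gr)\isomarrow\Ha^1(\Gr)\Zdual/\alpha(H_1(\Gr))=\J(\Gr)$.

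The one computation to carry out is this commutativity, i.e.\ that $\alpha(\gamma)=\pairing[1]{\iota(\gamma)}{-}$ as elements of $\Ha^1(\Gr)\Zdual$, for $\gamma\in H_1(\Gr)$. Unwinding Definition \ref{LABEL310}, for $\omega\in\Ha^1(\Gr)$ one has $\alpha(\gamma)(\omega)=\apairing[1]{\gamma}{\beta(\omega)}=\apairing[1]{\gamma}{\omega}$, the second equality because $\gamma$ is a cycle, so by the adjunction $\apairing[0]{\partial x}{f}=\apairing[1]{x}{df}$ its pairing with $\omega\in C^1(\Gr)$ kills coboundaries and depends only on the class $\beta(\omega)\in H^1(\Gr)$. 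On the other hand, since $\iota$ intertwines the evaluation pairing $\apairing[1]{-}{-}$ with the inner product $\pairing[1]{-}{-}$ (both having the edge set as orthonormal basis, cf.\ \eqref{LABEL270}), one has $\apairing[1]{\gamma}{\omega}=\pairing[1]{\iota(\gamma)}{\omega}$. Combining the two identities yields the claim.

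For part (c), apply the perfect pairing \eqref{LABEL220} to $L=\Ha^1(\Gr)$: it gives a canonical isomorphism $\Ha^1(\Gr)^\#/\Ha^1(\Gr)\isomarrow\Hom(\Ha^1(\Gr)^\#/\Ha^1(\Gr),\QQ/\ZZ)$, which transports through the isomorphism of part (b) to the desired self-duality $\J(\Gr)\isomarrow\Hom(\J(\Gr),\QQ/\ZZ)$. I expect the main obstacle to be the bookkeeping in part (b): keeping straight the three pairings in play---the evaluation pairing $C_1(\Gr)\times C^1(\Gr)\to\ZZ$, the induced pairing $H_1(\Gr)\times H^1(\Gr)\to\ZZ$, and the inner product on $C^1(\Gr)$---and verifying that $\alpha$, defined abstractly as the transpose of $\beta$, is intertwined by the isometry $\iota$ with the lattice inclusion. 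Everything else is formal consequence of Section \ref{LABEL210} and Proposition \ref{LABEL320}.
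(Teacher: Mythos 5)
Your proposal is correct and is essentially the paper's own proof: the same commutative square relating the lattice isomorphism $\Ha^1(\Gr)^\#\isomarrow\Ha^1(\Gr)\Zdual$, the isometry $\iota\colon H_1(\Gr)\isomarrow\Ha^1(\Gr)$ from Proposition \ref{LABEL320}(b), and the injection $\alpha$ from Proposition \ref{LABEL320}(f), with (a) and (b) read off from the square and (c) obtained by applying the perfect pairing \eqref{LABEL220} to $L=\Ha^1(\Gr)$. Your only addition is to spell out, via \eqref{LABEL270} and the adjunction between $\partial$ and $d$, the commutativity check $\alpha(\gamma)=\pairing{\iota(\gamma)}{-}$ that the paper asserts without detail; that verification is accurate.
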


\begin{proof}
By  Proposition \ref{LABEL320}(b,f), $\alpha$ is injective with
finite cokernel and the following
diagram commutes:
\[
  \begin{tikzcd}
    \Ha^1(\Gr)^\# \arrow[r, "x\mapsto \pairing x-", "\sim"'] &
    \Ha^1(\Gr)\Zdual \\
    \Ha^1(\Gr) \arrow[u, hook] & H_1(\Gr) \arrow[l, "\iota"', "\sim"]
    \arrow[u, hook, "\alpha"]
  \end{tikzcd}
\]
from which (a) and (b) follow. By \eqref{LABEL220}, we obtain the autoduality
(c).
\end{proof}

\begin{rem}
  There are various, different but obviously equivalent, definitions of
  $\J(\Gr)$. In \cite[p.177]{BdN} it is defined to be
  $\Ha^1(\Gr)^\#/\Ha^1(\Gr)$ (cf.~Proposition \ref{LABEL370}(a)). The
  component group as defined by Grothendieck \cite{Gr} (see Introduction) is the
  quotient $H^1(\Gr)/H_1(\Gr)$ (cf.~Proposition \ref{LABEL320}(g)).
\end{rem}

\textbf{Until the end of this subsection, we assume that $\Gr$ is
  connected}.

Let $D\in\Div^0(\Gr)$. Then there exists $\gamma_D\in C_1(\Gr)$,
unique modulo~$H_1(\Gr)$, with $\partial(\gamma_D)=D$.  Let
$\lambda_D\in\Ha^1(\Gr)\Zdual$ be the map
$\omega\mapsto\omega(\gamma_D)$; $\lambda_D$ is then well-defined
modulo~$\alpha(H_1(\Gr))\subset \Ha^1(\Gr)\Zdual$.

\begin{defn}
\label{LABEL380}
The \emph{Abel--Jacobi map} $\AJt\colon \Div^0(\Gr)\to \J(\Gr)$ is
the map
\[
  \AJt\colon D\mapsto \lambda_D\ \mathrm{mod}\ \alpha(H_1(\Gr)).
\]
\end{defn}

We have a commutative diagram with exact rows
\\[-2mm]
\begin{equation}
  \label{LABEL390}
  \begin{tikzcd}
    0 \arrow[r] & \ker(\partial)=H_1(\Gr) \arrow[d, twoheadrightarrow] \arrow[r] &
    C_1(\Gr)=C^1(\Gr)\Zdual \arrow[d, twoheadrightarrow, "\pi"]
    \arrow[r, "\partial"] &
    \Div^0(\Gr) \arrow[r] \arrow[d, "\AJt"] & 0
    \\
     0 \arrow[r] & H_1(\Gr)/\ker(\alpha) \arrow[r, "\alpha"] & \Ha^1(\Gr)\Zdual \arrow[r] &
    \J(\Gr)=\dfrac{\Ha^1(\Gr)\Zdual}{\alpha(H_1(\Gr))} \arrow[r] & 0
  \end{tikzcd}
\end{equation}
where $\pi$ is induced by the inclusion $\Ha^1(\Gr)\subset
C^1(\Gr)$. By Lemma \ref{LABEL290}, $\Ha^1(\Gr)$ is a direct summand
of $C^1(\Gr)$, and so $\pi$ is surjective.

The next result is Abel's Theorem for graphs, proved in the finite
case in \cite[Prop.~7(iii)]{BdN}.
\begin{thm}
\label{LABEL400}
The map $\AJt\colon \Div^0(\Gr)\to \J(\Gr)$ is
surjective with kernel $\Prin(\Gr)$, inducing an isomorphism
$\AJ\colon \Cl^0(\Gr)\isomarrow \J(\Gr)$.
\end{thm}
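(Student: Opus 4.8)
The plan is to derive the entire statement from the commutative diagram \eqref{LABEL390} with exact rows, whose bottom row is the defining presentation $\J(\Gr)=\Ha^1(\Gr)\Zdual/\alpha(H_1(\Gr))$. Write $\rho\colon\Ha^1(\Gr)\Zdual\to\J(\Gr)$ for the bottom-right quotient, so that commutativity gives $\AJt\circ\partial=\rho\circ\pi$, where $\pi\colon C_1(\Gr)=C^1(\Gr)\Zdual\to\Ha^1(\Gr)\Zdual$ is the transpose of the inclusion $\Ha^1(\Gr)\hookrightarrow C^1(\Gr)$. Since $\Gr$ is connected, $\partial\colon C_1(\Gr)\to\Div^0(\Gr)$ is surjective; $\pi$ is surjective by Lemma \ref{LABEL290} (as $\Ha^1(\Gr)$ is a direct summand of $C^1(\Gr)$); and $\rho$ is surjective by construction. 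Hence $\AJt\circ\partial=\rho\circ\pi$ is surjective, and therefore so is $\AJt$.

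For the kernel I would first identify $\ker\pi$. As $\pi$ is dual to $\Ha^1(\Gr)\hookrightarrow C^1(\Gr)$, its kernel is the orthogonal complement of $\Ha^1(\Gr)$ for the pairing $\apairing[1]{-}{-}\colon C_1(\Gr)\times C^1(\Gr)\to\ZZ$; by Lemma \ref{LABEL300}(b) this equals $\im(\partial\adj)$. Consequently $\partial(\ker\pi)=\partial\partial\adj(C_0(\Gr))=\Delta_0(\Div(\Gr))=\Prin(\Gr)$. This yields the inclusion $\Prin(\Gr)\subseteq\ker\AJt$ at once: if $D=\Delta_0 x$, then $\gamma=\partial\adj x$ is a preimage of $D$ under $\partial$ lying in $\ker\pi$, so $\AJt(D)=\rho(\pi(\gamma))=0$.

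For the reverse inclusion, suppose $D\in\ker\AJt$ and choose $\gamma\in C_1(\Gr)$ with $\partial\gamma=D$. Then $\rho(\pi(\gamma))=\AJt(D)=0$, so $\pi(\gamma)\in\ker\rho=\alpha(H_1(\Gr))$. Commutativity of the left square of \eqref{LABEL390} gives $\alpha(H_1(\Gr))=\pi(H_1(\Gr))$, so $\pi(\gamma)=\pi(h)$ for some $h\in H_1(\Gr)=\ker\partial$; that is, $\gamma-h\in\ker\pi=\im(\partial\adj)$. Then $D=\partial\gamma=\partial(\gamma-h)\in\partial(\im\partial\adj)=\Prin(\Gr)$. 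Thus $\ker\AJt=\Prin(\Gr)$, and $\AJt$ descends to the asserted isomorphism $\AJ\colon\Cl^0(\Gr)\isomarrow\J(\Gr)$.

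The surjectivity and the easy half of the kernel computation are formal consequences of the diagram; the real content is the identification $\ker\pi=\im(\partial\adj)$. This is where finiteness of $\Gr$ enters, so that $C_1(\Gr)=C^1(\Gr)\Zdual$ and $\pi$ is literally the dual of the inclusion, and where the integral Hodge-theoretic orthogonality of Lemma \ref{LABEL300}(b) does the essential work; matching $\partial(\im\partial\adj)$ with $\Prin(\Gr)=\im\Delta_0$ via $\Delta_0=\partial\partial\adj$ then closes the argument. I expect no further obstacle, since all the required ingredients are already in place.
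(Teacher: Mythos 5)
Your proof is correct and takes essentially the same route as the paper: both arguments rest on the diagram \eqref{LABEL390}, the identification $\ker(\pi)=\im(\partial\adj)$ via Lemma \ref{LABEL300}(b), and $\partial(\im\partial\adj)=\Delta_0(C_0(\Gr))=\Prin(\Gr)$, with your explicit chase merely unwinding the snake-lemma step the paper cites. One small correction to your closing remark: finiteness of $\Gr$ is not needed --- the paper states the theorem for any connected (locally finite, countable) graph, where $C_1(\Gr)=C^1(\Gr)\Zdual$ holds by Specker's theorem and Lemmas \ref{LABEL290} and \ref{LABEL300} are proved in that generality.
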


\begin{proof}
By the commutativity of diagram \ref{LABEL390}, $\AJt$ is
surjective, and by the snake lemma, the map $\partial\colon \ker(\pi) \to
\ker(\AJt)$ is surjective. Now
\[
  \ker(\pi) =
  \{ \gamma\in C_1(\Gr)\mid \apairing[1] \gamma {\Ha^1(\Gr)} = 0 \} 
\]
and since $\Ha^1(\Gr)=\ker (d\adj)$, by
Lemma \ref{LABEL300}(b) we have $\ker (\pi) = \im (\partial\adj)$. Therefore
$\partial(\ker \pi)=\Delta_0(C_0(\Gr))= \Prin(\Gr)$. 
\end{proof}


\subsection{Picard groups and duality}

We now describe the analogue of the Picard variety and Picard group
for a graph $\Gr$. Both of these are abelian groups, finite if $\Gr$
is finite.

First we define the analogue of the Picard variety of a curve, which
is what is called in \cite{BdN} the Picard group of $\Gr$.

Recall from Definition \ref{LABEL310} the  map $\beta\colon
\Ha^1(\Gr) \to H^1(\Gr)$. By Proposition \ref{LABEL320}(e), for $\Gr$
finite, $\beta$ is injective with finite cokernel.

\begin{defn}
  \label{LABEL405}
  The Picard group of $\Gr$ is the group
  $\P(\Gr)\defeq H^1(\Gr)/\beta(\Ha^1(\Gr))$.
\end{defn}

\begin{rem}
  This is the definition of the group given in
  \cite[Defn.~2.12]{jl}. By Proposition \ref{LABEL320}(b), for $\Gr$
  finite an equivalent definition is $H^1(\Gr)/\iota(H_1(\Gr))$.
\end{rem}

\begin{prop}
  \label{LABEL410}
  The homomorphism $d\adj\colon C^1(\Gr) \to C^0(\Gr)$ induces
    an isomorphism $\comp\colon \P(\Gr) \isomarrow \Clhat^0(\Gr)\subset
    \Clhat(\Gr)$.
\end{prop}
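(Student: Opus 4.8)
The plan is to show that $d\adj$ descends through the two successive quotients defining $\P(\Gr)$ and then to read off bijectivity directly from the defining relations. The identities that do all the work are $\Box_0=d\adj d$ (from the definition of the Laplacian), $\Ha^1(\Gr)=\ker(d\adj)$ (the definition of harmonic $1$-cochains), $\Divhat^0(\Gr)=\im(d\adj)$, and $\Prinhat(\Gr)=\Box_0(C^0(\Gr))$. First I would note that, since $\Divhat^0(\Gr)=\im(d\adj)$, the homomorphism $d\adj\colon C^1(\Gr)\to C^0(\Gr)$ factors as a surjection $C^1(\Gr)\twoheadrightarrow\Divhat^0(\Gr)$, and composing with the projection $\Divhat^0(\Gr)\twoheadrightarrow\Clhat^0(\Gr)$ yields a surjective homomorphism $C^1(\Gr)\to\Clhat^0(\Gr)$. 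Because $\Gr$ is $1$-dimensional we have $H^1(\Gr)=C^1(\Gr)/dC^0(\Gr)$, and $d\adj(dC^0(\Gr))=\Box_0(C^0(\Gr))=\Prinhat(\Gr)$, which is trivial in $\Clhat^0(\Gr)$; hence the map descends to $H^1(\Gr)\to\Clhat^0(\Gr)$. Finally, any $\omega\in\Ha^1(\Gr)=\ker(d\adj)$ has $d\adj\omega=0$, so its class $\beta(\omega)\in H^1(\Gr)$ is sent to $0$, and the map descends further to $\comp\colon\P(\Gr)\to\Clhat^0(\Gr)$, which inherits surjectivity.

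It then remains to prove injectivity, which I expect to be the only step with any real content, although even here a one-line splitting argument suffices. Suppose $\omega\in C^1(\Gr)$ represents a class of $\P(\Gr)$ killed by $\comp$, i.e. $d\adj\omega\in\Prinhat(\Gr)=\Box_0(C^0(\Gr))$. Writing $d\adj\omega=\Box_0 f=d\adj(df)$ for some $f\in C^0(\Gr)$ gives $d\adj(\omega-df)=0$, so $\omega-df\in\ker(d\adj)=\Ha^1(\Gr)$, whence $\omega\in dC^0(\Gr)+\Ha^1(\Gr)$. This last subgroup is exactly the preimage in $C^1(\Gr)$ of $\beta(\Ha^1(\Gr))\subset H^1(\Gr)$, so the class of $\omega$ in $\P(\Gr)$ vanishes. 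Therefore $\comp$ is injective, and the proof is complete.

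The main obstacle is really only bookkeeping: keeping straight which relation feeds which of the two quotients, and checking at each stage that the target is killed. Once the defining identity $\ker(d\adj)=\Ha^1(\Gr)$ is available, both the descent and the injectivity fall out of it, and the surjectivity is immediate from $\Divhat^0(\Gr)=\im(d\adj)$. I would also emphasize that the argument rests solely on these four displayed identities and the $1$-dimensionality of $\Gr$; it uses neither finiteness of $\Gr$ nor the Hodge-theoretic Proposition \ref{LABEL320}, so the isomorphism $\comp$ holds for every locally finite graph.
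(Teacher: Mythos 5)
Your proof is correct and is essentially the paper's own argument: the paper compresses it into a single application of the first isomorphism theorem, computing $\P(\Gr)=C^1(\Gr)/\bigl(dC^0(\Gr)+\ker(d\adj)\bigr)\isomarrow \im(d\adj)/d\adj dC^0(\Gr)=\Clhat^0(\Gr)$, which is exactly your descent-plus-surjectivity-plus-injectivity bookkeeping unwound (your injectivity step, $d\adj\omega=d\adj df\Rightarrow\omega-df\in\ker(d\adj)$, is the kernel computation hidden in that one line). Your closing observation that no finiteness is needed is also consonant with the paper, which defines $\Divhat^0(\Gr)=\im(d\adj)$ in general precisely so that this proposition holds for arbitrary locally finite graphs.
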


\begin{proof}
  Since $\im (d\adj)=C^0(\Gr)$ and $\Ha^1(\Gr)=\ker (d\adj)$, we have
  \[
    \P(\Gr) = \frac{C^1(\Gr)}{dC^0(\Gr) + \ker (d\adj)}
    \isomarrowby{\comp}
    \frac{C^0(\Gr)^0}{d\adj dC^0(\Gr)} = \Clhat^0(\Gr).
    \qedhere
  \]
\end{proof}

\begin{prop}
  \label{LABEL420}
  There is a canonical homomorphism
  $\zeta\colon \J(\Gr)\to \P(\Gr)$ \textup(described in the proof
  below\textup). If $\Gr$ is finite, $\zeta$ is an isomorphism.
\end{prop}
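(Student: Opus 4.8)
The plan is to obtain $\zeta$ by dualizing the single map $\iota\colon H_1(\Gr)\to\Ha^1(\Gr)$ coming from Lemma \ref{LABEL280}. Since $H^1(\Gr)=\Hom(H_1(\Gr),\ZZ)=H_1(\Gr)\Zdual$ (as recorded after \eqref{LABEL260}), taking $\ZZ$-duals turns $\iota$ into a canonical homomorphism $\iota\Zdual\colon\Ha^1(\Gr)\Zdual\to H_1(\Gr)\Zdual=H^1(\Gr)$. I would \emph{define} $\zeta$ to be the map induced by $\iota\Zdual$ on the quotients $\J(\Gr)=\Ha^1(\Gr)\Zdual/\alpha(H_1(\Gr))$ and $\P(\Gr)=H^1(\Gr)/\beta(\Ha^1(\Gr))$, so the first task is to check that $\iota\Zdual$ sends $\alpha(H_1(\Gr))$ into $\beta(\Ha^1(\Gr))$.

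For this I would prove the identity $\iota\Zdual\circ\alpha=\beta\circ\iota$, writing $B=\beta\circ\iota\colon H_1(\Gr)\to H^1(\Gr)$ for the common value. Since $\alpha$ is the transpose of $\beta$ (Definition \ref{LABEL310}), for $\gamma,\delta\in H_1(\Gr)$ the functional $\iota\Zdual(\alpha(\gamma))\in H^1(\Gr)=H_1(\Gr)\Zdual$ evaluates on $\delta$ as $\apairing{\gamma}{\beta(\iota\delta)}$, whereas $B(\gamma)$ evaluates on $\delta$ as $\apairing{\delta}{\beta(\iota\gamma)}$. Recognizing $B$ as the map $\gamma\mapsto[\hat\gamma]$ of Proposition \ref{LABEL320}(g) and applying \eqref{LABEL270}, both quantities reduce to the inner product $\pairing[1]{\gamma}{\delta}$, which is symmetric; hence they agree and $\iota\Zdual\circ\alpha=B$. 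Because $\iota$ maps $H_1(\Gr)=\ker(\partial)$ into $\Ha^1(\Gr)=\ker(d\adj)$ (Lemma \ref{LABEL280}), this gives $\iota\Zdual(\alpha(H_1(\Gr)))=\beta(\iota(H_1(\Gr)))\subseteq\beta(\Ha^1(\Gr))$, so $\zeta$ is well defined for any locally finite $\Gr$.

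When $\Gr$ is finite I would upgrade this to an isomorphism using Proposition \ref{LABEL320}(b), which says $\iota\colon H_1(\Gr)\to\Ha^1(\Gr)$ is itself an isomorphism. Then $\iota\Zdual\colon\Ha^1(\Gr)\Zdual\to H^1(\Gr)$ is an isomorphism of the two ambient groups, and $\iota(H_1(\Gr))=\Ha^1(\Gr)$ promotes the inclusion above to the equality $\iota\Zdual(\alpha(H_1(\Gr)))=\beta(\Ha^1(\Gr))$ of the two distinguished subgroups. An isomorphism of groups matching one subgroup onto another descends to an isomorphism of the quotients, so $\zeta$ is an isomorphism.

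The step I expect to be the real obstacle is the identity $\iota\Zdual\circ\alpha=\beta\circ\iota$: one must keep the evaluation pairing $\apairing{-}{-}$ straight from the symmetric inner product $\pairing[1]{-}{-}$ through the reflexivity identification $H^1(\Gr)=H_1(\Gr)\Zdual$, and check that the transpose $\alpha$ of $\beta$ pairs with $\iota\Zdual$ exactly as the symmetry of $\pairing[1]{-}{-}$ requires. Once that symmetry is secured, both the existence of $\zeta$ and---via the finiteness of $\Gr$---its bijectivity follow formally.
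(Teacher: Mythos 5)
Your proposal is correct and follows essentially the same route as the paper: the paper also defines $\zeta$ as the map induced on quotients by the transpose $\tr\iota\colon \Ha^1(\Gr)\Zdual \to H^1(\Gr)=H_1(\Gr)\Zdual$ of $\iota\colon H_1(\Gr)\to\Ha^1(\Gr)$, via the commuting square $\tr\iota\circ\alpha=\beta\circ\iota$, and deduces bijectivity for finite $\Gr$ from Proposition \ref{LABEL320}(b) exactly as you do. Your explicit verification of the commutativity through the symmetry of $\pairing[1]{-}{-}$ simply supplies the detail the paper dismisses as ``easily seen to commute.''
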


\begin{proof}
  By Lemma \ref{LABEL280}, $\iota_1$ induces a map $\iota\colon
  H_1(\Gr) \to \Ha^1(\Gr)$. The diagram
  \[
    \begin{tikzcd}
      \Ha^1(\Gr)\Zdual \arrow[r, "\tr\iota"] & H^1(\Gr)=H_1(\Gr)\Zdual
      \\
      H_1(\Gr) \arrow[u, "\alpha"] \arrow[r, "\iota"] &
      \Ha^1(\Gr) \arrow[u, "\beta"]
    \end{tikzcd}
  \]
  is easily seen to commute, and therefore induces the desired
  homomorphism $\zeta$,  which by Proposition \ref{LABEL320}(b) is an
  isomorphism for $\Gr$ finite.
\end{proof}

There an alternative description of $\zeta$ when $\Gr$ is
finite, using the autoduality of Proposition \ref{LABEL370}. Consider the pairing
\[
\Ha^1(\Gr)\dual \times C^1(\Gr) \isom \Ha^1(\Gr)^\# \times C^1(\Gr)
\subset C^1(\Gr)_\QQ \times C^1(\Gr)_\QQ 
  \xrightarrow{\pairing{-}{-}^1\otimes\QQ} \QQ.
\]
Since $\alpha$ and $\beta$ are injections (Proposition \ref{LABEL320}),
this pairing induces (see Section \ref{LABEL210}) a perfect pairing
$\J(\Gr) \times \P(\Gr) \to \QQ/\ZZ$. Combining this with the
autoduality of $\J(\Gr)$ from Proposition \ref{LABEL370}(b) gives an isomorphism
$\theta\colon \P(\Gr) \isomarrow \J(\Gr)$.

Explicitly: the composite map
\begin{equation}
\label{LABEL430}
C^1(\Gr)\isomarrowby{\iota\i}C_1(\Gr) = C^1(\Gr)\Zdual
\twoheadrightarrow \Ha^1(\Gr)\Zdual
\end{equation}
is surjective (since $\Ha^1(\Gr)\subset C^1(\Gr)$ is a direct
summand).  Its kernel is the orthogonal complement of $\Ha^1(\Gr)$ in
$C^1(\Gr)$, which is $dC^0(\Gr)$ by Proposition \ref{LABEL320}(d).
Therefore the map \eqref{LABEL430} induces an isomorphism
$\tilde\theta\colon H^1(\Gr)\isomarrow \Ha^1(\Gr)\Zdual$, inducing the
isomorphism $\theta\colon \P(\Gr) \isomarrow \J(\Gr)$.

It is clear from this description that for $\Gr$ finite, $\zeta=\theta\i$.

\begin{rem}
  Observe the analogy between $\P(\Gr)$ and the alternative
  description \eqref{LABEL160} of the Picard variety of a curve.
\end{rem}

The next result is a partial analogue of Theorem \ref{LABEL060} for
graphs.

\begin{thm}
  \label{LABEL440}
  Let $\Gr$ be a connected graph.
  The diagram
  \[
    \begin{tikzcd}
       \Cl(\Gr) \arrow[r, "\iota"] & \Clhat(\Gr)
      \\
      \Cl^0(\Gr) \arrow[u, hook] \arrow[d, "\AJ"', "\wr"]
      \arrow[r, "\iota^0"]
      & \Clhat{}^0(\Gr) \arrow[u, hook]
      \\
      \J(\Gr) \arrow[r, "\zeta"]
      & \P(\Gr) \arrow[u, "\wr", "\comp"']
    \end{tikzcd}
  \]
  commutes, and if $\Gr$ is finite, all the horizontal maps are isomorphisms.
\end{thm}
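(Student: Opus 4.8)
The plan is to verify commutativity of the two squares separately and then invoke the finite-graph isomorphism results already established. Since $\Gr$ is connected, I would first recall the ingredients: the Abel--Jacobi isomorphism $\AJ$ comes from Theorem~\ref{LABEL400}, the map $\zeta\colon\J(\Gr)\to\P(\Gr)$ from Proposition~\ref{LABEL420}, the isomorphism $\comp\colon\P(\Gr)\isomarrow\Clhat^0(\Gr)$ from Proposition~\ref{LABEL410}, and the maps $\iota,\iota^0$ from \eqref{LABEL340}. The claim to be checked is that the lower square (involving $\AJ$, $\zeta$, $\comp$, $\iota^0$) commutes, and the upper square (relating $\iota^0$ to $\iota$ via the two vertical inclusions) commutes; the latter is essentially immediate from the definitions, since both $\iota$ and $\iota^0$ are induced by the same map $\iota_0\colon C_0(\Gr)\to C^0(\Gr)$ and the inclusions $\Cl^0\hookrightarrow\Cl$, $\Clhat^0\hookrightarrow\Clhat$ are compatible by construction.

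The substance is the lower square. First I would trace a degree-zero divisor $D\in\Div^0(\Gr)=\im\partial$ around both ways. Going down-then-right: $\AJt(D)=\lambda_D\bmod\alpha(H_1(\Gr))$ where $\partial\gamma_D=D$ and $\lambda_D(\omega)=\omega(\gamma_D)$; applying $\zeta$ lands in $\P(\Gr)=H^1(\Gr)/\beta(\Ha^1(\Gr))$, and then $\comp$ is induced by $d\adj$. Going right: $\iota^0([D])$ is the class of $\iota_0(D)=\hat D$ in $\Clhat^0(\Gr)=\Divhat^0(\Gr)/\Prinhat(\Gr)$. The plan is to show these two classes in $\Clhat^0(\Gr)$ agree. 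The key computation uses Lemma~\ref{LABEL280}(a), namely $\iota\circ\partial=d\adj\circ\iota$: choosing a lift of $\lambda_D$ to $C^1(\Gr)\Zdual=C_1(\Gr)$ via the splitting, I would identify a representative $\gamma_D\in C_1(\Gr)$ with $\partial\gamma_D=D$, push it through $\iota_1$ to get $\hat\gamma_D\in C^1(\Gr)$, and then apply $d\adj$; by Lemma~\ref{LABEL280}(a) this yields $d\adj\hat\gamma_D=\iota(\partial\gamma_D)=\iota(D)=\hat D$, exactly the representative produced by $\iota^0$. The ambiguity in $\gamma_D$ (modulo $H_1(\Gr)$) maps under $d\adj\circ\iota=\iota\circ\partial$ to $\iota(\partial H_1(\Gr))=0$, so the class is well-defined, and the point is that $\zeta$ is precisely engineered through the $\beta$--$\alpha$ duality so that this identification is the content of the commuting square in the proof of Proposition~\ref{LABEL420}.

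Concretely, I would phrase this as: the composite $\comp\circ\zeta\circ\AJt$ sends $D=\partial\gamma_D$ to the class of $d\adj\hat\gamma_D=\hat D$ in $\Clhat^0(\Gr)$, which by definition is $\iota^0(\AJ\i$-image$)$; since $\AJ$ is surjective onto $\J(\Gr)$ by Theorem~\ref{LABEL400}, this proves $\comp\circ\zeta=\iota^0\circ\AJ$ on all of $\J(\Gr)$. The main obstacle I anticipate is bookkeeping the identifications $C_1(\Gr)=C^1(\Gr)\Zdual$ and $\Ha^1(\Gr)\Zdual$ so that the map $\zeta$ from Proposition~\ref{LABEL420}---defined abstractly via $\tr\iota$ and the duality between $\alpha$ and $\beta$---is matched up correctly with the concrete cochain-level map $d\adj$; in other words, unwinding that $\comp$ realizes $\zeta$ as the transpose relationship through the commuting square of Proposition~\ref{LABEL420}, rather than any genuinely hard mathematics. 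For the final assertion, once commutativity is established, the horizontal maps $\iota,\iota^0$ are isomorphisms for finite $\Gr$ by \eqref{LABEL340}, $\AJ$ is an isomorphism by Theorem~\ref{LABEL400}, $\comp$ by Proposition~\ref{LABEL410}, and $\zeta$ by Proposition~\ref{LABEL420}, so all four maps in the diagram are isomorphisms and the top row $\iota$ is an isomorphism as well.
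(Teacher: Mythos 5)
Your proposal is correct and follows essentially the same route as the paper: the top square from the definition of $\iota^0$ in \eqref{LABEL340}, and the bottom square by the chase $D=\partial\gamma \mapsto \lambda_D \mapsto \hat\gamma \mapsto d\adj\hat\gamma = \hat D$, using Lemma \ref{LABEL280}(a) exactly as the paper does. The only cosmetic difference is at the end: the paper deduces that $\zeta$ is an isomorphism for finite $\Gr$ from the commutativity together with $\iota$, $\iota^0$ being isomorphisms, whereas you cite Proposition \ref{LABEL420} directly; both are valid.
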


\begin{proof}
  The commutativity of the top square follows from the definition
  \eqref{LABEL340}. For the bottom square, let $D\in\Div^0(\Gr)$,
  $D=\partial \gamma$ for $\gamma\in C_1(\Gr)$. By Proposition
  \ref{LABEL280}(a), $\hat D\defeq \iota(D)=d\adj(\hat\gamma)\in\Divhat^0(\Gr)$. By
  definition, $\AJ(D)$ is the image of
  $(\omega\mapsto \omega(\gamma))\in\Ha^1(\Gr)\Zdual$.  So we have
  \[
    \begin{tikzcd}
      D\ \mathrm{mod} \ \Prin(\Gr)\arrow[r, mapsto, "\iota^0"] 
      \arrow[d, mapsto, "\AJ"]
      &\hat{D}\ \mathrm{mod}\ \Prinhat(\Gr)\\
      (\omega\mapsto \omega(\gamma)) \ \mathrm{mod}\ \alpha(H^1(\Gr))
      \arrow[r, mapsto, "\zeta"]
      &
      \hat\gamma \ \mathrm{mod}\ \beta(\Ha^1(\Gr))
      \arrow[u, mapsto, "\comp"'] 
    \end{tikzcd}
  \]
  and the square commutes. If $\Gr$ is finite then
  $\iota$ and $\iota^0$ are isomorphisms, hence so is $\zeta$.
\end{proof}

\begin{rem}
  The reader will have noticed that the fact that the complex
  $C_\bullet(\Gr)$ is the chain complex of a graph barely enters the
  proof. The same arguments apply almost unaltered to any 2-term
  complex $C_0\xleftarrow{\ \partial\ } C_1$, where $C_i$ are
  unimodular lattices and $\partial$ is any homomorphism,
  and indeed in greater generality. This is discussed further in
  section \ref{LABEL1080} below.
\end{rem}

For a complete analogue of Theorem \ref{LABEL060}, we need an analogue
of the Picard group of line bundles on a curve. In the next section we
will define, for any locally finite graph $\Gr$ with no isolated vertices, groups $\Pic^0(\Grg) \subset \Pic(\Grg)=H^1(\Grg,\Harm)$
classifying torsors for a sheaf $\Harm$ on the geometric realization
$\Grg$ of $\Gr$. There are canonical
isomorphisms $q_*\colon \P(\Gr) \isomarrow \Pic^0(\Grg)$ and
$\bar\delta\colon \Clhat(\Gr) \isomarrow \Pic(\Grg)$ fitting into a commutative
diagram (Theorem \ref{LABEL600} below)
\begin{equation}
  \label{LABEL450}
  \begin{tikzcd}
    \Clhat(\Gr) \arrow[r, "-\bar\delta", "\sim"']& \Pic(\Grg)
    \\
    \Clhat^0(\Gr) \arrow[u, hook]\arrow[r, "\lowsim"]
    &\Pic^0(\Grg) \arrow[u, hook] 
    \\
    & \P(\Gr) \arrow[u, "\wr", "q_*"']
    \arrow[ul, "\comp", "\isom"']
  \end{tikzcd}
\end{equation}
Combining \eqref{LABEL450} and Theorem \ref{LABEL440}, we obtain the
following analogue of Theorem \ref{LABEL060}.

\begin{thm}
  \label{LABEL460}
  Let $\Gr$ be a connected graph. The diagram
  \[
    \begin{tikzcd}
      \Cl(\Gr) \arrow[r, "\iota"] & \Clhat(\Gr)  \arrow[r, "-\bar\delta", "\sim"']
      & \Pic(\Grg)
      \\
      \Cl^0(\Gr) \arrow[u, hook] \arrow[d, "\AJ" left, "\wr"]
      \arrow[r, "\iota^0"]
      & \Clhat{}^0(\Gr) \arrow[u, hook]  \arrow[r, "\lowsim"]
      &\Pic^0(\Grg) \arrow[u, hook]
      \\
      \J(\Gr) \arrow[rr, "\zeta"]
      && \P(\Gr) \arrow[u, "\wr", "q_*"']
      \arrow[ul, "\comp", "\isom"']
    \end{tikzcd}
  \]
  commutes. If $\Gr$ is finite then the horizontal arrows are all
  isomorphisms.\qed
\end{thm}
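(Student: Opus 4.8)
The plan is to deduce the theorem formally by \emph{pasting} the two commutative diagrams already in hand, with no fresh computation. Theorem~\ref{LABEL440} supplies the entire left-hand block: the square relating $\iota\colon\Cl(\Gr)\to\Clhat(\Gr)$ to $\iota^0\colon\Cl^0(\Gr)\to\Clhat^0(\Gr)$ through the two vertical inclusions, and the bottom square in which $\zeta\colon\J(\Gr)\to\P(\Gr)$ lies over $\iota^0$ by way of the isomorphisms $\AJ$ and $\comp$. The diagram~\eqref{LABEL450}, established in Theorem~\ref{LABEL600} below, supplies the right-hand block: the compatible isomorphisms $-\bar\delta\colon\Clhat(\Gr)\isomarrow\Pic(\Grg)$ and $\Clhat^0(\Gr)\isomarrow\Pic^0(\Grg)$, together with the triangle on $\P(\Gr)$ expressing $q_*$ as the composite of $\comp$ with $\Clhat^0(\Gr)\isomarrow\Pic^0(\Grg)$.

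First I would confirm that the two blocks are glued along exactly the column $\Clhat^0(\Gr)\hookrightarrow\Clhat(\Gr)$ and the object $\P(\Gr)$, and --- the one point that genuinely must be checked --- that the arrow written $\comp$ means the \emph{same} map in both sources, namely the isomorphism $\P(\Gr)\isomarrow\Clhat^0(\Gr)$ of Proposition~\ref{LABEL410}. Granting this, each elementary face of the asserted diagram is a face of one of the two given diagrams: the top-left square is the top square of Theorem~\ref{LABEL440}; the top-right square is the Picard square of~\eqref{LABEL450}; the bottom square $\Cl^0(\Gr)\xrightarrow{\AJ}\J(\Gr)\xrightarrow{\zeta}\P(\Gr)\xrightarrow{\comp}\Clhat^0(\Gr)$ versus $\iota^0$ is the bottom square of Theorem~\ref{LABEL440}; and the remaining triangle $\P(\Gr)\xrightarrow{\comp}\Clhat^0(\Gr)\isomarrow\Pic^0(\Grg)$ versus $q_*$ is the triangle of~\eqref{LABEL450}. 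Commutativity of the whole then follows by transitivity.

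For the finiteness clause I would simply collect the relevant isomorphisms. When $\Gr$ is finite, $\iota$ and $\iota^0$ are isomorphisms (recorded in Theorem~\ref{LABEL440}), and $-\bar\delta$ and $\Clhat^0(\Gr)\isomarrow\Pic^0(\Grg)$ are isomorphisms by Theorem~\ref{LABEL600}; hence every horizontal arrow, being one of these or a composite of them, is an isomorphism. The diagonal maps $\AJ$, $\zeta$, $\comp$, $q_*$ are likewise isomorphisms for finite $\Gr$ (by Theorem~\ref{LABEL400}, Propositions~\ref{LABEL420} and~\ref{LABEL410}, and Theorem~\ref{LABEL600}), though only the horizontal statement is claimed.

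The main obstacle is not located in this argument at all: every piece of genuine content has been pushed into Theorem~\ref{LABEL600}, whose proof must construct the sheaf $\Harm$ on $\Grg$, the torsor groups $\Pic(\Grg)\supset\Pic^0(\Grg)$, and the comparison isomorphisms $q_*$ and $\bar\delta$. Within the pasting itself the only delicate issue is the sign: the top arrow carries the sign $-\bar\delta$, so one wants the assertion to hold on the nose rather than merely up to sign as in the classical Theorem~\ref{LABEL060}. Since both Theorem~\ref{LABEL440} and~\eqref{LABEL450} are stated as commuting exactly, this sign is already absorbed into the definition of $\bar\delta$ in Theorem~\ref{LABEL600}, and I would only verify that it is inherited consistently across both the degree-zero row and the full row when the blocks are combined.
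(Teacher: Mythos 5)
Your proposal is correct and is exactly the paper's argument: the paper proves Theorem \ref{LABEL460} by simply combining Theorem \ref{LABEL440} with the diagram \eqref{LABEL450} (established as Theorem \ref{LABEL600}), which is why the theorem is stated with an immediate \qed and no separate proof. Your pasting along the column $\Clhat^0(\Gr)\hookrightarrow\Clhat(\Gr)$ and $\P(\Gr)$, the identification of $\comp$ as the single map of Proposition \ref{LABEL410} in both blocks, and the collection of isomorphisms in the finite case (including $\zeta$, which is in fact one of the horizontal arrows, via Proposition \ref{LABEL420}) match the intended reasoning precisely.
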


\subsection{Sheaves and the geometric Picard group}
\label{LABEL470}

In this section we assume that $\Gr=(V,E,o,t)$ is a locally finite graph with no
isolated vertices.

Recall that the geometric realization $\Grg$ is the quotient of
the space $V\,\sqcup\,(E\times[0,1])$ obtained by identifying, for every
$e\in E$, $o(e)\in V$ with $(e,0)\in E\times [0,1]$ and $t(e)\in V$ with
$(e,1)$. Write $\tilde e\colon [0,1] \to \Grg$ for the obvious
map (which is an inclusion unless $e$ is a loop), and $x$ for the
coordinate function on $[0,1]$. We endow $\Grg$ with the obvious
metric for which the distance between adjacent vertices is $1$, and
for $r\in (0,1]$ and $v\in V$ write $U_v(r)$ for the open ball of
radius $r$ about $v$. If $\Gr'$ is the graph obtained from $\Gr$ by
reversing the orientation of some of the edges of $\Gr$, then there is
a canonical isometry $\Grg \isomarrow \abs{\Gr'}$. 

Following the discussion in \cite{MW1}, especially \S2.3 and \S3, we
define sheaves $\Harm\subset \PL$ on the topological space $\Grg$, analogous to the
sheaves $\cO^\times_X\subset \cK^\times_X$ on a curve $X$.

We will only need to consider sheaves $\cF$ on $\Grg$ satisfying the
following property:
\begin{itemize}
\item[(C)] For every $e\in E$, the restriction of $\cF$ to the open edge
  $\{e\}\times (0,1) \subset \Grg$ is constant.
\end{itemize}
(In the setting of simplicial complexes these are sometimes called
``cellular sheaves'', see for example \cite{She}, where they are
extensively studied.) Any such sheaf $\cF$ admits a simple
combinatorial description. For $v\in V$, let $\cF_v$ be the stalk of
$\cF$ at $v$, and for $e\in E$, let $\cF_e = H^0(\{e\}\times (0,1),
\cF)$.  Assumption (C) implies:
\begin{itemize}
\item for any $r\in (0,1/2)$, the restriction map
  $H^0(U_v(r),\cF) \to \cF_v$ is an isomorphism; and
\item for every $a$, $b$ with $0\le a<b\le 1$, the restriction map
  $\cF_e \to H^0(\{e\}\times (a,b),\cF)$ is an isomorphism.
\end{itemize}
Then for each $e\in E$, the inclusions\footnote{We use radius
  $1/3$ rather than the more natural $1$ to deal with loops.}
\[
  U_{o(e)}(1/3) \hookleftarrow \{e\}\times (0,1/3) \hookrightarrow 
  \{e\}\times (0,1)  \hookleftarrow \{e\}\times (2/3,1) \hookrightarrow
  U_{t(e)}(1/3)
\]
induce maps
\begin{equation}
  \label{LABEL480}
  \xi^0_e \colon \cF_{o(e)} \to \cF_e,\quad
  \xi^1_e \colon \cF_{t(e)} \to  \cF_e.
\end{equation}

\begin{prop}
  \label{LABEL490}
  \begin{enumerate}[\upshape(a)]
  \item The above construction gives an equivalence between the
    category of abelian sheaves on $\Grg$ satisfying \textup{(C)}, and the
    category of families of abelian groups $(\cF_v)_{v\in V}$,
    $(\cF_e)_{e\in E}$ and maps \eqref{LABEL480}.
  \item Let $\cF$ be a sheaf on $\Grg$ satisfying \textup{(C)}. There are
    functorial isomorphisms for $i=0$, $1$
    \[
      H^i(\Grg, \cF) \isomarrow H^i(C^\bullet(\Gr, \cF),\, d_{\cF}),
    \]
    where $(C^\bullet(\Gr, \cF),\, d_{\cF})$ is the complex
    \begin{align*}
      \prod _{v\in V} \cF_v &\xrightarrow{\,d_{\cF}\,} \prod_{e\in E}
                              \cF_e\\
      (a_v)_{v\in V} &\ \mapsto\ (b_e)_{e\in E},\quad b_e =
                       \xi^1_e(a_{t(e)}) - \xi^0_e(a_{o(e)}),
    \end{align*}
    compatible with the long exact sequence of cohomology.
  \end{enumerate}
\end{prop}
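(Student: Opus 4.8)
The plan is to prove (a) by exhibiting an explicit quasi-inverse to the functor $\Phi\colon \cF \mapsto ((\cF_v)_v, (\cF_e)_e, (\xi^0_e,\xi^1_e)_e)$ described just before the statement, and then to deduce (b) from a short exact sequence of sheaves on $\Grg$ together with a computation of the cohomology of its outer terms. For (a), given combinatorial data $D = ((\cF_v)_v, (\cF_e)_e, (\xi^0_e,\xi^1_e)_e)$ I would define a sheaf $\Psi(D)$ on $\Grg$ by prescribing, for open $W\subseteq\Grg$, the group of families $(a_v)_{v\in W\cap V}$ with $a_v\in\cF_v$ together with locally constant functions $f_e\colon W\cap(\{e\}\times(0,1))\to\cF_e$, subject to the local condition that near each vertex $v\in W$ the germ of $f_e$ equals $\xi^0_e(a_v)$ resp.\ $\xi^1_e(a_v)$ at the $o$- resp.\ $t$-end of $e$. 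The sheaf axioms hold since the condition is local. One then checks that $\Psi(D)$ satisfies (C), that its vertex- and edge-stalks and generization maps recover $D$ (so $\Phi\Psi\cong\id$), and conversely that for any $\cF$ satisfying (C) the two displayed restriction isomorphisms following (C) force the canonical comparison $\Psi(\Phi(\cF))\to\cF$ to be an isomorphism (so $\Psi\Phi\cong\id$). The whole content of (a) is that (C) lets one reconstruct all sections from the stalk/edge data; alternatively one may invoke the standard equivalence between cellular sheaves and representations of the face poset, as in \cite{She}.

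For (b), write $j\colon U=\coprod_{e\in E}(\{e\}\times(0,1))\hookrightarrow\Grg$ for the inclusion of the open edges and $i\colon V\hookrightarrow\Grg$ for the complementary closed inclusion of the vertex set. For any $\cF$ there is the standard exact sequence $0\to j_!(\cF|_U)\to\cF\to i_*(\cF|_V)\to 0$, and by (C) the restriction $\cF|_U$ is the constant sheaf $\cF_e$ on each open edge while $\cF|_V=(\cF_v)_v$. Since $V$ is discrete and $i$ is a closed immersion, $i_*(\cF|_V)$ is flasque with $H^0=\prod_v\cF_v$ and no higher cohomology, and $\Grg$ is one-dimensional so all cohomology vanishes in degrees $\ge 2$. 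Granting the computation $H^0(\Grg,j_!\cF|_U)=0$ and $H^1(\Grg,j_!\cF|_U)=\prod_e\cF_e$ (next paragraph), the long exact sequence collapses to a four-term exact sequence $0\to H^0(\Grg,\cF)\to\prod_v\cF_v\xrightarrow{\,\delta\,}\prod_e\cF_e\to H^1(\Grg,\cF)\to 0$, and it remains only to identify $\delta$. Lifting $(a_v)\in\prod_v\cF_v$ to the local sections $a_v$ on small stars, the two constants induced on the interior of $e$ from its two ends are $\xi^0_e(a_{o(e)})$ and $\xi^1_e(a_{t(e)})$, whose difference measures the failure to glue; hence $\delta=d_\cF$, giving $H^0(\Grg,\cF)=\ker d_\cF$ and $H^1(\Grg,\cF)=\coker d_\cF$. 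Functoriality in $\cF$ is automatic, and compatibility with long exact sequences follows because $\cF\mapsto(C^\bullet(\Gr,\cF),d_\cF)$ is exact: by (a) the stalks $\cF_v$ and edge-groups $\cF_e$ are exact functors, so a short exact sequence of (C)-sheaves induces one of two-term complexes.

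The main obstacle is the computation of $H^*(\Grg,j_!\cF|_U)$, and it is here that the footnote's warning about loops and the passage to infinite graphs must be handled with care. I would argue edge by edge: $j_!\cF|_U$ has pairwise disjoint, locally finite supports $\{e\}\times(0,1)$, and on the closed edge $\bar e$ (an interval, or a circle when $e$ is a loop) extension by zero off the open edge computes the relative cohomology $H^*(\bar e,\partial\bar e;\cF_e)$, which is $\cF_e$ in degree $1$ and $0$ in degree $0$ in both cases, so loops need no special treatment. Because the supports form a locally finite disjoint family, assembling over all edges yields products rather than direct sums, giving $\prod_e\cF_e$ and legitimizing the infinite case. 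A clean way to make this mechanical, bypassing $H^*(j_!)$ entirely, is to compute $H^*(\Grg,\cF)$ by \v{C}ech cohomology for the acyclic cover consisting of the small vertex stars $U_v(1/3)$ and the open edges $\{e\}\times(0,1)$: all triple intersections are empty, each double intersection is a disjoint union of open subintervals (two of them exactly when $e$ is a loop at $v$), and one checks directly that the resulting \v{C}ech complex has kernel $\ker d_\cF$ and cokernel $\coker d_\cF$. Either route requires verifying that vertex stars are $\cF$-acyclic, which is the one genuinely space-level (rather than purely combinatorial) input.
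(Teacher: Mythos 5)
Your proof is correct, but it takes a genuinely different route from the paper's. For (a) the paper simply declares the equivalence straightforward, so your explicit quasi-inverse is harmless extra detail. For (b) the paper argues via the \v Cech resolution of $\cF$ for the single covering of $\Grg$ by the vertex stars $U_v(2/3)$: assuming $\Gr$ has no loops, one has $H^0(U_v,\cF)=\cF_v$ and $H^0(U_v\cap U_w,\cF)=\prod_e\cF_e$ over the edges joining $v$ and $w$, so after choosing a total order on $V$ the resulting complex is $(C^\bullet(\Gr,\cF),\sigma\circ d_\cF)$ for a sign involution $\sigma$, and loops are dispatched with ``a similar argument.'' You instead use the d\'evissage $0\to j_!(\cF|_U)\to\cF\to i_*(\cF|_V)\to 0$ along the decomposition of $\Grg$ into open edges and vertices, compute the outer terms ($i_*(\cF|_V)$ flasque; $H^0(\Grg,j_!\cF|_U)=0$ and $H^1(\Grg,j_!\cF|_U)=\prod_e\cF_e$ edge by edge), and identify the connecting map with $d_\cF$. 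What your route buys: loops and multiple edges genuinely require no special treatment (your relative-cohomology computation $H^*(\bar e,\partial\bar e;\cF_e)$ handles the loop case uniformly, whereas the paper's chosen covering is awkward for loops and is only patched by ``a similar argument''), no vertex ordering or sign involution appears, and compatibility with long exact sequences is nearly formal because your isomorphisms arise from a natural short exact sequence of sheaves; your fallback \v Cech cover (stars of radius $1/3$ together with the open edges, all triple intersections empty) likewise sidesteps the loop issue. What the paper's route buys is brevity and a complex that is literally a \v Cech complex up to $\sigma$. The two points you flag but treat briefly are genuine but no worse than the paper's own elisions: the assembly of $H^1(\Grg,j_!\cF|_U)$ over infinitely many edges into a product does hold here (the open edges are disjoint and the family is locally finite; a Mayer--Vietoris computation with $U$ and the disjoint balls $U_v(1/2)$ confirms $H^0=0$, $H^1=\prod_e\cF_e$), and the acyclicity of vertex stars for (C)-sheaves is true (it follows from the same d\'evissage applied to a star, using local finiteness) and is needed equally, and equally tacitly, by the paper's \v Cech-resolution argument.
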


\begin{proof}
  (i) is straightforward.
  
  (ii) If $\Gr$ has no loops, then $C^\bullet(\Gr,\cF)$ is isomorphic
  to the \v Cech complex of $\cF$ for the covering by opens
  $U_v=U_v(2/3)$, $v\in V$. In more detail, one has
  $H^0(U_v,\cF)=\cF_v$ and for $v\ne w$,
  $H^0(U_v\cap U_w,\cF) = \prod_e \cF_e$, where the product is over
  all edges $e$ with $\{o(e),t(e)\}=\{v,w\}$.  Pick a total order on
  $V$, and for $e\in E$, write $\sigma_e=1$ if $o(e)<t(e)$ and $-1$
  otherwise. Let $\sigma$ be the induced involution of
  $\prod_{e\in E} \cF_e$. Then the \v Cech resolution $\Cech(\cF)$ of
  $\cF$ for the covering $(U_v)$ has global sections
  $C^\bullet(\Gr, \cF)$ and differential $\sigma\circ d_{\cF}$. A
  similar argument treats with the general case.
\end{proof}
If $A$ is any abelian group and $v\in V$, we denote by $v_*A$ the
skyscraper sheaf supported on $\{v\}\subset \Grg$ with stalk $A$.

The sheaf $\PL$ is defined to be the subsheaf of the sheaf of
continuous real-valued functions on $\Grg$, comprising all functions
whose restriction to every edge is locally a linear function $a+bx$,
$a$, $b\in \ZZ$.  So for every edge $e$, pullback gives a map $\tilde
e^*\PL \to \Lins$, where $\Lins$ is the constant sheaf $\ZZ\oplus\ZZ
x$ on $[0,1]$. The continuity condition shows therefore that the sheaf
$\PL$ is the kernel in the short exact sequence 
\begin{equation}
  \label{LABEL500}
  0 \to  \PL \to \prod_{e\in E} \tilde e_*\Lins \xrightarrow{\eval}
  \prod_{v\in V} v_*\bigl((\ZZ^{o\i(v)}\oplus \ZZ^{t\i(v)})/\mathrm{diag}(\ZZ)\bigr)
  \to 0.
\end{equation}
Here the map $\eval$ is determined by its stalks at vertices $v\in V$
\[
  \eval_v \colon\Bigl(\prod_{e\in E}\tilde e_*\Lins\Bigr)_{\!\!v}
  = (\ZZ\oplus \ZZ x)^{o\i(v)} \oplus (\ZZ\oplus\ZZ x)^{t\i(v)}
  \xrightarrow{\eval_0\oplus\eval_1}
  (\ZZ^{o\i(v)} \oplus \ZZ^{t\i(v)})/\mathrm{diag}(\ZZ)
\]
which is clearly surjective. (Recall that $\Gr$ is assumed to have no
isolated vertices, so that $o\i(v)\cup t\i(v)$ is nonempty.)

\begin{prop}
  Restriction to vertices gives an isomorphism
  $H^0(\Grg,\PL)\isomarrow \ZZ^V$, and $H^1(\Grg,\PL)=0$.
\end{prop}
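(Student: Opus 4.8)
The plan is to feed the short exact sequence \eqref{LABEL500} into the long exact cohomology sequence, compute the cohomology of the two outer sheaves, and reduce the statement to an explicit analysis of the evaluation map on global sections. First I would dispose of the right-hand term: each $v_*\bigl((\ZZ^{o\i(v)}\oplus\ZZ^{t\i(v)})/\mathrm{diag}(\ZZ)\bigr)$ is a skyscraper, hence flasque, and an arbitrary product of flasque sheaves is flasque (the restriction maps of a product are surjective because those of each factor are). Thus $\prod_{v}v_*(\cdots)$ is acyclic, with $H^0$ equal to $\prod_{v}(\ZZ^{o\i(v)}\oplus\ZZ^{t\i(v)})/\mathrm{diag}(\ZZ)$.

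Next I would compute the cohomology of the middle term $\mathcal{G}=\prod_{e}\tilde e_*\Lins$. The point is that $\mathcal{G}$ satisfies condition (C): its restriction to an open edge picks out the single factor $\Lins$, which is constant. Hence Proposition \ref{LABEL490}(b) applies and computes $H^i(\Grg,\mathcal{G})$ as the cohomology of $\prod_{v}\mathcal{G}_v\to\prod_{e}\mathcal{G}_e$. Local finiteness is essential here: only finitely many edges meet a neighbourhood of $v$, so the stalk of the product is the product of the stalks, namely $\mathcal{G}_v$ is a product of copies of $\ZZ\oplus\ZZ x$ indexed by the half-edges at $v$, while $\mathcal{G}_e=\ZZ\oplus\ZZ x$. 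The differential sends a family of germs to their differences along edges; since each half-edge occurs in exactly one such difference, one may prescribe the value on each edge freely, so the differential is surjective and $H^1(\Grg,\mathcal{G})=0$. For $H^0$ I would simply note that global sections commute with products and $H^0(\Grg,\tilde e_*\Lins)=\ZZ\oplus\ZZ x$, giving $H^0(\Grg,\mathcal{G})=\prod_{e}(\ZZ\oplus\ZZ x)$.

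With both outer computations in hand, the long exact sequence collapses to
\[
0 \to H^0(\Grg,\PL) \to \prod_{e}(\ZZ\oplus\ZZ x) \xrightarrow{\ \eval\ } \prod_{v}(\ZZ^{o\i(v)}\oplus\ZZ^{t\i(v)})/\mathrm{diag}(\ZZ) \to H^1(\Grg,\PL)\to 0,
\]
with $\eval$ the induced map on global sections, and I would finish by analysing this one map. Its kernel consists of families $(a_e+b_ex)_e$ whose endpoint values agree at every vertex; such a family glues to a continuous function that is linear on each edge, and sending it to its common value $c_v$ at each vertex identifies the kernel with $\ZZ^V$. This identification is exactly restriction to vertices, which gives the first assertion. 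For the second I would check surjectivity of $\eval$ on global sections directly: given a target, lift it to integers $\alpha_e$ (at $o(e)$) and $\gamma_e$ (at $t(e)$) for each edge and take $\alpha_e+(\gamma_e-\alpha_e)x$ on $e$; the two endpoints of distinct edges are prescribed independently, so this lifts any element, whence $H^1(\Grg,\PL)=\coker(\eval)=0$.

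The one genuinely delicate point is that sheaf cohomology does not commute with infinite products in general, so $H^1(\Grg,\prod_e\tilde e_*\Lins)=0$ cannot be read off factor by factor; the remedy is to route through Proposition \ref{LABEL490}(b), where local finiteness makes the relevant stalks \emph{finite} products and reduces the vanishing to the (exact) behaviour of products in $\mathbf{Ab}$. A second, lesser care is that surjectivity of $\eval$ as a map of sheaves (its stalks being surjective) does not by itself give surjectivity on global sections, so that has to be verified by the explicit lift above; loops require no separate treatment, since both the skyscraper argument and Proposition \ref{LABEL490} handle them uniformly.
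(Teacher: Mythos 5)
Your proposal is correct, and its skeleton is the same as the paper's: take the long exact cohomology sequence of \eqref{LABEL500}, observe that the two outer sheaves have vanishing $H^1$, and then check that $\eval$ is surjective on global sections (your explicit lift $\alpha_e+(\gamma_e-\alpha_e)x$ is the same computation the paper performs by rewriting the first map of its sequence \eqref{LABEL510} through the isomorphism $(\ZZ\oplus\ZZ x)^E\isom\ZZ^E\oplus\ZZ^E$ followed by the projection onto $\coker(\mu)$). Where you genuinely add something is at the step the paper disposes of in one clause (``the sheaves $\tilde e_*\ZZ$ and $v_*\ZZ$, along with their products, have no $H^1$''): as you say, $H^1$ does not commute with infinite products, so this cannot be read off factorwise. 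Your two-pronged justification is sound --- the skyscraper product is flasque (a product of flasque sheaves is flasque), while for $\prod_e\tilde e_*\Lins$ you verify condition (C) (restriction to an open subspace does commute with products, and distinct open edges meet no other closed edge) and then apply Proposition \ref{LABEL490}(b), with local finiteness guaranteeing that the stalk of the product at a vertex is the \emph{finite} product of half-edge contributions, so that surjectivity of the cellular differential is elementary. Your treatment of $H^0$ (kernel of $\eval$ identified with $\ZZ^V$ by restriction to vertices, using that local integral-linearity forbids interior breakpoints so a section is determined by its vertex values) is also a fuller version of what the paper declares ``clear from the definition.'' In short: same route, with the paper's one unproved assertion properly established.
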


\begin{proof}
  The first statement is clear from the definition of $\PL$. 

  The sheaves $\tilde e_*\ZZ$ and $v_*\ZZ$, along with their products,
  have no $H^1$, and so taking global sections of \eqref{LABEL500}
  gives an exact sequence
  \begin{equation}
    \label{LABEL510}
    (\ZZ\oplus\ZZ x)^E \to
    \prod_{v\in V} (\ZZ^{o\i(v)}\oplus \ZZ^{t\i(v)})/\mathrm{diag}(\ZZ)
    \to H^1(\Grg,\PL) \to 0.
  \end{equation}
  Let $(\phi_e)_{e\in E} \in (\ZZ\oplus \ZZ x)^E$ be a family of
  linear polynomials. Its image under the first map is a family of
  pairs $(a_v,b_v)_{v\in V}$ where
  \begin{align*}
    a_v\colon o\i(v) \to \ZZ,\quad & a_v(e)=\phi_e(0)\\
    b_v\colon t\i(v) \to \ZZ,\quad & b_v(e)=\phi_e(1).
  \end{align*}
  We have an isomorphism
  \[
    \ZZ^E \oplus \ZZ^E\isomarrow
    \prod_{v\in V} (\ZZ^{o\i(v)}\oplus \ZZ^{t\i(v)}) ,
    \quad
    (f,g) \mapsto (f|_{o\i(v)},g|_{t\i(v)})_{v\in V},\ f,g\colon E \to \ZZ
  \]
  in terms of which the diagonal becomes the map
  $\mu \colon \ZZ^V \to \ZZ^E\oplus \ZZ^E$,
  $\mu(h)=(h\circ o,h\circ t)$. Using this, the first map in
  \eqref{LABEL510} may be rewritten as the composite
  \[
    (\ZZ\oplus\ZZ x)^E \xrightarrow[\highsim]{(\eval_0,\ \eval_1)}
    \ZZ^E\oplus \ZZ^E \surject \coker(\mu).
  \]
  Therefore the map is surjective and so $H^1(\Grg,\PL)=0$.
\end{proof}

To fix signs, we define the incoming derivative of $h\in \PL_v$ along
the edge $e$ as follows: suppose $\tilde e^*h=a+bx$. Then the incoming
derivative is $-b$ if $v=o(e)$ and $b$ if $v=t(e)$. (If $o(e)=t(e)=v$
then there are two incoming derivatives, equal to $-b$ and $b$).

We define $\Harm\subset \PL$ to be the subsheaf of functions such that, at each
vertex, the sum of the incoming derivatives equals zero. (In \cite{MW1}
the tropical version of this sheaf is denoted $\underline L$.)  So
there is an exact sequence
\begin{equation}
  \label{LABEL520}
  0 \to \Harm \to \PL \xrightarrow{\diff}  \prod_{v\in V} v_*\ZZ
  \to 0 
\end{equation}
where $\diff=(\diff_v)_v$, $\diff_v\colon \PL_v \to \ZZ$ the sum of
the incoming derivatives at $v$. This is the graph-theoretic analogue
of the divisor sequence \eqref{LABEL020}.  Since $H^1(\Grg,\PL)=0$, we
deduce an exact sequence of cohomology
\begin{equation}
  \label{LABEL530}
  0 \to H^0(\Grg,\Harm) \to \ZZ^V=H^0(\Grg,\PL) \xrightarrow{\diff}
  \ZZ^V \xrightarrow{\delta} H^1(\Grg,\Harm) \to 0.
\end{equation}
\begin{prop}
  \label{LABEL540}
  The map $\diff\colon \ZZ^V \to \ZZ^V$ equals the Laplacian $\Box_0$,
  and $H^0(\Grg,\Harm)=\Ha^0(\Gr)=\ker\Box_0$.
\end{prop}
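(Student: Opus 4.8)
The plan is to compute the boundary map $\diff$ on global sections explicitly and match it term-by-term with $\Box_0$; the second assertion will then follow formally from exactness of \eqref{LABEL530}. First I would pin down the two copies of $\ZZ^V$ in \eqref{LABEL530}: the source is $H^0(\Grg,\PL)$, identified with $\ZZ^V$ by restriction to vertices (the preceding proposition), and the target is $H^0\bigl(\Grg,\prod_v v_*\ZZ\bigr)=\prod_{v\in V}\ZZ=\ZZ^V$. Given $f\in\ZZ^V$, the associated global section $h$ of $\PL$ is linear on each edge and interpolates the prescribed vertex values: $\tilde e^*h = f(o(e)) + b_e\,x$ with slope $b_e = f(t(e)) - f(o(e))$, this being the unique $\PL$-section realizing $f$.

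Next I would evaluate the incoming derivatives using the sign convention fixed just before \eqref{LABEL520}. Along an edge $e$ with $o(e)=v$ the slope is $b_e$, so the incoming derivative at $v$ is $-b_e = f(v)-f(t(e))$; along an edge $e$ with $t(e)=v$ the incoming derivative is $+b_e = f(v)-f(o(e))$ (a loop at $v$ contributes $-b_e$ and $+b_e$, summing to $0$). Summing over all edges meeting $v$,
\[
  \diff_v(h) = \sum_{o(e)=v}\bigl(f(v)-f(t(e))\bigr) + \sum_{t(e)=v}\bigl(f(v)-f(o(e))\bigr).
\]
On the other hand, Lemma \ref{LABEL280}(b) together with the explicit formula for $\Delta_0$ gives
\[
  (\Box_0 f)(v) = f(\Delta_0 v) = \sum_{t(e)=v}\bigl(f(v)-f(o(e))\bigr) + \sum_{o(e)=v}\bigl(f(v)-f(t(e))\bigr),
\]
which is the same expression. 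Hence $\diff = \Box_0$ as endomorphisms of $\ZZ^V$. The second statement is then immediate: by exactness of \eqref{LABEL530} one has $H^0(\Grg,\Harm)=\ker(\diff\colon\ZZ^V\to\ZZ^V)$, and since $\diff=\Box_0$ this equals $\ker(\Box_0)=\Ha^0(\Gr)$ by the definition of $\Ha^0$.

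I expect the only genuine obstacle to be sign bookkeeping: one must track the orientation convention (the coordinate $x$ runs from $o(e)$ at $0$ to $t(e)$ at $1$) and the convention defining the incoming derivative ($-b$ at $o(e)$, $+b$ at $t(e)$), and check that these are compatible with the conventions behind $\partial$, $\partial\adj$, and $\Box_0$. A secondary point worth isolating is that the section $h$ attached to $f$ may be taken linear, not merely piecewise linear, on each edge, so that a single slope $b_e$ controls the incoming derivatives at both ends; this is precisely what makes $\diff$ depend only on the vertex data $f$, and it is exactly what the identification $H^0(\Grg,\PL)\isomarrow\ZZ^V$ guarantees.
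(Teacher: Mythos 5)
Your proposal is correct and follows essentially the same route as the paper: the paper's proof likewise writes out $(\Box_0f)(v)=\sum_{o(e)=v}(f(v)-f(t(e)))+\sum_{t(e)=v}(f(v)-f(o(e)))$, observes that this is the sum of the incoming derivatives of $f$ viewed as a PL function on $\Grg$, and deduces $H^0(\Grg,\Harm)=\ker\Box_0$ from the exact sequence \eqref{LABEL530}. Your version merely makes explicit the sign bookkeeping and the linear-interpolation step that the paper leaves implicit.
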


\begin{proof}
  If $f\in \ZZ^V$ then for $v\in V$,
  \[
    (\Box_0f)(v)=\sum_{o(e)=v}(f(v)-f(t(e)))
    +\sum_{t(e)=v}(f(v)-f(o(e)))
  \]
  which is the sum of the incoming derivatives of $f$, viewed as a PL
  function on $\Grg$, at $v$. The second equality then follows from
  the definitions.
\end{proof}

The Picard group $\Pic(X)=H^1(X,\cO_X^\times)$ of a curve $X$
classifies isomorphism classes of $\cO_X^\times$--torsors on $X$. This
motivates the following definition for graphs.

\begin{defn}
  \label{LABEL550}
  \begin{enumerate}[\upshape(a)]
  \item A $\Harm$--torsor on $\Grg$ is a sheaf of sets $\cL$
    on $\Grg$ satisfying (C), together with an action $\Harm \times \cL \to \cL$,
    making each stalk $\cL_x$, $x\in\Grg$, a principal homogeneous
    space for the abelian group $\Harm_x$.
  \item $\Pic(\Grg)$ is the group of isomorphism classes of
    $\Harm$--torsors on $\Grg$.
  \end{enumerate}
\end{defn}
\noindent(This is equivalent to the definition of torsor found in,
e.g.,~\cite[Tag 02FO]{Stacks}.)  By standard sheaf theory (which we
recall in subsection \ref{LABEL910} below), there is a canonical
isomorphism
\[
\Pic(\Grg)\isom H^1(\Grg,\Harm).
\]

By the above discussion and Proposition \ref{LABEL540}, we have
the following, which is the graph-theoretic analogue of the
isomorphism $\Cl(X)\isomarrow \Pic(X)$ of Theorem \ref{LABEL060} for a
smooth projective curve $X$.
\begin{prop}
  The map $\delta$ in \eqref{LABEL530} induces an isomorphism
  \[
    \pushQED{\qed}
    \bar\delta\colon \Clhat(\Gr)= \ZZ^V/\Box_0(\ZZ^V) \isomarrow \Pic(\Grg).
    \qedhere
    \popQED
  \]
\end{prop}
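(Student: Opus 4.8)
The plan is to read the isomorphism off directly from the four-term exact sequence \eqref{LABEL530}, which has already been established from the short exact sequence of sheaves \eqref{LABEL520} together with the vanishing $H^1(\Grg,\PL)=0$. By exactness of \eqref{LABEL530}, the map $\delta\colon \ZZ^V \to H^1(\Grg,\Harm)$ is surjective, and its kernel is precisely the image of $\diff\colon \ZZ^V \to \ZZ^V$. By Proposition \ref{LABEL540} this map $\diff$ is the Laplacian $\Box_0$, so its image is $\Box_0(\ZZ^V)=\Prinhat(\Gr)$.

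Consequently $\delta$ factors through an isomorphism
\[
  \bar\delta\colon \ZZ^V/\Box_0(\ZZ^V) \isomarrow H^1(\Grg,\Harm).
\]
It then remains only to identify the two sides with the groups named in the statement. On the source, the definitions $\Divhat(\Gr)=\ZZ^V$ and $\Prinhat(\Gr)=\Box_0(\ZZ^V)$ give $\ZZ^V/\Box_0(\ZZ^V)=\Clhat(\Gr)$. On the target, the canonical isomorphism $\Pic(\Grg)\isom H^1(\Grg,\Harm)$ recalled just after Definition \ref{LABEL550} converts $H^1(\Grg,\Harm)$ into $\Pic(\Grg)$. Composing these identifications yields the desired $\bar\delta\colon \Clhat(\Gr)\isomarrow \Pic(\Grg)$.

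The argument is entirely formal once \eqref{LABEL530} is in hand, and I do not expect any genuine obstacle at this stage. The substantive inputs lie upstream: the exactness of \eqref{LABEL530} rests on the sheaf computation $H^1(\Grg,\PL)=0$, and the identification $\diff=\Box_0$ is the content of Proposition \ref{LABEL540}. Granting both, producing $\bar\delta$ is just the standard observation that a surjection with known kernel descends to an isomorphism from the corresponding quotient, combined with unwinding the definitions of $\Clhat(\Gr)$ and of $\Pic(\Grg)$.
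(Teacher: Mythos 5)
Your proposal is correct and is essentially the paper's own argument: the paper states this proposition without a separate proof precisely because, as you observe, it follows formally from the exactness of \eqref{LABEL530} (giving $\delta$ surjective with kernel $\im(\diff)$), the identification $\diff=\Box_0$ of Proposition \ref{LABEL540}, and the isomorphism $\Pic(\Grg)\isom H^1(\Grg,\Harm)$ recalled after Definition \ref{LABEL550}. There is nothing to add.
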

In terms of torsors on $\Grg$, $\delta$ is given as follows: let
$c\in \ZZ^V=H^0(\Grg,\prod_v v_*\ZZ)$. Then the corresponding
$\Harm$--torsor is the subsheaf of sets $\cL_c=\diff\i(c)\subset
\PL$. Explicitly, local sections of $\cL_c$ are PL-functions $f$ with
$\diff_v(f)=c(v)$ at every vertex $v$. If $c=\Box_0(g)$, for
$g\in\ZZ^V$ (viewed as a PL-function on $\Grg$), then addition of $g$
is an isomorphism $\cL_0=\Harm \isomarrow \cL_c$ of torsors.

The remaining sheaf, which we suggestively call $\Omega$, is the
kernel in the exact sequence
\begin{equation}
  \label{LABEL560}
  0 \to \Omega \to \prod_{e\in E} \tilde e_*\ZZ \xrightarrow{\underline{d}\adj}
  \prod_{v\in V} v_*\ZZ \to 0
\end{equation}
where $\underline{d}\adj$ is the unique map of sheaves which on global
sections equals $d\adj\colon\ZZ^E\to\ZZ^V$.  (Since $\Gr$ has no
isolated vertices, the sequence is exact at the right.) Passing to
cohomology and applying Proposition \ref{LABEL320}(h) we obtain:
\begin{prop}
  \label{LABEL570}
  $H^0(\Grg,\Omega)\isom \Ha^1(\Gr)=\ker d\adj\subset \ZZ^E$ and
  $H^1(\Grg,\Omega)\isom \coker d\adj$. If $\Gr$ is finite and
  connected, then
  $H^1(\Grg,\Omega)\isom \ZZ$. \qed
\end{prop}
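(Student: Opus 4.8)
The plan is to read off both cohomology groups directly from the long exact cohomology sequence of the defining short exact sequence \eqref{LABEL560}. For this I first need the cohomology of the two outer terms. The skyscraper sheaves $v_*\ZZ$ are flasque, as are their products, so $H^0(\Grg,\prod_v v_*\ZZ)=\ZZ^V$ and the higher cohomology vanishes. For the middle term, each $\tilde e\colon[0,1]\to\Grg$ is proper with fibres that are single points away from the endpoints (and finite at the endpoints, in the loop case), so $R^{>0}\tilde e_*\ZZ=0$ and hence $H^i(\Grg,\tilde e_*\ZZ)=H^i([0,1],\ZZ)$; this is $\ZZ$ for $i=0$ and $0$ otherwise. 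Since cohomology commutes with products, $H^0(\Grg,\prod_e\tilde e_*\ZZ)=\ZZ^E$ and, crucially, $H^1(\Grg,\prod_e\tilde e_*\ZZ)=0$. This last vanishing is exactly the fact already used in the proof that $H^1(\Grg,\PL)=0$: these pushforward sheaves and their products have no $H^1$.

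Next I feed these into the long exact sequence of \eqref{LABEL560}. Since the map on global sections induced by $\underline{d}\adj$ is by construction $d\adj\colon\ZZ^E\to\ZZ^V$, and since the vanishing of $H^1(\Grg,\prod_e\tilde e_*\ZZ)$ terminates the sequence on the right, I obtain
\[
0\to H^0(\Grg,\Omega)\to \ZZ^E\xrightarrow{\,d\adj\,}\ZZ^V\to H^1(\Grg,\Omega)\to 0.
\]
Reading this off immediately gives $H^0(\Grg,\Omega)=\ker d\adj=\Ha^1(\Gr)$ and $H^1(\Grg,\Omega)=\coker d\adj$, the first two assertions. For the final claim, when $\Gr$ is finite and connected Proposition \ref{LABEL320}(h) identifies $\im d\adj=C^0(\Gr)^0$ and gives $\coker d\adj\isom\ZZ$ (via the degree map), completing the proof.

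The argument is essentially a formal consequence of the long exact sequence, so the only point requiring genuine care is the vanishing $H^1(\Grg,\prod_e\tilde e_*\ZZ)=0$ in the locally finite, possibly infinite, setting --- this is where one must check both that cohomology commutes with the infinite product over $E$ and that each $H^1(\Grg,\tilde e_*\ZZ)$ vanishes, the loop case being the only one needing separate attention. Since this vanishing has already been established earlier, there is in fact no substantial obstacle remaining. An alternative, more self-contained route would be to note that $\Omega$ and both outer sheaves satisfy condition (C) and to compute everything through the two-term complexes $C^\bullet(\Gr,\cF)$ of Proposition \ref{LABEL490}(b), where the product over edges and the treatment of loops are built into the combinatorial description.
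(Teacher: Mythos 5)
Your proof is correct and is essentially the paper's own argument: the proposition is stated without a separate proof precisely because it follows by passing to cohomology in \eqref{LABEL560}, using the vanishing of $H^1$ for the products of pushforward sheaves already asserted in the proof that $H^1(\Grg,\PL)=0$, and then invoking Proposition \ref{LABEL320}(h) for the finite connected case. The one step you rightly flag --- that $H^1$ commutes with the infinite product over $E$, which is not automatic for sheaf cohomology --- is handled exactly by your alternative route: local finiteness makes $\prod_{e}\tilde e_*\ZZ$ a sheaf satisfying condition (C), so Proposition \ref{LABEL490}(b) computes its cohomology by a product of two-term complexes, each visibly with vanishing $H^1$.
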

For $\Gr$ connected, one may view the isomorphism
$H^1(\Grg,\Omega)\isom\ZZ$ as the analogue of Serre duality
$H^1(X,\Omega_{X})\isom\CC$.

Consider the ``edge derivative'' map
\[
  \ediff \colon \PL \to \prod_{e\in E}\tilde e_*\ZZ
\]
induced by the $x$-derivative map $\ZZ\oplus \ZZ x \to \ZZ$. Its
restriction to $\Harm$ fits into  a short exact
sequence
\begin{equation}
  \label{LABEL580}
  0 \to \ZZ \xrightarrow{\ q\ } \Harm \xrightarrow{\ediff} \Omega \to 0,
\end{equation}
where $q$ is the inclusion of the constant sheaf $\ZZ$,
and passing to cohomology gives a long exact sequence
\begin{equation}
  \label{LABEL590}
  H^0(\Grg,\Omega) \to  H^1(\Gr) \xrightarrow{q_*}
  \Pic(\Grg) \to H^1(\Grg,\Omega) \to 0
\end{equation}
to be regarded as the graph-theoretic analogue of the dlog sequence
\eqref{LABEL170}.  In terms of torsors, $q_*$ takes a $\ZZ$--torsor on
$\Grg$ to the $\Harm$--torsor obtained by extension of structure group
$\ZZ \hookrightarrow\Harm$.

\begin{defn}
  $\Pic^0(\Grg) \subset \Pic(\Grg)$ is the kernel of the map
  $\Pic(\Grg) \to H^1(\Grg,\Omega)$ in equation \eqref{LABEL590}.
\end{defn}

By Proposition \ref{LABEL570}, for $\Gr$ finite and connected we have
$\Pic(\Grg)/\Pic^0(\Grg)\isom \ZZ$, just as for curves. 

\begin{thm}
  \label{LABEL600}
  $\bar\delta(\Clhat^0(\Gr))= \Pic^0(\Grg)$, and the diagram
  \[
    \begin{tikzcd}
      \Clhat(\Gr) \arrow[r, "-\bar\delta", "\sim"']& \Pic(\Grg)
      \\
      \Clhat^0(\Gr) \arrow[u, hook]\arrow[r, "\lowsim"]
      &\Pic^0(\Grg) \arrow[u, hook] 
      \\
      & \P(\Gr) \arrow[u, "\wr", "q_*"']
      \arrow[ul, "\comp", "\isom"']
    \end{tikzcd}
  \]
  commutes.
\end{thm}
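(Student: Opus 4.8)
The plan is to deduce both assertions from a single $3\times 3$ commutative diagram of sheaves on $\Grg$ with exact rows and columns. I would take its middle row to be \eqref{LABEL520}, its bottom row to be \eqref{LABEL560}, its left column to be \eqref{LABEL580}, and its middle column to be
\[
  0\to\ZZ\xrightarrow{q'}\PL\xrightarrow{\ediff}\prod_{e\in E}\tilde e_*\ZZ\to0,
\]
where $q'$ is the inclusion of constants; this is exact because $\ker(\ediff)$ is the locally constant functions and any prescribed integer slopes can be integrated locally. The top row and right column are the trivial sequences $0\to\ZZ\xrightarrow{=}\ZZ\to0$ and $0\to\prod_v v_*\ZZ\xrightarrow{=}\prod_v v_*\ZZ$, so $q'$ restricts to the map $q$ of \eqref{LABEL580} on $\Harm$, and the only nonformal commutativity to check is the identity $\diff=\underline{d}\adj\circ\ediff\colon\PL\to\prod_v v_*\ZZ$. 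This is the local computation already implicit in Proposition \ref{LABEL540}: a section of $\PL$ of slope $b_e$ along $e$ has incoming derivative $-b_e$ at $o(e)$ and $+b_e$ at $t(e)$, and summing these at $v$ yields $(\underline{d}\adj(b_e)_e)(v)$.

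For the first assertion I would use naturality of connecting maps for the morphism of short exact sequences from the middle row to the bottom row given by $(\ediff,\ediff,\id)$, which yields $(\ediff)_*\circ\delta=\delta_\Omega$, where $\delta_\Omega\colon\ZZ^V\to H^1(\Grg,\Omega)$ is the connecting map of \eqref{LABEL560}. Since $\prod_e\tilde e_*\ZZ$ and $\prod_v v_*\ZZ$ have no $H^1$, the map $\delta_\Omega$ is simply the projection $\ZZ^V\twoheadrightarrow\coker(d\adj)\isom H^1(\Grg,\Omega)$ of Proposition \ref{LABEL570}, so $\ker(\delta_\Omega)=\im(d\adj)$. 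As $\delta$ is onto with kernel $\Box_0(\ZZ^V)$ and $\Pic^0(\Grg)=\ker((\ediff)_*)$, the identity $(\ediff)_*\circ\delta=\delta_\Omega$ gives $\delta^{-1}(\Pic^0(\Grg))=\ker(\delta_\Omega)=\im(d\adj)$, whence
\[
  \bar\delta\bigl(\Clhat^0(\Gr)\bigr)=\delta\bigl(\im(d\adj)\bigr)=\Pic^0(\Grg),
\]
using $\Clhat^0(\Gr)=\im(d\adj)/\Box_0(\ZZ^V)$ and surjectivity of $\delta$.

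The core of the commuting triangle is the sign-free identity
\[
  q_*\circ\partial_B=\delta\circ d\adj\colon\ZZ^E\longrightarrow\Pic(\Grg),
\]
where $\partial_B\colon\ZZ^E\to H^1(\Grg,\ZZ)=H^1(\Gr)$ is the connecting map of the middle column. To prove it I would exploit that the two connecting maps share the middle object $\PL$. Given $\omega\in\ZZ^E$, lift it along $\ediff$ to a \v{C}ech $0$-cochain $(g_v)$ for the cover $\{U_v(2/3)\}$, with $g_v$ a local $\PL$-function of slope $\omega(e)$ along each incident edge. The crucial point is that $\diff_v(g_v)=\sum_{t(e)=v}\omega(e)-\sum_{o(e)=v}\omega(e)=(d\adj\omega)(v)$, so the very same cochain $(g_v)$ is also a lift of $d\adj\omega$ along $\diff$. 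Hence $\partial_B(\omega)$ and $\delta(d\adj\omega)$ are represented by the identical coboundary $(g_w-g_v)$, whose entries are integer constants; as $q\colon\ZZ\hookrightarrow\Harm$ sends constants to constants, applying $q_*$ to $\partial_B(\omega)$ returns this same cocycle, giving the identity on the nose.

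It remains to feed this into the maps of the theorem. Recall that $\comp$ of Proposition \ref{LABEL410} sends the class $[\omega]$ in $\P(\Gr)$ to $[d\adj\omega]\in\Clhat^0(\Gr)$, that $\bar\delta([d\adj\omega])=\delta(d\adj\omega)$, and that $q_*$ on $\P(\Gr)$ is induced by $q_*\colon H^1(\Gr)\to\Pic(\Grg)$, well defined because $\ker q_*=\beta(\Ha^1(\Gr))$ by exactness of \eqref{LABEL590}. The one genuinely fiddly step --- and the main obstacle --- is matching $\partial_B$ with the canonical projection $\ZZ^E\twoheadrightarrow H^1(\Gr)$ built into these definitions. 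Reading off the lift above, the coboundary $(g_w-g_v)$ equals $-\omega(e)$ on an edge $e$ with $o(e)<t(e)$ and $+\omega(e)$ when $o(e)>t(e)$, i.e.\ it equals $-\sigma_e\omega(e)$ for the orientation sign $\sigma_e$ of Proposition \ref{LABEL490}; tracing through the \v{C}ech-to-simplicial identification of that proposition (which twists by $\sigma$) yields $\partial_B(\omega)=-[\omega]$ in $H^1(\Gr)$. Combined with the displayed identity this gives $q_*([\omega])=-\delta(d\adj\omega)=-\bar\delta(\comp([\omega]))$, precisely the commutativity of the triangle with the sign absorbed into the $-\bar\delta$ labelling the horizontal arrows. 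The only remaining technical point is the treatment of loops, for which the cover $\{U_v(2/3)\}$ must be refined using the radius-$1/3$ neighbourhoods of \eqref{LABEL480}; this is routine and does not affect any of the computations above.
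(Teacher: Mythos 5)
Your proof is correct, and while its computational core --- lifting an edge cochain to piecewise-linear germs of prescribed slope vanishing at each vertex and reading off the \v Cech coboundary --- is the same as the paper's, your scaffolding is genuinely different. The paper never forms your middle column $0\to\ZZ\to\PL\xrightarrow{\ediff}\prod_e\tilde e_*\ZZ\to 0$: it reduces everything at once to the single identity $\bar\delta\circ\comp=-q_*$, which yields the first assertion for free because $\comp$ and $q_*$ are isomorphisms onto $\Clhat^0(\Gr)$ and $\Pic^0(\Grg)$ respectively, and it verifies that identity by exhibiting one cochain $h=(h_v)$ with $\diff(h)=-d\adj g$ and coboundary the constant cocycle $g$. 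Your route buys two things. First, deriving $\bar\delta(\Clhat^0(\Gr))=\Pic^0(\Grg)$ from the naturality identity $(\ediff)_*\circ\delta=\delta_\Omega$ makes the first assertion independent of the triangle and of the fact that $q_*$ maps $\P(\Gr)$ isomorphically onto $\Pic^0(\Grg)$. Second, splitting the sign into the convention-independent shared-lift identity $q_*\circ\partial_B=\delta\circ d\adj$ plus the separate evaluation $\partial_B(\omega)=-[\omega]$ is more robust than the paper's one-shot computation; indeed your lift (slope $\omega(e)$ on every incident edge, value $0$ at $v$) is the correct one, whereas the paper's printed $h_v$ --- equal to $g(e)x$ rather than $-g(e)x$ on outgoing edges --- satisfies neither $\diff(h)=-d\adj g$ nor $d_{\PL}(h)=g$ as claimed, an evident sign typo that your bookkeeping automatically avoids. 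Two minor remarks: since Proposition \ref{LABEL490}(b) already asserts that the combinatorial complex computes cohomology compatibly with long exact sequences, you can evaluate $\partial_B$ directly in $C^\bullet(\Gr,\ZZ)$ and skip the ordered-cover/$\sigma$-twist detour entirely; and your appeal to exactness of \eqref{LABEL590} for $\ker q_*=\beta(\Ha^1(\Gr))$ tacitly identifies the connecting map $H^0(\Grg,\Omega)\to H^1(\Grg,\ZZ)$ with $\pm\beta$ --- the same identification the paper leaves implicit, and one your shared-lift trick would verify in a line by lifting $\omega\in\ker d\adj$ with the same germs $g_v$.
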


\begin{proof}
  Recall from Proposition \ref{LABEL410} that the isomorphism $\comp
  \colon \P(\Gr) \isomarrow \Clhat^0(\Gr)$ is induced by $d\adj\colon
  C^1(\Gr) \to C^0(\Gr)$. By definition, $q_*\colon \P(\Gr) \isomarrow
  \Pic^0(\Grg)$. So it is enough to show that $\bar\delta\circ \comp =
  -q_*$.  Let $g\in C^1(\Gr)=\ZZ^E$.  To compute the boundary
  $\delta(d\adj g)$, consider the \v Cech resolution from Proposition
  \ref{LABEL490}(b) for the exact sequence \eqref{LABEL520}:
  \begin{equation*}
    \begin{tikzcd}
       C^0(\Harm) \arrow[r] \arrow[d] &
       C^0(\PL) =\prod_{v\in V} \PL_v \arrow[r, "\diff"]
       \arrow[d, "{d_{\PL}}"] & \ZZ^V \arrow[d] \\
       C^1(\Harm) \arrow[r, "="] & C^1(\PL) = (\ZZ\oplus\ZZ x)^E
       \arrow[r] & 0\, .  
    \end{tikzcd}
  \end{equation*}
  Let  $h_v\in \PL_v$ to be the function such that
  \[
    \tilde e^*h_v=
    \begin{cases}
      g(e)(1-x) & \text{for $e\in t\i(v)$}\\
      g(e) x &\text{for $e\in o\i(v)$.} 
    \end{cases}
  \]
  Let $h=(h_v)\in C^0(\PL)$.  Then $\diff(h)= -d\adj g$, and
  $d_{\PL}(h)=g\in C^1(\PL)=C^1(\Harm)$.  So $\delta(-d\adj g)$ is the
  image of $g\in C^1(\Gr)$ under the composite map
  \[
    C^1(\Gr) \to H^1(\Gr)=H^1(\Grg,\ZZ) \to H^1(\Grg,\Harm)
  \]
  which is $q_*(g)$ as required.  
\end{proof}

\begin{rem}
  This proof has a natural interpretation in terms of
  torsors. Let $e\in E$, and consider $\hat e\in C^1(\Gr)$. Its image
  in $H^1(\Gr)=H^1(\Grg,\ZZ)$ is represented by the $\ZZ$--torsor $T$
  obtained from the trivial torsor by twisting by 1 along $e$. Under
  $q_*$ this maps to the $\Harm$--torsor $\cL$ obtained by twisting the
  trivial torsor by the constant function $1$ along $e$.
  
  In the other direction, $d\adj(\hat e)=\widehat{\partial(e)}$.  So
  $\delta(- d\adj(\hat e))$ is the class of the $\Harm$--torsor $\cL'$, which is
  the sheaf of PL-functions $f$ with $\diff_{t(e)}(f)=-1$,
  $\diff_{o(e)}=1$ and $\diff_v(f)=0$ for all other vertices
  $v$.

  There is an explicit isomorphism between $\cL'$ and $\cL$. On the
  complement of the edge $e$, they are both the trivial torsor. Write
  $u=o(e)$, $v=t(e)$, and set $U=U_u(2/3)$, $V=U_v(2/3)$. We write
  down an isomorphism over the open neighborhood $U\cup V$ of $e$.
  If $f$ is a section of $\cL'$ on $U\cup V$, then its restriction to
  $e$ is some linear function $a+bx$, whose incoming derivative at $u$
  is $-b$ and at $v$ is $b$. Let $f_u\colon U \to \RR$ be the
  function equal to $f$ except on $e\cap U$, where it equals $f+x$,
  and similarly $f_v\colon V \to \RR$ the function equal to $f-(1-x)$
  on $e\cap V$. Then the incoming derivative at $u$ of $f_u$ (resp.~at
  $v$ of $f_v$) along $e$ is $b-1$ (resp.~$b+1$), hence $f_u$ and
  $f_v$ are harmonic, and since their difference on $U\cap V$ is the
  constant function $1$, they define a local section of the torsor
  $\cL$.  So we have an isomorphism from $\cL'$ to $\cL$ over $U\cup V$,
  compatible with the trivial isomorphism on the complement of $e$.
\end{rem}

We have the following table of analogies between sheaves on curves and
on (the geometric realizations of) graphs. (Note that since
$V\subset \Grg$ is discrete, $\prod_vv_*\ZZ=\bigoplus_v v_*\ZZ$ as sheaves on $\Grg$.)
\\[1ex]
\begin{equation}
  \label{LABEL610}
  \begin{tabular}{|c|c|}
    \hline
    Curve $\vphantom{I^{I^{I^I}}}X$&Graph $\Gr$ \\
    \hline
    $\vphantom{I^{I^{I^I}}}\cK_X^\times$ & $\PL$ \\[1ex]
    $\underline{\Div}_X=\bigoplus_{x\in X}x_*\ZZ$ & $\prod_{v\in V}
                                                    v_*\ZZ$ \\[1ex]
    $\cO^\times_X$ & $\Harm$ \\[1ex]
    $\Omega^1_{X/\CC}$ & $\Omega$ \\[1ex]
    $\ 0 \to \cO_X^\times \to \cK_X^\times \xrightarrow{\div}
    \bigoplus_{x\in X} x_*\ZZ \to 0\ $
    &   $\ 0 \to \Harm \to \PL \xrightarrow{\diff} \prod_{v\in V} v_*\ZZ
      \to 0\ $  \\[1ex] 
    $0 \to \CC^\times \to \cO_X^\times
    \xrightarrow{\mathrm{dlog}} \Omega_X^1 \to 0$
    & $  0 \to \ZZ \to \Harm \xrightarrow{\diff_E} \Omega \to 0$\\[1ex] 
    \hline
  \end{tabular}
\end{equation}
\\[2ex]


\section{Generalized Jacobians and generalized Picard groups of
  graphs}
\label{LABEL620}

\subsection{Graphs with modulus}

\label{LABEL630}
Unless indicated otherwise, in this section $\Gr=(V,E,o,t)$ is a nonempty
locally finite graph.

Let $(w_i)_{i\in I}$ be a nonempty family of elements of $V$, and
write $\rho\colon I\to V$ for the map $i\mapsto w_i$. We assume that
for every $v\in V$, the set $I(v)\defeq\rho\i(v)\subset I$ is finite.
We then call the formal sum $\fm=\sum_{i\in I}w_i$ a \emph{modulus} on
$\Gr$.  Write $S=\supp(\fm)=\{w_i\}_{i\in I}$.  We say $\fm$ is
\emph{reduced} if the $w_i$ are distinct, and \emph{finite} if $S$
(or equivalently $I$) is finite.

\begin{defn}
  \label{LABEL640}
  Let $A$ be an abelian group and $\fm$ a modulus on $\Gr$.  An
  \emph{$\fm$-harmonic map} from $\Gr$ to the abelian group $A$ is a pair of
  maps $(f,g)\in A^V\oplus A^I$ such that
\begin{enumerate}[\upshape (a)]
\item
For all $v\in V\setminus S$, $(\Box_0f)(v)=0$.
\item
For all $v\in S$, $(\Box_0f)(v)
  =-\sum_{i\in I(v)}g(i)$.
\end{enumerate}
\end{defn}

Observe that the conditions (a) and (b) are
equivalent to the single condition that for all $v\in V$,
\[
(\Box_0f)(v)+\sum_{i\in I(v)}g(i)=0.
\]
\begin{prop}
If $\fm$ is a reduced modulus, then to give an $\fm$-harmonic map from
$\Gr$ to $A$ is equivalent to giving a map $h\colon V\to A$ satisfying
condition (a) of Definition \ref{LABEL640}. \qed
\end{prop}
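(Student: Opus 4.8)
The plan is to exhibit an explicit pair of mutually inverse maps between the set of $\fm$-harmonic maps $(f,g)\in A^V\oplus A^I$ and the set of maps $h\colon V\to A$ satisfying condition (a) of Definition~\ref{LABEL640}. The crucial consequence of reducedness is that $\rho\colon I\to V$, $i\mapsto w_i$, is injective; equivalently, $I(v)=\rho\i(v)$ is a singleton for each $v\in S$ and empty for each $v\in V\setminus S$. This collapses the sum $\sum_{i\in I(v)}g(i)$ appearing in condition (b) to a single term, and it is precisely this collapse that forces $g$ to be uniquely recoverable from $f$.

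The forward map is simply $(f,g)\mapsto f$. Since $(f,g)$ is $\fm$-harmonic, condition (a) holds for $f$ verbatim, so $f$ lies in the target set. For the backward map, given $h$ satisfying (a) I would set $g_h(i)=-(\Box_0 h)(w_i)$ for each $i\in I$ and send $h\mapsto(h,g_h)$. I must check that this pair is $\fm$-harmonic: condition (a) is the hypothesis on $h$, and for condition (b) I use that for $v\in S$ the fibre $I(v)$ is the singleton $\{i_v\}$ with $w_{i_v}=v$, so $\sum_{i\in I(v)}g_h(i)=g_h(i_v)=-(\Box_0 h)(v)$, which is exactly condition (b).

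Finally I would verify the two maps are mutually inverse. One composite sends $h\mapsto(h,g_h)\mapsto h$, which is the identity. For the other, starting from an $\fm$-harmonic $(f,g)$ I forget $g$ and reconstruct $g_f(i)=-(\Box_0 f)(w_i)$; condition (b) at the vertex $v=w_i$, together with the singleton identity $I(w_i)=\{i\}$, gives $(\Box_0 f)(w_i)=-g(i)$, whence $g_f=g$. Thus the two maps are inverse bijections. No genuine obstacle arises here; the only point requiring care is tracking the indexing so that the injectivity of $\rho$ is used correctly to identify $I(v)$ with a single index for each $v\in S$.
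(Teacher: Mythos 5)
Your proof is correct and is exactly the argument the paper has in mind: the paper states this proposition with no written proof (just \qed), treating it as immediate from the definitions, and your explicit bijection $h\mapsto(h,g_h)$ with $g_h(i)=-(\Box_0 h)(w_i)$, using injectivity of $\rho$ to collapse $\sum_{i\in I(v)}g(i)$ to a single term, is the obvious verification being left to the reader.
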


We next define the ray class groups of divisors and codivisors on
$\Gr$ modulo $\fm$ as follows. Define
\[
  \Div_\fm(\Gr)\defeq\Div(\Gr) \oplus \ZZ[I] = \ZZ[V]\oplus \ZZ[I]
\]
and, for $\Gr$ connected,
\[
  \Div_\fm^0(\Gr)\defeq\Div^0(\Gr) \oplus \ZZ[I] = \ZZ[V]_0\oplus \ZZ[I].
\]
Define also
\[
  \Prin_\fm(\Gr)\defeq \im\bigl( (\Delta_0,\rho^*)\colon
  \Div(\Gr) \xrightarrow{v \mapsto (\Delta_0(v),I(v))} \Div_\fm(\Gr).
\]
Here by abuse of notation we also write $I(v)\in \ZZ[V]$ for the sum $\sum_{\rho(i)=v}i$.
If $\Gr$ is connected, then $\Prin_\fm(\Gr) \subset \Div_\fm^0(\Gr)$.
\begin{defn}
  The \emph{ray class group modulo $\fm$} of divisors
  (resp.,~degree-zero divisors, for $\Gr$ connected) on $\Gr$ is
  $\Cl_\fm(\Gr) \defeq \Div_\fm(\Gr)/\Prin_\fm(\Gr)$
  (resp.~$\Cl_\fm^0(\Gr) \defeq \Div_\fm^0(\Gr)/\Prin_\fm(\Gr)$).
\end{defn}
By definition, the ray class group is universal for $\fm$-harmonic maps:

\begin{prop}
  \label{LABEL650}
  Let $A$ be an abelian group, $\fm$ a modulus on $\Gr$, and $v_0\in V$.
  There are isomorphisms
  \begin{align*}
    \{ \textup{$\fm$-harmonic maps $(f,g)$ from $\Gr$ to $A$} \}
    &\isom \Hom(\Cl_\fm(\Gr), A)
    \\\intertext{and, for $\Gr$ connected,}
    \{ \textup{$\fm$-harmonic maps $(f,g)$ with $f(v_0)=0$} \}
    &\isom \Hom(\Cl^0_\fm(\Gr), A)
  \end{align*}
  given by $(f,g)\mapsto f\oplus g$. \qed
\end{prop}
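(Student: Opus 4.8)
The plan is to deduce both isomorphisms from the universal property of the free abelian groups $\Div_\fm(\Gr)=\ZZ[V]\oplus\ZZ[I]$ and $\Div_\fm^0(\Gr)=\ZZ[V]_0\oplus\ZZ[I]$, reducing everything to the observation that the defining relations of $\Prin_\fm(\Gr)$ are exactly the $\fm$-harmonic equations. First I would record that a homomorphism $\Div_\fm(\Gr)\to A$ is the same thing as an arbitrary pair $(f,g)\in A^V\oplus A^I$, since $V\sqcup I$ is a basis; under this identification the homomorphism is precisely $f\oplus g$. Such a homomorphism factors through $\Cl_\fm(\Gr)=\Div_\fm(\Gr)/\Prin_\fm(\Gr)$ if and only if it kills the generators $(\Delta_0 v, I(v))$ of $\Prin_\fm(\Gr)$, that is, if and only if $f(\Delta_0 v)+\sum_{i\in I(v)}g(i)=0$ for every $v\in V$.

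The key identity is then $f(\Delta_0 v)=(\Box_0 f)(v)$, which is nothing but the relation $\Box_0=\tr{\Delta_0}$: concretely it is Lemma \ref{LABEL280}(b) applied to the $\ZZ$-linear extension $f\colon\ZZ[V]\to A$. The lemma is stated for $\ZZ$-coefficients, but the content is the purely formal fact that $\Box_0\otimes\id_A$ acts on $A^V=\Hom(\ZZ[V],A)$ by precomposition with $\Delta_0$, so it holds verbatim over any $A$. With this, the condition that $f\oplus g$ kill $\Prin_\fm(\Gr)$ becomes $(\Box_0 f)(v)+\sum_{i\in I(v)}g(i)=0$ for all $v$, which is exactly the single reformulated $\fm$-harmonic condition following Definition \ref{LABEL640}. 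This proves the first isomorphism.

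For the degree-zero statement the argument carries over, save for one change: the identification of $\Hom(\Div_\fm^0(\Gr),A)$. Since $\Gr$ is connected, $\Prin_\fm(\Gr)\subset\Div_\fm^0(\Gr)$, so $\Cl_\fm^0(\Gr)$ makes sense. Using that $\{v-v_0\mid v\ne v_0\}$ is a basis of $\ZZ[V]_0$, a homomorphism $\ZZ[V]_0\to A$ corresponds bijectively to a function $f\colon V\to A$ normalized by $f(v_0)=0$, via $\phi(v-v_0)=f(v)$; together with an arbitrary $g\colon I\to A$ this recovers every homomorphism $\Div_\fm^0(\Gr)\to A$ as $f\oplus g$ with $f(v_0)=0$. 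Because each $\Delta_0 v$ lies in $\ZZ[V]_0$, the normalized extension of $f$ agrees with $\phi$ on the generators $(\Delta_0 v, I(v))$, so the factorization condition through $\Cl_\fm^0(\Gr)$ is again exactly the $\fm$-harmonic condition, now for $(f,g)$ with $f(v_0)=0$.

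The content here is entirely bookkeeping, and I expect no genuine obstacle. The one point deserving care is the degree-zero identification: one must check that passing to the basis $\{v-v_0\}$ is compatible with the relation $f(\Delta_0 v)=(\Box_0 f)(v)$. This works precisely because $\Delta_0 v$ has degree zero and hence lies in $\ZZ[V]_0$, so the additive constant left ambiguous in $f$ never enters the harmonic equations.
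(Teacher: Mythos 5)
Your proposal is correct and follows exactly the route the paper intends: the paper states Proposition \ref{LABEL650} with no proof beyond the remark ``By definition, the ray class group is universal for $\fm$-harmonic maps,'' and your bookkeeping --- identifying $\Hom(\Div_\fm(\Gr),A)$ with pairs $(f,g)$, killing the generators $(\Delta_0 v, I(v))$ of $\Prin_\fm(\Gr)$ via the transpose identity $f(\Delta_0 v)=(\Box_0 f)(v)$, and using the basis $\{v-v_0\}$ of $\ZZ[V]_0$ for the degree-zero case --- is precisely the verification the authors leave implicit. Your care about the degree-zero normalization (that $\Delta_0 v\in\ZZ[V]_0$ makes the ambiguity in the constant harmless) is the one genuinely checkable point, and you handle it correctly.
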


Likewise, we define
\[
  \Divhat_\fm(\Gr)=\Divhat(\Gr)\oplus \ZZ^I = \ZZ^V\oplus \ZZ^I,
\]
and for $\Gr$ connected,
\[
  \Divhat^0_\fm(\Gr) = \Divhat^0(\Gr) \oplus \ZZ^I
\]
(where $\Divhat^0(\Gr)$ is as in section \ref{LABEL330}). The corresponding
codivisor ray class groups are
\[
  \Clhat_\fm(\Gr)=\Divhat_\fm(\Gr)/\Prin_\fm(\Gr) \supset
  \Clhat^0_\fm(\Gr)=\Divhat^0_\fm(\Gr)/\Prin_\fm(\Gr),
\]
where
\[
  \Prinhat_\fm(\Gr) = \im\bigl((\Box_0,\tr\rho)\colon \ZZ^V
  \xrightarrow{f\mapsto (\oBox_0(f),f\circ\rho)}
  \Divhat_\fm(\Gr) = \ZZ^V\oplus \ZZ^I\bigr) \subset
  \Divhat^0_\fm(\Gr).
\]
There is an obvious map
$\iota_\fm\colon\Div_\fm(\Gr)\hookrightarrow\ \Divhat_\fm(\Gr)$ which
is the direct sum of $\iota_0\colon \ZZ[V] \to \ZZ^V$ and the
inclusion $\ZZ[I] \hookrightarrow \ZZ^I$. For $\Gr$ connected,
$\iota_\fm(\Div^0_\fm(\Gr))\subset \Divhat^0_\fm(\Gr)$.

\begin{prop}
  We have $\iota_\fm(\Prin_\fm(\Gr)) \subset \Prinhat_\fm(\Gr)$, inducing
  homomorphisms $\Cl_\fm(\Gr) \to \Clhat_\fm(\Gr)$ and (for $\Gr$ connected)
  $\Cl_\fm(\Gr)^0 \to \Clhat_\fm^0(\Gr)$. Both maps are isomorphisms if $\Gr$ is finite.  
\end{prop}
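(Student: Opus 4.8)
The plan is to reduce all three assertions to a single commutativity identity between the two ``principal divisor'' maps. Writing $\iota_0\colon\ZZ[V]\to\ZZ^V$ for the characteristic-function map (so that $\iota_0(v)=\hat v$), I would first establish
\[
  \iota_\fm\circ(\Delta_0,\rho^*)=(\Box_0,\tr\rho)\circ\iota_0
  \qquad\text{as maps }\ZZ[V]\to\Divhat_\fm(\Gr).
\]
Granting this, the inclusion $\iota_\fm(\Prin_\fm(\Gr))\subset\Prinhat_\fm(\Gr)$ follows at once, since $\Prin_\fm(\Gr)=\im(\Delta_0,\rho^*)$ while $\Prinhat_\fm(\Gr)=\im(\Box_0,\tr\rho)$.

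To prove the identity I would evaluate both sides on a vertex $v$ and compare the two components of $\Divhat_\fm(\Gr)=\ZZ^V\oplus\ZZ^I$. On the $\ZZ^V$-component the needed equality $\iota_0(\Delta_0 v)=\Box_0(\iota_0 v)$ is precisely Lemma \ref{LABEL280}(a). On the $\ZZ^I$-component, the left-hand side sends $v$ to the image of $I(v)=\sum_{\rho(i)=v}i$ under $\ZZ[I]\hookrightarrow\ZZ^I$, i.e.\ the characteristic function of the fibre $\rho\i(v)\subset I$, while the right-hand side sends $v$ to $\hat v\circ\rho$; since $(\hat v\circ\rho)(i)=\hat v(w_i)$ equals $1$ exactly when $\rho(i)=v$, this too is the characteristic function of $\rho\i(v)$, so the components agree.

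With the identity in hand, $\iota_\fm$ descends to the asserted homomorphism $\Cl_\fm(\Gr)\to\Clhat_\fm(\Gr)$, and for $\Gr$ connected the inclusion $\iota_\fm(\Div^0_\fm(\Gr))\subset\Divhat^0_\fm(\Gr)$ already noted gives its restriction $\Cl^0_\fm(\Gr)\to\Clhat^0_\fm(\Gr)$. For the isomorphism claims I would argue: if $\Gr$ is finite then $\iota_0\colon\ZZ[V]\isomarrow\ZZ^V$ and $\ZZ[I]\isomarrow\ZZ^I$, so $\iota_\fm$ is an isomorphism of the ambient groups; and since $\iota_0$ is surjective, the identity upgrades the inclusion to the equality $\iota_\fm(\Prin_\fm(\Gr))=\im((\Box_0,\tr\rho)\circ\iota_0)=\Prinhat_\fm(\Gr)$. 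Thus $\iota_\fm$ induces an isomorphism $\Cl_\fm(\Gr)\isomarrow\Clhat_\fm(\Gr)$. When $\Gr$ is additionally connected, $\iota_0$ restricts to an isomorphism $\ZZ[V]_0\isomarrow\ZZ^{V,0}=\Divhat^0(\Gr)$, so that $\iota_\fm\colon\Div^0_\fm(\Gr)\isomarrow\Divhat^0_\fm(\Gr)$ carries $\Prin_\fm(\Gr)$ onto $\Prinhat_\fm(\Gr)$, yielding $\Cl^0_\fm(\Gr)\isomarrow\Clhat^0_\fm(\Gr)$.

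I do not anticipate a genuine obstacle here: the argument is formal once the commutativity identity is verified, and the only step needing attention is the $\ZZ^I$-component of that identity, where one must match the abuse-of-notation element $I(v)$---whose image in $\ZZ^I$ is the indicator of the fibre $\rho\i(v)$---with the pullback $\tr\rho(\hat v)=\hat v\circ\rho$.
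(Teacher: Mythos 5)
Your proof is correct and follows essentially the same route as the paper: the paper likewise reduces everything to Lemma \ref{LABEL280} for the Laplacian component together with the commutativity of the square matching $v\mapsto I(v)$ against $\tr\rho$, verified by noting that both composites send $v$ to the characteristic function $\mathbf{1}_{I(v)}\in\ZZ^I$. The only difference is that you spell out the finite-case isomorphism deduction (surjectivity of $\iota_0$ upgrading the inclusion $\iota_\fm(\Prin_\fm(\Gr))\subset\Prinhat_\fm(\Gr)$ to an equality, and the restriction to degree-zero parts), which the paper leaves implicit.
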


\begin{rem*}
  If $\Gr$ is infinite, the map $\Cl_\fm(\Gr) \to \Clhat_\fm(\Gr)$
  need not be either injective or surjective.
\end{rem*}

\begin{proof}
  Applying Lemma \ref{LABEL280}, it is enough to show that the square
  \[
  \begin{tikzcd}
    \ZZ[V] \arrow[r, hookrightarrow] \arrow[d, "{v\mapsto I(v)}"]
    & \ZZ^V \arrow[d, "\tr\rho"]
    \\
    \ZZ[I] \arrow[r, hookrightarrow]
    & \ZZ^I
  \end{tikzcd}
  \]
  commutes, and it is easy to check that the image of $v\in \ZZ[V]$ in
  either direction is the characteristic function $\mathbf{1}_{I(v)}
  \in \ZZ^V$.
\end{proof}
If $\fm$ is reduced then there are alternative descriptions of
$\Cl_\fm(\Gr)$ and $\Clhat_\fm(\Gr)$. As $\rho$ is a bijection onto
$S$, the maps $\rho^* \colon \ZZ[V] \to \ZZ[I]$,
$\tr\rho\colon \ZZ^V \to \ZZ^I$ are surjective with kernels
$\ZZ[V\setminus S]$ and $\ZZ^{V\setminus S}$ respectively. This gives:

\begin{prop}
  \label{LABEL660}
  Suppose that $\fm$ is reduced. There are isomorphisms
  \[
    \frac{\ZZ[V]}{\Delta_0(\ZZ[V\setminus S])} \isom \Cl_\fm(\Gr),\quad
    \frac{\ZZ^V}{\Box_0(\ZZ^{V\setminus S})} \isom \Clhat_\fm(\Gr),
  \]
  and (for $\Gr$ connected)  
  \[
    \frac{\ZZ[V]_0}{\Delta_0(\ZZ[V\setminus S])} \isom \Cl^0_\fm(\Gr),\quad
    \frac{\ZZ^{V,0}}{\Box_0(\ZZ^{V\setminus S})} \isom \Clhat_\fm^0(\Gr).\qed
  \]
\end{prop}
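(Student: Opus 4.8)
The plan is to deduce all four isomorphisms from one elementary observation about cokernels of maps into a direct sum. First I would record the following lemma: if $f\colon C\to A$ and $g\colon C\to B$ are homomorphisms of abelian groups with $g$ surjective, then the composite $A\hookrightarrow A\oplus B\to (A\oplus B)/\im(f,g)$ is surjective with kernel $f(\ker g)$, and therefore induces an isomorphism $A/f(\ker g)\isom (A\oplus B)/\im(f,g)$. The proof is a two-line diagram chase: for surjectivity, given $(a,b)$ choose $c\in C$ with $g(c)=b$, so that $(a,b)-(f(c),g(c))=(a-f(c),0)$ already lies in the image of $A$; and $(a,0)$ lies in $\im(f,g)$ precisely when there is $c\in\ker g$ with $f(c)=a$, i.e.\ when $a\in f(\ker g)$.

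Next I would invoke the surjectivity statements recorded immediately before the Proposition: since $\fm$ is reduced, $\rho$ is a bijection onto $S$, so $\rho^*\colon \ZZ[V]\to\ZZ[I]$ and $\tr\rho\colon \ZZ^V\to\ZZ^I$ are surjective with kernels $\ZZ[V\setminus S]$ and $\ZZ^{V\setminus S}$ respectively. These are exactly the surjective maps ``$g$'' required by the lemma in the divisor and codivisor cases, and this is the only place the reducedness hypothesis enters.

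The four isomorphisms then follow by applying the lemma with the appropriate data. For $\Cl_\fm(\Gr)$ take $A=\ZZ[V]$, $B=\ZZ[I]$, $C=\ZZ[V]$, $f=\Delta_0$, $g=\rho^*$, so that $\im(f,g)=\Prin_\fm(\Gr)$ and $f(\ker g)=\Delta_0(\ZZ[V\setminus S])$. For $\Clhat_\fm(\Gr)$ take $A=\ZZ^V$, $B=\ZZ^I$, $C=\ZZ^V$, $f=\Box_0$, $g=\tr\rho$, giving $f(\ker g)=\Box_0(\ZZ^{V\setminus S})$. The two degree-zero statements are the same computation on the degree-zero summand: for $\Gr$ connected one has $\im\Delta_0\subset \ZZ[V]_0$, so taking $A=\ZZ[V]_0$ (with $f=\Delta_0$ viewed as landing there) yields $\Cl^0_\fm(\Gr)\isom \ZZ[V]_0/\Delta_0(\ZZ[V\setminus S])$; and since $\Box_0=d\adj d$ has image in $\Divhat^0(\Gr)=\im d\adj$, taking $A=\Divhat^0(\Gr)$ yields $\Clhat^0_\fm(\Gr)\isom \Divhat^0(\Gr)/\Box_0(\ZZ^{V\setminus S})$, which is $\ZZ^{V,0}/\Box_0(\ZZ^{V\setminus S})$ by Proposition \ref{LABEL320}(h) when $\Gr$ is finite.

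The computation is essentially routine; the only points needing care — and hence the ``main obstacle,'' such as it is — are the bookkeeping checks that in the degree-zero cases the codomain $A$ genuinely contains the image of $f$ (this requires connectedness for $\Delta_0$, and the identity $\Box_0=d\adj d$ together with the definition $\Divhat^0(\Gr)=\im d\adj$ for $\Box_0$), and the correct identification of the kernels of $\rho^*$ and $\tr\rho$, which is precisely where reducedness of $\fm$ is used.
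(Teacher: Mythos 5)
Your proposal is correct and matches the paper's argument: the paper offers no proof beyond the observation (stated immediately before the Proposition) that reducedness makes $\rho^*$ and $\tr\rho$ surjective with kernels $\ZZ[V\setminus S]$ and $\ZZ^{V\setminus S}$, and your cokernel lemma is exactly the elementary diagram chase that observation leaves implicit. Your extra care in the degree-zero codivisor case (using $\Box_0=d\adj d$, $\Divhat^0(\Gr)=\im d\adj$, and Proposition \ref{LABEL320}(h) to identify this with $\ZZ^{V,0}$ for $\Gr$ finite and connected) is a welcome clarification of a point the paper glosses over.
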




\begin{rem}
  \label{LABEL670}
  Although we allow moduli on graphs to be non-reduced (since this can
  happen for the graphs arising in the study \cite{jrs} of N\'eron models of
  generalized Jacobians) this notion does not carry any genuinely new
  information. Suppose that $\Gr$ is connected. Let $\fm$ be the modulus given by a family of vertices
  $\rho\colon I \to V$, and let $\fm_0$ be the corresponding reduced
  modulus, given by $S=\rho(I)\subset V$. Then from the definitions
  one sees easily that there is a co-Cartesian diagram
  \[
    \begin{tikzcd}
      \ZZ[S] \arrow[r] \arrow[d,hookrightarrow, "v\mapsto I(v)"'] &
      \Cl^0_{\fm_0}(\Gr) \arrow[d,hookrightarrow]\\
      \ZZ[I] \arrow[r] & \Cl^0_\fm(\Gr)
    \end{tikzcd}
  \]
  in which the horizontal arrows are induced by the inclusions
  $\ZZ[S]\hookrightarrow \Div^0_{\fm_0}(\Gr)$, $\ZZ[I] \hookrightarrow
  \Div^0_\fm(\Gr)$. So the ray class groups $\Cl^0_\fm(\Gr)$ all come
  by pushout from those with reduced modulus. Likewise for codivisor
  ray class groups: there is a co-Cartesian square
  \[
    \begin{tikzcd}
      \ZZ^S \arrow[r] \arrow[d,hookrightarrow, "\tr\rho"']&
      \Clhat^0_{\fm_0}(\Gr) \arrow[d,hookrightarrow]\\
      \ZZ^I \arrow[r] & \Clhat^0_\fm(\Gr)\,.
    \end{tikzcd}
  \]
\end{rem}

\subsection{The extended graph \texorpdfstring{$\Grm$}{Gr\_m}}
\label{LABEL680}
Let $X$ be a (projective, smooth, irreducible) complex curve, and
$\fm=\sum P_i$ a reduced modulus. Let $X_\fm$ be the singular curve obtained
from $X$ by identifying the points of $\fm$ (with linearly independent
tangent directions along the branches). Then one knows that the
generalized Jacobian of $X$ with respect to $\fm$ is the Picard
variety of $X_\fm$, and the cohomology $H^1(X_\fm,\ZZ)$  equals the
relative cohomology $H^1(X,\{ P_i \};\ZZ)$. This motivates the following construction for graphs.

For the rest of this subsection we
consider only finite moduli. (We will discuss general moduli at the end of subsection \ref{LABEL1150} below.)

\begin{defn}
\label{LABEL690}
(cf.~\cite[Sect.~2.4]{jrs}) Let $\Gr$ be a graph, $\fm$ a finite
modulus.  Let $\star$ denote an auxiliary vertex.  Set
\begin{equation*}
V_\fm=V\sqcup \{\star\}\quad\textup{and}\quad
E_\fm=E\sqcup\{e_i\mid i\in I\}
\end{equation*}
with $o(e_i)=\star$ and $t(e_i)=w_i$ for $i\in I$.  Define $\Grm$
to be the graph with vertices $V_\fm$ and edges $E_\fm$.
\end{defn}
For example, if $\Gr$ is the graph on the left in Figure 1
with modulus $\fm=w_1+w_2$, $I=\{1,2\}$, and $S=\{w_1,w_2\}$,
 then $\Gr_{\fm}$ is the graph on the right:

\tikzset{middlearrow/.style={
        decoration={markings,
            mark= at position 0.5 with {\arrow{#1}} ,
        },
        postaction={decorate}
    }
}

\begin{figure}[ht]
\scalebox{1.2}{
\begin{tikzpicture}
\draw[black, thick, middlearrow={>}] (0,0) --(3.2,0);
\draw[black, thick, middlearrow={>}] (3.2,0) -- (1.6,1.15);
\draw[black, thick, middlearrow={>}] (1.6,1.15) --(0,0);
\node [left] at (0,0) {\tiny $w_1$};
\node[right] at (3.2,0) {\tiny $w_2$};
\node[above] at (1.6,1.15) {\tiny $v$};
\fill[black] (1.6, 1.15) circle (.08cm);
\fill[black] (0,0) circle (.08cm);
\fill[black] (3.2,0) circle (.08cm);
\node[above] at (2.4, .58) {\tiny $h$};
\node[above] at (.8, .58) {\tiny $f$};
\node[below] at (1.6, 0) {\tiny $g$};
\end{tikzpicture}
}\hspace{1em}
\scalebox{1.2}{
\begin{tikzpicture}
\draw [black, thick, middlearrow={>}] (1.6,2.2) ..controls (.3,1.85) and
(0,.8) ..(0,0);
\draw [black, thick, middlearrow={>}] (1.6,2.2) ..controls (2.9,1.85) and
(3.2,.8) ..(3.2,0);
\draw[middlearrow={>}] (0,0) --(3.2,0);
\draw[middlearrow={>}] (3.2,0) -- (1.6,1.15);
\draw[middlearrow={>}] (1.6,1.15) --(0,0);
\node [left] at (0,0) {\tiny $w_1$};
\node[right] at (3.2,0) {\tiny $w_2\,\, .$};
\node[above] at (1.6,1.15) {\tiny $v$};
\node[above] at (1.6,2.2) {\footnotesize $\star$};
\fill[black] (1.6,2.2) circle (.08cm);
\fill[black] (1.6, 1.15) circle (.08cm);
\fill[black] (0,0) circle (.08cm);
\fill[black] (3.2,0) circle (.08cm);
\node[left,black] at (.4, 1.6) {\tiny $e_1$};
\node[right,black] at (2.8, 1.6) {\tiny $e_2$};
\node[above] at (2.4, .58) {\tiny $h$};
\node[above] at (.8, .58) {\tiny $f$};
\node[below] at (1.6, 0) {\tiny $g$};
\end{tikzpicture}
}
\caption{The graphs $\Gr$ and $\Gr_{\fm}$}
\end{figure}
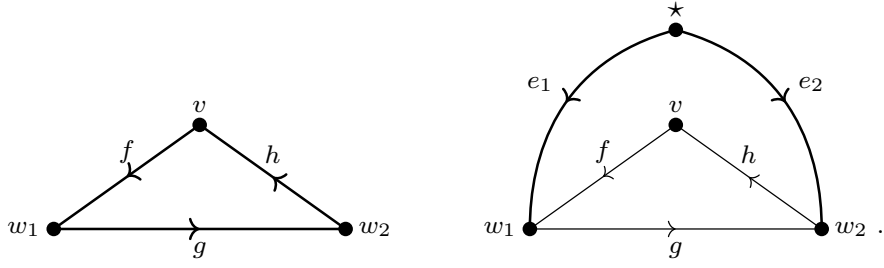
We shall write $\partial_\gfm$, $d_\gfm$, $\iota_\gfm$ etc.~for
the maps defined earlier with $\Gr$ replaced by $\Grm$. 
Explicitly, the complex $C_\bullet(\Grm)$ is
\begin{align*}
C_0(\Grm)=\ZZ[V] & \oplus \ZZ\xleftarrow{\partial_\gfm}
C_1(\Grm)=\ZZ[E]\oplus \ZZ[I]\text{ with}\\
\partial_\gfm(e,0)& =(\partial e, 0)\text{ and }
\partial_\gfm(0,i)=(w_i, -1).
\end{align*}
The adjoint $\partial_\gfm\adjp\colon C_0(\Grm)\to
C_1(\Grm)$ is given by
\begin{equation*}
\partial_\gfm\adjp (v,0)=\bigl(\partial\adj v ,\sum_{i\in I(v)}i\bigr),\quad
\partial_\gfm\adjp(0,1)=\bigl(0,-\sum_{i\in I}i\bigr)
\end{equation*}
which is well defined as $I$ is finite.

The cochain complex $C^\bull(\Grm)$ is
\begin{equation}
  \label{LABEL700}
  C^0(\Grm)=\ZZ^V \oplus \ZZ\xrightarrow{\ d_\gfm\ }
  C^1(\Grm)=\ZZ^E\oplus \ZZ^I,\quad
  d_\gfm(f,a)=(df,\tr{\rho} f - a).
\end{equation}
The associated ``adjoint'' of $d_\gfm$ is the map 
\begin{equation}
  \label{LABEL710}
  d_\gfm\adjp \defeq\tr(\partial_\gfm\adjp)\colon C^1(\Grm)\to C^0(\Grm),\quad
  d_\gfm\adjp(f,g)=\bigl(d\adj f + g \circ \rho^{-1}, -\sum_{i\in
    I}g(i)\bigr).
\end{equation}
Denote also by $\rho\colon\ZZ[I] \to \ZZ[V]$ the linear extension of
$\rho$, and write $r=\tr\rho\colon \ZZ^V \to \ZZ^I$ for its
transpose. As the fibres of $\rho$ are finite, the map
\[
  r\adj \colon \ZZ^I \to \ZZ^V,\quad
  r\adj(f)(v)= \sum_{i\in I(v)} f(i)
\]
is well defined.

\begin{prop}
 \label{LABEL720}
 \begin{enumerate}[\upshape(a)]
 \item  $\im(d_\gfm)=\{ (df,f\circ \rho) \mid f \in C^0(\Gr) \}$.
 \item Suppose that $\Gr$ is finite. Then 
   \[
     \im(\partial\adjp_\gfm) =  \sum_{v\in V} \ZZ(\partial\adj v,
     I(v)) \subset  \ZZ[E] \oplus \ZZ[I]
   \]
   and
   \[
     \ker(d_\fm\adjp) = \{ (f,g) \in \ZZ^E \oplus \ZZ^I \mid d\adj f +
     g\circ\rho\i = 0 \}.
   \]
 \end{enumerate}
\end{prop}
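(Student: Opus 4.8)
The plan is to treat each assertion by exhibiting it as the claim that a single ``extra'' generator, or a single ``extra'' defining equation, is redundant, with the redundancy in every case coming from one global cancellation identity. For (a) I would normalize the value at the auxiliary vertex $\star$. Given $(f,a)\in C^0(\Grm)=\ZZ^V\oplus\ZZ$, set $g=f-a\mathbf{1}_V\in\ZZ^V$, where $\mathbf{1}_V$ is the constant function $1$. Since the coboundary $d$ annihilates constants we have $dg=df$, and since $\tr\rho$ carries a constant function to the constant function of the same value we have $\tr\rho g=\tr\rho f-a$. Comparing with the formula $d_\gfm(f,a)=(df,\tr\rho f-a)$ of \eqref{LABEL700} gives $d_\gfm(f,a)=(dg,\tr\rho g)=(dg,g\circ\rho)$, so $\im(d_\gfm)\subseteq\{(dg,g\circ\rho)\mid g\in C^0(\Gr)\}$; the reverse inclusion is immediate on taking $a=0$. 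This proves (a).

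For the first statement of (b), I note that $\im(\partial\adjp_\gfm)$ is generated by the images of the basis of $C_0(\Grm)=\ZZ[V]\oplus\ZZ$, namely $\partial\adjp_\gfm(v,0)=(\partial\adj v,I(v))$ for $v\in V$ together with the single extra generator $\partial\adjp_\gfm(0,1)=(0,-\sum_{i\in I}i)$. The assertion is exactly that this last generator is superfluous, and I would deduce it from the identity $\sum_{v\in V}(\partial\adj v,I(v))=(0,\sum_{i\in I}i)$. Its first coordinate vanishes because $\sum_{v\in V}\partial\adj v=0$: each edge $e$ contributes $+e$ at $t(e)$ and $-e$ at $o(e)$, and these cancel in the total sum over $V$. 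Its second coordinate equals $\sum_{i\in I}i$ because the fibres $I(v)=\rho\i(v)$ partition $I$. Hence $(0,-\sum_{i\in I}i)\in\sum_{v\in V}\ZZ(\partial\adj v,I(v))$, and the two subgroups coincide.

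For the second statement of (b), recall from \eqref{LABEL710} that $d_\gfm\adjp(f,g)=(d\adj f+g\circ\rho\i,\,-\sum_{i\in I}g(i))$, so $\ker(d_\gfm\adjp)$ is cut out by the two conditions $d\adj f+g\circ\rho\i=0$ and $\sum_{i\in I}g(i)=0$; it therefore suffices to show the first forces the second. I would apply the total augmentation $\sum_{v\in V}$ to the $\ZZ^V$-valued equation $d\adj f+g\circ\rho\i=0$. Transposing the formula for $\partial\adj$ gives $(d\adj f)(v)=\sum_{t(e)=v}f(e)-\sum_{o(e)=v}f(e)$, whence $\sum_{v\in V}(d\adj f)(v)=0$ by the same edge cancellation as above, while $\sum_{v\in V}(g\circ\rho\i)(v)=\sum_{v\in V}\sum_{i\in I(v)}g(i)=\sum_{i\in I}g(i)$ since the $I(v)$ partition $I$. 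Summing the equation over $v\in V$ thus yields $\sum_{i\in I}g(i)=0$, as needed, giving the claimed kernel.

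All the computations are routine; the one genuine point---and the place where the hypothesis that $\Gr$ is finite is used in (b)---is the cancellation identity $\sum_{v\in V}\partial\adj v=0$, equivalently $\sum_{v\in V}(d\adj f)(v)=0$, which requires the sum over $V$ to be finite. Both redundancies in (b) are instances of it, so the real content of the proof is simply to recognize that this single global identity makes both the extra generator and the extra kernel equation disappear.
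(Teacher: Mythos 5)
Your proof is correct, and two of its three computations coincide with the paper's. For (a), the paper notes that $\im(d_\gfm)=\{(df,f\circ\rho)\mid f\in C^0(\Gr)\}+\ZZ(0,1_I)$ and absorbs the extra generator by observing $(d1_V,\,1_V\circ\rho)=(0,1_I)$; your substitution $g=f-a\mathbf{1}_V$ is the same observation run in the opposite direction. For the kernel statement in (b), the paper argues exactly as you do: $\sum_{v\in V}(d\adj f)(v)=0$, so the equation $d\adj f+g\circ\rho\i=0$ (correctly read, as you do, as $(d\adj f)(v)+\sum_{i\in I(v)}g(i)=0$, which is the right interpretation also for non-reduced $\fm$) forces $\sum_{i\in I}g(i)=0$. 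Where you genuinely diverge is the first statement of (b). The paper proves it by transport of structure: by Lemma \ref{LABEL280}, for finite $\Gr$ the isomorphisms $\iota_p$ intertwine $\partial\adjp_\gfm$ with $d_\gfm$, so $\iota_1$ carries $\im(\partial\adjp_\gfm)$ isomorphically onto $\im(d_\gfm)$, and the description from (a) translates into the stated one. You instead argue directly on generators, showing the image of $(0,1)\in C_0(\Grm)$ is redundant via the cancellation identity $\sum_{v\in V}(\partial\adj v,\,I(v))=(0,\sum_{i\in I}i)$. Your route is more elementary --- it needs no chain--cochain comparison --- and it isolates precisely where finiteness of $\Gr$ enters, namely the global sum over $V$, exhibiting both halves of (b) as instances of one identity (indeed, for infinite graphs the paper later notes that $\ker d_L\adjp$ and $\Ha^1(\Gr_\fm)$ genuinely differ, consistent with your diagnosis). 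What the paper's route buys is uniformity: the $\iota$-transfer is the standing mechanism of the whole paper (it drives Lemma \ref{LABEL290}, Proposition \ref{LABEL420}, Theorem \ref{LABEL830}), and it makes visible that the two displays in (b) are the chain-side and dual-side avatars of the single computation already done in (a).
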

\begin{proof}
  (a) By \eqref{LABEL700},
  \[
    \im(d_\gfm) = \{ (df,f\circ \rho) \mid f \in C^0(\Gr) \} + \ZZ(0,1_I)
  \]
  and if $f=1_V\in C^0(\Gr)$ then $(df,f\circ\rho)=(0,1_I)$.

  (b) If $\Gr$ is finite then by Lemma \ref{LABEL280} $\iota_1$
  induces an isomorphism
  $\im\partial\adjp_\gfm \isomarrow \im d_\gfm$, and the first
  equality then follows from (a). For the second, we know that
  $\sum_{v\in V} d\adj f(v)=0$ and so if $(f,g)$ belongs to the right
  hand expression, then automatically $\sum_{i\in I}g(i)=0$.
\end{proof}

We have an inclusion of graphs $j\colon \Gr \hookrightarrow
\Grm$. On the chain and cochain complexes this induces obvious
inclusions and projections:
\begin{align*}
C_i(\Gr)\stackrel{j_\ast}{\hooklongrightarrow}&C_i(\Grm)\stackrel
{j^!}{\twoheadlongrightarrow}C_i(\Gr)\quad (i=0,1)\\
C^i(\Gr)\stackrel{j_!}{\hooklongrightarrow}&C^i(\Grm)\stackrel
{j^\ast}{\twoheadlongrightarrow}C^i(\Gr).
\end{align*}
From the formulae for the various (co-)boundary maps, we have the
following compatibilities.
\begin{lem}
  \label{LABEL730}
  \begin{enumerate}[\upshape (a)]
  \item
    $j_\ast\circ\partial =\partial_\gfm \circ j_\ast\colon C_1(\Gr)
    \to C_0(\Grm)$, 
  \item
    $j^\ast\circ d_\gfm=d\circ j^\ast\colon C^0(\Grm)\to
    C^1(\Gr)$,
  \item
    $j^!\circ\partial_\gfm\adjp=\partial\adj \circ j^!\colon 
    C_0(\Grm)\to C_1(\Gr) $,
  \item
    $j_!\circ d\adj=d_\gfm\adjp\circ j_!\colon 
    C^1(\Gr)\to C^0(\Grm)$.
  \item $j_!\circ \iota_\Gr = \iota_\gfm\circ j_* \colon C_1(\Gr) \to C^1(\Grg)$. 
    \qed
  \end{enumerate}
\end{lem}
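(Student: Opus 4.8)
The plan is to verify all five identities by direct computation on generators, using the explicit formulae for $\partial_\gfm$, $d_\gfm$, $\partial_\gfm\adjp$ and $d_\gfm\adjp$ recorded above together with the explicit shapes of the four comparison maps. Under the identifications $C_0(\Grm)=\ZZ[V]\oplus\ZZ$, $C_1(\Grm)=\ZZ[E]\oplus\ZZ[I]$, $C^0(\Grm)=\ZZ^V\oplus\ZZ$ and $C^1(\Grm)=\ZZ^E\oplus\ZZ^I$, each of $j_\ast$ and $j_!$ is the inclusion $x\mapsto(x,0)$ of the first summand, while each of $j^!$ and $j^\ast$ is the projection $(x,c)\mapsto x$ onto the first summand. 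So the whole content is to track how the extra $\star$-coordinate and the extra $I$-coordinate are created and destroyed by the various (co)boundary maps, bearing in mind that $j_\ast$, $j^!$, $j_!$, $j^\ast$ are each used in both degrees $0$ and $1$.

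First I would dispose of (a) and (b), which are immediate because $j_\ast$ and $j^\ast$ only involve the first summand, and $\partial_\gfm$, $d_\gfm$ restrict there to the original $\partial$, $d$. Indeed, on $e\in\ZZ[E]$ both sides of (a) equal $(\partial e,0)$, since $\partial_\gfm(e,0)=(\partial e,0)$; and on $(f,a)\in C^0(\Grm)$ both sides of (b) equal $df$, since $j^\ast(df,\tr\rho f-a)=df$. Statement (e) is equally quick: for $e\in E\subset E_\fm$ the characteristic function $\iota_\gfm(e,0)$ in $\ZZ^E\oplus\ZZ^I$ is $(\hat e,0)=j_!(\iota_\Gr e)$.

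The two identities that genuinely exercise the extra coordinates are (c) and (d). For (c) I would test $\partial_\gfm\adjp$ on the two kinds of generator of $C_0(\Grm)$: on $(v,0)$ it outputs $(\partial\adj v,\sum_{i\in I(v)}i)$, so the degree-$1$ projection $j^!$ sends this to $\partial\adj v=\partial\adj\,j^!(v,0)$; on the generator $(0,1)$ attached to $\star$ it outputs $(0,-\sum_{i\in I}i)$, which $j^!$ annihilates, matching $\partial\adj\,j^!(0,1)=\partial\adj 0=0$. For (d) I would feed $g\in\ZZ^E$ through both sides: $j_!(d\adj g)=(d\adj g,0)$, while $d_\gfm\adjp(g,0)=(d\adj g,0)$ because the $\ZZ^I$-input coordinate is $0$, so both the $\rho^{-1}$-term and the $\ZZ$-summand in the formula for $d_\gfm\adjp$ vanish.

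I do not expect any genuine obstacle: every identity collapses to comparing two coordinate expressions that agree by inspection. The only part meriting attention is confirming, in (c) and (d), that the nonzero $I$- and $\star$-components manufactured by the adjoint maps are exactly those killed by $j^!$ or absent from the range of $j_!$; once this bookkeeping is checked the lemma follows.
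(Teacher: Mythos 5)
Your proposal is correct and is exactly the verification the paper intends: the paper states the lemma with no written proof (``From the formulae for the various (co-)boundary maps, we have the following compatibilities''), and your coordinate-by-coordinate check on generators, using the explicit formulae for $\partial_\gfm$, $d_\gfm$, $\partial_\gfm\adjp$ and $d_\gfm\adjp$, is precisely the suppressed computation. All five identities are checked accurately, including the key bookkeeping in (c) and (d) that the extra $\star$- and $I$-components produced by the adjoints are exactly those killed by $j^!$ or forced to vanish when the $\ZZ^I$-input is zero.
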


\subsection{The Abel--Jacobi map for finite graphs with modulus}
\label{LABEL740}

In this subsection, $\Gr$ will be a connected finite graph with
modulus $\fm$.  (For infinite graphs, see Corollary \ref{LABEL1170}
and the discussion which follows it.)

Let $\alpha_\fm\colon H_1(\Gr)\hookrightarrow \Ha^1(\Grm)\Zdual$ be the
map induced by $j_\ast\colon C_1(\Gr)\hookrightarrow C_1(\Grm)=
C^1(\Grm)\Zdual$.  Note that $\alpha_\fm$ is injective 
since $\alpha_\Grm\colon H_1(\Grm)\hookrightarrow \Ha^1(\Grm)\Zdual$ is injective
by Proposition \ref{LABEL320}(f).
\begin{defn}
The generalized Jacobian of conductor $\fm$ (or $\fm$-Jacobian)
of $\Gr$ is $\J_\fm(\Gr)=\Ha^1(\Grm)\Zdual/\alpha_\fm(H_1(\Gr))$.
\end{defn}
As for the generalized Jacobians of curves, $\J_\fm(\Gr)$ is an
extension, in this case of a finite abelian group by a free abelian
group. 
\begin{prop}
  There is an exact sequence
  \[
    0 \to \ZZ \xrightarrow{1\mapsto \sum_{i\in I}i} \ZZ[I] \to \J_\fm(\Gr) \to \J(\Gr) \to 0
  \]
  in which the surjection is induced by the transpose of the inclusion
  $j_! \colon \Ha^1(\Gr) \hookrightarrow \Ha^1(\Grm)$. 
\end{prop}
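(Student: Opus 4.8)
The plan is to exhibit the map $\J_\fm(\Gr)\to\J(\Gr)$ as the induced map of cokernels in a comparison of the two defining presentations, to compute its kernel by dualizing a short exact sequence of harmonic lattices, and then to splice the two resulting short exact sequences. First I would check that $j_!$ genuinely restricts to an inclusion $\Ha^1(\Gr)\hookrightarrow\Ha^1(\Grm)$: Lemma~\ref{LABEL730}(d) gives $d_\gfm\adjp\circ j_! = j_!\circ d\adj$, so $j_!$ carries $\ker d\adj$ into $\ker d_\gfm\adjp$, and concretely it is $\omega\mapsto(\omega,0)$ under $\ZZ^E\hookrightarrow \ZZ^E\oplus\ZZ^I$. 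Using the description $\Ha^1(\Grm)=\ker d_\gfm\adjp = \{(f,g)\mid d\adj f + r\adj g = 0\}$ of Proposition~\ref{LABEL720}(b), the second projection $\mathrm{pr}_2\colon\Ha^1(\Grm)\to\ZZ^I$, $(f,g)\mapsto g$, has kernel exactly $j_!(\Ha^1(\Gr))$, and its image consists of those $g$ for which $d\adj f = -r\adj g$ is solvable. Since $\Gr$ is finite and connected, Proposition~\ref{LABEL320}(h) gives $\im d\adj = \ZZ^{V,0}$, and as $\sum_v(r\adj g)(v)=\sum_{i\in I}g(i)$ the solvability condition is precisely $\sum_i g(i)=0$. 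This produces a short exact sequence of free groups
\[
  0 \to \Ha^1(\Gr) \xrightarrow{\,j_!\,} \Ha^1(\Grm)
  \xrightarrow{\,\mathrm{pr}_2\,} \ZZ^{I,0} \to 0.
\]

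Next I would dualize. As $\ZZ^{I,0}$ is free, applying $\Hom(-,\ZZ)$ preserves exactness and yields
\[
  0 \to (\ZZ^{I,0})\Zdual \to \Ha^1(\Grm)\Zdual
  \xrightarrow{\,\tr{j_!}\,} \Ha^1(\Gr)\Zdual \to 0,
\]
so $\tr{j_!}$ is surjective with kernel $(\ZZ^{I,0})\Zdual$. Dualizing the tautological sequence $0\to\ZZ^{I,0}\to\ZZ^I\xrightarrow{\mathrm{sum}}\ZZ\to 0$ and identifying $(\ZZ^I)\Zdual=\ZZ[I]$ gives $0\to\ZZ\xrightarrow{1\mapsto\sum_i i}\ZZ[I]\to(\ZZ^{I,0})\Zdual\to 0$, where the generator of $\ZZ$ maps to $\sum_i i$ because that element of $\ZZ[I]$ represents the functional $\mathrm{sum}$.

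Finally I would compare the two presentations via the snake lemma. The key compatibility is $\tr{j_!}\circ\alpha_\fm=\alpha$: for $\gamma\in H_1(\Gr)$ both sides are the functional $\omega\mapsto\apairing[1]{j_*\gamma}{j_!\omega}=\apairing[1]{\gamma}{\omega}$ on $\Ha^1(\Gr)$, using that $j_*$, $j_!$ are the inclusions of the first summands of $C_1(\Grm)$, $C^1(\Grm)$ and that $\alpha(\gamma)$ is evaluation $\omega\mapsto\apairing[1]\gamma\omega$. Thus the diagram
\[
  \begin{tikzcd}
    0 \arrow[r] & H_1(\Gr) \arrow[r, "\alpha_\fm"] \arrow[d, "\id"]
      & \Ha^1(\Grm)\Zdual \arrow[r] \arrow[d, "\tr{j_!}"]
      & \J_\fm(\Gr) \arrow[r] \arrow[d] & 0 \\
    0 \arrow[r] & H_1(\Gr) \arrow[r, "\alpha"]
      & \Ha^1(\Gr)\Zdual \arrow[r] & \J(\Gr) \arrow[r] & 0
  \end{tikzcd}
\]
commutes with exact rows, both $\alpha_\fm$ and $\alpha$ being injective ($\alpha_\fm$ as noted when it was introduced, $\alpha$ by Proposition~\ref{LABEL320}(f)). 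Since $\id$ has trivial kernel and cokernel and $\tr{j_!}$ is surjective, the snake lemma collapses to $0\to\ker\tr{j_!}\to\J_\fm(\Gr)\to\J(\Gr)\to 0$, with the surjection induced by $\tr{j_!}$. Splicing this with $0\to\ZZ\to\ZZ[I]\to\ker\tr{j_!}=(\ZZ^{I,0})\Zdual\to 0$ yields the asserted four-term sequence.

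The only delicate point, and the crux of the statement, is identifying the image of $\mathrm{pr}_2$ as $\ZZ^{I,0}$ rather than all of $\ZZ^I$: this is where connectedness enters, through $\im d\adj=\ZZ^{V,0}$, and it is exactly what forces the kernel term $\ZZ[I]/\ZZ(\sum_i i)$ of rank $\#I-1$, matching the extension structure of the generalized Jacobian. Everything else is formal homological algebra.
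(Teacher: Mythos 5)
Your proof is correct, and it supplies a genuinely direct argument where the paper gives essentially none: the authors remark only that the statement ``follows easily from the definitions'' and instead prove the Picard-side analogue in Proposition \ref{LABEL1055} --- the extension of $\P(\Gr)$ by $\ZZ^I/\ZZ$ obtained from the surjection of cochain complexes $j^*\colon C^\bullet(\Grm)\to C^\bullet(\Gr)$, whose kernel $[\ZZ\xrightarrow{\mathrm{diag}}\ZZ^I]$ is visible at the level of complexes with no harmonicity needed --- the two statements corresponding under the duality isomorphisms of Theorem \ref{LABEL830} (note $\ZZ[I]/\ZZ\cdot\sum_i i\isom \ZZ^I/\ZZ$ for finite $I$). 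You work instead on the Jacobian side: the short exact sequence $0\to\Ha^1(\Gr)\xrightarrow{j_!}\Ha^1(\Grm)\xrightarrow{\mathrm{pr}_2}\ZZ^{I,0}\to 0$, whose only nontrivial ingredient is the identification $\im\mathrm{pr}_2=\ZZ^{I,0}$, which you establish correctly from Proposition \ref{LABEL720}(b) together with $\im d\adj=\ZZ^{V,0}$ (Proposition \ref{LABEL320}(h)) --- and you rightly flag this as the point where finiteness and connectedness enter; after that, dualization is harmless since all groups are finitely generated free, and your snake-lemma comparison is complete, including the two hypotheses that actually need checking, namely the compatibility $\tr{(j_!)}\circ\alpha_\fm=\alpha$ (your pairing computation $\apairing[1]{j_*\gamma}{j_!\omega}=\apairing[1]{\gamma}{\omega}$ is exactly right) and the injectivity of $\alpha$ and $\alpha_\fm$. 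Compared with the paper's route, yours trades a trivial kernel computation plus transport across $\zeta_\fm$ for a slightly harder cokernel computation done in place; what it buys is that the extension sits explicitly inside $\Ha^1(\Grm)\Zdual$, and in particular your construction identifies the (unnamed) middle map $\ZZ[I]\to\J_\fm(\Gr)$ as $i\mapsto\varepsilon_i=\AJt_\fm(0,i)$, i.e.\ compatibly with the Abel--Jacobi map of Definition \ref{LABEL750} --- a useful observation worth stating explicitly, since the Proposition's statement pins down only the surjection.
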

This follows easily from the definitions; we prove an equivalent
result in Proposition \ref{LABEL1060} below.

For $D\in \Div^0(\Gr)$, let $\gamma_D\in C_1(\Gr)$ with
$\partial(\gamma_D)=D$ as in Section 2.3, and let $\lambda_{\fm,D}\in
\Ha^1(\Grm)\Zdual$ be the map $\omega\mapsto
\omega(j_*\gamma_D)$. Then $\lambda_{\fm,D}$ is well-defined modulo
$\alpha_\fm(H_1(\Gr))$.

For $i\in I$, let $\varepsilon_i\in \Ha^1(\Grm)\Zdual$ be the map
$\omega\mapsto \omega(e_i)$, with $e_i\in E_\fm$ as in Definition
\ref{LABEL690}. 
\begin{defn}
  \label{LABEL750}
  The \emph{Abel--Jacobi map} $\AJt_\fm\colon \Div^0_\fm(\Gr)\to
  \J_\fm(\Gr)$ is the homomorphism given by
  \[
    \AJt_\fm\colon
    \begin{cases}(D,0)\hspace*{-0.7em}&\mapsto \lambda_{\fm,D}\\
      (0,i)\hspace*{-0.7em}&\mapsto \varepsilon_i
    \end{cases}
    \quad \mathrm{mod}\ \alpha_\fm(H_1(\Gr))
  \]
  for $D\in\Div^0(\Gr)$ and $i\in I$.
\end{defn}


\begin{thm} \label{LABEL760}
  (Abel's Theorem for the generalized Jacobian.) $\AJt_\fm$ induces an
  isomorphism $\AJ_\fm\colon\Cl^0_\fm(\Gr)\isomarrow \J_\fm(\Gr)$.
\end{thm}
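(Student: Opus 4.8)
The plan is to run the same argument as in the proof of Abel's Theorem \ref{LABEL400}, but with $\Gr$ systematically replaced by the extended graph $\Grm$, so that the diagram \eqref{LABEL390} is replaced by its $\fm$-analogue. Concretely, I would introduce the surjection
\[
  b\colon C_1(\Grm)=\ZZ[E]\oplus\ZZ[I] \longrightarrow
  \Div^0_\fm(\Gr)=\ZZ[V]_0\oplus\ZZ[I],\qquad (\gamma,c)\mapsto(\partial\gamma,c),
\]
which is onto because $\Gr$ is connected (so $\partial\colon\ZZ[E]\to\ZZ[V]_0$ is surjective) and is the identity on $\ZZ[I]$; its kernel is exactly $j_*H_1(\Gr)$. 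Writing $\pi_\fm\colon C_1(\Grm)=C^1(\Grm)\Zdual \twoheadrightarrow \Ha^1(\Grm)\Zdual$ for the transpose of the inclusion $\Ha^1(\Grm)\hookrightarrow C^1(\Grm)$, and $p$ for the quotient map onto $\J_\fm(\Gr)$, I would assemble the commutative ladder with exact rows
\[
  \begin{tikzcd}
    0 \arrow[r] & H_1(\Gr) \arrow[r, "j_*"] \arrow[d, "\id"] & C_1(\Grm) \arrow[r, "b"] \arrow[d, "\pi_\fm", twoheadrightarrow] & \Div^0_\fm(\Gr) \arrow[r] \arrow[d, "\AJt_\fm"] & 0 \\
    0 \arrow[r] & H_1(\Gr) \arrow[r, "\alpha_\fm"] & \Ha^1(\Grm)\Zdual \arrow[r, "p"] & \J_\fm(\Gr) \arrow[r] & 0
  \end{tikzcd}
\]

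The verifications are then routine. Commutativity of the left square is the definition $\alpha_\fm=\pi_\fm\circ j_*$. For the right square it suffices to check $\AJt_\fm\circ b = p\circ\pi_\fm$ on the two kinds of generators of $C_1(\Grm)$: for an edge $(\gamma,0)$ of $\Gr$ one may take $\gamma$ itself as an $\fm$-lift of $\partial\gamma$, so $\AJt_\fm(b(\gamma,0))=\lambda_{\fm,\partial\gamma}=\pi_\fm(j_*\gamma)$, while for $(0,e_i)$ one has $b(0,e_i)=(0,i)$ and $\AJt_\fm(0,i)=\varepsilon_i=\pi_\fm(e_i)$. The top row is exact by the computation of $\ker b$ above, and $\pi_\fm$ is surjective because $\Ha^1(\Grm)=\ker(d_\fm\adjp)$ is a direct summand of $C^1(\Grm)$ by Lemma \ref{LABEL290}.

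With the ladder in place, the snake lemma (left vertical an isomorphism, middle vertical surjective) shows at once that $\AJt_\fm$ is surjective and that $b$ restricts to an isomorphism $\ker\pi_\fm \isomarrow \ker\AJt_\fm$, so that $\ker\AJt_\fm=b(\ker\pi_\fm)$. Since $\Ha^1(\Grm)=\ker(d_\fm\adjp)$, Lemma \ref{LABEL300}(b) applied to $\Grm$ identifies $\ker\pi_\fm$, the annihilator of $\Ha^1(\Grm)$ in $C_1(\Grm)$, with $\im(\partial_\fm\adjp)$. I would then push this through $b$: by Proposition \ref{LABEL720}(b) one has $\im(\partial_\fm\adjp)=\sum_{v\in V}\ZZ\,(\partial\adj v, I(v))$, whence
\[
  b(\ker\pi_\fm)=\sum_{v\in V}\ZZ\,(\partial\partial\adj v, I(v))
  =\sum_{v\in V}\ZZ\,(\Delta_0 v, I(v))=\Prin_\fm(\Gr),
\]
which is exactly the kernel required, giving the induced isomorphism $\AJ_\fm\colon\Cl^0_\fm(\Gr)\isomarrow\J_\fm(\Gr)$.

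The one genuinely new point — and the step I expect to be the main obstacle — is this last identification $b(\im\partial_\fm\adjp)=\Prin_\fm(\Gr)$, where the auxiliary vertex $\star$ intervenes. A priori $\im(\partial_\fm\adjp)$ is generated not only by the $(\partial\adj v, I(v))$ but also by $\partial_\fm\adjp(0,1)=(0,-\sum_{i\in I}i)$, and one must check that this extra $\star$-generator is redundant. This is precisely the content of Proposition \ref{LABEL720}(b), which rests on the identity $\sum_{v\in V}\partial\adj v=0$ for $\Gr$: it forces $(0,\sum_{i\in I}i)=\sum_{v\in V}(\partial\adj v, I(v))$ to lie already in the span. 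Once this bookkeeping around $\star$ is in hand, the argument is formally identical to the modulus-free case of Theorem \ref{LABEL400}.
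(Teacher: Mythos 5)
Your proposal is correct and is essentially identical to the paper's own proof: the same ladder \eqref{LABEL770} with $\pi_\fm$ the transpose of the split inclusion $\Ha^1(\Grm)\subset C^1(\Grm)$, the snake lemma, the identification $\ker\pi_\fm=\im\partial\adjp_\gfm$ via Lemma \ref{LABEL300}(b), and Proposition \ref{LABEL720}(b) to compute $b(\im\partial\adjp_\gfm)=\Prin_\fm(\Gr)$. Your closing observation about the redundancy of the $\star$-generator $(0,-\sum_{i\in I}i)$ correctly isolates the one point where finiteness enters, and it is exactly the content of Proposition \ref{LABEL720}(b) as the paper uses it.
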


\begin{proof}
  We imitate the proof of Theorem \ref{LABEL400}.
  Consider the diagram with exact rows
  \begin{equation}
  \label{LABEL770}
  \begin{tikzcd}
    0 \arrow[r]
    &\ker\partial_\Gr \arrow[d, equals] \arrow[r, "{(\incl,0)}"]
    & C_1(\Gr)\oplus \ZZ[I]=C_1(\Grm)
      \arrow[d, twoheadrightarrow, "{\pi_\fm}"]
      \arrow[r, "{\partial\oplus\id}"]
    & \Div^0_\fm(\Gr) \arrow[r] \arrow[d, "{\AJt_\fm}"]
    & 0
    \\
    0 \arrow[r]
    & H_1(\Gr) \arrow[r, "{\alpha_\fm}"]
    & \Ha^1(\Grm)\Zdual \arrow[r]
    & \J_\fm(\Gr)=\dfrac{\Ha^1(\Grm)\Zdual}{\alpha_\fm(H_1(\Gr))} \arrow[r]
    & 0
  \end{tikzcd}
  \end{equation}
  where $\pi_\fm$ is induced by the split inclusion $\Ha^1(\Grm)
  \subset C^1(\Grm)= C^1(\Gr)\oplus \ZZ^I$, cf.~equations
  \eqref{LABEL700} and \eqref{LABEL710}. By the definitions, the diagram
  commutes. So $\AJt_\fm$ is surjective, and $\ker\pi_\fm \isomarrow
  \ker\AJt_\fm$. But
  \[
    \ker(\pi_\fm) = \{ \gamma \in C_1(\Grm)\mid
    \text{for all $\omega\in \Ha^1(\Grm)$,
      $\omega(\gamma)=0$.} \}
  \]
  so $\ker\pi_\fm=\im \partial\adjp_\gfm$ by Lemma
  \ref{LABEL300}(b). Therefore, applying Proposition \ref{LABEL720}(b),
  \[
    \ker(\AJt_\fm)=(\partial\oplus\id_{\ZZ[I]})(\im
    \partial\adjp_\gfm)
    =\im\Bigl(\ZZ[V] \xrightarrow{v\mapsto (\Delta_0(v),I(v))} \ZZ[V]\oplus \ZZ[I]\Bigr)
    =\Prin_\fm(\Gr)
  \]
   which gives
   the required isomorphism $\AJ_\fm$.
\end{proof}

\subsection{The \texorpdfstring{$\fm$}{m}-Picard group of a graph with
  modulus}
\label{LABEL780}

In this subsection, $\Gr$ will be a (not necessarily finite) connected graph with finite modulus $\fm$.
The extended graph $\Gr_\fm$ is then defined, and
recall that $j_!\colon C^1(\Gr)\hookrightarrow C^1(\Grm)$ is the
extension-by-zero map: if $\omega\in C^1(\Gr)$,
$(j_!\omega)(e)=\omega(e)$ if $e\in E$ and $(j_!\omega)(e_i)=0$ for
all $i\in I$. By Lemma \ref{LABEL730}(d), $j_!(\Ha^1(\Gr))\subset \Ha^1(\Gr_\fm)$.

\begin{defn}
  \label{LABEL790}
  Let $\beta_\fm$ be the composite homomorphism
  \[
    \beta_\fm\colon \Ha^1(\Gr)\xrightarrow{j_!}  \Ha^1(\Grm)
    \hookrightarrow C^1(\Gr_\fm) \to H^1(\Grm)
  \]
  and define
  \[
    \P_\fm(\Gr)\defeq H^1(\Grm)/\beta_\fm(\Ha^1(\Gr)),
  \]
  the $\fm$-Picard group of $\Gr$.
\end{defn}
If $\Gr$ is finite then $\beta_\fm$ is injective, by Proposition
\ref{LABEL320}(e) applied to $\Gr_\fm$.
\begin{prop}
  \label{LABEL800}
  The map
  \begin{equation}
    \label{LABEL810}
    d\adj\oplus \id \colon C^1(\Grm)=C^1(\Gr)\oplus \ZZ^I\to
    \Divhat^0_\fm(\Gr)=\im d\adj \oplus \ZZ^I
  \end{equation}
  induces an isomorphism $\comp_\fm\colon\P_\fm(\Gr)\isom \Clhat_\fm^0(\Gr)$.
\end{prop}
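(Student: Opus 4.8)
The plan is to realize both sides as quotients of $C^1(\Grm)=C^1(\Gr)\oplus\ZZ^I$ and to check that $\phi\defeq d\adj\oplus\id$ carries the relation subgroup defining $\P_\fm(\Gr)$ exactly onto the one defining $\Clhat^0_\fm(\Gr)$. Since $H^1(\Grm)=C^1(\Grm)/\im(d_\fm)$, and $\beta_\fm(\Ha^1(\Gr))$ is the image in $H^1(\Grm)$ of $j_!\Ha^1(\Gr)=\{(\omega,0)\mid\omega\in\Ha^1(\Gr)\}$, lifting back to $C^1(\Grm)$ gives
\[
\P_\fm(\Gr)=C^1(\Grm)\big/\bigl(\im(d_\fm)+j_!\Ha^1(\Gr)\bigr),
\]
whereas $\Clhat^0_\fm(\Gr)=\Divhat^0_\fm(\Gr)/\Prinhat_\fm(\Gr)$ by definition. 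The map $\phi$ is visibly surjective, since $d\adj$ surjects onto $\im d\adj=\Divhat^0(\Gr)$ and $\id$ onto $\ZZ^I$. So it will suffice to show $\phi\bigl(\im(d_\fm)+j_!\Ha^1(\Gr)\bigr)=\Prinhat_\fm(\Gr)$ together with the reverse inclusion $\phi\i(\Prinhat_\fm(\Gr))\subseteq\im(d_\fm)+j_!\Ha^1(\Gr)$; the first gives a well-defined surjection $\comp_\fm$ and the second gives injectivity.

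First I would compute the two images. By Proposition \ref{LABEL720}(a), $\im(d_\fm)=\{(df,f\circ\rho)\mid f\in C^0(\Gr)\}$, and since $\Box_0=d\adj d$ one has $\phi(df,f\circ\rho)=(\Box_0 f,f\circ\rho)$; letting $f$ range over $\ZZ^V$ this is precisely $\Prinhat_\fm(\Gr)$. On the other hand $\phi(\omega,0)=(d\adj\omega,0)$, which vanishes when $\omega\in\Ha^1(\Gr)=\ker d\adj$. Hence $\phi$ maps the relation subgroup $\im(d_\fm)+j_!\Ha^1(\Gr)$ onto $\Prinhat_\fm(\Gr)$, yielding a well-defined surjection $\comp_\fm\colon\P_\fm(\Gr)\to\Clhat^0_\fm(\Gr)$.

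The crux is the reverse inclusion. Suppose $(g,h)\in C^1(\Gr)\oplus\ZZ^I$ satisfies $\phi(g,h)=(d\adj g,h)\in\Prinhat_\fm(\Gr)$, so there is $f\in\ZZ^V$ with $d\adj g=\Box_0 f=d\adj(df)$ and $h=f\circ\rho$. Then $d\adj(g-df)=0$, so $\omega\defeq g-df$ lies in $\ker d\adj=\Ha^1(\Gr)$, and therefore $(g,h)=(df,f\circ\rho)+(\omega,0)\in\im(d_\fm)+j_!\Ha^1(\Gr)$. This proves $\phi\i(\Prinhat_\fm(\Gr))=\im(d_\fm)+j_!\Ha^1(\Gr)$, so $\comp_\fm$ is injective, hence an isomorphism.

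The whole argument is formal once $\Ha^1(\Gr)=\ker d\adj$ and the description of $\im(d_\fm)$ from Proposition \ref{LABEL720}(a) are in hand; it uses neither finiteness of $\Gr$ nor injectivity of $\beta_\fm$, so it applies uniformly in the stated (possibly infinite) generality. The one point demanding care is the preimage computation: membership of $\phi(g,h)$ in $\Prinhat_\fm(\Gr)$ is witnessed by a single $f$ that simultaneously controls both coordinates $(d\adj g,h)=(\Box_0 f,f\circ\rho)$, and the decomposition $(g,h)=(df,f\circ\rho)+(\omega,0)$ succeeds exactly because that same $f$ aligns the $\ZZ^I$-component $h=f\circ\rho$ with the $\ZZ^V$-component. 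Treating the two summands of $C^1(\Grm)$ independently would lose this coupling, so I would keep the witness $f$ common to both throughout.
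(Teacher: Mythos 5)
Your proof is correct and is essentially the paper's argument: the paper likewise uses Proposition \ref{LABEL720}(a) to present $\P_\fm(\Gr)$ as $C^1(\Grm)/\bigl(j_!\Ha^1(\Gr)+d_\gfm C^0(\Grm)\bigr)$ and then observes that $d\adj\oplus\id$ is surjective with kernel $\ker(d\adj)\oplus\{0\}=\Ha^1(\Gr)\oplus\{0\}$, which is absorbed into the relation subgroup. Your explicit verification that $\phi\i(\Prinhat_\fm(\Gr))=\im(d_\fm)+j_!\Ha^1(\Gr)$ (keeping the single witness $f$ coupling the two coordinates) is just the unpacked form of the paper's one-line quotient computation, and your remark that neither finiteness of $\Gr$ nor injectivity of $\beta_\fm$ is needed matches the paper's own later use of this proof in Corollary \ref{LABEL1000}.
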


\begin{proof}
  The map \eqref{LABEL810} is surjective, with kernel
  $\ker(d\adj)\oplus \{0\}=\Ha^1(\Gr)\oplus\{0\}$. By Proposition
  \ref{LABEL720}(a) we therefore have
  \begin{align*}
  \P_\fm(\Gr)
  & = \frac{ C^1(\Grm) }{ j_!\Ha^1(\Gr)+d_\gfm C^0(\Grm) }
    =\frac{ C^1(\Gr)\oplus \ZZ^I }{ (\Ha^1(\Gr)\oplus 0)+
             \{ (df, f\circ \rho)\mid f\in C^0(\Gr) \} }
  \\
  &\xrightarrow[\ \comp_\fm\ ]{\lowsim}
    \frac{ \Divhat^0_\fm(\Gr)}{ \{ (\Box_0f, f\circ \rho)\mid f\in
                                        C^0(\Gr) \} }
    =\Clhat_\fm^0(\Gr).\qedhere
\end{align*}
\end{proof}
Suppose that the modulus $\fm$ is reduced. We then have an alternative
description of $\P_\fm(\Gr)$, analogous to Proposition \ref{LABEL660}:
there is a commutative diagram of split exact sequences
\[
\begin{tikzcd}
  0 \arrow[r] &
  \ZZ^{V\setminus S}\oplus \Ha^1(\Gr) \arrow[rr, hook]
  \arrow[d, "(d\ \mathrm{incl})"'] &&
  \ZZ^V\oplus \Ha^1(\Gr) \arrow[rr, "{(\rho^*\ 0)}"]
  \arrow[ll, bend right=15, "\mathrm{restr}\oplus id"']
  \arrow[d] &&
  \ZZ^S \arrow[d, equals] \arrow[r]
  & 0
  \\
  0 \arrow[r] & \ZZ^E \arrow[rr, hook] &&
  \ZZ^E\oplus \ZZ^S \arrow[rr]
  \arrow[ll, bend left=15, "{(id\ -d\circ \rho_*)}"]
  && \ZZ^S \arrow[r]
  & 0
\end{tikzcd}
\]
where the middle vertical map is $(f,g)\mapsto (df+g,f|_S)$,
$\mathrm{restr}$ and $\mathrm{incl}$ are restriction and inclusion
maps, and all other unlabeled maps are the obvious ones.
The following is an immediate consequence of this and Propositions
\ref{LABEL660}, \ref{LABEL720} and \ref{LABEL800}.
 
\begin{prop}
  \label{LABEL820}
  (i) The diagram above induces an isomorphism
  \[
    \frac{\ZZ^E}{d(\ZZ^{V\setminus S}) + \Ha^1(\Gr)} \isomarrow
    \frac{H^1(C^\bull_\fm)}{\im(\Ha^1(\Gr))} = \P_\fm(\Gr)
  \]
  (ii) There is a commutative diagram
  \[
    \begin{tikzcd}
      \ZZ^V \arrow[r] & \smash[b]{\dfrac{\ZZ^V}{\Box_0(\ZZ^{V\setminus S})}}
      \arrow[r, "\sim"] &
      \Clhat_\fm(\Gr) \\
      \ZZ^E \arrow[u, "d\adj"] \arrow[r] &
      \dfrac{H^1(C^\bull_\fm)}{\im(\Ha^1(\Gr))} \arrow[r, "\sim"] &
      \P_\fm(\Gr) \arrow[u, "\chi_\fm"] \ .
      \mbox{\hspace*{10em}$\qed$\hspace*{-11em}}
    \end{tikzcd}
  \]
\end{prop}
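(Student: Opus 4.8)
The plan is to obtain both parts by applying the snake lemma to the displayed commutative diagram, whose rows are the evident split short exact sequences. First I would confirm that the diagram really is a morphism of short exact sequences. The two rows are visibly exact (the first factors inject, the last factors are the quotient maps, and the bent arrows split them), so all that needs checking is that the two squares commute. Writing the middle vertical map as $\phi\colon (f,\omega)\mapsto (df+\omega,\,f|_S)$ and the left one as $\phi_L\colon(f,\omega)\mapsto df+\omega$, the left square commutes because for $f\in\ZZ^{V\setminus S}$ one has $f|_S=0$, and the right square commutes because the right vertical arrow is the identity and $\phi$ followed by projection to $\ZZ^S$ is $(f,\omega)\mapsto f|_S=\rho^*f$. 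All of this is routine bookkeeping.

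Since the right vertical arrow is an isomorphism, the snake lemma gives $\coker\phi_L\isomarrow\coker\phi$. The left cokernel is at once $\ZZ^E/\bigl(d(\ZZ^{V\setminus S})+\Ha^1(\Gr)\bigr)$, since $\im\phi_L=d(\ZZ^{V\setminus S})+\Ha^1(\Gr)$, which is the source in (i). The substantive point is to identify $\coker\phi$ with $\P_\fm(\Gr)$. Inside $C^1(\Grm)=\ZZ^E\oplus\ZZ^S$ one has $\im\phi=\{(df+\omega,\,\rho^*f)\mid f\in\ZZ^V,\ \omega\in\Ha^1(\Gr)\}$, and by Proposition \ref{LABEL720}(a) the image of $d_\gfm$ is exactly $\{(df,\rho^*f)\mid f\in\ZZ^V\}$. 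Hence $\im\phi=\im d_\gfm + j_!\Ha^1(\Gr)$, so $\coker\phi=H^1(\Grm)/\beta_\fm(\Ha^1(\Gr))=\P_\fm(\Gr)$ by Definition \ref{LABEL790}; the induced isomorphism is the one coming from the inclusion $\ZZ^E\hookrightarrow\ZZ^E\oplus\ZZ^S$. This proves (i).

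For (ii) I would simply trace $\omega\in\ZZ^E$ around the square. The right-hand isomorphisms are $\ZZ^V/\Box_0(\ZZ^{V\setminus S})\isomarrow\Clhat_\fm(\Gr)$ of Proposition \ref{LABEL660}, induced by $h\mapsto[(h,0)]$, together with $H^1(\Grm)/\im(\Ha^1(\Gr))\isomarrow\P_\fm(\Gr)$, while $\comp_\fm$ is induced by $d\adj\oplus\id$ by Proposition \ref{LABEL800}. Going up then across sends $\omega\mapsto d\adj\omega\mapsto[(d\adj\omega,0)]$, and going across then up sends $\omega\mapsto(\omega,0)\mapsto\comp_\fm(\,\cdot\,)=[(d\adj\omega,0)]$ by the formula for $\comp_\fm$. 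The two outputs coincide, giving commutativity.

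The one place where something genuine (rather than diagram-chasing) happens is the identification $\coker\phi=\P_\fm(\Gr)$: one must be certain that the extra $\ZZ$-coordinate attached to the auxiliary vertex $\star$ — which enters $\im d_\gfm$ through the term $\rho^*f-a\mathbf 1_I$ — does not enlarge the relevant subgroup of $C^1(\Grm)$. That this coordinate is absorbed (because $d\mathbf 1_V=0$ and $\mathbf 1_V\circ\rho=\mathbf 1_I$) is precisely the content of Proposition \ref{LABEL720}(a), and once it is invoked the matching of $\im\phi$ with $\im d_\gfm+j_!\Ha^1(\Gr)$ is clean. Everything else reduces to the snake lemma and to Propositions \ref{LABEL660} and \ref{LABEL800}.
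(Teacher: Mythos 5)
Your proof is correct and is essentially the argument the paper intends: Proposition \ref{LABEL820} is stated there as an immediate consequence of the displayed diagram together with Propositions \ref{LABEL660}, \ref{LABEL720} and \ref{LABEL800}, and your snake-lemma write-up, with the identification $\im\phi=\im d_\gfm+j_!\Ha^1(\Gr)$ via Proposition \ref{LABEL720}(a), supplies exactly those omitted steps. You also correctly isolate the one non-routine point --- that the auxiliary $\star$-coordinate in $\im d_\gfm$ is absorbed because $(0,\mathbf{1}_I)=(d\mathbf{1}_V,\mathbf{1}_V\circ\rho)$ --- so nothing is missing.
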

We now define, for $\Gr$ finite, a homomorphism
\[
\zeta_\fm \colon \J_\fm(\Gr) \to \P_\fm(\Gr).
\]
Consider the diagram
\[
  \begin{tikzcd}
   \Ha^1(\Grm)\Zdual \arrow[r, "\tr(\iota_\gfm^{\mbox{ }})"] & H^1(\Grm)
    \\
    H_1(\Grm) \arrow[u] \arrow[r, "\iota_\gfm^{\mbox{ }}"]
    & \Ha^1(\Grm) \arrow[u]
    \\
    H_1(\Gr) \arrow[u, "j_*"] \arrow[r, "\iota_\Gr^{\mbox{ }}"]
    \arrow[uu, bend left, shift left=1.5em, "\beta_\fm"]
    & \Ha^1(\Gr) \arrow[u, "j_!"]
    \arrow[uu, bend right, shift right=1.5em, "\alpha_\fm"']
  \end{tikzcd}
\]
in which $\alpha_\fm$ and $\beta_\fm$ are the composites of the
vertical maps. As in the proof of Proposition \ref{LABEL420}, using
Lemma \ref{LABEL280} the top square commutes. The bottom square
commutes by Lemma \ref{LABEL730}(e). So the entire diagram commutes,
inducing the desired map $\zeta_\fm$. Since $\Gr$ is finite then by
Proposition \ref{LABEL320}(b) all the
horizontal maps in the diagram are isomorphisms, and so $\zeta_\fm$ is
an isomorphism.
\begin{thm}
  \label{LABEL830}
  Let $\Gr$ be finite. The diagram
  \[
    \begin{tikzcd}
      \Cl^0_\fm(\Gr)
      \arrow[r, "\iota_\fm", "\sim"']]
      \arrow[d, "\AJ_\fm", "\simeq"'] & \Clhat^0_\fm(\Gr)
      \\
      \J_\fm(\Gr) \arrow[r, "\zeta_\fm", "\sim"']&\P_\fm(\Gr)
      \arrow[u, "\simeq", "\comp_\fm"']
    \end{tikzcd}
  \]
  commutes, and all the arrows are isomorphisms. 
\end{thm}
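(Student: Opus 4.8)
The plan is to follow the proof of Theorem~\ref{LABEL440} essentially verbatim, now carried out on the extended graph $\Grm$ in place of $\Gr$. First note that there is nothing to prove about the individual arrows: $\iota_\fm$ is an isomorphism for $\Gr$ finite (as recorded just after the definition of $\Clhat_\fm(\Gr)$), $\AJ_\fm$ is an isomorphism by Theorem~\ref{LABEL760}, $\comp_\fm$ is an isomorphism by Proposition~\ref{LABEL800}, and $\zeta_\fm$ was already shown to be an isomorphism in the discussion preceding the theorem (for $\Gr$ finite every horizontal map in the diagram defining $\zeta_\fm$ is an isomorphism by Proposition~\ref{LABEL320}(b)). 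Hence the sole content is the commutativity of the square, i.e.\ the identity $\comp_\fm\circ\zeta_\fm\circ\AJ_\fm=\iota_\fm$.

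To verify this I would chase a general element $(D,d)\in\Div^0_\fm(\Gr)=\ZZ[V]_0\oplus\ZZ[I]$. Choose $\gamma_D\in C_1(\Gr)$ with $\partial\gamma_D=D$ (well defined modulo $H_1(\Gr)$, as in Section~\ref{LABEL740}), write $d=\sum_{i\in I}d_i\,[i]$, and set $\Gamma=j_*\gamma_D+\sum_{i\in I}d_i e_i\in C_1(\Grm)$. By Definition~\ref{LABEL750}, $\AJ_\fm(D,d)$ is the class of the functional $\omega\mapsto\omega(\Gamma)$ in $\Ha^1(\Grm)\Zdual/\alpha_\fm(H_1(\Gr))$, i.e.\ the image of $\Gamma\in C_1(\Grm)=C^1(\Grm)\Zdual$ under restriction to $\Ha^1(\Grm)$.

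The crucial step is to make $\zeta_\fm$ explicit, just as $\zeta$ was made explicit in Theorem~\ref{LABEL440}. Since $\Grm$ is finite, $\iota_\gfm\colon H_1(\Grm)\isomarrow\Ha^1(\Grm)$ is an isomorphism and $\iota_\gfm$ intertwines the inner product with the evaluation pairing \eqref{LABEL270}; consequently $\tr{(\iota_\gfm)}$ sends the functional $\omega\mapsto\omega(\Gamma)$ to the class $[\hat\Gamma]\in H^1(\Grm)$ of $\hat\Gamma=\iota_\gfm(\Gamma)$. Now Lemma~\ref{LABEL730}(e), which gives $\iota_\gfm(j_*\gamma_D)=j_!\hat\gamma_D$, together with $\iota_\gfm(e_i)=\hat e_i$, identifies $\hat\Gamma$ with $(\hat\gamma_D,\,d)\in C^1(\Gr)\oplus\ZZ^I=C^1(\Grm)$, where $d$ now denotes its image in $\ZZ^I$. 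Thus $\zeta_\fm(\AJ_\fm(D,d))=[(\hat\gamma_D,d)]\in\P_\fm(\Gr)$.

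Finally I would apply $\comp_\fm$, which by Proposition~\ref{LABEL800} is induced by $d\adj\oplus\id$ of \eqref{LABEL810}. This carries $(\hat\gamma_D,d)$ to $(d\adj\hat\gamma_D,\,d)$, and $d\adj\hat\gamma_D=\iota(\partial\gamma_D)=\hat D$ by Lemma~\ref{LABEL280}(a). Hence the resulting class in $\Clhat^0_\fm(\Gr)$ is $(\hat D,d)=\iota_\fm(D,d)$, which is precisely the class obtained along the top edge; this proves commutativity. The one place where the argument is more than bookkeeping is the explicit description of $\zeta_\fm$ on functionals and the matching of the two summands of $\hat\Gamma$ against the $C^1(\Gr)$ and $\ZZ^I$ factors of $C^1(\Grm)$; once Lemma~\ref{LABEL730}(e) is invoked, the remainder is the same chase as in Theorem~\ref{LABEL440}, transported to $\Grm$.
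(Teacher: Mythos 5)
Your proposal is correct and follows essentially the same route as the paper's proof: an explicit element chase through the square, identifying $\zeta_\fm\circ\AJ_\fm(D,d)$ with the class of $(\hat\gamma_D,d)\in C^1(\Grm)$ via $\tr(\iota_\gfm)$ and Lemma~\ref{LABEL730}(e), then applying $\comp_\fm=d\adj\oplus\id$ and Lemma~\ref{LABEL280}(a) to recover $\iota_\fm(D,d)$. The only cosmetic difference is that the paper verifies commutativity on the generators $(D,i)$ with $i\in I$, whereas you carry a general second component $d=\sum_i d_i[i]$ throughout.
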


\begin{proof}
Let $(D,i)\in \Div^0_\fm(\Gr)$, $D=\partial \gamma$ with
$\gamma\in C_1(\Gr)$ and $i\in I$.  Then $\AJ_\fm(D,i)$ is the
class mod $\im(\alpha_\fm)$ of $\lambda_{\fm, D}+\varepsilon_i\in \Ha^1(\Grm)\Zdual$, which
is the map $\omega\mapsto (j^\ast\omega)(\gamma)+\omega(e_i)$. Under
the map $\tr(\iota_\gfm)\colon \Ha^1(\Grm)\Zdual \to H^1(\Grm)$ this
becomes the class of the cocycle $\hat\gamma+\hat i$, where $\hat
i\in\ZZ^I$ is the characteristic function of $i\in I$.

Now the
image of $(D,i)$ in $\Divhat^0_\fm(\Gr)$ is
\[
(\hat{D},\hat{i})=(\widehat{\partial\gamma},\hat{i})=
(d\adj\hat{\gamma},\hat{i})
\]
using Proposition \ref{LABEL280}.  So we have
\[
  \begin{tikzcd}
    (D,i)\ \mathrm{mod} \ \Prin_\fm(\Gr)\arrow[r, mapsto, "\iota_\fm"]
    \arrow[d, mapsto, "\AJ_\fm"] &(d\adj\hat{\gamma},\hat{i})
    \ \mathrm{mod}\ \Prinhat_\fm(\Gr)\\
    (\lambda_{D,\fm}+\varepsilon_i) \ \mathrm{mod}\
    \alpha_\fm(H^1(\Gr))
    \arrow[r, mapsto, "\zeta_\fm"]
    &
    (\hat{\gamma},\hat{i})\ \mathrm{mod}\ \im(\beta_\fm)
    \arrow[u, mapsto, "\comp_\fm"']
  \end{tikzcd}
\]
and since elements $(D,i)$ generate $\Div^0_\fm(\Gr)$,  the diagram commutes.
\end{proof}

\begin{rem}
  In the next section we will define a rigidified Picard group
  $\Pic_\fm(\Grg)$ of $\Grg$. Combining Theorem \ref{LABEL830} and
  Theorem \ref{LABEL900} (or Corollary \ref{LABEL1010}), and the
  definitions of the maps given in the next section, we then obtain
  the following analogue of Theorem \ref{LABEL130}.
\end{rem}

\begin{thm}
  \label{LABEL840}
  There is a commutative diagram
  \[
    \begin{tikzcd}
      \Cl_\fm(\Gr) \arrow[r, "\sim"] & \Clhat_\fm(\Gr)
      \arrow[r, "\sim"', "{-\bar\delta_\fm}"]
      & \Pic_\fm(\Grg)
      \\
      \Cl_\fm^0(\Gr) \arrow[u, hook] \arrow[d, "\AJ_\fm" left, "\wr"]
      \arrow[r, "\iota", "\sim"']
      & \Clhat_\fm^0(\Gr) \arrow[u, hook]  \arrow[r, "\sim"]
      &\Pic_\fm^0(\Grg) \arrow[u, hook]
      \\
      \J_\fm(\Gr) \arrow[rr, "\zeta_\fm" ]
      && \P_\fm(\Gr) \arrow[u, "\wr"] \ .
      \arrow[ul, "\chi_\fm", "\simeq"']
    \end{tikzcd}
  \]
  in which the horizontal arrows are isomorphisms.
\end{thm}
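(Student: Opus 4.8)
The plan is to recognize the displayed diagram as the amalgamation of two commutative sub-diagrams, each already available, glued along a single common edge, so that no new computation is needed beyond what the next section provides. \emph{First,} I would extract the lower-left square together with its bounding arrows, namely the maps
\[
  \Cl^0_\fm(\Gr)\xrightarrow{\iota_\fm}\Clhat^0_\fm(\Gr),\qquad
  \Cl^0_\fm(\Gr)\xrightarrow{\AJ_\fm}\J_\fm(\Gr),\qquad
  \J_\fm(\Gr)\xrightarrow{\zeta_\fm}\P_\fm(\Gr),\qquad
  \P_\fm(\Gr)\xrightarrow{\comp_\fm}\Clhat^0_\fm(\Gr).
\]
This is exactly the commutative square of Theorem \ref{LABEL830}, all of whose arrows are isomorphisms for $\Gr$ finite. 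It supplies the bottom map $\zeta_\fm$, the left vertical $\AJ_\fm$, the degree-zero horizontal $\iota_\fm$, and the lower-right diagonal $\comp_\fm$, along with the commutativity of the square they bound.

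\emph{Second,} the remaining content is everything involving $\Pic_\fm(\Grg)$, which rests on the constructions of the next section. Granting Theorem \ref{LABEL900} (equivalently Corollary \ref{LABEL1010}), I obtain: the isomorphism $-\bar\delta_\fm\colon\Clhat_\fm(\Gr)\isomarrow\Pic_\fm(\Grg)$, the $\fm$-analogue of the map $-\bar\delta$ of Theorem \ref{LABEL600}; the fact that it carries $\Clhat^0_\fm(\Gr)$ isomorphically onto $\Pic^0_\fm(\Grg)$; the vertical isomorphism $\P_\fm(\Gr)\isomarrow\Pic^0_\fm(\Grg)$, which is the $\fm$-analogue of $q_*$; and the commutativity of both the upper-right rectangle (expressing that $-\bar\delta_\fm$ respects the degree-zero subgroups) and the lower-right triangle (expressing the vertical $\P_\fm(\Gr)\to\Pic^0_\fm(\Grg)$ as $\comp_\fm$ followed by the middle-row isomorphism $\Clhat^0_\fm(\Gr)\isomarrow\Pic^0_\fm(\Grg)$). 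The top-row isomorphism $\Cl_\fm(\Gr)\isomarrow\Clhat_\fm(\Gr)$ is the map induced by $\iota_\fm\colon\Div_\fm(\Gr)\hookrightarrow\Divhat_\fm(\Gr)$, already shown to be an isomorphism for finite $\Gr$, and it restricts to $\iota_\fm$ on degree-zero parts.

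\emph{Third,} I would assemble the pieces. The two sub-diagrams overlap in exactly one arrow, the isomorphism $\comp_\fm\colon\P_\fm(\Gr)\isomarrow\Clhat^0_\fm(\Gr)$, which occurs in each; gluing along it and invoking the commutativity of the two pieces yields commutativity of the entire diagram, and every horizontal arrow is an isomorphism because each of its constituents is. The genuine work lies not in the present statement but in Theorem \ref{LABEL900}: constructing $\Pic_\fm(\Grg)$ as the group of $\fm$-rigidified $\Harm$-torsors and proving the $\fm$-analogue of the $\mathrm{dlog}$-type sequence \eqref{LABEL580}. The one point here that is more than bookkeeping is the degree-zero compatibility $\bar\delta_\fm(\Clhat^0_\fm(\Gr))=\Pic^0_\fm(\Grg)$, mirroring the statement $\bar\delta(\Clhat^0(\Gr))=\Pic^0(\Grg)$ of Theorem \ref{LABEL600}; once that is in hand, Theorem \ref{LABEL840} is a formal consequence of Theorems \ref{LABEL830} and \ref{LABEL900}.
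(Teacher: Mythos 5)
Your proposal is correct and matches the paper exactly: the paper offers no separate argument for Theorem \ref{LABEL840}, stating in the preceding remark that it follows by combining Theorem \ref{LABEL830} with Theorem \ref{LABEL900} (or Corollary \ref{LABEL1010}) and the definitions of the maps, which is precisely your gluing of the two sub-diagrams along $\comp_\fm$. Your observation that the only non-formal ingredient is the degree-zero compatibility $\bar\delta_\fm(\Clhat^0_\fm(\Gr))=\Pic^0_\fm(\Grg)$, established in Theorem \ref{LABEL900}, is also where the paper places the real content.
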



\subsection{Rigidified torsors and
  \texorpdfstring{$\Pic_\fm(\Grg)$}{Pic\_m(|Gr|)}}

\label{LABEL850}
To complete the picture, we introduce a sheaf-theoretic generalized
Picard group $\Pic_\fm(\Grg)$. As in subsection \ref{LABEL470}, we
assume that $\Gr$ is a locally finite graph with no isolated
vertices. Let $\fm$  be a modulus on $\Gr$ given by a map $\rho\colon I
\to V$. Set $S=\rho(I)\subset V$, $w_i=\rho(i)\in S$.

For any $v\in V$ we have the
evaluation map
\[
  \eval_v\colon \Harm_v \to \ZZ.
\]
Let $\cL$ be a $\Harm$--torsor on $\Grg$. We may then contract the
structure group of the stalk $\cL_v$ by
$\eval_v$ gives a $\ZZ$--torsor $\cL(v)$:
\[
\cL(v)=\cL_v\times^{\Harm_v}\ZZ = (\cL_v\times\ZZ)/\sim
\]
where for $c\in \cL_v$, $f\in\Harm_v$, $n\in \ZZ$, $(fc,n)\sim
(f,\eval_v(f)+n)$. If $c\in \cL_v$ we write $c(v)=(c,0)$ for its image
in $\cL(v)$.
\begin{defn}
  An $\fm$-rigidification of $\cL$ is a family of elements
  $s_i \in  \cL(w_i)$, $i\in I$. The group of
  isomorphism classes of $\fm$-rigidified $\Harm$--torsors on $\Grg$ is
  denoted $\Pic_\fm(\Grg)$.
\end{defn}

Any such $s_i$ determines an isomorphism $\ZZ \isom \cL(w_i)$,
$n\mapsto s_i+n$ of $\ZZ$--torsors. There is an obvious surjective map
$\Pic_\fm(\Grg) \to \Pic(\Grg)$, whose kernel is the set of
isomorphism classes of rigidifications of the trivial torsor
$\cL_{\mathrm{triv}}=\Harm$, hence equal to the cokernel of the component homomorphism
$\ZZ^{\pi_0(\Gr)} \to \ZZ^I$. We write $\Pic^0_\fm(\Grg)\subset
\Pic(\Grg)$ for the inverse image of $\Pic^0(\Grg)$.

\begin{rem}
  \label{LABEL860}
  Parallel to the discussion of Remark \ref{LABEL670}, the generality
  of non-reduced moduli is illusory: let $\fm_0$
  be the reduced modulus given by $S$.  Fix a $\Harm$--torsor
  $\cL$. Then $\ZZ^I$ (resp.~$\ZZ^S$) acts transitively on the set of
  $\fm$-rigidifications (resp.~$\fm_0$-rigidifications) of $\cL$, and
  the former set is the pushout of the latter via the map
  $\tr\rho\colon \ZZ^S \to \ZZ^I$.
\end{rem}

For the rest of this section we will assume that $\fm$ is a reduced
modulus; we postpone further discussion of the general case (which
involves more delicate homological algebra) to Section \ref{LABEL910}
below.

Let $\Harmp$ be the kernel
\[
  \Harmp = \ker\bigl( \Harm \xrightarrow{\eval_I} \prod_{i\in I}
  w_{i*}\ZZ \bigr)
\]
in degrees $0$ and $1$, where $\eval_I=(\eval_{w_i})_i$.

\begin{prop}
  There is a canonical isomorphism $\Pic_\fm(\Grg) \isom H^1(\Grg, \Harmp)$.
\end{prop}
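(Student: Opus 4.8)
The plan is to identify $\fm$-rigidified $\Harm$--torsors with torsors under the kernel sheaf $\Harmp=\ker(\eval_I)$, and then to invoke the standard identification (recalled in subsection~\ref{LABEL910}) of $H^1(\Grg,\Harmp)$ with the group of isomorphism classes of $\Harmp$--torsors on $\Grg$. Since $\fm$ is reduced the $w_i$ are distinct, and $\eval_I$ fits into a short exact sequence of sheaves
\[
0 \to \Harmp \to \Harm \xrightarrow{\eval_I} \prod_{i\in I} w_{i*}\ZZ \to 0,
\]
surjectivity holding because constant functions are harmonic and realise every integer value at each $w_i$. Writing $\mathcal G=\prod_{i\in I}w_{i*}\ZZ$, the guiding principle is the torsor formalism for this sequence: for a $\Harm$--torsor $\cL$, an $\fm$-rigidification is the same as a global section of the associated $\mathcal G$--torsor $\cL\times^{\Harm}\mathcal G$, and such a section amounts to a reduction of structure group of $\cL$ from $\Harm$ to $\Harmp$. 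Because $\mathcal G$ is a product of skyscrapers at the distinct $w_i$, this associated torsor is $\prod_i w_{i*}\cL(w_i)$, whose global sections are exactly the families $(s_i)$ with $s_i\in\cL(w_i)$. I would make the resulting correspondence explicit in both directions.

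In the forward direction I would send a $\Harmp$--torsor $\cL'$ to the pushout $\Harm$--torsor $\cL=\cL'\times^{\Harmp}\Harm$, rigidified as follows. For each $i$ the composite $\cL'_{w_i}\to\cL_{w_i}\to\cL(w_i)$, where the last map is $c\mapsto c(w_i)$, is constant: any two germs in $\cL'_{w_i}$ differ by an element of $\Harmpv[w_i]=\ker(\eval_{w_i})$ (here reducedness enters, since only the $i$-th evaluation is nonzero at $w_i$), and such an element acts trivially on $\cL(w_i)$ by the contraction relation defining $\cL(w_i)$. Its common value $s_i\in\cL(w_i)$ is then a canonical rigidification, giving a well-defined map on isomorphism classes.

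Conversely, given an $\fm$-rigidified torsor $(\cL,(s_i))$ I would let $\cL'\subset\cL$ be the subsheaf whose sections over an open $U$ are those $c$ with $c(w_i)=s_i$ in $\cL(w_i)$ for every $w_i\in U$. Away from $S$ one has $\Harmp=\Harm$ and $\cL'=\cL$; at $w_i$ the stalk $\cL'_{w_i}$ is the fibre over $s_i$ of the contraction $\cL_{w_i}\to\cL(w_i)$, and since the fibres of this map are precisely the orbits of $\Harmpv[w_i]=\ker(\eval_{w_i})$, that fibre is a single $\Harmpv[w_i]$--torsor. Hence $\cL'$ is a $\Harmp$--torsor, the $\Harmp$-action preserving the defining condition because $\eval_{w_i}$ kills $\Harmp$.

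It then remains to check that these two constructions are mutually inverse up to isomorphism --- the natural comparison maps $\cL'\to(\cL'\times^{\Harmp}\Harm)'$ and $(\cL'\times^{\Harmp}\Harm)\to\cL$ are isomorphisms of (rigidified) torsors, verified stalkwise --- and that the correspondence respects the group structures, which follows from the monoidality of extension of structure group together with additivity of rigidifications. This yields the canonical isomorphism $\Pic_\fm(\Grg)\isom H^1(\Grg,\Harmp)$. The step most in need of care is the stalk analysis at the rigidified vertices $w_i$ and the verification that the pushout and the subsheaf construction invert one another. As a consistency check, the long exact cohomology sequence of the displayed sheaf sequence (using $H^1(\Grg,\mathcal G)=0$ and $H^0(\Grg,\Harm)=\Ha^0(\Gr)$ from Proposition~\ref{LABEL540}) reads $0\to H^0(\Grg,\Harmp)\to\Ha^0(\Gr)\xrightarrow{\eval_I}\ZZ^I\to H^1(\Grg,\Harmp)\to\Pic(\Grg)\to 0$, matching the extension of $\Pic(\Grg)$ by the group of isomorphism classes of rigidifications of the trivial torsor, namely $\coker(\eval_I\colon\Ha^0(\Gr)\to\ZZ^I)$.
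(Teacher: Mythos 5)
Your proof is correct and follows essentially the same route as the paper's: the forward map is the pushout $\cL_0\times^{\Harmp}\Harm$ with its canonical rigidification (using that the contraction $\Harmpv[w_i]\to\ZZ$ vanishes), and the inverse takes $(\cL,(s_i))$ to the subsheaf of germs $c$ with $c(w_i)=s_i$. The extra verifications you supply (stalk analysis at the $w_i$, group-structure compatibility, and the long-exact-sequence consistency check) are sound elaborations of the same argument, not a different approach.
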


\begin{proof}
  Elements of $H^1(\Grg, \Harmp)$ are is bijection with
  isomorphism classes of $\Harmp$--torsors on $\Grg$. Let $\cL_0$
  be a $\Harmp$--torsor.  Then the pushout
  $\cL=\cL_0\times^{\Harmp}\Harm$ is a $\Harm$--torsor, and it
  carries a canonical $\fm$-rigidification: for every $v\in S$,
  \[
  \cL(v)=(\cL_0\times^{\Harmp}\Harm)(v)
  =\cL_{0,v}\times^{\Harmpv}\ZZ = \ZZ
  \]
  since the contracting homomorphism $\Harmpv\to \Harm_v \to \ZZ$ is zero.

  In the other direction, let $\cL$ be a $\Harm$--torsor on $\Grg$ and
  $(s_i)$ an $\fm$-rigidification. Let $\cL_0\subset \cL$ be the
  subsheaf which differs from $\cL$ only at points $w_i\in S$, where
  its stalk is
  \[
  \cL_{0,w_i}=\{c \in  \cL_{w_i} \mid c(w_i)=s_i \}. 
  \]
  Then $\cL_0$ is a $\Harmp$--torsor, and the two constructions are
  easily seen to be mutual inverses.
\end{proof}
Let $k\colon \Grg\setminus S \hookrightarrow \Grg$ be the
inclusion. Define sheaves
\[
\ZZpm=k_!\ZZ =\ker\Bigl(\ZZ \to \prod_{i\in I} w_{i*}\ZZ \Bigr)
,\quad
\PLm=\ker\Bigl(\PL \to \prod_{i\in I} w_{i*}\ZZ \Bigr)
\]
(with evaluation as differential). These are sheaves of abelian groups
on $\Grg$ satisfying property (C) of Section \ref{LABEL470}. The
exact sequences \eqref{LABEL520}, \eqref{LABEL580} restrict to give exact
sequences
\begin{equation}
  \label{LABEL870}
  0 \to \Harmp \to \PLm \xrightarrow{\diff} \smash[b]{ \prod_{v\in V} v_*\ZZ}
  \to 0 \phantom{\prod}
\end{equation}
and 
\begin{equation}
  \label{LABEL880}
  0 \to \ZZpm \xrightarrow{\ q_\fm\ } \Harmp \xrightarrow{\ediff} \Omega \to 0.
\end{equation}
Since the evaluation map $H^0(\Grg,\PLm) \to \ZZ^S$ is surjective, the
cohomology $H^1(\Grg,\PLm)$ vanishes and \eqref{LABEL870} then gives
an exact sequence
\begin{equation}
  \label{LABEL890}
  0 \to H^0(\Grg,\Harmp)  \to \ZZ^{V\setminus S}=H^0(\Grg,\PLm)
  \xrightarrow{\diff} \ZZ^V \xrightarrow{\delta_\fm} H^1(\Grg,\Harmp)
  \to 0.
\end{equation}
By Proposition \ref{LABEL540}, the map $\diff$ equals $\Box_0$,
and $\delta_\fm$ therefore induces an isomorphism
$\bar\delta_\fm\colon \Clhat_\fm(\Gr)=\ZZ^V/\Box_0(\ZZ^{V\setminus S})
\isomarrow \Pic_\fm(\Grg)$ (the first equality by Proposition
\ref{LABEL660}).

The \v Cech isomorphism Proposition \ref{LABEL490}(b) for $\ZZpm$ is an isomorphism
$H^1(\Gr,\ZZpm) \isomarrow H^1(C^\bull_\fm)$, and 
the inclusion of sheaves $q_\fm$ therefore
induces, by Proposition \ref{LABEL820}(a), a homomorphism $q_{\fm *}\colon\P_\fm(\Gr)
\to \Pic_\fm(\Grg)$.

\begin{thm}
  \label{LABEL900}
  $\bar\delta_\fm(\Clhat^0_\fm(\Gr))=\Pic^0_\fm(\Grg)$, and the
  diagram
  \[
    \begin{tikzcd}
      \Clhat_\fm(\Gr) \arrow[r, "-\bar\delta_\fm", "\sim"']& \Pic_\fm(\Grg)
      \\
      \Clhat^0_\fm(\Gr) \arrow[u, hook]\arrow[r, "\lowsim"]
      &\Pic^0_\fm(\Grg) \arrow[u, hook] 
      \\
      & \P_\fm(\Gr) \arrow[u, "\wr", "q_{\fm *}"']
      \arrow[ul, "\comp_\fm", "\isom"']
    \end{tikzcd}
  \]
  commutes.  
\end{thm}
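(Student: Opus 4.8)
The statement is the modulus analogue of Theorem~\ref{LABEL600}, and the plan is to adapt that proof, exploiting throughout that $\Harmp$, $\PLm$, $\ZZpm$ are the subsheaves of $\Harm$, $\PL$, $\ZZ$ cut out by the vanishing condition along $S$. There are two things to prove: the equality $\bar\delta_\fm(\Clhat^0_\fm(\Gr))=\Pic^0_\fm(\Grg)$, and the commutativity of the lower triangle. Since $\comp_\fm$ is an isomorphism onto $\Clhat^0_\fm(\Gr)$ by Proposition~\ref{LABEL800} and the middle horizontal arrow is the restriction of $-\bar\delta_\fm$, the triangle amounts to the single identity $\bar\delta_\fm\circ\comp_\fm=-q_{\fm *}$ of maps $\P_\fm(\Gr)\to\Pic_\fm(\Grg)$, exactly parallel to the identity $\bar\delta\circ\comp=-q_*$ established in the proof of Theorem~\ref{LABEL600}.

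First I would treat $\bar\delta_\fm(\Clhat^0_\fm(\Gr))=\Pic^0_\fm(\Grg)$. The inclusions $\Harmp\hookrightarrow\Harm$ and $\PLm\hookrightarrow\PL$ define a morphism from the short exact sequence \eqref{LABEL870} to \eqref{LABEL520} which is the identity on $\prod_{v}v_*\ZZ$; by naturality of the connecting homomorphism this gives a commutative square relating $\bar\delta_\fm$ and $\bar\delta$ through the natural surjection $\mathrm{pr}\colon\Clhat_\fm(\Gr)=\ZZ^V/\Box_0(\ZZ^{V\setminus S})\to\ZZ^V/\Box_0(\ZZ^V)=\Clhat(\Gr)$ on the left and the forgetful map $\Pic_\fm(\Grg)\to\Pic(\Grg)$ on the right. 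Since $\Pic^0_\fm(\Grg)$ is by definition the preimage of $\Pic^0(\Grg)$, and $\bar\delta^{-1}(\Pic^0(\Grg))=\Clhat^0(\Gr)$ by Theorem~\ref{LABEL600}, the problem reduces to the identity $\mathrm{pr}^{-1}(\Clhat^0(\Gr))=\Clhat^0_\fm(\Gr)$. Writing both degree-zero groups with numerator $\Divhat^0(\Gr)=\im(d\adj)$, this follows at once from $\Box_0(\ZZ^V)=\im(\Box_0)\subset\im(d\adj)$ (as $\Box_0=d\adj d$), which shows that an element of $\ZZ^V/\Box_0(\ZZ^{V\setminus S})$ lies in $\mathrm{pr}^{-1}(\Clhat^0(\Gr))$ precisely when it is represented by an element of $\im(d\adj)$.

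For the identity $\bar\delta_\fm\circ\comp_\fm=-q_{\fm *}$ I would repeat the Čech boundary computation of Theorem~\ref{LABEL600}, verifying that the data stays inside the restricted sheaves. Represent a class of $\P_\fm(\Gr)$ by $g\in\ZZ^E$ as in Proposition~\ref{LABEL820}; then $\comp_\fm(g)=[d\adj g]$ in $\Clhat^0_\fm(\Gr)$, while $q_{\fm *}(g)$ is the image of the class of $g$ under $\ZZpm\hookrightarrow\Harmp$. To compute $\delta_\fm(d\adj g)$ from \eqref{LABEL870} I would use the very same $0$-cochain $h=(h_v)$ of local PL potentials as in Theorem~\ref{LABEL600}, for which $\diff(h)=-d\adj g$ and the Čech coboundary $d_{\PLm}(h)$ is the constant cocycle $g\in\ZZ^E$. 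The one new point is that each $h_v$ satisfies $h_v(v)=0$ (immediate from $\tilde e^* h_v=g(e)(1-x)$ on $e\in t\i(v)$ and $g(e)x$ on $e\in o\i(v)$), so $h$ is a cochain for $\PLm$ and not merely for $\PL$; the identical computation in \eqref{LABEL870} then gives $\delta_\fm(-d\adj g)=q_{\fm *}(g)$, i.e. $\bar\delta_\fm(\comp_\fm(g))=-q_{\fm *}(g)$. Together with the first part this also shows $q_{\fm *}$ is an isomorphism onto $\Pic^0_\fm(\Grg)$.

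The main obstacle is bookkeeping rather than anything conceptual: one must make sure that the local lift splitting $\diff$ can be taken inside $\PLm$, which is exactly the assertion $h_v(v)=0$ above, and that under the Čech identification $H^1(C^\bull_\fm)\isom H^1(\Grg,\ZZpm)$ of Proposition~\ref{LABEL820} the constant cocycle $g$ is carried to $q_{\fm *}(g)$ compatibly with the sheaf inclusion $q_\fm$. The reduced-modulus hypothesis keeps these identifications clean; the general modulus follows formally by the pushout description of Remark~\ref{LABEL860}.
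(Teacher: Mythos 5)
Your proposal is correct and takes exactly the paper's route: the paper's own proof of Theorem \ref{LABEL900} consists of the single remark that it is the same as the proof of Theorem \ref{LABEL600} in view of the preceding propositions, and your argument is precisely that transfer, reducing the triangle to the identity $\bar\delta_\fm\circ\comp_\fm=-q_{\fm *}$ and rerunning the \v Cech boundary computation. The details you supply --- the naturality square comparing \eqref{LABEL870} with \eqref{LABEL520} to match the degree-zero subgroups (using $\Box_0(\ZZ^V)\subset\im(d\adj)$), and the check $h_v(v)=0$ guaranteeing the potentials lie in $\PLm$ --- are exactly the verifications the paper leaves implicit.
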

\begin{proof}
  This is exactly the same as the proof of Theorem \ref{LABEL600}, in
  view of the above Propositions.
\end{proof}

\subsection{Rigidified torsors (general case)}
\label{LABEL910}

For completeness (but see Remark \ref{LABEL860} above), and at the
expense of further homological algebra, we generalize the
results of Section \ref{LABEL850} to arbitrary moduli. So $\Gr$ will
be a locally finite graph with no isolated vertices, and $\fm$ a
modulus on $\Gr$, now not necessarily reduced.

Let $\Harm_\fm$ be the complex of sheaves
\[
  \Harm_\fm = \Bigl[\ \Harm \xrightarrow{\eval_I} \prod_{i\in I}
  w_{i*}\ZZ\ \Bigr]
\]
in degrees $0$ and $1$, where $\eval_I=(\eval_{w_i})_i$. If the
modulus is reduced, then this is quasi-isomorphic to the sheaf
$\Harmp$ considered in the previous subsection.

Standard sheaf theory gives an isomorphism
\[
  \Pic_\fm(\Grg) \isom H^1(\Grg, \Harm_\fm)
\]
extending the isomorphism $\Pic(\Grg)\isom H^1(\Grg,\Harm)$, which we now
recall.

More generally, let $X $ be a topological space, $\phi\colon H \to G$
a morphism of abelian sheaves on $X$. Then there is a canonical
isomorphism
\[
  H^1(X,[H \to G]) \isom \left\{ \parbox{24em}{isomorphism classes of pairs
    $(T,s)$, with $T$ an $H$--torsor on $X$, and $s\in H^0(X,\phi_*T)$ a
    trivialization of $\phi_*T$.} \right\}.
\]
(Recall that $\phi_*T$ is the $G$--torsor which is the quotient of
$T\times G$ by $H$, acting antidiagonally.)

If $G=\{0\}$ then this is the isomorphism following Definition
\ref{LABEL550}. The explicit construction (with correct sign) is as
follows: let $H\hookrightarrow I$ be an injection into an acyclic
sheaf. Then we have an extension of sheaves
\[
  0 \to H \to I \xrightarrow{\pi} Q=I/H \to 0
\]
whose long exact cohomology sequence gives a surjection
$\delta\colon H^0(X,Q) \to H^1(X,H)$ inducing an isomorphism
$\bar\delta\colon \im\bigl(H^0(X,I) \to H^0(X,Q)\bigr) \isom
H^1(X,H)$. Let $x\in H^0(X,Q)$ map to $\xi\in H^1(X,H)$. Then
$T_\xi=\pi\i(x)$ is the $H$--torsor (up to isomorphism) determined by
$\xi$. In the case of a graph and $H=\Harm$, we take $I=\PL$,
$Q=\prod_V v_*\ZZ$ and then $\delta$ and $\bar\delta$ are the maps in
subsection \ref{LABEL470}.

In general, let $P$ be the pushout of $\phi$ and $H\to I$, which fits
into a commutative diagram with exact rows:
\[
  \begin{tikzcd}
    0 \arrow[r] &
    H \arrow[d, "\phi"] \arrow[r, hookrightarrow] &
    I \arrow[r, "\pi"] \arrow[d] & Q \arrow[r] \arrow[d, equals] & 0
    \\
    0 \arrow[r] & G \arrow[r, hookrightarrow] &
    P \arrow[r, "\psi"] & Q \arrow[r] & 0
  \end{tikzcd}
\]
The vertical maps are a quasi-isomorphism between the two rows, so
$H^1(X,[H\to G]) \simeq H^1(X, [I \to P])$.

Write $\delta_\phi\colon H^0(X,P)\to H^1(X,[I\to
P])$ for the natural map. Then $\delta_\phi$ is surjective, since
$H^1(X,I)=0$, inducing an isomorphism
\[
  \bar\delta_\phi \colon  \coker\bigl( H^0(X,I) \to H^0(X,P)\bigr)
  \isom H^1(X,[H\to G]).
\]
Let $y\in H^0(X,P)$ be the preimage of $\eta\in
H^1(X,[H\to G])$. Let $x\in H^0(X,Q)$ be the image of $y$. Then $x$
and $\eta$ have the same image $\xi\in H^1(X,H)$. The pushout of the
torsor $T_\xi$ by $\phi$ equals the $G$--torsor $\psi\i(x)$, and $y$ is
a trivialization of it.

In the present case, the complex $H\to G$ is $\Harm_\fm$, and for
$I$ we may take $\PL$. Since the evaluation map $\eval_I\colon \Harm \to \prod_i
w_{i*}\ZZ$ extends to $\PL$, the pushout may be written as
\begin{equation}
  \label{LABEL920}
  \begin{tikzcd}
    \Harm \arrow[d, hookrightarrow] \arrow[r,"{\eval_I}"]
    & \prod_{i\in I} w_{i*}\ZZ
    \arrow[d, hookrightarrow, "{(0,\id)}"]
    \\
    \PL \arrow[r, "{(\diff,\eval_I)}"] & \prod_{v\in V}v_*\ZZ \oplus
    \prod_{i\in I} w_{i*}\ZZ\,.
  \end{tikzcd}
\end{equation}
From this and the previous discussion we obtain a surjection
$\delta_\fm \colon \ZZ^V\oplus\ZZ^I \to H^1(\Grg,\Harm_\fm)$.
\begin{prop}
  \label{LABEL930}
  $\delta_\fm$ induces an isomorphism
  \[
    \bar\delta_\fm \colon
    \frac{\ZZ^V \oplus \ZZ^I}{(\Box_0,\tr \rho)(\ZZ^V)}
    \isom H^1(\Grg,\Harm_\fm) \simeq \Pic_\fm(\Grg). \qed
  \]
\end{prop}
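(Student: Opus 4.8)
The plan is to apply verbatim the general homological-algebra construction set up immediately above, taking for the acyclic sheaf the choice $I=\PL$ and for $P$ the pushout displayed in diagram \eqref{LABEL920}. The complex $\Harm_\fm=[\Harm\xrightarrow{\eval_I}\prod_{i\in I}w_{i*}\ZZ]$ is exactly the complex $[\,H\to G\,]$ of the general discussion, with $\phi=\eval_I$, and the inclusion $\Harm\hookrightarrow\PL$ provides the required injection into a sheaf with vanishing $H^1$. Since $H^1(\Grg,\PL)=0$ (the Proposition computing $H^0(\Grg,\PL)\isom\ZZ^V$ and $H^1(\Grg,\PL)=0$), the surjectivity hypothesis of the general construction holds, so $\delta_\fm$ is surjective and the induced map $\bar\delta_\fm\colon\coker\bigl(H^0(\Grg,\PL)\to H^0(\Grg,P)\bigr)\isom H^1(\Grg,\Harm_\fm)$ is an isomorphism; one reads this off from the short exact sequence of complexes $0\to P[-1]\to[\PL\to P]\to\PL\to 0$, whose hypercohomology sequence in degree $1$ degenerates because $H^1(\Grg,\PL)=0$.

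It then remains only to make the three groups and the single map explicit. As $P=\prod_{v\in V}v_*\ZZ\oplus\prod_{i\in I}w_{i*}\ZZ$ is a sum of skyscraper sheaves, its global sections are $H^0(\Grg,P)=\ZZ^V\oplus\ZZ^I$, while $H^0(\Grg,\PL)=\ZZ^V$ by restriction to vertices. The connecting map $H^0(\Grg,\PL)\to H^0(\Grg,P)$ is induced by the bottom arrow $(\diff,\eval_I)$ of \eqref{LABEL920}: here I would invoke Proposition \ref{LABEL540} to rewrite $\diff=\Box_0$, and observe that for a vertex-function $f\in\ZZ^V$ one has $\eval_I(f)=(f(w_i))_{i\in I}=\tr\rho(f)$. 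Hence the map is $f\mapsto(\Box_0 f,\tr\rho\, f)=(\Box_0,\tr\rho)(f)$, and
\[
  H^1(\Grg,\Harm_\fm)\isom\coker\bigl(\ZZ^V\xrightarrow{(\Box_0,\tr\rho)}\ZZ^V\oplus\ZZ^I\bigr)=\frac{\ZZ^V\oplus\ZZ^I}{(\Box_0,\tr\rho)(\ZZ^V)},
\]
the final identification $H^1(\Grg,\Harm_\fm)\simeq\Pic_\fm(\Grg)$ being the one recalled at the start of the subsection.

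No serious obstacle is expected, since the statement is essentially a translation of the abstract construction into the combinatorial data of the graph. The one place demanding genuine care is the identification of the map on $H^0$: matching $\eval_I$ with $\tr\rho$ on vertex-functions, replacing $\diff$ by $\Box_0$, and keeping track of the sign conventions built into the pushout square \eqref{LABEL920} so that it really presents $[\Harm\to\prod_i w_{i*}\ZZ]$ up to quasi-isomorphism. I would also note, as a consistency check, that the resulting cokernel is precisely $\Clhat_\fm(\Gr)=\Divhat_\fm(\Gr)/\Prinhat_\fm(\Gr)$ in the notation of subsection \ref{LABEL630}, which reconciles this computation with the reduced-modulus map $\bar\delta_\fm$ of Section \ref{LABEL850}.
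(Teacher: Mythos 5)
Your argument is correct and is essentially identical to the paper's: Proposition \ref{LABEL930} is stated there without separate proof precisely because, as you do, one specializes the general construction of $\bar\delta_\phi$ to the acyclic sheaf $\PL$ and the pushout \eqref{LABEL920}, using $H^1(\Grg,\PL)=0$ together with the identifications $\diff=\Box_0$ (Proposition \ref{LABEL540}) and $\eval_I=\tr\rho$ on $\ZZ^V$. Your closing consistency check, that the cokernel is $\Clhat_\fm(\Gr)$, likewise agrees with the paper's subsequent use of the result in Corollary \ref{LABEL1000} and with the reduced-modulus case of Section \ref{LABEL850}.
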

We have further complexes of sheaves on $\Grg$:
\begin{align*}
  \ZZ_\fm &= \bigl[\ \ZZ \to  \prod_{i\in I} w_{i*}\ZZ\ \bigr] \\
  \PL_\fm &= \bigl[\ \PL \to  \prod_{i\in I} w_{i*}\ZZ\ \bigr] 
\end{align*}
with evaluation as differential,
and exact sequences of complexes
\begin{equation}
  \label{LABEL940}
  0 \to \Harm_\fm \to \PL_\fm \to \prod_{v\in V} v_*\ZZ[0] \to 0
\end{equation}
and
\begin{equation}
  \label{LABEL950}
  0 \to \ZZ_\fm \to \Harm_\fm \to \Omega[0] \to 0.
\end{equation}
(Again, if $\fm$ is reduced these are quasi-isomorphic to the sheaves
denoted by the same symbols in Section \ref{LABEL850}.)

\begin{prop}
  \label{LABEL960}
  $H^1(\Grg,\ZZ_\fm) \isom H^1(\Gr_\fm)$.
\end{prop}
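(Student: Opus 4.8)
The plan is to compute $H^1(\Grg,\ZZ_\fm)$ by applying the complex-forming functor $\cF\mapsto C^\bullet(\Gr,\cF)$ of Proposition~\ref{LABEL490}(b) to the two sheaves occurring in $\ZZ_\fm=[\,\ZZ\xrightarrow{\eval_I}\prod_{i\in I}w_{i*}\ZZ\,]$, and then to compare the resulting total complex with the cochain complex $C^\bullet(\Grm)$ of \eqref{LABEL700}. First I would observe that, since $\cF\mapsto C^\bullet(\Gr,\cF)$ sends a sheaf satisfying (C) to a two-term complex computing its cohomology, functorially and compatibly with connecting maps (this is really the \v Cech computation for the cover $(U_v)$, which stays acyclic for bounded complexes of (C)-sheaves), the hypercohomology $H^\bullet(\Grg,\ZZ_\fm)$ is the cohomology of the total complex of the associated double complex.

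Evaluating term by term: for the constant sheaf one gets $C^\bullet(\Gr,\ZZ)=C^\bullet(\Gr)$; for $\prod_i w_{i*}\ZZ$ the stalk at $v$ is $\ZZ^{I(v)}$ with no edge contributions, so its complex is $[\,\ZZ^I\to0\,]$; and $\eval_I$ induces in degree $0$ the map $\tr\rho\colon\ZZ^V\to\ZZ^I$, $f\mapsto f\circ\rho$. After absorbing the total-complex sign into the automorphism of $\ZZ^E\oplus\ZZ^I$ negating the second factor, the total complex becomes
\[
  T^\bullet=\bigl[\,\ZZ^V\xrightarrow{\ (d,\ \tr\rho)\ }\ZZ^E\oplus\ZZ^I\,\bigr]
\]
in degrees $0$, $1$, so that $H^i(\Grg,\ZZ_\fm)\isom H^i(T^\bullet)$.

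Comparing with \eqref{LABEL700}, the inclusion $\ZZ^V\hookrightarrow\ZZ^V\oplus\ZZ$, $f\mapsto(f,0)$, in degree $0$ together with the identity in degree $1$ is a map of complexes $T^\bullet\to C^\bullet(\Grm)$, because $d_\fm(f,0)=(df,\tr\rho f)$; it is injective with cokernel the complex $\ZZ$ concentrated in degree $0$ (the coordinate of the auxiliary vertex $\star$). The long exact sequence of $0\to T^\bullet\to C^\bullet(\Grm)\to\ZZ[0]\to0$ ends in
\[
  H^0(\Grm)\to\ZZ\xrightarrow{\ \partial\ }H^1(T^\bullet)\to H^1(\Grm)\to0,
\]
and I would finish by showing $\partial=0$: the generator $1$ lifts to $(0,1)\in C^0(\Grm)$, whose image $d_\fm(0,1)=(0,-\mathbf 1_I)$ already lies in the image of the differential of $T^\bullet$, since the constant function $\mathbf 1_V\in\ZZ^V$ is $d$-closed with $\tr\rho\,\mathbf 1_V=\mathbf 1_I$. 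Hence $\partial(1)=0$, giving $H^1(T^\bullet)\isomarrow H^1(\Grm)$, which is the desired isomorphism $H^1(\Grg,\ZZ_\fm)\isom H^1(\Grm)$.

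The main thing to get right is the first step---that totalizing the \v Cech-type double complex genuinely computes the hypercohomology of the \emph{complex} of sheaves $\ZZ_\fm$ (equivalently, that Proposition~\ref{LABEL490}(b) is a Leray computation extending to bounded complexes of (C)-sheaves)---together with the bookkeeping of total-complex signs. The signs are harmless, as they affect only the $\ZZ^I$-factor by an automorphism and the vanishing $\partial=0$ is insensitive to them. Note also that the quasi-isomorphism $\ZZ_\fm\simeq\ZZpm$ available for reduced moduli is exactly what fails in general, which is why one must carry the two-term complex $\ZZ_\fm$ throughout rather than replacing it by a single sheaf.
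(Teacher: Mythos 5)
Your proposal is correct and takes essentially the same route as the paper: the paper likewise identifies $H^1(\Grg,\ZZ_\fm)$ with the first cohomology of the mapping fibre $\bigl[\ZZ^V \xrightarrow{f\mapsto (df,\,f\circ\rho)} \ZZ^E\oplus\ZZ^I\bigr]$ and then observes that the obvious inclusion into $C^\bullet(\Grm)$ induces an isomorphism on $H^1$, citing Proposition \ref{LABEL720}(a), whose key point is exactly your computation that $(0,\mathbf{1}_I)$ is the image of the constant function $\mathbf{1}_V$. Your long-exact-sequence packaging of that final step (vanishing connecting map) and the explicit sign bookkeeping are harmless elaborations of the same argument.
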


\begin{proof}
  $H^1(\Grg,\ZZ_\fm)$ is the cohomology of the mapping fibre of
  $C^\bullet(\Gr) \to \ZZ^I[0]$, which is the complex
  \begin{align*}
    \ZZ^V & \to \ZZ^E \oplus \ZZ^I \\
    f&\mapsto (df, f\circ \rho)
  \end{align*}
  whereas $C^\bullet(\Gr_\fm)$ is the complex
  \begin{align*}
    \ZZ^V\oplus\ZZ & \to \ZZ^E \oplus \ZZ^I \\
    (f,a)&\mapsto (df, f\circ \rho-a)
  \end{align*}
  and the obvious inclusion of complexes induces an isomorphism on
  $H^1$ (cf.~the proof of Proposition \ref{LABEL800}).
\end{proof}
The inclusion of complexes $\ZZ_\fm \to \Harm_\fm$ induces a map
$(**)$ on cohomology.
We now have the version with modulus of Theorem \ref{LABEL600}.
\begin{thm}
  \label{LABEL970}
  The diagram
  \[
    \begin{tikzcd}
      C^0_\fm(\Gr)=\ZZ^V\oplus\ZZ^I \arrow[r, "-\delta_\fm"] & \Pic(\Grg)
      \\
      C^1(\Gr_\fm)=\ZZ^E\oplus \ZZ^I \arrow[u, "{(d\adj,\id)}"] \arrow[r] & H^1(\Gr_\fm)
      \arrow[u, "(**)"'] 
    \end{tikzcd}
  \]
  commutes.
\end{thm}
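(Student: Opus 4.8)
The plan is to treat this as the modulus version of Theorem \ref{LABEL600}, reducing commutativity to a single explicit cochain identity. First I would unwind the two composites starting from $(g,c)\in C^1(\Gr_\fm)=\ZZ^E\oplus\ZZ^I$. Along the left edge and then the top, $(g,c)\mapsto(d\adj g,c)\mapsto -\delta_\fm(d\adj g,c)$; and by Proposition \ref{LABEL930} the map $\delta_\fm$ is simply the quotient $\ZZ^V\oplus\ZZ^I\to \coker\bigl((\Box_0,\tr\rho)\bigr)=H^1(\Grg,\Harm_\fm)\simeq\Pic_\fm(\Grg)$, so this composite equals $-[(d\adj g,c)]$. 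The other composite is $(g,c)\mapsto[(g,c)]\mapsto (**)\bigl([(g,c)]\bigr)$, where $[(g,c)]$ is the class of the cocycle $(g,c)$ in $H^1(\Gr_\fm)\cong H^1(\Grg,\ZZ_\fm)$ (Proposition \ref{LABEL960}). Thus everything comes down to proving $(**)\bigl([(g,c)]\bigr)=-[(d\adj g,c)]$.

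To evaluate $(**)$ I would work at the cochain level, exploiting that the inclusion $\ZZ_\fm\hookrightarrow\Harm_\fm$ is the identity on the shared degree-one term $\prod_{i\in I}w_{i*}\ZZ$, so that it splits into a harmonic contribution in $g$ and a rigidification contribution in $c$. Representing the class by the mapping-fibre complex of Proposition \ref{LABEL960} and pushing it through the pushout presentation \eqref{LABEL920} of $\Harm_\fm\simeq[\PL\to P]$ with $P=\prod_v v_*\ZZ\oplus\prod_i w_{i*}\ZZ$, I would handle $g$ exactly as in the proof of Theorem \ref{LABEL600}: the PL $0$-cochain $h=(h_v)\in C^0(\PL)$ with $\tilde e^{*}h_v=g(e)(1-x)$ for $e\in t\i(v)$ and $g(e)x$ for $e\in o\i(v)$ satisfies $\diff(h)=-d\adj g$ and $d_\PL(h)=g$, and — crucially — $h_v(v)=0$ for every $v$, whence $\eval_I(h)=0$. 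So the $g$-part contributes $-[(d\adj g,0)]$ and produces no rigidification term, while the coordinate $c$ is carried across unchanged by the identity on $\prod_i w_{i*}\ZZ$. Assembling the two pieces yields $(**)\bigl([(g,c)]\bigr)=-[(d\adj g,c)]$, and since the cocycles $(g,c)$ exhaust $H^1(\Gr_\fm)$, the square commutes.

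The main obstacle I anticipate is the homological bookkeeping for the two-term complexes of sheaves $\ZZ_\fm$ and $\Harm_\fm$, together with the matching of signs. Unlike Theorem \ref{LABEL600}, where a single short exact sequence of sheaves sufficed, here $(**)$ must be read off from the morphism between the short exact sequences \eqref{LABEL940} and \eqref{LABEL950} (equivalently, from the pushout \eqref{LABEL920}), and one must check that the two routes by which the common skyscraper $\prod_i w_{i*}\ZZ$ maps into $H^1(\Grg,\Harm_\fm)$ are compatible. The sign appearing in the harmonic part ($g\mapsto -d\adj g$, via the $h_v$) is precisely the one absorbed into the normalization $-\delta_\fm$, just as $-\bar\delta$ functions in Theorem \ref{LABEL600}; the delicate point is to confirm that this sign and the (unsigned) rigidification coordinate assemble consistently, for which the vanishing $h_v(v)=0$ — ruling out any cross-term between the harmonic and rigidification contributions — is the essential ingredient. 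Once the cochain-level form of $(**)$ is pinned down, the remainder is a direct diagram chase resting on the computation already carried out for Theorem \ref{LABEL600}.
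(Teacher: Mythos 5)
Your proposal is correct and takes essentially the same route as the paper: both reduce the square to the cochain-level evaluation of $\delta_\fm$ through the pushout presentation \eqref{LABEL920}, reuse the PL $0$-cochain $h=(h_v)$ from the proof of Theorem \ref{LABEL600} satisfying $\diff(h)=-d\adj g$, $d_{\PL}(h)=g$ and $\eval_I(h)=0$ (your observation that $h_v(v)=0$ kills any cross-term is exactly the paper's ingredient), so that $\delta_\fm(d\adj g,c)$ is represented by $(-g,-c)$, which is the image of $-(g,c)$ under $(**)$. The sign bookkeeping you defer is precisely what the paper carries out --- note only that the delicate map is $\delta_\fm$, the connecting map of the non-exact sequence $\Harm_\fm \to \PL \to \prod_v v_*\ZZ\oplus\prod_i w_{i*}\ZZ$ (a distinguished triangle pinned down via the total-complex and cone conventions of the Stacks project), rather than $(**)$, which is induced by the honest inclusion of complexes $\ZZ_\fm\hookrightarrow\Harm_\fm$ and needs no such care.
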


\begin{proof}
  Morally, $\delta_\fm$ is the connecting homomorphism for the sequence
  \[
     \Harm_\fm \to \PL \to \prod_{v\in V}v_*\ZZ \oplus
    \prod_{i\in I}w_{i*}\ZZ 
  \]
  but this is not an exact sequence of complexes --- it only
  determines a triangle in the derived category. So we need to do some
  homological algebra to pin down the signs. We follow the
  sign conventions of \cite[Tag 0119]{Stacks}. First, we take the
  pushout square of complexes of abelian groups
  \begin{equation}
    \label{LABEL980}
    \begin{tikzcd}
      C^\bull(\Harm) \arrow[d, hookrightarrow] \arrow[rr,"{\eval_I}"]
      && \ZZ^I
      \arrow[d, hookrightarrow, "{(0,\id)}"]
      \\
      C^\bull(\PL) \arrow[rr, "{\gamma=(\diff,\eval_I)}"] && \ZZ^V
      \oplus \ZZ^I
    \end{tikzcd}
  \end{equation}
  obtained by applying the standard resolution to
  \eqref{LABEL920}. Recall that for a morphism
  $f\colon X^\bull \to Y^\bull$ of complexes, the total complex of $f$
  is
  \[
    (\tot(f)^\bull, d_{\tot(f)}) = \left( X^\bull\oplus Y^{\bull-1},
      \begin{pmatrix}
        d_X&0\\ f& -d_Y
      \end{pmatrix} \right).
  \]
  Denote by $\incl\colon X^\bull \to \tot(f)^\bull$, $\proj \colon
  \tot(f)^\bull \to Y[-1]^\bull$ the obvious inclusion and projection.
  By the definitions, $\tot(f)=\cone(-f)[-1]$. So applying
  \cite[Tag 014L]{Stacks} (see also the remark following Tag 014Q), the
  sequence of morphisms
  \[
    X^\bull \xrightarrow{-f} Y^\bull \xrightarrow{\incl}
    \tot(f)[1]^\bull \xrightarrow{-\proj} X[1]^\bull
  \]
  is a distinguished triangle in the homotopy category.  Then applying
  the axiom TR2 of triangulated categories (rotation of triangles), we
  see that the triangle
  \begin{equation}
    \label{LABEL990}
    \tot(f)^\bull \xrightarrow{-\proj} X^\bull \xrightarrow{f} Y^\bull
    \xrightarrow{-\incl} \tot(f)[1]^\bull
  \end{equation}
  is also distinguished.
  In the pushout diagram \eqref{LABEL980}, the total complex of the top
  row is the complex $C^\bull(\Harm_\fm)$ whose cohomology is
  $H^*(\Grg, \Harm_\fm)$.  Since \eqref{LABEL980} is a pushout square
  and the vertical maps are injective, the induced map on total
  complexes $C^\bull(\Harm_\fm) \to \tot(\gamma)^\bull$ is a
  quasi-isomorphism. This, together with the final map in
  \eqref{LABEL990}, are the solid arrows in the diagram
  \begin{equation*}
    \begin{tikzcd}[column sep=7em]
      \ZZ^V\oplus \ZZ^I \arrow[r, "{(x,y)\mapsto(0,-x,-y)}"]
      \arrow[dr, dashed]
      &
      \bigl[ \, C^0(\PL) \xrightarrow{(-d_{\PL},-\diff,-\eval_I)}
      C^1(\PL) \oplus \ZZ^V \oplus \ZZ^I \, \bigr]
      \\
      & \bigl[ \, C^0(\Harm) \xrightarrow[(-d_{\Harm}, -\eval_I)]{}
      C^1(\Harm) \oplus \ZZ^I \, \bigr] \arrow[u, "\text{q-iso}"]
    \end{tikzcd}
  \end{equation*}
  in which the complexes on the right are degrees $-1$ and $0$, and the
  vertical map is given by the obvious inclusions.  The dashed arrow
  is a morphism in the derived category, and on cohomology it induces
  $\delta_\fm\colon \ZZ^V\oplus\ZZ^I \to
  H^1(\Grg,\Harm_\fm)$. Explicitly, let $c=(a,k)\in \ZZ^V
  \oplus \ZZ^I$ and $f\in C^0(\PL)$ with $\diff(f)=a$. Then $c$ maps to
  the triple
  \[
    (0,-\diff(f),-k)= (d_{\PL}(f),0,\eval_I(f)-k)-d_{\tot(\alpha')}(f)
    \in C^1(\PL)\oplus \ZZ^V \oplus \ZZ^E
  \]
  and so $\delta_\fm\colon \ZZ^V \oplus \ZZ^E \to H^1(\Gr,\Harm_\fm)$
  takes $(a,k)$ to  $(d_{\PL}(f),\eval_I(f)-k)\in
  C^1(\Harm)\oplus\ZZ^I=C^1(\Harm_\fm)$. 
  If $a=d\adj g$ for $g\in \ZZ^E$, then as in the proof of Theorem
  \ref{LABEL600} we may take for $f$ the family $-h=(-h_v)$ which satisfies
  \[
    \diff(h)=-f,\quad d_{\PL}(h)=g,\quad \eval_I(h)=0
  \]
  and therefore  $\delta_\fm(d\adj g,k)=(-g,-k)$, which lies in the
  image of $C^1(\Grg,\ZZ_\fm)$.
\end{proof}

\begin{cor}
  \label{LABEL1000}
  The diagram
  \begin{equation*}
    \begin{tikzcd}
      \Clhat_\fm(\Gr) \arrow[r, "\sim"', "-\bar\delta_\fm"] & \Pic_\fm(\Grg) \\
      \Clhat^0_\fm(\Gr)=\dfrac{d\adj(\ZZ^E)\oplus\ZZ^I}{(\Box_0,\tr\rho)(\ZZ^V)}
      \arrow[u, hookrightarrow] &
      \dfrac{H^1(\Gr_\fm)}{\beta_\fm(\ker d\adj)} = \P_\fm(\Gr)
      \arrow[l, "\sim", "\chi_\fm"'] \arrow[u, hookrightarrow, "(**)"]
    \end{tikzcd}
  \end{equation*}
  commutes, where $\chi_\fm$ is induced by $(d\adj,\id)\colon
  C^1(\Gr_\fm)=\ZZ^E\oplus \ZZ^I \to \ZZ^V\oplus\ZZ^E=\Divhat_\fm(\Gr)$.
\end{cor}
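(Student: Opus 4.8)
The plan is to obtain the corollary purely formally from Theorem \ref{LABEL970} by passing to quotients in all four corners of its cochain-level square; no new geometric input is needed. Recall that Theorem \ref{LABEL970} records the commutativity of $-\delta_\fm\circ(d\adj,\id) = (**)\circ\proj$ as maps $C^1(\Grm)=\ZZ^E\oplus\ZZ^I\to\Pic_\fm(\Grg)$, where $\proj\colon C^1(\Grm)\to H^1(\Grm)$ is the natural projection and $(**)$ is induced by $\ZZ_\fm\to\Harm_\fm$. Each arrow of the corollary is the descent of one of these maps, so the whole task is to check that the descents are well defined and then transport this single identity through them.

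First I would assemble the descents already in hand. By Proposition \ref{LABEL930} the map $-\delta_\fm$ factors as $\ZZ^V\oplus\ZZ^I\twoheadrightarrow\Clhat_\fm(\Gr)\xrightarrow{-\bar\delta_\fm}\Pic_\fm(\Grg)$ with $-\bar\delta_\fm$ an isomorphism. By Proposition \ref{LABEL800} the map $(d\adj,\id)$ has image $\Divhat^0_\fm(\Gr)=d\adj(\ZZ^E)\oplus\ZZ^I$ and, composed with the projection onto $\Clhat^0_\fm(\Gr)$, factors through $\P_\fm(\Gr)$ to give the isomorphism $\chi_\fm=\comp_\fm$; explicitly, if $c\in C^1(\Grm)$ has image $\bar c\in\P_\fm(\Gr)$, then $\chi_\fm(\bar c)$ is the class of $(d\adj,\id)(c)$ in $\Clhat^0_\fm(\Gr)$.

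The only step that is not pure bookkeeping is the descent of $(**)$, and it is where the identity $\Ha^1(\Gr)=\ker d\adj$ is used. I would show that $(**)\colon H^1(\Grm)\to\Pic_\fm(\Grg)$ annihilates $\beta_\fm(\ker d\adj)$, so that it descends to the corollary's arrow $\P_\fm(\Gr)\to\Pic_\fm(\Grg)$. Indeed, for $\omega\in\Ha^1(\Gr)$ the class $\beta_\fm(\omega)$ is the image under $\proj$ of $j_!\omega=(\omega,0)\in\ZZ^E\oplus\ZZ^I$; applying Theorem \ref{LABEL970} and then $d\adj\omega=0$ gives $(**)(\beta_\fm(\omega))=-\delta_\fm((d\adj,\id)(\omega,0))=-\delta_\fm(d\adj\omega,0)=0$. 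I expect this to be the only point requiring a genuine (if brief) argument.

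Finally I would run the chase. For $\bar c\in\P_\fm(\Gr)$ lifted to $c\in C^1(\Grm)$, the left-then-top route sends $\bar c$ to $-\bar\delta_\fm$ of the class of $(d\adj,\id)(c)$, which by the first descent equals $-\delta_\fm((d\adj,\id)(c))$, while the right route sends $\bar c$ to $(**)(\proj(c))$; these coincide by the identity of Theorem \ref{LABEL970}, and since such $\bar c$ exhaust $\P_\fm(\Gr)$ the square commutes. The injectivity asserted by the hooked right-hand arrow is then automatic, since that arrow now equals the composite $-\bar\delta_\fm\circ\incl\circ\chi_\fm$ of two isomorphisms with an inclusion; everything else is transport of structure through Propositions \ref{LABEL800} and \ref{LABEL930}.
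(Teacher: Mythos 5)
Your proposal is correct and takes essentially the same route as the paper, which likewise deduces the corollary by descending the cochain-level identity of Theorem \ref{LABEL970} to the quotients, with the lower isomorphism $\chi_\fm$ supplied by the proof of Proposition \ref{LABEL800} (valid without assuming $\Gr$ finite or connected). The only difference is one of explicitness: you spell out the well-definedness of the descended arrow $(**)$ --- namely that $(**)$ kills $\beta_\fm(\ker d\adj)$, which you neatly extract from Theorem \ref{LABEL970} itself via $(d\adj,\id)(\omega,0)=(0,0)$ --- a point the paper leaves implicit in ``the rest follows from Theorem \ref{LABEL970}.''
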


\begin{proof}
  The proof of Proposition \ref{LABEL800} shows, without assuming $\Gr$ to
  be finite, that the lower map is an isomorphism. The rest follows from
  Theorem \ref{LABEL970}.
\end{proof}

Now assume that $\Gr$ is connected, and define
$\Pic_\fm^0(\Grg)$ to be the inverse image of $\Pic^0(\Grg)$ under the
surjection $\Pic_\fm(\Grg) \to \Pic(\Grg)$.
\begin{cor} 
  \label{LABEL1010}
  The diagram below commutes:
  \[
  \begin{tikzcd}
    \Clhat_\fm(\Gr) \arrow[r, "-\bar\delta_\fm", "\sim"']& \Pic_\fm(\Grg)
    \\
    \Clhat_\fm^0(\Gr) \arrow[u, hook]\arrow[r, "\lowsim"]
    &\Pic_\fm^0(\Grg) \arrow[u, hook] 
    \\
    & \P_\fm(\Gr) \arrow[u, "\wr"]
    \arrow[ul, "\chi_\fm"', "\simeq"]\,.
  \end{tikzcd}
  \] 
\end{cor}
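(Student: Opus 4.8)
The plan is to obtain the whole diagram from Corollary \ref{LABEL1000} by restricting everything to the degree-zero parts, so that the only genuinely new assertion is the equality
\[
  -\bar\delta_\fm(\Clhat^0_\fm(\Gr)) = \Pic^0_\fm(\Grg).
\]
Corollary \ref{LABEL1000} already supplies the top isomorphism $-\bar\delta_\fm$, the isomorphism $\chi_\fm$, and the commutativity of the outer square; granting the displayed equality, the upper square of the present diagram commutes trivially (it is an inclusion followed by an isomorphism whose restriction is, by definition, the lower horizontal map), and the lower triangle is just the restriction of the square of Corollary \ref{LABEL1000}. Indeed, the vertical map $\P_\fm(\Gr) \to \Pic_\fm(\Grg)$ there factors as $-\bar\delta_\fm \circ \chi_\fm$, so its image is $-\bar\delta_\fm(\Clhat^0_\fm(\Gr)) = \Pic^0_\fm(\Grg)$ and it is an isomorphism onto this subgroup.

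First I would record that both degree-zero subgroups are \emph{inverse images} under the natural ``forget the modulus'' projections. By its definition, $\Pic^0_\fm(\Grg) = \pi\i(\Pic^0(\Grg))$, where $\pi\colon \Pic_\fm(\Grg) \to \Pic(\Grg)$ is the forgetful surjection, i.e.\ the map on $H^1$ induced by the degree-$0$ projection of complexes $\Harm_\fm \to \Harm$. Dually, let $p\colon \Clhat_\fm(\Gr) \to \Clhat(\Gr)$ be induced by $\ZZ^V \oplus \ZZ^I \to \ZZ^V$; this is well defined since $(\Box_0,\tr\rho)(\ZZ^V)$ projects onto $\Box_0(\ZZ^V)=\Prinhat(\Gr)$, and a direct check from the definitions gives
\[
  p\i(\Clhat^0(\Gr)) = \frac{\im d\adj \oplus \ZZ^I}{\Prin_\fm(\Gr)} = \Clhat^0_\fm(\Gr),
\]
using $\Box_0\ZZ^V \subseteq \im d\adj$ and $\Divhat^0_\fm(\Gr)=\im d\adj \oplus \ZZ^I$.

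The key step is then to prove that the square
\[
  \begin{tikzcd}
    \Clhat_\fm(\Gr) \arrow[r, "-\bar\delta_\fm", "\sim"'] \arrow[d, "p"'] & \Pic_\fm(\Grg) \arrow[d, "\pi"] \\
    \Clhat(\Gr) \arrow[r, "-\bar\delta", "\sim"'] & \Pic(\Grg)
  \end{tikzcd}
\]
commutes. Both $\bar\delta$ and $\bar\delta_\fm$ are connecting homomorphisms: $\bar\delta$ for the sequence $0 \to \Harm \to \PL \xrightarrow{\diff} \prod_v v_*\ZZ \to 0$, and $\bar\delta_\fm$, in the form of Proposition \ref{LABEL930}, for the pushout presentation $\Harm_\fm \simeq [\,\PL \xrightarrow{(\diff,\eval_I)} \prod_v v_*\ZZ \oplus \prod_i w_{i*}\ZZ\,]$ of \eqref{LABEL920}. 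The projection $\mathrm{pr}\colon \prod_v v_*\ZZ \oplus \prod_i w_{i*}\ZZ \to \prod_v v_*\ZZ$ satisfies $\mathrm{pr}\circ(\diff,\eval_I)=\diff$, so together with $\id_{\PL}$ it gives a morphism of the two mapping-fibre triangles whose induced map on fibres is precisely the degree-$0$ projection $\Harm_\fm \to \Harm$ inducing $\pi$. Naturality of the connecting homomorphism then yields $\pi\circ \delta_\fm = \delta\circ \mathrm{pr}_*$, and since $\mathrm{pr}_*\colon \ZZ^V\oplus\ZZ^I \to \ZZ^V$ is the projection inducing $p$ and both rows carry the same sign, the square above commutes. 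I expect this naturality-and-sign bookkeeping, carried out in agreement with the conventions fixed in the proof of Theorem \ref{LABEL970}, to be the main obstacle; the rest is formal.

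Finally I would take inverse images. Since $-\bar\delta_\fm$ and $-\bar\delta$ are isomorphisms and the square commutes, $-\bar\delta_\fm(p\i(Y)) = \pi\i(-\bar\delta(Y))$ for every subgroup $Y \subset \Clhat(\Gr)$. Taking $Y = \Clhat^0(\Gr)$ and invoking Theorem \ref{LABEL600}, which gives $-\bar\delta(\Clhat^0(\Gr)) = \Pic^0(\Grg)$, I obtain
\[
  -\bar\delta_\fm(\Clhat^0_\fm(\Gr)) = -\bar\delta_\fm\bigl(p\i(\Clhat^0(\Gr))\bigr) = \pi\i\bigl(\Pic^0(\Grg)\bigr) = \Pic^0_\fm(\Grg),
\]
which is the required identification. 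Combined with Corollary \ref{LABEL1000} as in the first paragraph, this establishes commutativity of the entire diagram with the indicated isomorphisms.
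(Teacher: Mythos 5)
Your proposal is correct and takes essentially the same route as the paper: reduce everything to Corollary \ref{LABEL1000} together with Theorem \ref{LABEL600} and the compatibility of $\bar\delta_\fm$ with the forgetful maps, the only genuinely new content being $-\bar\delta_\fm(\Clhat^0_\fm(\Gr))=\Pic^0_\fm(\Grg)$. The paper's own proof is a one-line remark (that for $\Gr$ connected one has $\im d\adj=\Divhat^0(\Gr)$ and $\ker d\adj=\Ha^1(\Gr)$, identifying the concrete groups of Corollary \ref{LABEL1000} with $\Clhat^0_\fm(\Gr)$ and $\P_\fm(\Gr)$) which leaves that image identification implicit; your computation $\Clhat^0_\fm(\Gr)=p^{-1}(\Clhat^0(\Gr))$ and the naturality argument $\pi\circ\bar\delta_\fm=\bar\delta\circ p$ via the morphism of mapping-fibre triangles supply exactly the omitted step correctly (and even a possible global sign in comparing the two connecting homomorphisms would not affect the subgroup identity, so your sign caveat is harmless).
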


\begin{proof}
  This follows since for $\Gr$ connected, $\im
  d\adj=\Divhat^0(\Gr)$ and $\ker d\adj=\Ha^1(\Gr)$.
\end{proof}

\subsection{Duality}

\label{LABEL1020}
Let $\fm=\sum_{P\in S}P$ be a reduced modulus on a curve $X$.
Then $\P_\fm(X)$ is an extension
\begin{equation}
  \label{LABEL1030}
  0\longrightarrow \T_\fm\longrightarrow \P_\fm(X)\longrightarrow \P(X)
  \longrightarrow 0
\end{equation}
of the Picard variety $\P(X)$ by the torus
$\T_\fm = (\CC^\times)^S/\CC^\times$ (each
element of $(\CC^\times)^S$ determines a trivialization along $S$
of the trivial line bundle $\cO_X$).  Applying the functor
$\Hom(\,\bullet\, ,\Gm)$ 
we get a connecting morphism
\begin{equation}
\label{LABEL1040}
\ZZ[S]_0=\Hom(\T_\fm,\Gm)\longrightarrow \Ext(\P(X),\Gm)=
\J(X)\text{ (canonically)},
\end{equation}
which has the following well-known description.
\begin{thm}
\label{LABEL1050}
The map \eqref{LABEL1040} equals \textup{(}up to sign\textup{)} the
Abel--Jacobi map restricted to $\ZZ[S]_0\subset \Div^0(X)$.\qed
\end{thm}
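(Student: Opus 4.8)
The plan is to identify $\Ext(\P(X),\Gm)$ with $\J(X)$ by the Weil--Barsotti formula --- the canonical identification already invoked in \eqref{LABEL1040} --- under which the connecting map \eqref{LABEL1040}, which I denote $\partial$, sends a character $\chi\in\Hom(\T_\fm,\Gm)$ to the class of the pushout extension $\chi_*\P_\fm(X)$ of $\P(X)$ by $\Gm$. The entire proof then reduces to computing this pushout explicitly and recognizing it as a value of the Abel--Jacobi map. Throughout I use the identifications $\J(X)=\P(X)\Zdual=\Pic^0(\P(X))$ together with the fact that the Abel--Jacobi map is realized by the normalized Poincar\'e bundle $\mathcal P$ on $X\times\P(X)$: writing $\mathcal P_P=\mathcal P|_{\{P\}\times\P(X)}$, one has $[\mathcal P_P]=\AJt(P-P_0)$ in $\Pic^0(\P(X))$, so that $\AJt\bigl(\sum_{P} n_PP\bigr)=\bigl[\bigotimes_{P}\mathcal P_P^{\otimes n_P}\bigr]$ whenever $\sum_{P} n_P=0$.

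First I would make the $\T_\fm$--torsor structure of $\P_\fm(X)\to\P(X)$ explicit. Since $\fm$ is reduced, using the isomorphism $\P_\fm(X)\isom\Pic^0_\fm(X)$ a point of $\P_\fm(X)$ lying over $[L]\in\P(X)$ is a degree-zero line bundle $L$ together with a trivialization of $L|_S=\bigoplus_{P\in S}L_P$, taken up to the global automorphisms $H^0(X,\cO_X^\times)=\CC^\times$ of $L$. Thus the fibre is $\bigl(\prod_{P\in S}L_P^\times\bigr)/\CC^\times$, a torsor under $\T_\fm=(\Gm)^S/\Gm$; over the identity $[\cO_X]$ this recovers the description of $\T_\fm$ recalled after \eqref{LABEL1030}.

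Next I would push out along $\chi=\sum_{P\in S}n_P[P]\in\ZZ[S]_0=\Hom(\T_\fm,\Gm)$, where $\chi$ sends the class of a tuple $(t_P)_P$ to $\prod_{P}t_P^{n_P}$ (well defined precisely because $\sum_P n_P=0$). Contracting the torsor of the previous paragraph by $\chi$ produces the $\Gm$--torsor underlying the line bundle on $\P(X)$ whose fibre over $[L]$ is $\bigotimes_{P\in S}L_P^{\otimes n_P}$. As $[L]$ ranges over $\P(X)$ this line bundle is exactly $\bigotimes_{P}\mathcal P_P^{\otimes n_P}$, so its class in $\Pic^0(\P(X))=\J(X)$ is $\AJt(\sum_P n_PP)$. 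Therefore $\partial(\chi)=\chi_*[\P_\fm(X)]=\AJt(\sum_P n_PP)$, which is the asserted equality.

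The only genuine difficulty I anticipate is the bookkeeping of signs and normalizations: matching the sign convention in the Weil--Barsotti identification $\Ext(\P(X),\Gm)\isom\J(X)$ and the direction of the pushout against the Poincar\'e normalization of $\AJt$ is exactly what produces the ``up to sign'' in the statement. I would pin this down by tracking the single generator $P-Q$ of $\ZZ[S]_0$ in the case $\#S=2$ (where $\T_\fm=\Gm$ and $\P_\fm(X)$ is a single $\Gm$--extension of $\P(X)$) through all the identifications above, and then extend by linearity.
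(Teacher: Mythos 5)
Your proof is correct, but note first that the paper itself gives no proof of Theorem \ref{LABEL1050}: it is stated as well known (with a qed symbol and no argument), and the only proof actually written out is for its graph analogue, Theorem \ref{LABEL1070}. Your route --- Weil--Barsotti $\Ext(\P(X),\Gm)\isom \Pic^0(\P(X))$, the connecting map as the pushout $\chi_*\P_\fm(X)$, fibrewise contraction of the rigidification torsor $\bigl(\prod_{P\in S}L_P^\times\bigr)/\CC^\times$ along $\chi=\sum_P n_P[P]$ to get the line bundle with fibres $\bigotimes_P L_P^{\otimes n_P}$, recognized as $\bigotimes_P\mathcal P_P^{\otimes n_P}$ --- is the standard geometric argument, essentially that of Serre \cite{SeAG}, and it genuinely supplies the omitted proof; your description of the fibres of $\P_\fm(X)\to\P(X)$ matches the paper's parenthetical remark after \eqref{LABEL1030}. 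By contrast, the paper's proof of the graph analogue computes the connecting class by bare homological algebra: for an extension of a torsion group $A$ by $\ZZ$, the class in $\Hom(A,\QQ/\ZZ)$ is obtained by lifting $x$, writing $n\tilde x=i(m)$, and sending $x\mapsto m/n$; both sides are then evaluated via the Pontryagin pairing between $\J(\Gr)$ and $\P(\Gr)$ with explicit cochains. That elementary computation is forced in the graph setting, where there is no Poincar\'e-bundle formalism, so your argument would not transpose to graphs, whereas the paper's method, transposed to curves, would amount to a period/cocycle computation in the style of the Appendix. Two points to make your sketch airtight: the identification of the pushout bundle with $\bigotimes_P\mathcal P_P^{\otimes n_P}$ should be checked in families (a seesaw argument), not merely fibre by fibre --- the normalization ambiguity of $\mathcal P$ does cancel precisely because $\sum_P n_P=0$, as you note --- and your final input $[\mathcal P_P]=\AJt(P-P_0)$ is the compatibility of the paper's transcendental Abel--Jacobi map \eqref{LABEL040} with Albanese--Picard duality, a standard fact that is legitimate to cite here since the theorem is asserted only up to sign, and your plan to fix the sign by tracking a generator of $\ZZ[S]_0$ with $\#S=2$ is exactly the right check.
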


We have an analogue of this for (connected) graphs.
Here is the analogue of \eqref{LABEL1030}.

\begin{prop}
  \label{LABEL1055}
  The restriction map $j^*\colon C^\bullet(\Gr_\fm) \to C^\bullet(\Gr)$ induces
  an exact sequence
  \begin{equation} \label{LABEL1060}
    0 \to \ZZ^I/\ZZ \to \P(\Gr_\fm) \to \P(\Gr) \to 0.
  \end{equation}
\end{prop}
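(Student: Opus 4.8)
The plan is to obtain the sequence by applying the chain map $j^*$ to the cochain complexes and then descending to the Picard quotients. First I would note that $j^*\colon C^\bull(\Grm)\to C^\bull(\Gr)$ is the projection forgetting the coordinates attached to the new vertex $\star$ and the new edges $e_i$: in degree $0$ it is $\ZZ^V\oplus\ZZ\twoheadrightarrow\ZZ^V$, and in degree $1$ it is $\ZZ^E\oplus\ZZ^I\twoheadrightarrow\ZZ^E$. By Lemma \ref{LABEL730}(b) it is a morphism of complexes, and it is degreewise surjective, so there is a short exact sequence of complexes
\[
0\to K^\bull\to C^\bull(\Grm)\xrightarrow{\ j^*\ }C^\bull(\Gr)\to 0 .
\]
Reading off $d_\gfm$ from \eqref{LABEL700}, the kernel is the two-term complex $K^\bull=[\,\ZZ\to\ZZ^I\,]$ in degrees $0,1$ whose differential sends the $\star$-coordinate $a\in\ZZ$ to $-a\,\mathbf 1\in\ZZ^I$ (minus the constant vector with all entries $a$).

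Next I would take cohomology of $K^\bull$. As $\mathbf 1\colon\ZZ\to\ZZ^I$ is injective, $H^0(K^\bull)=0$ and $H^1(K^\bull)=\ZZ^I/\ZZ$. In the long exact cohomology sequence of the displayed short exact sequence, connectedness of both $\Gr$ and $\Grm$ (adjoining $\star$, joined to the $w_i$, preserves connectedness) makes the restriction $H^0(\Grm)=\ZZ\to H^0(\Gr)=\ZZ$ an isomorphism, so the connecting map $H^0(\Gr)\to H^1(K^\bull)$ vanishes. Since $H^0(K^\bull)=0$ and all $H^2$ vanish, the long exact sequence collapses to
\[
0\to\ZZ^I/\ZZ\to H^1(\Grm)\xrightarrow{\ j^*\ }H^1(\Gr)\to 0 .
\]

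Finally I would descend this to $\P_\fm(\Gr)=H^1(\Grm)/\beta_\fm(\Ha^1(\Gr))$ and $\P(\Gr)=H^1(\Gr)/\beta(\Ha^1(\Gr))$. The compatibility $j^*\circ\beta_\fm=\beta$ is immediate: $\beta_\fm(\omega)$ is the class of $j_!\omega=(\omega,0)$, and $j^*(\omega,0)=\omega$. Hence $j^*$ carries $\beta_\fm(\Ha^1(\Gr))$ onto $\beta(\Ha^1(\Gr))$ and induces a surjection $\P_\fm(\Gr)\twoheadrightarrow\P(\Gr)$ whose kernel is $\bigl(\beta_\fm(\Ha^1(\Gr))+\ker j^*\bigr)/\beta_\fm(\Ha^1(\Gr))\isom(\ZZ^I/\ZZ)/\bigl(\beta_\fm(\Ha^1(\Gr))\cap\ker j^*\bigr)$. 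Any element of the intersection has the form $\beta_\fm(\omega)$ with $\beta(\omega)=j^*\beta_\fm(\omega)=0$; since $\Gr$ is finite, $\beta$ is injective by Proposition \ref{LABEL320}(e), so $\omega=0$ and the intersection is trivial. The kernel is therefore all of $\ZZ^I/\ZZ$, giving the asserted sequence. (Equivalently, the two rows $0\to0\to\Ha^1(\Gr)\xrightarrow{\id}\Ha^1(\Gr)\to0$ and the sequence just displayed, linked by $0,\beta_\fm,\beta$, feed directly into the snake lemma.)

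The genuinely routine part is the cochain bookkeeping identifying $K^\bull$ and its cohomology. The two points needing care are the collapse of the long exact sequence, which uses connectedness to annihilate the degree-$0$ connecting map, and the vanishing of $\beta_\fm(\Ha^1(\Gr))\cap\ker j^*$. This last vanishing is exactly where finiteness of $\Gr$ enters: it guarantees injectivity of $\beta$, and without it the snake lemma would instead leave an extra term $\ker\beta/\ker\beta_\fm$ on the left. So pinpointing finiteness (injectivity of $\beta$) as the real hypothesis is the main subtlety.
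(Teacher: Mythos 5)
Correct, and essentially the paper's own argument: the paper likewise forms the short exact sequence of complexes $0\to[\,\ZZ\xrightarrow{\mathrm{diag}}\ZZ^I\,]\to C^\bullet(\Grm)\xrightarrow{j^*}C^\bullet(\Gr)\to 0$, reads off $0\to\ZZ^I/\ZZ\to H^1(\Grm)\to H^1(\Gr)\to 0$ from its long exact sequence, and compresses the descent to the Picard quotients into ``the result follows from the definitions'' --- the descent you carry out explicitly (reading $\P(\Grm)$ in the statement as $\P_\fm(\Gr)$, which matches the paper's intended meaning, as its use of $\im(\beta_\fm)$ in the proof of Theorem \ref{LABEL1070} confirms). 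Your only miscalibration is the closing claim that finiteness of $\Gr$ is the essential hypothesis: in fact $(df,0)=d_\fm(f,\tr{\rho}f)$ shows $\ker\beta_\fm=\ker\beta$ for any connected locally finite graph, so $\beta_\fm(\Ha^1(\Gr))\cap\ker j^*$ vanishes without injectivity of $\beta$, though your appeal to Proposition \ref{LABEL320}(e) is a perfectly correct sufficient argument in the finite case.
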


\begin{proof}
  The map of complexes $j^*$ is surjective, with kernel the complex
  $\ZZ \xrightarrow{\mathrm{diag}} \ZZ^I$, giving an exact sequence
  \[
    0 \to \ZZ^I/\ZZ \to H^1(\Gr_\fm) \to H^1(\Gr) \to 0
  \]
  from which the result follows from the definitions of $\P(\Gr)$ and
  $\P(\Gr_\fm)$. 
\end{proof}
Applying $\Hom(\,\bullet \, ,\ZZ)$ then gives a connecting 
homomorphism
\[
\ZZ[I]_0=\Hom(\ZZ^I/\ZZ,\ZZ)
\to \Ext(\P(\Gr),\ZZ)\cong
\Hom(\P(\Gr),\QQ/\ZZ)
\]
classifying the extension \eqref{LABEL1060} of abelian groups. We have the
duality $\Hom(\P(\Gr),\QQ/\ZZ)\simeq \J(\Gr)$ from
\eqref{LABEL370}(b), and composing with this gives a map
$\ZZ[I]_0\to \J(\Gr)$ analogous to \eqref{LABEL1040}.

\begin{thm}
\label{LABEL1070}
Up to sign, the map $\ZZ[I]_0\to \J(\Gr)$ above equals the
composite
\[
\ZZ[I]_0\stackrel{b}{\to}\Div^0(\Gr)\stackrel{\AJt}
{\longrightarrow} \J(\Gr).
\]
\end{thm}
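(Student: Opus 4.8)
Throughout take $\Gr$ finite and connected, so that $\P(\Gr)$ and $\J(\Gr)$ are finite and $\Ext(\P(\Gr),\ZZ)\cong\Hom(\P(\Gr),\QQ/\ZZ)$; let $b\colon\ZZ[I]_0\to\Div^0(\Gr)$ be the degree-zero extension of $\rho$, so $b(c)=\sum_{i\in I}c_i w_i$ for $c=(c_i)\in\ZZ[I]_0$. Write $\P_\fm(\Gr)$ for the middle term $\P(\Grm)$ of \eqref{LABEL1060} and $\kappa\colon\ZZ^I/\ZZ\hookrightarrow\P_\fm(\Gr)$ for the inclusion there. The map under consideration is $\delta\colon\ZZ[I]_0=\Hom(\ZZ^I/\ZZ,\ZZ)\to\Ext(\P(\Gr),\ZZ)\cong\Hom(\P(\Gr),\QQ/\ZZ)$, namely the connecting map of \eqref{LABEL1060} for $\Hom(-,\ZZ)$ followed by the duality $\Hom(\P(\Gr),\QQ/\ZZ)\isom\J(\Gr)$, $\xi\mapsto\apairing{\xi}{-}$, coming from the perfect pairing $\J(\Gr)\times\P(\Gr)\to\QQ/\ZZ$ constructed after Proposition \ref{LABEL420}. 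The plan is therefore to fix $c$ and establish, for every $y\in\P(\Gr)$, the identity $\delta(c)(y)=\pm\apairing{\AJt(b(c))}{y}$ in $\QQ/\ZZ$; since the pairing is perfect this gives the theorem.

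First I would make the left-hand side explicit. For $y\in\P(\Gr)$ of order $n$, choose a lift $\tilde y\in\P_\fm(\Gr)$; then $n\tilde y\in\ker(j^*)=\kappa(\ZZ^I/\ZZ)$, say $n\tilde y=\kappa(x)$, and the connecting construction gives $\delta(c)(y)=\tfrac1n c(x)\bmod\ZZ$. To evaluate $x$, represent $y$ by $\eta\in C^1(\Gr)=\ZZ^E$ and lift to $(\eta,0)\in C^1(\Grm)$; the relation $ny=0$ in $\P(\Gr)=C^1(\Gr)/(dC^0(\Gr)+\Ha^1(\Gr))$ gives $n\eta=df+h$ with $f\in\ZZ^V$, $h\in\Ha^1(\Gr)$. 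Subtracting $j_!h\in\beta_\fm(\Ha^1(\Gr))$ and a coboundary $d_\fm(f,\ast)=(df,f\circ\rho-\ast)$ from $(n\eta,0)$ leaves $(0,-f\circ\rho)$ modulo the diagonal $\ZZ$, so $x=[-f\circ\rho]$ and
\[
  \delta(c)(y)=-\tfrac1n\,c(f\circ\rho)=-\tfrac1n\sum_{i\in I}c_i\,f(w_i)=-\tfrac1n\,\apairing{b(c)}{f}\ \bmod\ZZ,
\]
where the last pairing is evaluation of the divisor $b(c)$ against the function $f\in C^0(\Gr)$.

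Next I would compute the right-hand side. By Theorem \ref{LABEL400}, $\AJt(b(c))$ is the class of $\omega\mapsto\omega(\gamma)$ with $\gamma\in C_1(\Gr)$, $\partial\gamma=b(c)$; under $\Ha^1(\Gr)\Zdual\cong\Ha^1(\Gr)^{\#}$ it is represented by the $\ell\in\Ha^1(\Gr)^{\#}$ characterised by $\pairing{\ell}{\omega}^1=\omega(\gamma)$ for $\omega\in\Ha^1(\Gr)$, i.e.\ the orthogonal projection of $\hat\gamma$ onto $\Ha^1(\Gr)_\QQ$ in the decomposition $C^1(\Gr,\QQ)=dC^0(\Gr,\QQ)\oplus\Ha^1(\Gr)_\QQ$ of Proposition \ref{LABEL320}(d). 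Then $\apairing{\AJt(b(c))}{y}=\pairing{\ell}{\eta}^1\bmod\ZZ$. Pairing $\ell$ with $n\eta=df+h$ and using $\pairing{\ell}{df}^1=0$ (orthogonality) together with $\pairing{\ell}{h}^1=h(\gamma)=n\eta(\gamma)-f(\partial\gamma)=n\eta(\gamma)-\apairing{b(c)}{f}$ gives $n\,\pairing{\ell}{\eta}^1=n\eta(\gamma)-\apairing{b(c)}{f}$. As $\eta(\gamma)\in\ZZ$, dividing by $n$ and reducing modulo $\ZZ$ yields $\pairing{\ell}{\eta}^1\equiv-\tfrac1n\apairing{b(c)}{f}$, matching the display above. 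Hence the two homomorphisms $\P(\Gr)\to\QQ/\ZZ$ coincide and the theorem follows.

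The main obstacle is the sign bookkeeping. Three separately sign-sensitive identifications enter — the connecting homomorphism of \eqref{LABEL1060} (through the sign in $d_\fm(0,1)=(0,-\mathbf 1_I)$, which fixes $\kappa$), the isomorphism $\Ext(\P(\Gr),\ZZ)\cong\Hom(\P(\Gr),\QQ/\ZZ)$, and the duality $\J(\Gr)\cong\Hom(\P(\Gr),\QQ/\ZZ)$ — and it is their combination that accounts for the ``up to sign'' in the statement. The computation is arranged so that both sides produce $-\tfrac1n\apairing{b(c)}{f}$ once conventions are fixed; the only genuine work is to pin these down consistently, the remainder being the routine cochain manipulation indicated above.
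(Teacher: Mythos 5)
Your proposal is correct and takes essentially the same route as the paper's own proof: you compute the connecting map by the standard $\tfrac{1}{n}$-recipe for extensions by $\ZZ$ (pushed out along $c$), lift through $j_!$ and the coboundary $d_\fm(f,a)=(df,\tr{\rho}f-a)$ to obtain $-\tfrac{1}{n}\,c(f\circ\rho)$, and match this against the Pontryagin pairing with $\AJt(b(c))$ via the identity $n\eta=df+h$ together with $df(\gamma)=f(\partial\gamma)$, exactly as the paper does. Your only deviation --- rewriting the functional $\omega\mapsto\omega(\gamma)$ as the orthogonal projection $\ell\in\Ha^1(\Gr)^\#$ of $\hat\gamma$ before pairing, instead of evaluating it directly on the harmonic part as in the paper --- is cosmetic, and your identification of the middle term of \eqref{LABEL1060} with $\P_\fm(\Gr)$ is the same harmless conflation the paper itself makes (only the vanishing of $j_!\Ha^1(\Gr)$ in the middle group is used).
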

\begin{proof}
First recall: let 
\[
0\to \ZZ\stackrel{i}{\to} A'\xrightarrow{\pi} A\to 0
\]
be an exact sequence of abelian groups, with $A$ torsion.  Its 
extension class in $\Hom(A,\QQ/\ZZ)\simeq \Ext(A)$ may 
(up to sign) be computed as follows:  For $x\in A$, let
$\tilde{x}\in\pi\i(x)$ and $n\geq 1$ with $nx=0$.  Then
$n\tilde{x}=i(m)$ for some $m\in\ZZ$, and $x\mapsto m/n$ is 
the desired homomorphism $A\to \QQ/\ZZ$.

For the extension \eqref{LABEL1060}, let $x\in\P(\Gr)$, $\omega\in C^1(\Gr)$
whose class in $\P(\Gr)$ equals $x$.  Let $n\geq 1$ with $nx=0$.  Then
$n\omega=\eta + df$ for some $\eta\in \Ha^1(\Gr)$, $f\in C^0(\Gr)$.
We may take $\tilde{x}$ to be the class of 
\[
j_!\omega = (\omega, 0)\in C^1(\Gr_\fm)=C^1(\Gr)\oplus \ZZ^I .
\]
Then
\[
n\tilde{x}=\textup{ class of }(\eta +df, 0)=\textup{ class of }(df,0)
\]
Since $(\eta,0)=j_!\eta\in\im(\beta_\fm)$.  As
\[
d_\fm (f,a)=(df,\tr{\rho} f-a)\in C^1(\Gr_\fm)=\ZZ^E\oplus \ZZ^I
\textup{ for all } (f,a)\in \ZZ^V\oplus \ZZ=C^0(\Gr_\fm),
\]
we see that $n\tilde{x}$ is also equal to the class of $(0,-\tr{\rho} f)$.
In other words, the connecting map
\[
\ZZ[I]_0\to \Hom(\P(\Gr), \QQ/\ZZ)\text{ is given by }
D\mapsto \bigl( x\mapsto -1/n(\tr{\rho} f)(D)\bigr)
\]
and the claim is that (up to sign) this equals the map
\[
D\mapsto \Bigl( x\mapsto \bigl(\AJt(\rho_\ast D),x\bigr)_{\mathrm{Pont}}\Bigr),
\]
where $\pairing[\mathrm{Pont}]{-}{-}\colon \J(\Gr)\times \P(\Gr)\to
\QQ/\ZZ$ is the Pontryagin duality pairing.

Compute both expressions:  let $D\in\ZZ[I]_0$, so that $\rho_\ast D= d \gamma
\in\Div^0(\Gr)$ for some $\gamma\in C_1(\Gr)$. Then
\[
\AJt(\rho_\ast D)=(\omega\mapsto \omega(\gamma))\in
\Ha^1(\Gr)\Zdual \ \text{mod}\ \alpha(H_1(\Gr)).
\]
The pairing
\[
\pairing[\mathrm{Pont}]{-}{-}\colon \J(\Gr)\times \P(\Gr)=
\frac{\Ha^1(\Gr)\Zdual}{\alpha(H_1(\Gr))}\times
\frac{H^1(\Gr)}{\beta(\Ha^1(\Gr))}\to \QQ/\ZZ
\]
is given as follows:  if $\varphi\in\Ha^1(\Gr)\Zdual$, then
\[
(\varphi\ \text{mod}\ \im(\alpha),x)_{\mathrm{Pont}}=\frac{1}{n}\varphi(\eta)
\,\,\textup{mod}\ \ZZ
\]
with $n$, $\eta$ as above.

So 
\[
(\AJt(\rho_\ast D),x)_{\mathrm{Pont}}=\frac{1}{n}\eta(\gamma)\,\,
\textup{mod}\ \ZZ\ .
\]
On the other hand, $(\tr{\rho} f)(D)=f(\partial\gamma)=df(\gamma)$
and since $n\omega =\eta + df$,
\[
  (\tr{\rho} f)(D)= - \eta(\gamma) + n\omega (\gamma),
\]
so
\[
 -\frac{1}{n}(\tr{\rho} f)(D)
               = \frac{1}{n}\eta(\gamma)\,\,\textup{mod}\ \ZZ\ .
                                             \qedhere
\]
\end{proof}

\subsection{An abstract formulation}
\label{LABEL1080}

To construct the various groups and obtain the relations between them,
we can in fact work with a rather minimal algebraic setup:

\begin{itemize}
\item $C_0 \xleftarrow{\,\partial\,} C_1$ is a $2$-term complex of
  (arbitrary) abelian groups.
\item For $p\in\{0,1\}$, $\pairing[p]--\colon C_p\times C_p\to
  \ZZ$ is a symmetric bilinear form (otherwise arbitrary).
\end{itemize}
There is just one assumption:
\begin{itemize}
  \item There exists a formal adjoint $\partial\adj \colon C_0\to C_1$
    of $\partial$, that is, a homomorphism satisfying
    $\pairing[0]x{\partial(y)} = \pairing[1]{\partial\adj(x)}y$ for
    all $x\in C_0$, $y\in C_1$.
\end{itemize}
Define $C^p=(C_p)\Zdual=\Hom(C_p,\ZZ)$, with  $\apairing[p]-- \colon
C_p \times C^p \to \ZZ$ the duality pairing. The bilinear forms
$\pairing[p]--$ induce homomorphisms $\iota_p\colon C_p \to C^p$ such
that $\pairing[p]xy=\apairing[p]x{\iota_p(y)}$.

Let $d=\tr{\partial}\colon C^0 \to C^1$, $d\adj=\tr{(\partial\adj)}\colon
C^1\to C^0$. Define ``Laplacians''
\begin{gather*}
  \Delta_0=\partial\partial\adj \in \End C_0,\quad
    \Delta_1=\partial\adj\partial \in \End C_1,
  \Box_0=d\adj d\in \End C^0,\quad \Box_1=dd\adj\in\End C^1.
\end{gather*}

\begin{lem}
  \label{LABEL1090}
  $\iota_0\circ\partial=d\adj\circ\iota_1$,
  $\iota_1\circ\partial\adj=d\circ\iota_0$, and
  $\iota_0\circ\Delta_0=\Box_0\circ\iota_0$.
\end{lem}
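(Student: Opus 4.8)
The plan is to verify the three identities by pairing both sides against arbitrary elements and reducing everything to the single defining property of the formal adjoint, $\pairing[0]{x}{\partial(y)} = \pairing[1]{\partial\adj(x)}{y}$. The key bookkeeping device is that each $\iota_p$ is, by construction, the comparison between the bilinear form $\pairing[p]{-}{-}$ and the duality pairing $\apairing[p]{-}{-}$, namely $\apairing[p]{a}{\iota_p(b)} = \pairing[p]{a}{b}$, while $d$ and $d\adj$ are the honest transposes of $\partial$ and $\partial\adj$ with respect to the duality pairings. The two transpose relations I will use repeatedly are $\apairing[1]{y}{d(\psi)} = \apairing[0]{\partial(y)}{\psi}$ for $\psi\in C^0$, $y\in C_1$, and $\apairing[0]{x}{d\adj(\phi)} = \apairing[1]{\partial\adj(x)}{\phi}$ for $\phi\in C^1$, $x\in C_0$.

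For the first identity, I would fix $y\in C_1$ and compare the two elements $\iota_0(\partial y)$ and $d\adj(\iota_1 y)$ of $C^0$ by evaluating on an arbitrary $x\in C_0$. On the left, the definition of $\iota_0$ gives $\apairing[0]{x}{\iota_0(\partial y)} = \pairing[0]{x}{\partial y}$. On the right, the transpose relation for $d\adj$ followed by the definition of $\iota_1$ gives $\apairing[0]{x}{d\adj(\iota_1 y)} = \apairing[1]{\partial\adj(x)}{\iota_1 y} = \pairing[1]{\partial\adj(x)}{y}$. These two expressions coincide exactly by the adjoint property, so the two functionals agree for every $x$, whence $\iota_0\circ\partial = d\adj\circ\iota_1$.

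I would prove the second identity in the mirror-image way: fix $x\in C_0$, evaluate $\iota_1(\partial\adj x)$ and $d(\iota_0 x)$ on an arbitrary $y\in C_1$, use the definition of $\iota_1$ on the left and the transpose relation for $d$ together with the definition of $\iota_0$ on the right, and reconcile the two using the same adjoint property. The only place any care is needed throughout is to invoke symmetry of $\pairing[p]{-}{-}$ when swapping the two slots, since the duality pairing $\apairing[p]{-}{-}$ is \emph{not} symmetric (indeed its arguments lie in different groups).

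Finally, the third identity is a purely formal consequence of the first two: using $\Delta_0 = \partial\partial\adj$ and $\Box_0 = d\adj d$, I compute $\iota_0\circ\Delta_0 = \iota_0\circ\partial\circ\partial\adj = d\adj\circ\iota_1\circ\partial\adj = d\adj\circ d\circ\iota_0 = \Box_0\circ\iota_0$, applying the first identity at the second step and the second at the third. There is no genuine obstacle here; the entire lemma is a careful unwinding of definitions, the whole content being that the comparison maps $\iota_p$ intertwine the ``adjoint'' operators on the chain side with their transposes on the cochain side.
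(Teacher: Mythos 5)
Your proposal is correct and takes essentially the same approach as the paper: its proof likewise checks the first identity by observing that both sides, evaluated against an arbitrary element, reduce to the defining adjoint relation $\pairing[0]{x}{\partial y}=\pairing[1]{\partial\adj x}{y}$ (with symmetry of the forms absorbing the slot swap), notes the second is the mirror image, and obtains the third formally from the first two via $\Delta_0=\partial\partial\adj$ and $\Box_0=d\adj d$. Your write-up simply makes explicit the evaluation bookkeeping that the paper compresses into a single sentence.
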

\begin{proof}
  For the first statement: if $x\in C_1$, $\iota_0(\partial x)$ is the map
  $y\mapsto \pairing[0]{\partial x}y$, whereas
  $d\adj(\iota_1(x))=\iota_1(x)\circ \partial\adj$ is
  the map $y\mapsto \pairing[1]x{\partial\adj y}$, and these maps are
  equal. The second statement is similar, and together they give the third.
\end{proof}

For any subgroups $A \subset C_1$, $B\subset C^1$ the duality
restricts to a pairing $A\times B \to \ZZ$. Write
\[
  \alpha \colon A \to B\Zdual,\quad \alpha' \colon B \to A\Zdual
\]
for the induced maps (for any $A$, $B$).
\begin{lem}
    \label{LABEL1100}
  Suppose $\iota_1(A)\subset B$. Then the square
  \[
    \begin{tikzcd}
      B\Zdual \arrow[r, "{\tr{(\iota_1)}}"] & A\Zdual 
      \\
      A \arrow[r, "\iota_1"'] \arrow[u, "\alpha"] &
      B \arrow[u, "{\alpha'}"']
    \end{tikzcd}
  \]
  commutes.
\end{lem}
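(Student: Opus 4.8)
The plan is to prove the lemma by a direct diagram chase, tracking a single element $a \in A$ around the square and identifying both of its images in $A\Zdual$. The only substantive input will be the symmetry of the bilinear form $\pairing[1]{-}{-}$; everything else is unwinding definitions.

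First I would record precisely what each of the four maps does. By definition $\alpha(a) \in B\Zdual$ is the functional $b \mapsto \apairing[1]{a}{b}$ on $B$, and $\alpha'(b) \in A\Zdual$ is the functional $a' \mapsto \apairing[1]{a'}{b}$ on $A$. The hypothesis $\iota_1(A) \subset B$ is exactly what makes the restricted map $\iota_1 \colon A \to B$ well defined, and its transpose $\tr{(\iota_1)}\colon B\Zdual \to A\Zdual$ sends a functional $\varphi$ to $\varphi \circ \iota_1$, i.e.\ to the functional $a' \mapsto \varphi(\iota_1(a'))$.

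Next I would compute both composites on a fixed $a \in A$. Going up then across, $\tr{(\iota_1)}(\alpha(a))$ is the functional $a' \mapsto \alpha(a)(\iota_1(a')) = \apairing[1]{a}{\iota_1(a')}$, which by the defining relation $\pairing[1]{x}{y} = \apairing[1]{x}{\iota_1(y)}$ equals $a' \mapsto \pairing[1]{a}{a'}$. Going across then up, $\alpha'(\iota_1(a))$ is the functional $a' \mapsto \apairing[1]{a'}{\iota_1(a)} = \pairing[1]{a'}{a}$, again by the same relation.

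Finally I would invoke symmetry: since $\pairing[1]{-}{-}$ is symmetric, $\pairing[1]{a}{a'} = \pairing[1]{a'}{a}$ for every $a' \in A$, so the two functionals coincide and the square commutes. The main (indeed only) obstacle here is purely notational bookkeeping---being careful that $\tr{(\iota_1)}$ is the transpose of the \emph{restricted} map $A \to B$, and that in each composite the two slots of the pairing are matched to the correct arguments---after which symmetry of $\pairing[1]{-}{-}$ closes the argument at once.
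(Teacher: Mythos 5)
Your proof is correct and is essentially the paper's argument: the paper compresses the same computation into the single observation that both composites equal the canonical map $A \to C_1 \xrightarrow{\iota_1} C^1=(C_1)\Zdual \to A\Zdual$, which when unwound is exactly your element chase using $\pairing[1]{x}{y}=\apairing[1]{x}{\iota_1(y)}$ and the symmetry of $\pairing[1]{-}{-}$. Your explicit care that $\tr{(\iota_1)}$ is the transpose of the \emph{restricted} map $A\to B$ (which is where the hypothesis $\iota_1(A)\subset B$ enters) is a point the paper leaves implicit.
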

\begin{proof}
  Each of the composite maps equals the map $A\to C_1
  \xrightarrow{\iota_1} C^1=(C_1)\Zdual \to  A\Zdual$.
\end{proof}

Define
\[
  \Clb=\dfrac{C_0}{\im \Delta_0}
  \supset \Clb^0 = \dfrac{\im \partial}{\im \Delta_0}\,,
  \quad
  \Clhb = \dfrac{C^0}{\im \Box_0}
  \supset \Clhb^0 = \dfrac{\im d\adj}{\im \Box_0}\,.
\]
By Lemma \ref{LABEL1090}, $\iota_0$ induces a map $\iota_0\colon \Clb \to \Clhb$, and
$\iota_0(\Clb^0) \subset \Clhb^0$.
\begin{lem}
  \label{LABEL1110}
  The maps $\partial$, $d\adj$ induce isomorphisms $\psi$, $\chi$ fitting into a
  commutative diagram
  \[
    \begin{tikzcd}
      \dfrac{C_1}{\ker\partial + \im \partial\adj}
      \arrow[r, "\iota_1"] \arrow[d, "\wr", "\psi"'] &
      \dfrac{C^1}{\ker d\adj + \im d} \arrow[d, "\wr"', "\chi"]
      \\
      \Clb^0 \arrow[r, "\iota_0"] &
      \Clhb^0.
    \end{tikzcd}
  \]
\end{lem}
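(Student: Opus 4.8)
The plan is to build $\psi$ and $\chi$ by the first isomorphism theorem, identifying each kernel explicitly, and then to read off the commutativity of the square directly from Lemma \ref{LABEL1090}.

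First I would construct $\psi$. The composite
\[
  C_1 \xrightarrow{\ \partial\ } \im\partial \twoheadrightarrow
  \im\partial/\im\Delta_0 = \Clb^0
\]
is surjective, so it suffices to compute its kernel. The key observation is that $\im\Delta_0 = \im(\partial\partial\adj) = \partial(\im\partial\adj)$; hence $\partial x$ lies in $\im\Delta_0$ precisely when $\partial x = \partial y$ for some $y\in\im\partial\adj$, i.e.\ when $x \in \ker\partial + \im\partial\adj$. Thus the kernel is exactly $\ker\partial + \im\partial\adj$, and $\partial$ induces the isomorphism $\psi\colon C_1/(\ker\partial+\im\partial\adj) \isomarrow \Clb^0$.

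The construction of $\chi$ is formally dual. Since $\im\Box_0 = \im(d\adj d) = d\adj(\im d)$, the same argument applied to $d\adj\colon C^1\to C^0$ shows that $C^1 \to \im d\adj/\im\Box_0 = \Clhb^0$ is surjective with kernel $\ker d\adj + \im d$, giving $\chi\colon C^1/(\ker d\adj + \im d) \isomarrow \Clhb^0$.

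It then remains to check that the horizontal maps are well defined on these quotients and that the square commutes, and this is where Lemma \ref{LABEL1090} does all the work. From $\iota_0\partial = d\adj\iota_1$ I get $\iota_1(\ker\partial)\subset \ker d\adj$, and from $\iota_1\partial\adj = d\iota_0$ I get $\iota_1(\im\partial\adj)\subset \im d$; together these show that $\iota_1$ descends to the top arrow. Likewise $\iota_0(\im\partial)\subset\im d\adj$ and $\iota_0(\im\Delta_0)\subset\im\Box_0$ (using $\iota_0\Delta_0 = \Box_0\iota_0$) show that $\iota_0$ descends to the bottom arrow, as already observed before the lemma. Finally, for $x\in C_1$ both routes around the square send the class of $x$ to the class of $\iota_0(\partial x) = d\adj(\iota_1 x)$ in $\Clhb^0$, the two expressions being equal by Lemma \ref{LABEL1090}; hence the square commutes. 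I anticipate no serious obstacle here: the only point requiring genuine care is the identification $\im\Delta_0 = \partial(\im\partial\adj)$ (and its transpose $\im\Box_0 = d\adj(\im d)$), since that is what pins down the two kernels; everything else is a formal consequence of Lemma \ref{LABEL1090}.
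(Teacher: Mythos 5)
Your proposal is correct and is exactly the argument the paper has in mind: the paper's entire proof reads ``This follows from the definitions and Lemma \ref{LABEL1090},'' and your write-up fills in those details in the intended way (the identifications $\im\Delta_0=\partial(\im\partial\adj)$ and $\im\Box_0=d\adj(\im d)$ pin down the kernels via the first isomorphism theorem, and the three identities of Lemma \ref{LABEL1090} give well-definedness of the horizontal maps and commutativity). No gaps; your added care in isolating the kernel computations is precisely the content the paper leaves implicit.
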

This follows from the definitions and Lemma \ref{LABEL1090}.\qed

In Lemma \ref{LABEL1100} take $A=\ker\partial$, $B=\ker d\adj \supset
\iota_1(\ker\partial)$ (the inclusion by Lemma \ref{LABEL1090}), and define
\[
  \Jb = \dfrac{(\ker d\adj)\Zdual}{\alpha(\ker\partial)},\qquad
  \Pbbar = \dfrac{(\ker \partial)\Zdual}{\alpha'(\ker d\adj)}\,.
\]
Then $\tr{(\iota_1)}$ induces a homomorphism $\zeta\colon \Jb \to \Pbbar$.

Since $\apairing[1]{\ker\partial}{\im d}=0$, duality gives a map
$H^1 = C^1/\im d \to (\ker\partial)\Zdual = (H_1)\Zdual$. Let $\beta$
be the composite
$\beta\colon \ker d\adj \hookrightarrow C^1 \to H^1$, and define
\[
  \Pb = \dfrac{H^1}{\beta(\ker d\adj)} = \dfrac{C^1}{\ker d\adj + \im d}\,.
\]
Then $H^1\to (H_1)\dual$ gives a map $\Pb \to \Pbbar$. 

\begin{thm}
  \label{LABEL1120}
  \begin{enumerate}[\upshape(a)]
  \item There is a unique homomorphism $\AJ \colon \Clb^0 \to \Jb$
    which, for $x\in C_1$, takes the class of $\partial x$ to the
    class of the linear form $y\mapsto \apairing[1]xy$,
    $y\in \ker d\adj$.

  \item The diagram below is commutative:
    \[
      \begin{tikzcd}
        \Clb^0 \arrow[rr, "\iota_0"] \arrow[d, "\AJ"] &&
        \Clhb^0 \arrow[d, "\wr"', "{\chi^{-1}}"]
        \\
        \Jb \arrow[r,"\zeta"] &\Pbbar&
        \Pb\,. \arrow[l]
      \end{tikzcd}
    \]
  \end{enumerate}
\end{thm}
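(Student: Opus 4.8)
The plan is to prove (a) by defining $\AJ$ on the generators of $\Clb^0 = \im\partial/\im\Delta_0$ and checking well-definedness, and then to prove (b) by a direct chase of such a generator around both routes of the diagram. Throughout I write $\lambda_x\in(\ker d\adj)\Zdual$ for the linear form $y\mapsto\apairing[1]xy$; note $\lambda_x$ is linear in $x$, and for $z\in\ker\partial$ one has $\lambda_z=\alpha(z)$ directly from the definition of $\alpha$.

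For (a), I set $\AJ(\text{class of }\partial x)=\lambda_x \bmod \alpha(\ker\partial)$ and verify two things. First, if $\partial x=\partial x'$ then $x-x'\in\ker\partial$, so $\lambda_x-\lambda_{x'}=\lambda_{x-x'}=\alpha(x-x')\in\alpha(\ker\partial)$, and the value is independent of the chosen lift. Second, if $\partial x=\Delta_0(w)=\partial\partial\adj(w)\in\im\Delta_0$, then $x-\partial\adj w\in\ker\partial$, so it suffices to show $\lambda_{\partial\adj w}$ vanishes on $\ker d\adj$; this follows from the adjunction $\apairing[1]{\partial\adj w}y=\apairing[0]w{d\adj y}=0$ for $y\in\ker d\adj$ (recall $d\adj=\tr{(\partial\adj)}$). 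Hence $\lambda_x=\lambda_{x-\partial\adj w}\in\alpha(\ker\partial)$, so $\AJ$ kills $\im\Delta_0$. Uniqueness is immediate, since the classes of the elements $\partial x$ generate $\Clb^0$.

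For (b), I chase the class of $\partial x$. Along $\Clb^0\xrightarrow{\AJ}\Jb\xrightarrow{\zeta}\Pbbar$, first $\AJ$ yields $\lambda_x\bmod\alpha(\ker\partial)$, and then $\zeta$ (induced by $\tr{(\iota_1)}$) sends it to $\tr{(\iota_1)}\lambda_x\bmod\alpha'(\ker d\adj)$; for $z\in\ker\partial$ this functional is $\tr{(\iota_1)}\lambda_x(z)=\lambda_x(\iota_1 z)=\apairing[1]x{\iota_1 z}=\pairing[1]xz=\apairing[1]z{\iota_1 x}$, using the defining relation $\pairing[1]ab=\apairing[1]a{\iota_1 b}$ and the symmetry of $\pairing[1]--$. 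Along the other route, $\iota_0$ sends the class of $\partial x$ to that of $\iota_0\partial x=d\adj(\iota_1 x)$ in $\Clhb^0$ by Lemma \ref{LABEL1090}; since $\chi$ is induced by $d\adj$ (Lemma \ref{LABEL1110}), $\chi^{-1}$ returns the class of $\iota_1 x$ in $\Pb$; and the edge $\Pb\to\Pbbar$, built from $H^1\to(\ker\partial)\Zdual$, sends it to $z\mapsto\apairing[1]z{\iota_1 x}$ modulo $\alpha'(\ker d\adj)$. The two functionals coincide, so the diagram commutes.

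The computations are all routine once the maps are unwound, so the real content is bookkeeping. The step most likely to cause trouble is matching the two landing points inside $\Pbbar$: one must recognize that $\zeta$, built from $\tr{(\iota_1)}$ on $(\ker d\adj)\Zdual$, and the edge $\Pb\to\Pbbar$, built from $H^1\to(\ker\partial)\Zdual$, produce the \emph{same} functional, which is precisely the compatibility encoded in Lemma \ref{LABEL1100} together with the symmetry of the form. The subsidiary points requiring care are confirming that $\chi$ is an isomorphism (so that $\chi^{-1}$ is legitimate, again from Lemma \ref{LABEL1110}) and keeping straight which quotient each representative lives in; no genuine obstacle arises beyond this.
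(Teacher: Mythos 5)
Your proposal is correct and is essentially the paper's own argument unwound at the level of elements: your verification that $\lambda_{\partial\adj w}$ vanishes on $\ker d\adj$ is exactly the paper's observation that $\alpha(\im\partial\adj)=0$, and your matching of the two functionals in $\Pbbar$ via symmetry of $\pairing[1]--$ is the commutativity of the square in Lemma \ref{LABEL1100}, which the paper pastes together with the isomorphisms $\psi$, $\chi$ of Lemma \ref{LABEL1110} instead of chasing generators. No gap; the element-wise bookkeeping is sound and complete.
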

\begin{proof}
  The square
  \[
    \begin{tikzcd}
      C_1 \arrow[r, "\iota_1"]  \arrow[d, "\alpha"] &
      C^1 \arrow[d, "\alpha'"]
      \\
      (\ker d\adj)\Zdual \arrow[r, "{\tr{(\iota_1)}}"] & (\ker\partial)\Zdual
    \end{tikzcd}
  \]
  commutes by Lemma \ref{LABEL1100}. By the definitions, $\alpha(\im \partial\adj)=0$. So
  passing to the quotients we get a commutative square
  \[
    \begin{tikzcd}
      \dfrac{C_1}{\im\partial\adj+\ker\partial}
      \arrow[r, "\iota_1"]  \arrow[d] &
      \Pb \arrow[d]
      \\
      \Jb \arrow[r, "\zeta"] & \Pbbar\,.
    \end{tikzcd}
  \]
  Combining this with Lemma \ref{LABEL1110} gives the existence of $\AJ$ in (a) and the
  commutativity of the diagram in (b).
\end{proof}

Now assume that we are in addition given an abelian group $L$ together
with a homomorphism $\rho \colon L \to C_0$ and a symmetric bilinear
form $\pairing[L]--\colon L \times L \to \ZZ$. We also assume that
$\rho$ has a formal adjoint $\rho\adj \colon C_0 \to L$. Let
$M=L\Zdual$, $r=\tr \rho$ and $r\adj = \tr{(\rho\adj)}$. Let
$\iota_L\colon L \to M$ be the map induced by the pairing
$\pairing[L]--$.

Let $\tilde C_\bullet$ be
the complex
\[
  \tilde C_0 = C_0 \xleftarrow{\partial_L = (\partial\ \rho)}
  \tilde C_1 = C_1 \oplus L
\]
and equip $\tilde C_1$ with the obvious symmetric bilinear form.

Then a formal adjoint to $\partial_L$ is the map $\partial_L\adjp
\defeq (\partial\adj,\rho\adj)\colon \tilde C_0 \to \tilde C_1$. Let
\[
  \tilde C^0 = C^0 \xrightarrow{d_L = (d, r)}
  \tilde C^1 = C^1 \oplus M
\]
be the dual complex, and set $d_L\adjp \defeq \tr{(\partial_L)}= (d\adj\ r\adj) \colon
\tilde C^1 \to \tilde C^1$, which is a formal adjoint to $d_L$.

Define
\[
  \Clb_L=\dfrac{C_0\oplus L}{(\Delta_0,\rho\adj)C_0}
  \supset \Clb_L^0 = \dfrac{\im \partial\oplus L}{(\Delta_0,\rho\adj)C_0}\,,
  \quad
  \Clhb_L = \dfrac{C^0\oplus M}{(\Box_0,r)C^0}
  \supset \Clhb_L^0 = \dfrac{\im \partial\oplus M}{(\Box_0,r)C^0}\,.
\]
By Lemma \ref{LABEL1090}, we have a map $\iota_0\oplus \iota_L\colon \Clb_L \to
\Clhb_L$, and $(\iota_0\oplus\iota_L)\Clb^0_L \subset
\Clhb^0_L$. Analogous to Lemma \ref{LABEL1110}, we have (with the same proof):

\begin{lem}
  \label{LABEL1130}
  The maps
  \[
    \partial\oplus\id_L \colon \tilde C_1 = C_1\oplus L \to C_0 \oplus
    L,\quad
    d\adj\oplus \id_M \colon \tilde C^1 = C^1 \oplus M \to C^0\oplus M
  \]
  induce isomorphisms $\phi_L$, $\chi_L$ fitting into a commutative
  diagram
  \[
    \begin{tikzcd}
      \dfrac{\tilde C_1}{(\ker\partial)\oplus\{0\} + \im \partial_L\adjp}
      \arrow[r, "\iota_1"] \arrow[d, "\wr", "\phi_L"'] &
      \dfrac{\tilde C^1}{(\ker d\adj)\oplus \{0\} + \im d_L}
      \arrow[d, "\wr"', "\chi_L"]
      \\
      \Clb^0_L \arrow[r, "\iota_0\oplus \iota_L"] &
      \Clhb^0_L\,.
    \end{tikzcd} 
  \]
  \\[-5.5ex]\mbox{ }\qed
  \\[0ex]
\end{lem}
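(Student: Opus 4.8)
The plan is to mimic the proof of Lemma \ref{LABEL1110}, but with the essential observation that the relevant boundary map here is $\partial\oplus\id_L$ (the identity on the $L$-summand), \emph{not} the map $\partial_L=(\partial\ \rho)$ of the extended complex $\tilde C_\bullet$.

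First I would analyze $\partial\oplus\id_L\colon \tilde C_1=C_1\oplus L\to C_0\oplus L$. It surjects onto $\im\partial\oplus L$ with kernel $(\ker\partial)\oplus\{0\}$, and since $\partial_L\adjp=(\partial\adj,\rho\adj)$ one computes $(\partial\oplus\id_L)(\partial\adj z,\rho\adj z)=(\Delta_0 z,\rho\adj z)$, so that $(\partial\oplus\id_L)(\im\partial_L\adjp)=(\Delta_0,\rho\adj)C_0$, which is exactly the denominator defining $\Clb_L^0$. Thus $\partial\oplus\id_L$ induces a surjection onto $\Clb_L^0$; injectivity follows as in Lemma \ref{LABEL1110}: if $(\gamma,\ell)$ maps into $(\Delta_0,\rho\adj)C_0$, say $(\partial\gamma,\ell)=(\Delta_0 z,\rho\adj z)$, then $\gamma-\partial\adj z\in\ker\partial$ and $(\gamma,\ell)=(\gamma-\partial\adj z,0)+(\partial\adj z,\rho\adj z)$ lies in the denominator. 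This gives the isomorphism $\phi_L$. Replacing $(\partial,\partial\adj,\Delta_0,\rho\adj)$ by $(d\adj,d,\Box_0,r)$ and using $d_L=(d,r)$, the identical argument --- now with $(d\adj\oplus\id_M)(df,rf)=(\Box_0 f,rf)$ --- produces $\chi_L$.

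For the commutative square I would work at the level of representatives in $\tilde C_1$ and $\tilde C^1$. Descending $\iota_1\oplus\iota_L$ to the quotients requires it to carry the source denominator into the target denominator; this follows from Lemma \ref{LABEL1090} ($\iota_1(\ker\partial)\subset\ker d\adj$ and $\iota_1\circ\partial\adj=d\circ\iota_0$) together with the analogous identity $\iota_L\circ\rho\adj=r\circ\iota_0$ for the pair $(\rho,\rho\adj)$, which holds by the same adjoint-to-transpose computation (symmetry of the forms and $\pairing[0]{\rho\ell}{x}=\pairing[L]{\ell}{\rho\adj x}$). Commutativity itself is then immediate: for $(\gamma,\ell)$ both composites yield $(\iota_0\partial\gamma,\iota_L\ell)=(d\adj\iota_1\gamma,\iota_L\ell)$ by the relation $\iota_0\circ\partial=d\adj\circ\iota_1$ of Lemma \ref{LABEL1090}.

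The one genuine subtlety --- and the step I would be most careful about --- is resisting the temptation to invoke Lemma \ref{LABEL1110} verbatim for the complex $\tilde C_\bullet$: that would produce the quotient by $\ker\partial_L$ and the group $\im\partial_L/\im\tilde\Delta_0\subset C_0$, whereas the lemma concerns the different map $\partial\oplus\id_L$ and the group $\Clb_L^0$, a quotient of a subgroup of $C_0\oplus L$ rather than of $C_0$. Everything else is bookkeeping identical to the $L=0$ case, which is why the statement can be proved "with the same proof."
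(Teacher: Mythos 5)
Your proposal is correct and is essentially the paper's argument written out in full: the paper dispatches this lemma as ``analogous to Lemma \ref{LABEL1110}, with the same proof,'' and your computation that $(\partial\oplus\id_L)(\im\partial_L\adjp)=(\Delta_0,\rho\adj)C_0$ together with the splitting trick and the transpose identities of Lemma \ref{LABEL1090} (supplemented by $\iota_L\circ\rho\adj=r\circ\iota_0$, which indeed holds by the same adjoint-to-transpose argument) is exactly that proof. Your cautionary remark about using $\partial\oplus\id_L$ rather than $\partial_L$ is well taken and consistent with the paper's definitions of $\Clb^0_L$ and $\Clhb^0_L$.
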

The inclusions $C_1\subset \tilde C_1$, $C^1\subset \tilde C^1$ and
the duality pairing $\tilde C_1 \times \tilde C^1\to\ZZ$ induce  maps
\[
  \tilde\alpha \colon \ker\partial \to (\ker d_L\adjp)\Zdual,\quad
  \tilde\alpha' \colon \ker d\adj \to H_1(\tilde C_\bullet)\Zdual,\quad
  \tilde\beta \colon \ker d\adj \to H^1(\tilde C^\bullet).
\]
Define
\[
  \Jb_L = \dfrac{(\ker d_L\adjp)\Zdual}{\tilde\alpha(\ker\partial)},\quad
  \Pbbar_L = \dfrac{(\ker \partial_L)\Zdual}{\tilde\alpha'(\ker
    d\adj)},\quad
  \Pb_L = \dfrac{H^1(\tilde C_L^\bullet)}{\tilde\beta(\ker
    d\adj)}  = \dfrac{\tilde C^1}{(\ker d\adj)\oplus\{0\} + \im d_L}\,.
\]
There is an obvious map $\Pb_L \to \Pbbar_L$, and
$\tr{(\iota_1\oplus\iota_L)}$ induces a homomorphism
$\zeta_L\colon \Jb_L \to \Pbbar_L$. From Lemma \ref{LABEL1130} we have an
isomorphism $\chi_L\colon \Clhb_L^0 \isom \Pb_L$. A similar
diagram chase as in the proof of Theorem \ref{LABEL1120} gives:

\begin{thm}
  \label{LABEL1140}
  \textup{(a)} There is a unique homomorphism
  $\AJ_L \colon \Clb_L^0 \to \Jb_L$ which takes the class of
  $(\partial x, l)$ (for $x\in C_1$, $l\in L$) to the class of the linear form
  $(y, m)\mapsto \apairing[1]xy+\apairing[L]lm$, $(y,m)\in \ker d_L\adjp\subset
  C^1\oplus M$.

  \textup{(b)} The diagram below is commutative:
  \[
    \begin{tikzcd}
      \Clb_L^0 \arrow[rr, "\iota_0\oplus\iota_L"] \arrow[d, "\AJ_L"] &&
      \Clhb_L^0 \arrow[d, "\wr"', "{\chi_L^{-1}}"]
      \\
      \Jb \arrow[r,"\zeta_L"] &\Pbbar&
      \Pb\,. \arrow[l]
    \end{tikzcd}
  \]
  \\[-5.5ex]\mbox{ }\qed
  \\[0ex]
\end{thm}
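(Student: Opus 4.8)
The plan is to transcribe the proof of Theorem~\ref{LABEL1120} to the complex $\tilde C_\bullet$, replacing $\partial,\partial\adj,d,d\adj,\iota_1$ throughout by $\partial_L,\partial_L\adjp,d_L,d_L\adjp,\iota_1\oplus\iota_L$, and invoking Lemma~\ref{LABEL1130} in place of Lemma~\ref{LABEL1110}. First I would record the commutativity of the square
\[
  \begin{tikzcd}
    \tilde C_1 \arrow[r, "\iota_1\oplus\iota_L"]  \arrow[d, "\tilde\alpha"] &
    \tilde C^1 \arrow[d, "\tilde\alpha'"]
    \\
    (\ker d_L\adjp)\Zdual \arrow[r, "{\tr{(\iota_1\oplus\iota_L)}}"] & (\ker\partial_L)\Zdual
  \end{tikzcd}
\]
where $\tilde\alpha,\tilde\alpha'$ are the maps induced by the duality pairing $\tilde C_1\times\tilde C^1\to\ZZ$. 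The inclusion $(\iota_1\oplus\iota_L)(\ker\partial_L)\subset\ker d_L\adjp$, needed to form the bottom arrow, follows from Lemma~\ref{LABEL1090} applied to $\tilde C_\bullet$; commutativity then holds for exactly the reason given in Lemma~\ref{LABEL1100}, namely that both composites send $z\in\tilde C_1$ to the functional $w\mapsto\pairing{z}{w}$ on $\ker\partial_L$, which is symmetric in $z$ and $w$.

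Next I would verify the vanishing $\tilde\alpha(\im\partial_L\adjp)=0$, the analogue of $\alpha(\im\partial\adj)=0$: for $z\in\tilde C_0$ and $y\in\ker d_L\adjp$ one has $\apairing{\partial_L\adjp z}{y}=\apairing{z}{d_L\adjp y}=0$, since $d_L\adjp=\tr{(\partial_L\adjp)}$. Together with the inclusions $(\iota_1\oplus\iota_L)\bigl((\ker\partial)\oplus\{0\}\bigr)\subset(\ker d\adj)\oplus\{0\}$ and $(\iota_1\oplus\iota_L)(\im\partial_L\adjp)=\im(d_L\iota_0)\subset\im d_L$ (again Lemma~\ref{LABEL1090}) and the dual vanishing $\tilde\alpha'(\im d_L)=0$, this shows that all four maps descend to the quotients appearing in
\[
  \begin{tikzcd}
    \dfrac{\tilde C_1}{(\ker\partial)\oplus\{0\} + \im\partial_L\adjp}
    \arrow[r, "\iota_1\oplus\iota_L"]  \arrow[d] &
    \Pb_L \arrow[d]
    \\
    \Jb_L \arrow[r, "\zeta_L"] & \Pbbar_L
  \end{tikzcd}
\]
and that this square commutes (using the previous square). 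The one point demanding attention is that $\Jb_L$, $\Pb_L$ and $\Pbbar_L$ are formed using the \emph{un-tilded} kernels $\ker\partial$ and $\ker d\adj$ (embedded in the first coordinate), so $\Jb_L$ is \emph{not} the literal $\Jb$ of $\tilde C_\bullet$; one must confirm that $\tilde\alpha$, $\tilde\alpha'$ and $\iota_1\oplus\iota_L$ respect precisely these denominators.

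Finally I would read off the statement from Lemma~\ref{LABEL1130}. Its isomorphism $\phi_L\colon \tilde C_1/\bigl((\ker\partial)\oplus\{0\}+\im\partial_L\adjp\bigr)\isomarrow\Clb_L^0$, induced by $\partial\oplus\id_L$, lets me define $\AJ_L$ as the composite of $\phi_L\i$ with the left vertical map of the square above; tracing the class of $(\partial x,l)$ through $\phi_L\i$ and then $\tilde\alpha$ yields the functional $(y,m)\mapsto\apairing[1]{x}{y}+\apairing[L]{l}{m}$ on $\ker d_L\adjp$, giving both the existence and the uniqueness asserted in~(a). For~(b) I would paste the commuting square above onto the square of Lemma~\ref{LABEL1130}, which expresses $\iota_0\oplus\iota_L=\chi_L\circ(\iota_1\oplus\iota_L)\circ\phi_L\i$; the composite $\Clb_L^0\xrightarrow{\iota_0\oplus\iota_L}\Clhb_L^0\xrightarrow{\chi_L\i}\Pb_L\to\Pbbar_L$ then equals $\zeta_L\circ\AJ_L$, exactly as in Theorem~\ref{LABEL1120}. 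The whole argument is a diagram chase; the main—though modest—obstacle is the bookkeeping of denominators just described, i.e.\ checking that the tilded constructions, which mix the extended kernels $\ker\partial_L,\ker d_L\adjp$ with the original $\ker\partial,\ker d\adj$, fit together consistently.
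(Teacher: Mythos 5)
Your proposal is correct and takes essentially the same approach as the paper: the paper proves Theorem \ref{LABEL1140} only by asserting that ``a similar diagram chase as in the proof of Theorem \ref{LABEL1120}'' gives it, and your argument carries out precisely that chase, with Lemma \ref{LABEL1130} replacing Lemma \ref{LABEL1110} and the vanishing $\tilde\alpha(\im\partial_L\adjp)=0$ replacing $\alpha(\im\partial\adj)=0$. Your flagged bookkeeping point --- that $\Jb_L$, $\Pb_L$, $\Pbbar_L$ are formed with the untilded kernels $\ker\partial$ and $\ker d\adj$ rather than $\ker\partial_L$ and $\ker d_L\adjp$ --- is indeed the only non-mechanical step in the transcription, and you resolve it correctly.
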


\subsection{Generalized Jacobian and Picard groups, II}
\label{LABEL1150}

Now let $\Gr=(V,E)$ be a connected finite graph, and
let $C_\bullet = C_\bullet(\Gr)$ be the chain complex for $\Gr$. Let
$I$ be a finite set and $\rho\colon I \to V$ be a map, which we extend
by linearity to a homomorphism $\rho\colon L\defeq \ZZ[I] \to
C_0$. Write $w_i=\rho(i)$, $\fm=\sum_{i\in I}w_i\in \ZZ[V]$
the associated modulus. For the pairings we take the positive definite
forms for which $V$, $E$ and $I$ are respectively orthonormal bases of
$C_0$, $C_1$ and $L$. The maps $\iota_0$, $\iota_1$ and $\iota_L$ are
isomorphisms, and so $\Clb^{(0)}_L \isom \Clhb^{(0)}_L$ and
$\zeta_L\colon \Jb_L\isom \Pbbar_L$. Moreover
$H_1(\tilde C_\bullet)\times H^1(\tilde C^\bullet)\to\ZZ$ is a perfect
pairing of free abelian groups of finite rank, and so
$\Pb_L \isom \Pbbar_L$. We conclude that all the maps in the
diagram of Theorem \ref{LABEL1140} are isomorphisms.

Let us reinterpret the various groups in this case. We have
$\ker d\adj = \Ha^1(\Gr)$ by definition and
$\ker d_L\adjp = \Ha^1(\Gr_\fm)$ by Proposition \ref{LABEL720}(b). By
Proposition \ref{LABEL320} the maps
$\alpha \colon H_1=\ker\partial \to \Ha^1(\Gr)\Zdual$,
$\alpha' \colon \Ha^1(\Gr) \to (H_1)\Zdual$ are injective.  We also
have
\[
  \im(\partial\oplus \id_L) = \ZZ[V]^0\oplus \ZZ[I] = \mathrm{Div}^0_\fm(\Gr)
\]
and so
\[
  \Clb_L^{(0)} = \mathrm{Cl}^{(0)}_\fm(\Gr),\quad
  \Clhb_L^{(0)} = \widehat{\mathrm{Cl}}^{(0)}_\fm(\Gr),\quad
  \Jb_L=\mathrm{J}_\fm(\Gr), \quad \Pb_L=\mathrm{P}_\fm(\Gr).
\]
Therefore from Theorem \ref{LABEL1140} we recover Theorem \ref{LABEL830}.

We now consider the case of an infinite (but, as always, locally finite) graph
$\Gr$. Then $\partial$ and $\partial\adj$ are homomorphisms between
free abelian groups. In particular, their images are free, and so
$\ker\partial \subset C_1$, $\ker\partial\adj \subset C_0$ are direct
summands. Moreover, $\im\partial$ is the submodule of $C_0$ consisting
of $0$-chains whose degree of each connected component of $\Gr$ is
zero, and so $C_0\simeq \im\partial \oplus \ZZ[\pi_0(\Gr)]$ is also
split. 

\begin{thm}
  \label{LABEL1160}
  Let $\Gr=(V,E)$ be a locally finite graph. Then there is a
  subset $E'\subset E$ such that
  $C_1(\Gr)=\im\partial\adj\oplus \ZZ[E']$. In particular,
  $\im\partial\adj \subset C_1$ is a direct summand.
\end{thm}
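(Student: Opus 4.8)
The plan is to construct $E'$ by a maximality argument and then show the resulting set already spans. First I would introduce the poset $\mathcal P$ of all subsets $E'\subseteq E$ for which the sum $\ZZ[E']+\im\partial\adj$ is direct, i.e. $\ZZ[E']\cap\im\partial\adj=0$, ordered by inclusion. The empty set lies in $\mathcal P$, and the union of a chain $(E'_\lambda)$ again lies in $\mathcal P$: any element of $\ZZ[\bigcup_\lambda E'_\lambda]\cap\im\partial\adj$ has finite support, hence is supported on some single $E'_\lambda$ (the chain is totally ordered), and so lies in $\ZZ[E'_\lambda]\cap\im\partial\adj=0$. Zorn's lemma then yields a maximal $E^*\in\mathcal P$, and it remains only to prove that $\ZZ[E^*]+\im\partial\adj=C_1(\Gr)$, since directness already holds.

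The key step is a separation statement: if $G:=\im\partial\adj+\ZZ[E^*]$ is proper and $e\in E$ is an edge with $e\notin G$, then there is a harmonic $1$-cochain $\omega\in\Ha^1(\Gr)=\ker d\adj$ with $\omega|_{E^*}=0$ and $\omega(e)=1$. Granting this, I finish as follows: applying $\omega$ to a relation $z+ke=\partial\adj c$ with $z\in\ZZ[E^*]$ gives $k=\omega(z)+k\,\omega(e)=\omega(\partial\adj c)=(d\adj\omega)(c)=0$, using $\omega|_{E^*}=0$ and $d\adj\omega=0$; hence $z=\partial\adj c\in\ZZ[E^*]\cap\im\partial\adj=0$, so $E^*\cup\{e\}\in\mathcal P$, contradicting maximality. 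Since the edges generate $C_1(\Gr)$, a proper $G$ must omit some edge, so in fact $G=C_1(\Gr)$, giving $C_1(\Gr)=\im\partial\adj\oplus\ZZ[E^*]$ and in particular the direct-summand assertion.

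To produce $\omega$ I would argue in $\Gr\setminus E^*$. Recall $\partial\adj v=\sum_{t(f)=v}f-\sum_{o(f)=v}f$, so for a finite vertex set $K$ the chain $\partial\adj\mathbf 1_K$ is the signed sum of the edges of $\Gr$ crossing the cut $\partial K$. If the endpoints of $e$ are joined by a path in $(\Gr\setminus E^*)\setminus e$, then that path together with $e$ is a finite cycle, and its (signed) indicator $1$-cochain is the desired $\omega$. Otherwise $e$ is a bridge of $\Gr\setminus E^*$; let $K$ be the component of $o(e)$ in $(\Gr\setminus E^*)\setminus e$. If $K$ were finite, every edge of $\Gr$ crossing $\partial K$ would lie in $E^*\cup\{e\}$, so $\partial\adj\mathbf 1_K=\pm e+w$ with $w\in\ZZ[E^*]$, forcing $e\in G$, contrary to hypothesis; hence $K$, and likewise the component of $t(e)$, is infinite. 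As $\Gr$ is locally finite, each infinite component contains a ray (König's lemma), and splicing a ray out of $o(e)$ and a ray out of $t(e)$ across $e$ produces a bi-infinite simple path in $\Gr\setminus E^*$ through $e$; its indicator cochain is conservative at every vertex it traverses, hence harmonic, and is the required $\omega$.

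The main obstacle is exactly this construction of $\omega$ in the infinite case. Unlike for finite graphs, a coordinate complement of $\im\partial\adj$ cannot be obtained from a spanning tree: the finite-stage splittings do not stabilize — e.g. on the two-way infinite path $\im\partial\adj$ is the (infinite-index) subgroup of finitely supported sum-zero chains — so one must account for the ``ends'' of $\Gr$. The device that resolves this, and simultaneously makes saturation of $\im\partial\adj$ automatic rather than a separate check, is that the witnessing $\omega$ is allowed infinite support: harmonic $1$-cochains of an infinite graph live in the full product $\ZZ^E$, so bi-infinite paths furnish genuine conservative flows. Verifying that these indicator cochains are harmonic and that the cycle/bridge dichotomy is exhaustive are the remaining routine points.
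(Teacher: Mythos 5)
Your proof is correct, but it takes a genuinely different route from the paper's. You get $E'$ nonconstructively: Zorn's lemma produces a maximal $E^*$ with $\ZZ[E^*]\cap\im\partial\adj=0$, and spanning is forced by a separation argument --- for an edge $e\notin\im\partial\adj+\ZZ[E^*]$ you build $\omega\in\Ha^1(\Gr)=\ker d\adj\subset\ZZ^E$ with $\omega|_{E^*}=0$, $\omega(e)=1$, either as the indicator of a finite cycle through $e$ in $\Gr\setminus E^*$ or, in the bridge case, as the flow along a bi-infinite simple path obtained by splicing rays in the two components (which your cut computation $\partial\adj\mathbf 1_K=\pm e+w$ correctly shows are infinite, local finiteness making $\partial\adj\mathbf 1_K$ and the pairing $\omega(\partial\adj c)=(d\adj\omega)(c)$ legitimate even for $\omega$ of infinite support); the usual pairing argument then contradicts maximality. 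The paper instead argues constructively: it reduces to connected loopless graphs, then via a spanning tree to trees, handles finite trees by an explicit isomorphism $\partial\adj\colon\ZZ[V\setminus\{v\}]\isomarrow C_1$, and decomposes an infinite tree into a core subtree in which every non-root vertex has degree $>1$ with finite trees attached, reading off an explicit complement $E_0$ from the formula for $\partial\adj(v)$. What each buys: your argument is shorter in structure, needs no connectivity or loop reductions, and makes visible the duality with harmonic $1$-cochains (the same bi-infinite path flows that populate $\ker d\adj\cong\ZZ^{E_0}$ in the paper's Corollary \ref{LABEL1170}); the paper's proof yields a concrete, describable $E'$ and avoids choice (though since the paper's graphs are countable, your Zorn step could be replaced by a greedy enumeration anyway). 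Two small caveats: your remark that a complement ``cannot be obtained from a spanning tree'' overstates matters --- the paper does exactly that, and on the two-way infinite path its recipe gives a one-edge complement; what is true is your intended point that naive limits of finite-stage splittings fail. Also, the loop case of your dichotomy (where $e$ is a loop, hence a cycle by itself) deserves one explicit sentence, though it is subsumed under your ``empty path'' reading of case~1.
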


\begin{proof}
  Since the complex $(C_\bullet,\,\partial\adj)$ is additive for
  disjoint unions of graphs, we may assume that $\Gr$ is connected. If
  $\Gr'$ is obtained from $\Gr$ by removing the subset $E^*\subset E$
  of loops of $\Gr$, then $C_1(\Gr)=C_1(\Gr')\oplus \ZZ[E^*]$ and
  $\im\partial\adjp_\Gr=\im\partial\adjp_{\Gr'}$, so we may in
  addition assume that $\Gr$ has no loops. Since
  $\iota_0\colon \ker\partial\adj \hookrightarrow \ker d$, and
  $\ker d=H^0(\Gr)$ is the group of constant functions, we have that
  $\ker\partial\adj$ is isomorphic to $\ZZ$ if $\Gr$ is finite, and is
  zero otherwise.

  Let $\Gr_0=(V,E_0)$ be a spanning tree in $\Gr$. Then we have a commutative diagram:
  \[
    \begin{tikzcd}
      C_0(\Gr) \arrow[r, "{\partial\adjp_{\Gr}}"] \arrow[d, equals] &
      C_1(\Gr)=\ZZ[E_0]\oplus \ZZ[E\setminus E_0]
      \arrow[d, "\mathrm{pr}_1"] \\
      C_0(\Gr_0)  \arrow[r, "{\partial\adjp_{\Gr_0}}"] & C_1(\Gr_0)=\ZZ[E_0].
    \end{tikzcd}
  \]
  By the last observation, $\ker\partial\adjp_{\Gr}=\ker\partial\adjp_{\Gr_0}$.
  Suppose that $C_1(\Gr_0)=\im\partial\adjp_{\Gr_0} \oplus \ZZ[E_0']$
  for some $E_0'\subset E_0$.  Then
  $C_1(\Gr)=\im\partial\adjp_{\Gr}\oplus \ZZ[E_0'\cup (E\setminus
  E_0)]$. So it is sufficient to prove the Theorem for $\Gr$ a
  tree. As $\im\partial\adj$ is independent of the orientation of
  $\Gr$, we may assume that $\Gr$ is a rooted tree and that every edge
  is directed away from the root $v_0\in V$.

  Suppose that $\Gr$ is a finite tree. Then as $\ker\partial=0$ and
  $\im\partial=C_0(\Gr)_0$, we have that $\partial\adj$ is surjective
  and $\ker\partial\adj=\ZZ\cdot \sum_{v\in V}v$. So for any $v\in V$,
  $\partial\adj\colon \ZZ[V\setminus\{v\}] \isom C_1(\Gr)$ is an isomorphism.

  Now assume that $\Gr$ is infinite. Then there exists a unique
  maximal subtree $\Gr_1$ of $\Gr$ containing $v_0$, such that every
  vertex $v\ne v_0$ of $\Gr_1$ has degree $>1$. Its vertex set
  comprises all vertices $v$ of $\Gr$ for which there exists an
  infinite chain of edges originating at $v$. Then $\Gr$ is obtained
  by attaching finite trees to $\Gr_1$. Precisely, there is a family
  $(\Delta_v)_{v\in V}$ of pairwise disjoint finite subtrees of $\Gr$
  with
  \[
    \Gr = \Gr_1 \cup \bigcup_{v\in V}\Delta_v\text{ and } \Gr\cap \Delta_v=\{v\}.
  \]
  We then have a commutative diagram with exact rows
  \[
    \begin{tikzcd}
      0 \arrow[r] & \smash[b]{\displaystyle\bigoplus_v}\,
      \ZZ[V(\Delta_v)\setminus\{v\}]  
      \arrow[r] \arrow[d, "{\bigoplus_v\partial\adjp_{\Delta_v}}", "\simeq"'] &
      \ZZ[V(\Gr)] \arrow[r] \arrow[d, "\partial\adjp_\Gr"] &
      \ZZ[V(\Gr_1)] \arrow[r] \arrow[d, "\partial\adjp_{\Gr_1}"] & 0
      \\
      0 \arrow[r] & \displaystyle\bigoplus_v \ZZ[E(\Delta_v)] \arrow[r] &
      \ZZ[E(\Gr)] \arrow[r] & \ZZ[E(\Gr_1)] \arrow[r] & 0     
    \end{tikzcd}
  \]
  and by the finite case, the left-hand vertical arrow is an
  isomorphism. So it's enough to prove the Theorem for $\Gr=\Gr_1$,
  and may therefore assume that for all $v\in V$, $\deg(v)>1$.

  Let
  \[
    e(v,i),\ 0 \le i <
    \begin{cases}
      \deg(v) & \text{if $v=v_0$}\\
      \deg(v)-1 & \text{if $v\ne v_0$}
    \end{cases}
  \]
  be the edges $e$ of $\Gr$ with $o(e)=v$. For $v\ne v_0$ let $e'(v)$
  be the unique edge with $t(e)=v$. Then
  \[
    \partial\adj(v)=
    \begin{cases}
      -\sum_i e(v_0,i) & \text{if $v=v_0$} \\
      e'(v) -\sum_i e(v,i) & \text{if $v\ne v_0$} 
    \end{cases}
  \]
  from which one sees easily that
  $C_1(\Gr)=\im\partial\adj \oplus \ZZ[E_0]$, where
  \[
    E_0 = \bigcup_{v\in V} \{ e(v,i) \mid i \ne 0 \}. \qedhere
  \]
\end{proof}

Let $\fm=\sum_{i\in I}w_i$ be a (not necessarily finite) modulus on the graph
$\Gr$, as in Section \ref{LABEL630}. Taking $L=\ZZ[I]$ with the bilinear
form $\pairing[L]ij=\delta_{ij}$ we then may apply the previous
formalism, and we have the commutative diagram of Theorem
\ref{LABEL1140}.  As $H^1(\Gr)=H_1(\Gr)\Zdual$, the map
$\Pb_L\to \Pbbar_L$ is an isomorphism.

\begin{cor}
  \label{LABEL1170}
  Let $\Gr$ be a connected locally finite graph. Then
  $\AJ_L\colon \Clb^0_L \to \Jb_L$ is an isomorphism.
\end{cor}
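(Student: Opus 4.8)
The plan is to mimic the proof of Theorem~\ref{LABEL760}, but with the chain complex of the extended graph $\Grm$ (which fails to be locally finite as soon as $I$ is infinite) replaced by the abstract complex $\tilde C_\bullet$ of Section~\ref{LABEL1080} with $L=\ZZ[I]$. Since for finite $\Gr$ the index set $I=\bigcup_v\rho\i(v)$ is automatically finite, the modulus is finite and Theorem~\ref{LABEL830} (equivalently the finite-case discussion preceding this Corollary) already gives that $\AJ_L$ is an isomorphism; so I would assume $\Gr$ infinite, in which case $\ker\partial\adj=0$ by the analysis in the proof of Theorem~\ref{LABEL1160}. Writing $\Div^0_\fm(\Gr)=\im\partial\oplus L=\im(\partial\oplus\id_L)$, I would set up the diagram with exact rows
\[
  \begin{tikzcd}[column sep=small]
    0 \arrow[r] & \ker\partial \arrow[d, equals] \arrow[r, "{(\incl,0)}"]
    & \tilde C_1 \arrow[d, twoheadrightarrow, "{\pi_L}"]
      \arrow[r, "{\partial\oplus\id_L}"]
    & \Div^0_\fm(\Gr) \arrow[d, "{\AJt_L}"] \arrow[r] & 0 \\
    0 \arrow[r] & \ker\partial \arrow[r, "{\tilde\alpha}"]
    & (\ker d_L\adjp)\Zdual \arrow[r] & \Jb_L \arrow[r] & 0
  \end{tikzcd}
\]
in which $\pi_L\colon\tilde C_1=(\tilde C^1)\Zdual\to(\ker d_L\adjp)\Zdual$ is the transpose of the inclusion $\ker d_L\adjp\hookrightarrow\tilde C^1$ (using the Specker reflexivity $\tilde C_1\isom(\tilde C^1)\Zdual$), $\AJt_L$ is the composite $\Div^0_\fm(\Gr)\to\Clb^0_L\xrightarrow{\AJ_L}\Jb_L$, and both squares commute by the description of $\AJ_L$ in Theorem~\ref{LABEL1140}(a). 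The bottom row is exact once $\tilde\alpha$ is injective, as $\Jb_L$ is by definition its cokernel.

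Two facts make the snake lemma run. First, $\tilde\alpha$ is injective: taking the $L$-component to be zero exhibits $\ker d\adj=\Ha^1(\Gr)$ inside $\ker d_L\adjp$, so $\tilde\alpha(\gamma)=0$ forces $\apairing[1]{\gamma}{\omega}=0$ for all $\omega\in\Ha^1(\Gr)$; by Lemma~\ref{LABEL300}(b) this places $\gamma\in\ker\partial\cap\im\partial\adj$, and writing $\gamma=\partial\adj D$ we get $\pairing[1]{\partial\adj D}{\partial\adj D}=\pairing[0]{\Delta_0 D}{D}=0$, whence $\gamma=0$ by positive-definiteness. Second, and this is the crux, $\im\partial_L\adjp$ is a direct summand of $\tilde C_1$. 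Here I would invoke Theorem~\ref{LABEL1160} to split $C_1=\im\partial\adj\oplus\ZZ[E']$; since $\Gr$ is infinite and connected, $\partial\adj$ is injective, so $\im\partial_L\adjp=\{(\partial\adj D,\rho\adj D)\mid D\in C_0\}$ is the graph of the homomorphism $\rho\adj\circ(\partial\adj)\i\colon\im\partial\adj\to L$, and the graph of a homomorphism is always a direct summand, giving $\tilde C_1=\im\partial_L\adjp\oplus(L\oplus\ZZ[E'])$. Dualizing this splitting and using the adjoint identity $\apairing{\partial_L\adjp D}{(f,g)}=\apairing{D}{d_L\adjp(f,g)}$, which identifies $\ker d_L\adjp$ with the annihilator of $\im\partial_L\adjp$, I obtain that $\ker d_L\adjp$ is a complementary summand of $\tilde C^1$; hence $\pi_L$ is surjective and $\ker\pi_L=(\ker d_L\adjp)^\perp=\im\partial_L\adjp$ (double orthogonal complement of a direct summand).

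Finally I would feed these into the snake lemma. Surjectivity of $\pi_L$ makes $\AJt_L$ surjective and, since the left vertical arrow is the identity, induces an isomorphism $\partial\oplus\id_L\colon\ker\pi_L\isomarrow\ker\AJt_L$. As $\ker\pi_L=\im\partial_L\adjp$, applying $\partial\oplus\id_L$ gives $\ker\AJt_L=\{(\Delta_0 D,\rho\adj D)\mid D\in C_0\}=\Prin_\fm(\Gr)$, exactly as in Proposition~\ref{LABEL720}(b) and Theorem~\ref{LABEL760}. Thus $\AJt_L$ descends to the asserted isomorphism $\AJ_L\colon\Clb^0_L=\Div^0_\fm(\Gr)/\Prin_\fm(\Gr)\isomarrow\Jb_L$. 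The main obstacle is the second fact: for an infinite modulus one cannot appeal to the splitting Lemma~\ref{LABEL290} for $\Grm$ (which is not locally finite), so the direct-summand property of $\im\partial_L\adjp$ must instead be extracted from Theorem~\ref{LABEL1160} together with the observation that $\im\partial_L\adjp$ is the graph of a homomorphism.
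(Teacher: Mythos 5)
Your proof is correct and follows essentially the same route as the paper: reduce to the infinite case and combine the splitting of Theorem \ref{LABEL1160} with Specker's theorem to identify $\tilde C_1/\im\partial_L\adjp$ (free of countable rank) with $(\ker d_L\adjp)\Zdual$ via the duality map, whence $\AJ_L$ is an isomorphism after quotienting by $\ker\partial$. The paper's own proof is just the terse computation $C_1/\im\partial\adj=\ZZ[E_0]\isom(\ker d\adj)\Zdual$, and your additional steps --- the injectivity of $\tilde\alpha$, the graph-of-a-homomorphism splitting $\tilde C_1=\im\partial_L\adjp\oplus(L\oplus\ZZ[E'])$ using $\ker\partial\adj=0$, and the snake-lemma bookkeeping modeled on Theorem \ref{LABEL760} --- are precisely the details it leaves implicit.
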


\begin{proof}
  We may assume that $\Gr$ is infinite. As it is connected and locally
  finite, $E$ and $V$ are countably infinite sets.  From its
  definition,
  $\ker d\adj = (C_1/\im\partial\adj)\Zdual =
  \ZZ[E_0]\Zdual=\ZZ^{E_0}$, and by Specker's Theorem \cite{Sp}, the canonical
  homomorphism $\ZZ[E_0] \to (\ZZ^{E_0})\Zdual$ is an isomorphism of
  abelian groups. So
  $C_1/\im\partial\adj = \ZZ[E_0] \isom (\ker d\adj)\Zdual$.
\end{proof}

For an infinite graph, the equalities
\[
  \Clb_L^{(0)} = \mathrm{Cl}^{(0)}_\fm(\Gr),\quad \Clhb_L^{(0)} =
  \widehat{\mathrm{Cl}}^{(0)}_\fm(\Gr),\quad \Pb_L=\mathrm{P}_\fm(\Gr)
\]
still hold. The last corollary, and Corollary \ref{LABEL1000},
together suggest that for an infinite graph with arbitrary modulus,
the correct definition of the generalized Jacobian group
should be $\J_\fm(\Gr)= \Jb_L$. Combining these
results with Theorem \ref{LABEL1140}, we obtain a commutative diagram
\[
  \begin{tikzcd}
    \Cl_\fm(\Gr) \arrow[r] &
    \Clhat_\fm(\Gr) \arrow[r, "-\delta_\fm", "\sim"'] &
    \Pic_\fm(\Grg) \\
    \Cl^0_\fm(\Gr) \arrow[u, hookrightarrow] \arrow[d, "\AJ_\fm"', "\simeq"]&&\\
    \J_\fm(\Gr) \arrow[rr, "\zeta"] && \P_\fm(\Gr) \arrow[uu, hookrightarrow, "(**)"']
    \arrow[uul, "\chi"', hookrightarrow]
  \end{tikzcd}
\]
Simple examples show that in general $\zeta$ is neither injective or
surjective, and that $\ker d_L\adjp$ and $\Ha(\Gr_\fm)$ are not in
general equal.

\subsection{Functoriality}

We describe the analogues for graphs of Albanese and Picard
functoriality (compare the remarks after Theorems \ref{LABEL060} and
\ref{LABEL130}). The proofs of the compatibilities of the various maps
with functoriality are rather formal and we mostly leave them to the
reader.

We first review from \cite{bn2} (following previous work in \cite{U1,U2})
the notion of a harmonic morphism of graphs, with appropriate
modifications for oriented graphs and to allow loops.

Throughout this section, $\Gr$ and $\Gr'$ will be connected
locally finite graphs, with $E(\Gr)\ne\emptyset\ne E(\Gr')$.

\subsubsection*{Harmonic morphisms} \cite[\S2.1]{bn2}

\begin{defn}
  A morphism $\f\colon \Gr \to \Gr'$ of graphs is a pair of maps
  \[
    \f_V\colon V(\Gr) \to V(\Gr'),\quad
    \f_E \colon E(\Gr) \to E(\Gr') \sqcup  V(\Gr')
  \]
  such that, for all $e\in E(\Gr)$:
  \begin{itemize}
  \item if $\f_E(e)=v'\in V(\Gr')$ then $\f_V(o(e))=v'=\f_V(t(e))$.
  \item if $\f_E(e)=e'\in E(\Gr')$ then $\f_V(o(e))=o(e')$ and $\f_V(t(e))=t(e')$.
  \end{itemize}
\end{defn}
A morphism $\f\colon \Gr \to \Gr'$ induces a PL map
$\abs \f \colon \Grg \to \abs{\Gr'}$ on geometric realisations.

\begin{defn}
  Let $\f\colon \Gr \to \Gr'$ be a morphism of graphs, $v\in
  V(\Gr)$, $v'=\f(v)$. Say that $\f$ is \emph{harmonic} at $v$ if there exists a
  non-negative integer $m(\f,v)$, the \emph{horizontal multiplicity} of
  $\f$ at $v$ such that the fibres of the maps induced by $\phi_E$
  \begin{align*}
    \Phi_0 \colon & \{e \in E(\Gr) \mid \f_E(e)\in E(\Gr'),\ o(e)=v \} \to
                    \{e' \in E(\Gr') \mid  o(e')=v' \} \\
    \Phi_1 \colon & \{e \in E(\Gr) \mid \f_E(e)\in E(\Gr'),\ t(e)=v \} \to
                    \{e' \in E(\Gr') \mid  t(e')=v' \}
  \end{align*}
  all have cardinality $m(\f,v)$. If $\f$ is harmonic at every $v\in
  V(\Gr)$ say that $\f$ is harmonic.
\end{defn}
We may rewrite this definition in terms of geometric realisations: let
\[
  U=U_{1/3}(\Gr, v) \setminus \abs \f^{-1}(v'), \quad U'=U_{1/3}(\Gr',v')
  \setminus\{v'\}.
\]
Then $\f$ is harmonic at $v$, with horizontal multiplicity $m$, iff
the map $\abs \f \colon U \to U'$ is everywhere $m$-to-one. (This is
the analogue of the local mapping property for holomorphic functions
of a complex variable.)

Let $\f\colon \Gr \to \Gr'$ be a morphism of graphs. Since $\abs \f$ is PL, pullback of
functions determines a map of sheaves $\f^*\colon \abs \f^*\PL_{\Gr'}
\to \PL_{\Gr}$.

\begin{prop}
  \label{LABEL1180}
  \begin{enumerate}[\upshape(a)]
  \item $\f$ is harmonic at $v$ iff $\f^*_v (\Harm_{\Gr',\f(v)}) \subset
    \Harm_{\Gr,v}$.
  \item $\f$ is harmonic iff $\f^*$ maps $\abs \f^*\Harm_{\Gr'}$ to
    $\Harm_{\Gr}$.
  \end{enumerate}
\end{prop}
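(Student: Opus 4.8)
The plan is to prove (a) directly at the level of stalks, and then to deduce (b) by noting that the subsheaf condition is local. First I would fix $v\in V(\Gr)$ and set $v'=\f(v)$, and describe the stalk $\PL_{\Gr,v}$ concretely via the geometric realisation. The punctured ball $U_v(1/3)\setminus\{v\}$ is a disjoint union of half-open intervals indexed by the \emph{directions} at $v$, i.e.\ the incidences of an edge with $v$ (a loop contributing two directions). A germ in $\PL_{\Gr,v}$ is determined by its value at $v$ together with an arbitrary integer slope along each direction, and by the conventions of the paragraph preceding \eqref{LABEL520} the quantity $\diff_v$ is the sum over all directions $\tau$ at $v$ of the incoming derivative along $\tau$. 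Hence $\Harm_{\Gr,v}$ consists of those germs whose incoming-derivative vector lies in the zero-sum hyperplane, and conversely every integer vector with zero sum is realised by some harmonic germ.

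Next I would analyse the pullback. Since $\f$ is a morphism of graphs, $\abs\f$ is parameter-preserving on each non-collapsed edge (matching origins to origins and termini to termini) and constant on each collapsed edge; so each direction $\tau$ at $v$ either \emph{survives}, mapping isometrically onto a direction $\abs\f(\tau)$ at $v'$, or \emph{collapses} to $v'$. For a harmonic germ $h'$ at $v'$ with incoming derivative $d'_\sigma$ along the direction $\sigma$, the pullback $\f^*h'=h'\circ\abs\f$ then has incoming derivative $d'_{\abs\f(\tau)}$ along a surviving direction $\tau$ and $0$ along a collapsed one (the sign matches because ``incoming'' is intrinsic to approaching the vertex). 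Summing over directions gives
\[
  \diff_v(\f^*h') = \sum_{\sigma\text{ at }v'} n_\sigma\, d'_\sigma,
  \qquad
  n_\sigma \defeq \#\{\tau\text{ at }v : \abs\f(\tau)=\sigma\}.
\]
By the reformulation of harmonicity recorded after the definition of harmonic morphism, $\f$ is harmonic at $v$ with multiplicity $m$ precisely when all $n_\sigma$ equal $m$. The crux of (a) is then the elementary fact that the functional $(d'_\sigma)\mapsto\sum_\sigma n_\sigma d'_\sigma$ vanishes on the zero-sum hyperplane if and only if all the $n_\sigma$ are equal (test against vectors supported on two directions with values $+1$ and $-1$). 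As the harmonic germs $h'$ realise exactly the zero-sum vectors, this shows $\f^*_v(\Harm_{\Gr',v'})\subset\Harm_{\Gr,v}$ iff all $n_\sigma$ agree iff $\f$ is harmonic at $v$, proving (a).

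Finally, (b) follows because the condition that the sheaf map $\f^*\colon\abs\f^*\Harm_{\Gr'}\to\PL_\Gr$ factor through $\Harm_\Gr$ may be checked on stalks. At an interior point of an edge of $\Gr$ the stalks of $\Harm_\Gr$ and $\PL_\Gr$ coincide (there is no vertex condition), so nothing is required there; at a vertex $v$ we have $(\abs\f^*\Harm_{\Gr'})_v=\Harm_{\Gr',\f(v)}$, and the required containment is exactly the stalk condition of (a). Hence $\f^*$ lands in $\Harm_\Gr$ iff $\f$ is harmonic at every vertex, i.e.\ iff $\f$ is harmonic.

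I expect the main obstacle to be the careful bookkeeping of signs and of loops in the incoming-derivative computation. Passing to directions (half-edges) in the geometric realisation, as above, is what makes orientations and loops transparent: a loop simply contributes two independent directions, and the whole statement then reduces to the single linear-algebra criterion for a functional to vanish on the zero-sum hyperplane.
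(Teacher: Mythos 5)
Your proof is correct and takes essentially the same route as the paper's: the computation $\diff_v(\f^*h')=\sum_\sigma n_\sigma d'_\sigma$, with derivatives vanishing along collapsed directions and preserved along surviving ones, is exactly the paper's argument, your $n_\sigma$ being the fibre cardinalities of $\Phi_0$ and $\Phi_1$ (your half-edge directions simply merge those two maps into one), and (b) is the same local reduction using $\Harm_{\Gr,e}=\PL_{\Gr,e}$ on open edges. The explicit zero-sum-hyperplane lemma and the stalkwise bookkeeping are details the paper leaves implicit, not a different method.
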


\begin{proof}
  (a) By definition, $\Harm_{\Gr,v}$ is the set of PL-functions in
  $\PL_{\Gr,v}$ for which the sum of the outgoing derivatives
  at $v$ vanishes.  Let $h \in\Harm_{\Gr',f(v)}$. Then the outgoing
  derivative of $\abs \f^*h$ along an edge $e$ with $o(e)=v$
  (resp.~$t(e)=v$) is zero if $\f_E(e)\in V(\Gr')$, and equals the
  outgoing derivative of $h$ along $\f_E(e)$ otherwise. So the sum
  of the outgoing derivatives of $\abs \f^*h$ vanishes for every
  $h$ iff all fibres of the maps $\Phi_0$ and $\Phi_1$ have the
  same cardinality.
  
  (b) Since $\Harm_{\Gr,e}=\PL_{\Gr,e}$ for every edge $e$, this
  follows from (a).
\end{proof}

\subsubsection*{Graphs without modulus}

The following results describe the functorial properties of the
Jacobian and Picard groups of graphs with respect to harmonic
morphisms, and are mostly contained in \cite{bn2}.  We give proofs of
more general statements in \ref{LABEL1190} below.

\begin{prop} \upshape{\cite[\S4.1]{bn2}}
  \begin{enumerate}[\upshape(a)]
  \item Let $\f\colon \Gr \to \Gr'$ be harmonic, and
    $\f_*\colon \Div(\Gr) \to \Div(\Gr')$ the linear extension of
    $\f_V$. Then $\f_*(\Prin(\Gr)) \subset \Prin(\Gr')$, inducing a
    homomorphism
    \[
      \f_*\colon \Cl(\Gr) \to \Cl(\Gr')
    \]
    which maps $\Cl^0(\Gr)$ into $\Cl^0(\Gr')$.
  \item Write $\f^m\colon \Divhat(\Gr') \to \Divhat(\Gr)$ for the
    transpose of the map $\ZZ[V(\Gr)] \to \ZZ[V(\Gr')]$,
    $v\mapsto m(\f,v)\f(v)$. Then $\f^m(\Prinhat(\Gr'))\subset
    \Prinhat(\Gr)$, inducing a homomorphism
    \[
      \f^*\colon \Clhat(\Gr') \to \Clhat(\Gr).
    \]
    If the graphs are finite, then $\f^*(\Clhat^0(\Gr')) \subset \Clhat^0(\Gr)$.
  \end{enumerate}
\end{prop}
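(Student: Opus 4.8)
The plan is to derive both parts from one compatibility between the pushforward of divisors and the homological Laplacian. First I would establish
\[
  \f_*\bigl(\Delta_0(v)\bigr)=m(\f,v)\,\Delta_0\bigl(\f(v)\bigr)
  \qquad\text{for all } v\in V(\Gr),
\]
with $\Delta_0$ the Laplacian of $\Gr$ on the left and of $\Gr'$ on the right. To prove it I expand $\Delta_0(v)=\sum_{t(e)=v}(v-o(e))+\sum_{o(e)=v}(v-t(e))$ and apply $\f_*$. Any edge $e$ at $v$ with $\f_E(e)\in V(\Gr')$ has $\f(o(e))=\f(t(e))=\f(v)$ and so contributes $0$; only the edges with $\f_E(e)\in E(\Gr')$ remain. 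For these, harmonicity at $v$ asserts that $\Phi_0$ and $\Phi_1$ onto the edges of $\Gr'$ incident to $\f(v)$ are everywhere $m(\f,v)$-to-one, which rewrites each surviving sum as $m(\f,v)$ times the corresponding sum at $\f(v)$, giving the identity. Since $\Prin(\Gr')=\Delta_0(\ZZ[V(\Gr')])$ is a subgroup, the identity yields $\f_*(\Prin(\Gr))\subseteq\Prin(\Gr')$ and hence $\f_*\colon\Cl(\Gr)\to\Cl(\Gr')$; degree is preserved because $\f_*$ sends a vertex to a vertex, so $\Cl^0$ goes to $\Cl^0$.

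For part (b) I would transpose. Writing $\mu\colon\ZZ[V(\Gr)]\to\ZZ[V(\Gr')]$ for $v\mapsto m(\f,v)\f(v)$ and $\f_*$ for $v\mapsto\f(v)$, the identity above reads $\f_*\circ\Delta_0=\Delta_0\circ\mu$. Taking transposes and using $\Box_0=\tr{\Delta_0}$ and $\f^m=\tr{\mu}$ gives
\[
  \f^m\circ\Box_0=\Box_0\circ\tr{(\f_*)}\colon\ZZ^{V(\Gr')}\to\ZZ^{V(\Gr)},
\]
where $\tr{(\f_*)}$ is pullback of functions, $h\mapsto h\circ\f_V$. As $\Prinhat(\Gr')=\Box_0(\ZZ^{V(\Gr')})$, this gives at once
\[
  \f^m\bigl(\Prinhat(\Gr')\bigr)=\Box_0\bigl(\tr{(\f_*)}\,\ZZ^{V(\Gr')}\bigr)\subseteq\Box_0\bigl(\ZZ^{V(\Gr)}\bigr)=\Prinhat(\Gr),
\]
and hence the induced homomorphism $\f^*\colon\Clhat(\Gr')\to\Clhat(\Gr)$.

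For the final statement both graphs are finite, so $\Divhat^0=\ZZ^{V,0}$ is the group of functions with vanishing sum, and I must check that $\f^m$ preserves this condition. For $h\in\ZZ^{V(\Gr')}$ one has $\sum_{v}(\f^m h)(v)=\sum_{v'}h(v')\,d(v')$, where $d(v')\defeq\sum_{v\in\f_V^{-1}(v')}m(\f,v)$, so it suffices to show $d$ is constant on $V(\Gr')$. I would prove this by a double count: for fixed $e'\in E(\Gr')$, every $e$ with $\f_E(e)=e'$ has $o(e)\in\f_V^{-1}(o(e'))$ and $t(e)\in\f_V^{-1}(t(e'))$; grouping such edges by origin and using that each fibre of $\Phi_0$ has size $m(\f,v)$ gives $\#\f_E^{-1}(e')=d(o(e'))$, while grouping by terminus via $\Phi_1$ gives $\#\f_E^{-1}(e')=d(t(e'))$. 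Thus $d(o(e'))=d(t(e'))$ for every edge, and connectedness of $\Gr'$ (with $E(\Gr')\neq\emptyset$) forces $d$ constant; hence $\sum_v(\f^m h)(v)=d\sum_{v'}h(v')=0$ and $\f^*(\Clhat^0(\Gr'))\subseteq\Clhat^0(\Gr)$. The one genuinely non-formal step is this constancy of the local degree $d$; everything else is the edge-by-edge bookkeeping of part (a) and a formal transpose. Points to watch are the cancellation of vertical edges and of loops (a loop maps to a loop or a vertex and always contributes $0$), and that finiteness is exactly what identifies $\Divhat^0$ with the ``sum $=0$'' functions and keeps the above sums finite.
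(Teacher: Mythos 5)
Your proof is correct. Note first that the paper gives no direct proof of this proposition: it cites \cite[\S4.1]{bn2} and instead derives more general statements from the abstract formalism of subsection \ref{LABEL1080}, where a ``harmonic morphism of systems'' is a triple of maps $\phi_0=\f_*$, $\phi_1$ (pushforward of edges, with vertical edges sent to zero) and $\phi_m=\mu\colon v\mapsto m(\f,v)\f_V(v)$ making the two squares $\f_*\circ\partial=\partial_{\Gr'}\circ\phi_1$ and $\phi_1\circ\partial\adj=\partial\adjp_{\Gr'}\circ\mu$ commute. Your key identity $\f_*(\Delta_0(v))=m(\f,v)\,\Delta_0^{\Gr'}(\f_V(v))$, i.e.\ $\f_*\circ\Delta_0=\Delta_0^{\Gr'}\circ\mu$, is exactly the composite of these two squares, so for part (a) and the induced map in part (b) your argument agrees with the paper's, with the factorization through the edge groups suppressed (your bookkeeping of vertical edges and loops, and the transposition $\f^m\circ\Box_0^{\Gr'}=\Box_0\circ\tr{(\f_*)}$, are all sound, also for infinite locally finite graphs). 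The genuine divergence is the final statement. You deduce $\f^m(\ZZ^{V',0})\subseteq\ZZ^{V,0}$ from constancy of the local degree $d(v')=\sum_{\f_V(v)=v'}m(\f,v)$, proved by double-counting $\f_E^{-1}(e')$ by origins and by termini and invoking the standing hypotheses of the section ($\Gr'$ connected, $E(\Gr')\ne\emptyset$) --- this is the degree formula for harmonic morphisms from \cite{bn2}, correctly rederived, and your argument even handles the degenerate case $d\equiv 0$ of a collapsing map. The paper's formal route gets this step for free: transposing the second square gives $d\adj\circ\tr{(\phi_1)}=\f^m\circ d\adjp_{\Gr'}$ on cochains, whence $\f^m(\im d\adjp_{\Gr'})\subseteq\im d\adj$ with no degree lemma and no connectedness, and for finite connected graphs $\im d\adj=\ZZ^{V,0}=\Divhat^0(\Gr)$ by Proposition \ref{LABEL320}(h). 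What your route buys is a self-contained, purely vertex-level proof that never introduces edge cochains and recovers a classical invariant of harmonic morphisms along the way; what the abstract route buys is uniformity --- the same two commuting squares simultaneously drive the functoriality of $\Ha^1$, $\J(\Gr)$, $\P(\Gr)$ and the versions with modulus --- and it isolates exactly where connectedness enters (only in identifying $\im d\adj$ with the sum-zero functions, not in the containment itself).
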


\begin{prop}
  \begin{enumerate}[\upshape(a)]
  \item \upshape{\cite[Prop.~4.11]{bn2}} $\f$ induces a map
    $\f^*\colon \ker d_{\Gr'}\adjp \to \ker d_{\Gr}\adjp$.
  \item The following diagram commutes:
    \[
      \begin{tikzcd}
        \Cl^0(\Gr) \arrow[r, "\f_*"] \arrow[d, "\simeq", "\AJ_{\Gr}"'] &
        \Cl^0(\Gr') \arrow[d, "\simeq"', "\AJ_{\Gr'}"] \\
        \J(\Gr) = \dfrac{(\ker d_{\Gr}\adjp)\Zdual}{\alpha(H_1(\Gr))}
        \arrow[r] &
        \J(\Gr') = \dfrac{(\ker d_{\Gr'}\adjp)\Zdual}{\alpha(H_1(\Gr'))}
        \\
        (\ker d_{\Gr}\adjp)\Zdual \arrow[u, twoheadrightarrow]
        \arrow[r, "\tr{(\f^*)}"] &
        (\ker d_{\Gr'}\adjp)\Zdual \arrow[u, twoheadrightarrow]
      \end{tikzcd}
    \]
  \end{enumerate}
\end{prop}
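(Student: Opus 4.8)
The plan is to reduce the entire statement to one adjunction identity, namely that pullback of cochains is the transpose of pushforward of chains, after first pinning down the map $\f^*$ of part~(a) concretely. I would define $\f_*\colon C_1(\Gr)\to C_1(\Gr')$ by $\f_*(e)=\f_E(e)$ when $\f_E(e)\in E(\Gr')$ and $\f_*(e)=0$ when $e$ is contracted to a vertex, and let $\f^*\colon C^1(\Gr')\to C^1(\Gr)$ be its transpose, so that $(\f^*\omega')(e)=\omega'(\f_E(e))$ on non-contracted edges and $0$ otherwise. By construction $\apairing[1]{\f_*c}{\omega'}=\apairing[1]{c}{\f^*\omega'}$ for all $c\in C_1(\Gr)$ and $\omega'\in C^1(\Gr')$.

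For part~(a) the content is that $\f^*$ carries $\ker d_{\Gr'}\adjp=\Ha^1(\Gr')$ into $\ker d_\Gr\adjp=\Ha^1(\Gr)$. I would verify this by the local identity
\[
  (d_\Gr\adjp\,\f^*\omega')(v) = m(\f,v)\,(d_{\Gr'}\adjp\omega')(\f(v)),
\]
obtained by grouping the edges incident at $v$ according to their images at $\f(v)$: contracted edges contribute $0$, and harmonicity of $\f$ at $v$ says precisely that over each edge $e'$ with $o(e')=\f(v)$ (resp.\ $t(e')=\f(v)$) there lie exactly $m(\f,v)$ non-contracted edges at $v$, via the maps $\Phi_0$, $\Phi_1$. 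Since $\omega'\in\Ha^1(\Gr')$ the right-hand side vanishes, so $\f^*\omega'\in\Ha^1(\Gr)$; this is the cochain form of Proposition~\ref{LABEL1180}.

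For part~(b) I would first note that $\f_*$ commutes with $\partial$ (immediate from the morphism axioms), hence restricts to $\f_*\colon H_1(\Gr)\to H_1(\Gr')$ and, by the previous proposition's inclusion $\f_*(\Prin(\Gr))\subset\Prin(\Gr')$, descends to $\f_*\colon\Cl^0(\Gr)\to\Cl^0(\Gr')$. Recalling from Definition~\ref{LABEL310} that $\alpha$ sends $\gamma$ to the functional $\omega\mapsto\apairing[1]{\gamma}{\omega}$, the adjunction gives
\[
  \tr{(\f^*)}\bigl(\alpha_\Gr(\gamma)\bigr)
  =\bigl(\omega'\mapsto\apairing[1]{\gamma}{\f^*\omega'}\bigr)
  =\bigl(\omega'\mapsto\apairing[1]{\f_*\gamma}{\omega'}\bigr)
  =\alpha_{\Gr'}(\f_*\gamma),
\]
so $\tr{(\f^*)}$ sends $\alpha(H_1(\Gr))$ into $\alpha(H_1(\Gr'))$ and therefore descends to the middle arrow $\J(\Gr)\to\J(\Gr')$, which is the commutativity of the bottom square. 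For the top square I would take $D=\partial\gamma\in\Div^0(\Gr)$, so that $\f_*D=\partial(\f_*\gamma)$, and compare $\AJ_{\Gr'}(\f_*D)=[\,\omega'\mapsto\apairing[1]{\f_*\gamma}{\omega'}\,]$ with the image of $\AJ_\Gr(D)=[\,\omega\mapsto\apairing[1]{\gamma}{\omega}\,]$ under $\tr{(\f^*)}$; these agree modulo $\alpha(H_1(\Gr'))$ by the same adjunction and Definition~\ref{LABEL380}.

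I expect the only genuine work to lie in part~(a): correctly bookkeeping the horizontal multiplicity $m(\f,v)$ and the contracted edges so that the displayed local identity holds exactly. Once $\f^*$ is known to preserve harmonicity, part~(b) is entirely formal, being repeated use of the single adjunction $\apairing[1]{\f_*c}{\omega'}=\apairing[1]{c}{\f^*\omega'}$ together with the definitions of $\alpha$ and $\AJ$.
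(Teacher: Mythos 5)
Your proposal is correct and in substance coincides with the paper's own (largely delegated) argument: the local identity $(d_{\Gr}\adjp\,\f^*\omega')(v)=m(\f,v)\,(d_{\Gr'}\adjp\,\omega')(\f(v))$ you verify is precisely the transpose of the commuting square $\phi_1\circ\partial\adj=\lp\partial\adj\circ\phi_m$ (with $\phi_m(v)=m(\f,v)\f(v)$, $\phi_1=\f_*$ on edges) on which the paper's abstract formulation in subsection \ref{LABEL1080} and the functoriality section rests, and part (a) is the content of the cited \cite[Prop.~4.11]{bn2}. Part (b) then reduces, exactly as you say, to the single adjunction $\apairing[1]{\f_*c}{\omega'}=\apairing[1]{c}{\f^*\omega'}$ together with $\f_*\circ\partial=\partial\circ\f_*$ and the definitions of $\alpha$ and $\AJ$, which is the ``rather formal'' diagram chase the paper leaves to the reader.
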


\begin{prop}
  \label{LABEL1185}
  There is a commutative diagram
  \[
    \begin{tikzcd}
      \Clhat(\Gr') \arrow[d, "{\bar\delta_{\Gr'}}"',"\simeq"]
      \arrow[r, "\f^*"] &
      \Clhat(\Gr) \arrow[d, "{\bar\delta_{\Gr}}", "\simeq"'] \\
      \Pic(\abs{\Gr'}) \arrow[r, "\f^*"] &
      \Pic(\Grg) \\
      \P(\Gr')= \dfrac{H^1(\Gr')}{\beta(\ker d_{\Gr'}\adjp)}
      \arrow[u, "(*)"] \arrow[r] &
      \P(\Gr)= \dfrac{H^1(\Gr)}{\beta(\ker d_{\Gr}\adjp)}
      \arrow[u, "(*)"'] \\
      H^1(\Gr') \arrow[u, twoheadrightarrow]
      \arrow[r, "\f^*"] &
      H^1(\Gr) \arrow[u, twoheadrightarrow]
    \end{tikzcd}
  \]
  where the maps $\f^*$ on $\Pic$ and $H^1$ are pullback on cohomology, and the 
  vertical maps $(*)$ are those in equation \upshape{(2.5.6)}.
\end{prop}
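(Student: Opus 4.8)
The plan is to realize the whole diagram as the effect on cohomology of a single pair of morphisms of short exact sequences of sheaves on $\Grg$, induced by the PL map $\abs{\f}\colon \Grg \to \abs{\Gr'}$. Concretely, the top square is governed by the boundary map of the sequence \eqref{LABEL520}, namely $0 \to \Harm \to \PL \xrightarrow{\diff} \prod_{v} v_*\ZZ \to 0$, whose connecting homomorphism gives $\bar\delta$, while the two lower squares are governed by the $\ediff$-sequence \eqref{LABEL580}, namely $0 \to \ZZ \xrightarrow{q} \Harm \xrightarrow{\ediff} \Omega \to 0$, whose long exact sequence \eqref{LABEL590} supplies both $q_*$ and the surjection $H^1(\Gr) \to \P(\Gr)$. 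So once I produce compatible pullback morphisms between these sheaf sequences on the two graphs, naturality of the long exact cohomology sequences and of the connecting maps will deliver all three squares simultaneously.

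First I would build the sheaf maps. Since $\abs{\f}$ is PL, pullback gives $\f^*\colon \abs{\f}^*\PL_{\Gr'} \to \PL_\Gr$, and by Proposition \ref{LABEL1180}(b) harmonicity of $\f$ is exactly the condition guaranteeing $\f^*(\abs{\f}^*\Harm_{\Gr'}) \subset \Harm_\Gr$; the edge derivative is respected because every non-collapsed edge maps isometrically onto its image, so $\f^*$ also sends $\abs{\f}^*\Omega_{\Gr'}$ into $\Omega_\Gr$ and $\abs{\f}^*\ZZ$ into $\ZZ$. The one step carrying genuine content is the identification of the induced map on the divisor sheaf $\prod_v v_*\ZZ$. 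Here I would argue locally: if $h$ is a PL function near $v'=\f(v)$, then along an edge $e$ at $v$ with $\f_E(e)\in E(\Gr')$ the incoming derivative of $\f^*h = h\circ\abs{\f}$ at $v$ equals the incoming derivative of $h$ along $\f_E(e)$ at $v'$, whereas edges collapsed by $\f$ contribute $0$; since $\f$ is harmonic, exactly $m(\f,v)$ edges at $v$ lie over each edge at $v'$, whence
\[
  \diff_v(\f^*h) = m(\f,v)\,\diff_{v'}(h).
\]
In view of Proposition \ref{LABEL540} this says precisely that the map induced on the divisor sheaves is, on global sections $\ZZ^{V(\Gr')} \to \ZZ^{V(\Gr)}$, the weight-$m$ pullback $\f^m$, so the rightmost squares of both sheaf-sequence morphisms commute.

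Finally I would pass to cohomology. Applying $H^*(\Grg,-)$, which is functorial for $\abs{\f}$, together with naturality of the boundary maps: the boundary of \eqref{LABEL520} yields the top square and at the same time identifies the left-hand $\f^*$ on $\Clhat$ with the map induced by $\f^m$, which is exactly its definition; the sequence \eqref{LABEL580} yields $q_*\circ\f^* = \f^*\circ q_*$ (middle square) and the compatibility of the quotient $H^1 \to \P$ with $\f^*$ (bottom square). For the last point I also need $\f^*$ to descend to $\P$, i.e.\ to carry $\beta_{\Gr'}(\Ha^1(\Gr'))$ into $\beta_\Gr(\Ha^1(\Gr))$, and this is immediate from the inclusion $\f^*(\abs{\f}^*\Harm_{\Gr'}) \subset \Harm_\Gr$ of Proposition \ref{LABEL1180}(b). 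I expect the only real obstacle to be the bookkeeping of the middle paragraph: verifying that the multiplicity factors $m(\f,v)$ assemble the sheaf-level pullback on divisors into exactly the map $\f^m$ defining $\f^*$ on $\Clhat$. Everything else is the formal naturality of long exact cohomology sequences.
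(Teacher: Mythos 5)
Your argument is correct, and for the top square it is in substance the paper's own: the paper describes $\bar\delta_{\Gr'}(D)$ as the torsor $\cL_D=\diff\i(D)\subset\PL_{\Gr'}$ and verifies $\abs\f^*\cL_D\isom\cL_{\f^m(D)}$ by the same incoming-derivative count from the proof of Proposition \ref{LABEL1180}(a) that you isolate as $\diff_v(\f^*h)=m(\f,v)\,\diff_{\f(v)}(h)$; your packaging via a morphism from the pullback of the sequence \eqref{LABEL520} and naturality of the connecting map is equivalent, since $\bar\delta$ \emph{is} that connecting map. Where you genuinely diverge is in the lower two squares: the paper treats these as formal and proves chain-level generalizations in the abstract setting of \ref{LABEL1190} (pullbacks are transposes of maps of the complexes $C_\bullet$, with no sheaves involved), whereas you deduce both squares at once from naturality of the cohomology sequence of $0\to\ZZ\to\Harm\xrightarrow{\ediff}\Omega\to 0$, using Proposition \ref{LABEL1180}(b) and the same multiplicity count to see that $\f^*$ carries $\abs\f^*\Omega_{\Gr'}$ into $\Omega_\Gr$ (this is \cite[Prop.~4.11]{bn2}, quoted as part (a) of the proposition preceding the statement). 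Your route buys uniformity --- all of $\bar\delta$, $q_*$ and the surjections $H^1\to\P$ become instances of one naturality principle --- at the cost of two small checks worth making explicit: (i) that the combinatorial pullback on cochains (transpose of $\f_E$, with collapsed edges sent to $0$) computes the topological pullback $\abs\f^*$ on $H^1(\abs{\Gr'},\ZZ)=H^1(\Gr')$, so that the bottom horizontal arrows of your sheaf-theoretic diagram are the stated ones; and (ii) that the descent of $\f^*$ to $\P(\Gr')=H^1(\Gr')/\beta(\Ha^1(\Gr'))$ uses not merely $\f^*(\abs\f^*\Harm_{\Gr'})\subset\Harm_\Gr$ but the identification of $\beta(\Ha^1)$ with the image of the connecting map $H^0(\Grg,\Omega)\to H^1(\Grg,\ZZ)$ in \eqref{LABEL590}, which is implicit in the construction of $q_*$ and Theorem \ref{LABEL600}; alternatively (ii) follows directly from \cite[Prop.~4.11]{bn2} and the compatibility of $\f^*$ on $\Ha^1\subset C^1$ with $\f^*$ on $H^1$. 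With those two points spelled out, your proof is complete and arguably cleaner than leaving the lower squares to the reader.
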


For now, we just note that the commutativity of the top square follows
easily from the following description of the map $\bar\delta$: if
$D\in\ZZ^{V(\Gr')}$ then the corresponding $\Harm_{\Gr'}$-torsor
$\cL_D$ is the pullback
\[
  \begin{tikzcd}
    0 \arrow[r] & \Harm_{\Gr'} \arrow[r] &
    \PL_{\Gr'} \arrow[r, "(\mathrm{diff}_v)"] &
    \prod_v v_*\ZZ \arrow[r] & 0
    \\
    && \cL_D \arrow[u] \arrow[r] & \ast \arrow[u, "D"']
  \end{tikzcd}
\]
and so by the same reasoning as in the proof of Proposition
\ref{LABEL1180}(a), $\abs \f^*\cL_D  \simeq \cL_{\f^m(D)}$.

\subsubsection*{Graphs with modulus}
\label{LABEL1190}

Let $\f\colon \Gr \to \Gr'$ be a harmonic morphism of finite graphs.
Let $(w_i)_{i\in I}$, $(w'_j)_{j \in J}$ be families of vertices of
$\Gr$ and $\Gr'$ , with corresponding moduli $\fm=\sum w_i$,
$\fm'=\sum w'_j$. We give conditions under which the results of the
previous subsections can be extended to generalized Jacobians and
Picard groups. We will consider only the case of reduced modulus ---
using Remarks \ref{LABEL670} and \ref{LABEL860} one can easily extend
this to the general case. As before, write $S=\{w_i\}\subset V$,
$S'=\{w'_j\}\subset V'$. 

\begin{prop}
  \label{LABEL1200}
  Suppose that $\phi_V^{-1}(S') \supset S$, and let
  $\phi_S \colon \ZZ[S] \to \ZZ[S']$ denote the corresponding
  map. Then:
  \begin{enumerate}[\upshape(i)]
  \item $\phi_V\oplus \pi_S\colon \ZZ[V\setminus S] \to \ZZ[V'\setminus S']$
    induces a homomorphism
    \[
      \phi_*\colon \Cl_\fm(\Gr) \to \Cl_{\fm'}(\Gr').
    \]
  \item $\phi$ induces a map $\phi^* \colon \ker d_{\Gr'_{\fm'}}\adjp
    \to \ker d_{\Gr'_{\fm}}\adjp$, and the following diagram commutes:
    \[
      \begin{tikzcd}
        \Cl^0_\fm(\Gr) \arrow[r, "\f_*"] \arrow[d, "\simeq", "\AJ_{\Gr,\fm}"'] &
        \Cl^0_{\fm'}(\Gr') \arrow[d, "\simeq"', "\AJ_{\Gr',\fm'}"] \\
        \J_\fm(\Gr) = \dfrac{(\ker d_{\Gr_\fm}\adjp)\Zdual}{\alpha_\fm(H_1(\Gr))}
        \arrow[r] &
        \J_{\fm'}(\Gr') = \dfrac{(\ker
          d_{\Gr'_{\fm'}}\adjp)\Zdual}{\alpha_{\fm'}(H_1(\Gr'))} 
        \\
        (\ker d_{\Gr_\fm}\adjp)\Zdual \arrow[u, twoheadrightarrow]
        \arrow[r, "\tr{(\f^*)}"] &
        (\ker d_{\Gr'_{\fm'}}\adjp)\Zdual \arrow[u, twoheadrightarrow]
      \end{tikzcd}
    \]
  \end{enumerate}
\end{prop}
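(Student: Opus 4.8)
The plan is to reduce everything to the extended graphs $\Grm$ and $\Gr'_{\fm'}$ of Definition~\ref{LABEL690} and to transcribe the functoriality arguments for graphs without modulus. First I would observe that the hypothesis $\phi_V^{-1}(S')\supset S$ is exactly what permits an extension of $\phi$ to a morphism $\phi_\fm\colon \Grm\to\Gr'_{\fm'}$: send the auxiliary vertex $\star$ to $\star'$, and for $i\in I$ send the edge $e_i$ (with $t(e_i)=w_i$) to the edge $e'_{j}$ with $t(e'_j)=\phi(w_i)$, which exists precisely because $\phi(w_i)\in S'$. The first task is then to check that $\phi_\fm$ is harmonic; via the criterion of Proposition~\ref{LABEL1180}(b) this reduces to the condition at $\star$ and at the $w_i$, the only vertices where new edges appear, and this is where the support hypothesis does its work.

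Second, I would record the two elementary compatibilities. On chains, $(\phi_\fm)_*$ commutes with the inclusions $j_*$, $j'_*$ and with the boundary maps, so that $(\phi_\fm)_*\,j_* = j'_*\,\phi_*$ and $\partial'\phi_* = \phi_*\partial$; in particular $\phi_*$ carries $H_1(\Gr)$ into $H_1(\Gr')$. On cochains, pullback of $1$-cochains and pushforward of $1$-chains are adjoint for the duality pairing, so $\omega'\big((\phi_\fm)_*\gamma\big) = (\phi_\fm^*\omega')(\gamma)$. Granting that $\phi_\fm$ is harmonic, the non-modulus functoriality statement immediately preceding Proposition~\ref{LABEL1185}, applied to $\phi_\fm$, yields the pullback $\phi^*=\phi_\fm^*\colon \ker d_{\Gr'_{\fm'}}\adjp\to\ker d_{\Grm}\adjp$ claimed in (ii) (recall $\ker d_{\Grm}\adjp=\Ha^1(\Grm)$), and by adjunction its transpose is identified with $(\phi_\fm)_*$ on the dual lattices $\Ha^1(\Grm)\Zdual$. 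The sublattices $\alpha_\fm(H_1(\Gr))$ and $\alpha_{\fm'}(H_1(\Gr'))$ are then respected because $\phi_*H_1(\Gr)\subset H_1(\Gr')$.

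Third, for part (i) I would define $\phi_*$ on divisors with modulus by $\phi_V$ on $\ZZ[V]$ and by $i\mapsto j(i)$ on the modulus part $\ZZ[I]$, and verify $\phi_*(\Prin_\fm(\Gr))\subset\Prin_{\fm'}(\Gr')$. The only computation needed is the harmonic pushforward identity $\phi_*(\Delta_0 v)=m(\phi,v)\,\Delta_0'(\phi(v))$, obtained by splitting the edges at $v$ into horizontal and collapsed ones and invoking the equal-fibre condition; combined with the support hypothesis this carries each generator $(\Delta_0 v, I(v))$ of $\Prin_\fm(\Gr)$ into $\Prin_{\fm'}(\Gr')$, as one checks on the presentation of Proposition~\ref{LABEL660}. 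For part (ii) the commutativity of the square is a direct transcription of the non-modulus proof: starting from $(D,i)\in\Div^0_\fm(\Gr)$ with $D=\partial\gamma$, the Abel--Jacobi image (Definition~\ref{LABEL750}) is $\lambda_{\fm,D}+\varepsilon_i$, and applying $\tr{(\phi^*)}$ and the adjunction turns the form $\omega'\mapsto(\phi_\fm^*\omega')(j_*\gamma)+(\phi_\fm^*\omega')(e_i)$ into $\omega'\mapsto\omega'(j'_*\phi_*\gamma)+\omega'(e'_{j(i)})$, which is exactly $\AJ_{\fm'}\big(\phi_*(D,i)\big)$ since $\partial'(\phi_*\gamma)=\phi_*D$.

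The step I expect to be the genuine obstacle is the verification that $\phi_\fm$ is harmonic, equivalently that $\phi^*$ truly lands in $\ker d_{\Grm}\adjp$. The delicate point is a vertex $v\notin S$ whose image $\phi(v)$ nevertheless lies in $S'$: such a $v$ acquires no edge to $\star$ in $\Grm$, yet $\phi(v)$ carries the edge $e'_j$ to $\star'$, so the fibre of $\Phi_0$ (or $\Phi_1$) over $e'_j$ is empty while the other fibres need not be, which threatens the equal-multiplicity condition. Controlling this is where the support hypothesis must be used most carefully, and is presumably why the statement is phrased through the map $\phi_V\oplus\pi_S$ on the complements $\ZZ[V\setminus S]\to\ZZ[V'\setminus S']$. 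I would handle it by checking directly, on the presentation of Proposition~\ref{LABEL660}, that the class of $\Delta_0 v$ for such $v$ still maps into $\Delta_0'(\ZZ[V'\setminus S'])$, rather than relying on pointwise harmonicity of $\phi_\fm$ at those vertices.
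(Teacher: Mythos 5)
Your reduction to a harmonic morphism $\f_\fm\colon \Grm \to \Gr'_{\fm'}$ of extended graphs does not work, and the failure is broader than the single ``genuine obstacle'' you flagged. Harmonicity of $\f_\fm$ breaks at three kinds of vertices, and you caught only one. At $v=w_i\in S$: the fibre of $\Phi_1$ over the new edge $e'_j$ (where $\f(w_i)=w'_j$) is exactly $\{e_i\}$, of cardinality $1$, while the fibres over the old edges at $w'_j$ have cardinality $m(\f,w_i)$; so $\f_\fm$ is harmonic at $w_i$ only if $m(\f,w_i)=1$, which is not part of the hypothesis (any ramified harmonic morphism violates it). At $\star$: harmonicity requires all fibres of $i\mapsto j(i)$ over $J$ to have the \emph{same} cardinality, in particular $\f_S$ must be surjective onto $S'$ in an equi-fibred way; the hypothesis $\f_V^{-1}(S')\supset S$ gives nothing of the sort. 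And then your Case 2 ($v\notin S$ with $\f(v)\in S'$), which forces $m(\f,v)=0$. Since $\f_\fm$ is not harmonic in general, the non-modulus functoriality results cannot be applied to it, and with them collapses your construction of $\f^*$ on $\ker d\adjp_{\Gr'_{\fm'}}=\Ha^1(\Gr'_{\fm'})$ and the adjunction argument for the square in (ii).

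Your fallback for (i) --- checking directly on the presentation of Proposition \ref{LABEL660} --- is not a repair, because the containment you hope to verify is simply false for such $v$: if $v\notin S$, $m(\f,v)>0$ and $\f(v)=w'_j\in S'$, then $\f_*(\Delta_0 v)=m(\f,v)\,\Delta'_0(w'_j)$ does not lie in $\Delta'_0(\ZZ[V'\setminus S'])$. Already for $\f=\id$, $\fm=w_1$, $\fm'=w_1+w_2$ (hypothesis satisfied), any $D$ with $\Delta_0 D=\Delta_0 w_2$ has the form $w_2+c\,\mathbf{1}_V$, whose image in $\ZZ[J]$ is $j_2+c(j_1+j_2)\ne 0$, so $(\Delta_0 w_2,0)\notin\Prin_{\fm'}$. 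The paper's proof avoids extended-graph morphisms altogether: it works in the abstract setting of subsection \ref{LABEL1080}, feeding in the harmonic-morphism squares $\phi_1\partial\adj=\lp\partial\adj\phi_m$ and $\phi_0\partial=\lp\partial\phi_1$ \emph{together with} an auxiliary map $\mu_L\colon \ZZ[I]\to\ZZ[J]$ on the modulus labels satisfying $\mu_L\circ\rho\adj=\lp\rho\adj\circ\phi_m$, i.e.\ $\mu_L(I(v))=m(\f,v)\,J(\f(v))$ for all $v$; once that datum exists, both the descent defining $\f_*$ on $\Cl_\fm$ and the commutativity of the diagram in (ii) (using $\ker d_{\Gr_\fm}\adjp=\Ha^1(\Grm)$ from Proposition \ref{LABEL720}(b)) are formal diagram chases exactly as in Theorem \ref{LABEL1140}. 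That intertwining condition on $\mu_L$ is precisely the multiplicity constraint your construction trips over ($m(\f,w_i)=1$ on $S$, $m(\f,v)=0$ on $\f_V^{-1}(S')\setminus S$); your formal steps --- the pushforward identity $\f_*\Delta_0 v=m(\f,v)\Delta'_0(\f(v))$ and the transpose computation for the Abel--Jacobi square --- agree with the paper's chases, but without supplying $\mu_L$ (or extra hypotheses guaranteeing it) the proposal does not go through.
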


\begin{prop}
  \label{LABEL1210}
  Suppose that $\phi_V^{-1}(S') \subset S$, and let
  $\phi^S\colon \ZZ^{S'} \to \ZZ^S$ be the corresponding map. Then:
  \begin{enumerate}[\upshape(i)]
  \item
    $\phi^m\oplus \phi^S \colon \Divhat_{\fm'}(\Gr') \to
    \Divhat_\fm(\Gr)$ induces a homomorphism
    $\phi^* \colon \Clhat_{\fm'}(\Gr') \to \Clhat_\fm(\Gr)$, and
    $\phi^*(\Clhat_{\fm'}(\Gr'))\subset \Clhat_\fm(\Gr)$.
  \item
    There is a commutative diagram:
    \[
      \begin{tikzcd}
         \Clhat_{\fm'}(\Gr') \arrow[r, "\phi^*"] \arrow[d, "{\bar\delta_{\fm'}}"]
         & \Clhat_\fm(\Gr) \arrow[d, "{\bar\delta_\fm}"] \\
         \Pic_{\fm'}(\abs{\Gr'})\arrow[r, "{\abs\phi^*}"] &
         \Pic_\fm(\Grg) \\
         \P_{\fm'}(\Gr') \arrow[r] \arrow[u] & \P_\fm(\Gr) \arrow[u]\\
         H^1(\Gr'_{\fm'}) \arrow[u, twoheadrightarrow] \arrow[r, "\phi^*"] &
         H^1(\Gr_\fm)  \arrow[u, twoheadrightarrow]
      \end{tikzcd}
    \]
  \end{enumerate}
\end{prop}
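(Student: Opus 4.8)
The plan is to mirror the proof of Proposition \ref{LABEL1185} (its modulus-free counterpart), reducing everything to two already-available ingredients: the pullback of $\Harm$--torsors along a harmonic morphism (Proposition \ref{LABEL1180}) and the harmonic-morphism identity $\Box_0\circ\phi_V^* = \phi^m\circ\Box_0'$ underlying the modulus-free functoriality of $\Clhat$ and $\P$. Here $\phi_V^*\colon \ZZ^{V'}\to\ZZ^V$ denotes pullback of vertex-functions, $f'\mapsto f'\circ\phi_V$, while $\phi^m$ is its multiplicity-weighted variant $f'\mapsto\bigl(v\mapsto m(\phi,v)f'(\phi_V(v))\bigr)$. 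By Remarks \ref{LABEL670} and \ref{LABEL860} it suffices to treat a reduced modulus, so throughout I identify $\ZZ^I\cong\ZZ^S$, $\ZZ^J\cong\ZZ^{S'}$ and use the presentation $\Clhat_\fm(\Gr)\isom \ZZ^V/\Box_0(\ZZ^{V\setminus S})$ of Proposition \ref{LABEL660}.

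For part (i) I would first record that $\phi^S\colon \ZZ^{S'}\to\ZZ^S$ is the map on supports induced by $\phi_V$, the hypothesis on $S$, $S'$ being exactly what makes $\phi^S$ well defined and compatible with $\tr\rho$, $\tr{\rho'}$ (it constrains where vertices can map relative to the moduli). I would then check descent to the class groups by applying $\phi^m\oplus\phi^S$ to a generator $(\Box_0'f',\tr{\rho'}f')$ of $\Prinhat_{\fm'}(\Gr')$: the identity $\phi^m\Box_0'=\Box_0\phi_V^*$ turns the first coordinate into $\Box_0(\phi_V^* f')$, and the support hypothesis is designed so that the second coordinate becomes $\tr\rho$ applied to the same element, placing the image in $\Prinhat_\fm(\Gr)$. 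The degree-zero statement then follows because $\phi^m$ preserves $\im d\adj=\Divhat^0$ by the modulus-free proposition, the multiplicities $m(\phi,v)$ being absorbed exactly as in Proposition \ref{LABEL1185}.

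For part (ii) I would construct the three vertical pullback maps and then verify the squares. The geometric map $\abs\phi^*\colon \Pic_{\fm'}(\abs{\Gr'})\to\Pic_\fm(\Grg)$ comes from a map of complexes of sheaves $\abs\phi^{-1}\Harm_{\fm'}\to\Harm_\fm$: on the degree-$0$ term it is the torsor pullback of Proposition \ref{LABEL1180}(b), and on the skyscraper term $\prod_j w'_{j*}\ZZ\to\prod_i w_{i*}\ZZ$ it is dictated by $\phi_V$ on supports, the hypothesis ensuring the evaluation square commutes. The maps $\phi^*\colon H^1(\Gr'_{\fm'})\to H^1(\Gr_\fm)$ and $\P_{\fm'}(\Gr')\to\P_\fm(\Gr)$ arise from the same construction applied to $\ZZ_\fm$ (Proposition \ref{LABEL960}) and are compatible with $j_!$, hence with the definition of $\P_\fm$ in Proposition \ref{LABEL800}. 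Commutativity of the top square is the torsor computation of Theorem \ref{LABEL900}/Corollary \ref{LABEL1000}: the $\Harm_\fm$--torsor attached to a codivisor $D$ is $\cL_D$, and by the reasoning of Proposition \ref{LABEL1185} its pullback is $\cL_{(\phi^m\oplus\phi^S)(D)}$, so $\abs\phi^*\circ(-\bar\delta_{\fm'})=(-\bar\delta_\fm)\circ\phi^*$. The remaining squares follow from naturality of the long exact sequences \eqref{LABEL870} and \eqref{LABEL880} under these sheaf pullbacks, combined with the already-established modulus-free commutativities.

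The main obstacle is the bookkeeping on the modulus/rigidification summand. Concretely, one must verify that $\abs\phi^*$ really carries $\Harm_{\fm'}$--torsors to $\Harm_\fm$--torsors and that on the $\ZZ^S$--summand it agrees with the algebraic map $\phi^S$; this is where the precise relation between $S$ and $S'$ under $\phi_V$, the transport of the rigidifying sections $s'_j\in\cL'(w'_j)$ to sections at the $w_i$, and the horizontal multiplicities $m(\phi,v)$ tracked through the extended graphs $\Gr_\fm$, $\Gr'_{\fm'}$ must all be pinned down simultaneously. Once the reduced case is settled, the passage to arbitrary finite moduli is formal via the pushout descriptions of Remarks \ref{LABEL670} and \ref{LABEL860}, and the extension to infinite locally finite graphs proceeds through the abstract formalism of Theorem \ref{LABEL1140}, as in Section \ref{LABEL1150}.
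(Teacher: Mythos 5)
Your outline tracks the paper's intended proof quite closely: the paper likewise reduces to reduced moduli via Remarks \ref{LABEL670} and \ref{LABEL860}, disposes of the top square of (b) by exactly the torsor-pullback argument given after Proposition \ref{LABEL1185}, and derives part (i) together with the lower squares from the two abstract propositions closing the functoriality section (the formal-adjoint formalism of \ref{LABEL1080} with $L=\ZZ[I]$), of which your generator computation and your appeal to naturality of \eqref{LABEL870}, \eqref{LABEL880} are concrete unwindings. But the one non-formal verification in the whole argument --- the one you assert with ``the support hypothesis is designed so that the second coordinate becomes $\tr\rho$ applied to the same element'' --- is precisely where the argument breaks, and you never carry it out.

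Do the check. A generator of $\Prinhat_{\fm'}(\Gr')$ is $(\Box_0 f', f'|_{S'})$, and by your Laplacian identity its image under $\phi^m\oplus\phi^S$ is $(\Box_0(f'\circ\phi_V),\,\phi^S(f'|_{S'}))$. Membership in $\Prinhat_\fm(\Gr)$ forces (for $\Gr$ connected) the witnessing function to be $h=f'\circ\phi_V+c$, hence requires $\phi^S(f'|_{S'})=(f'\circ\phi_V)|_S+c$. With $\phi^S$ the extension-by-zero pullback --- the only map the stated inclusion $\phi_V^{-1}(S')\subset S$ provides --- the left side vanishes at every $w\in S$ with $\phi_V(w)\notin S'$, while the right side there is $f'(\phi_V(w))+c$; varying $f'$ shows no linear choice of $\phi^S$ can repair this. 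Concretely, $\phi=\mathrm{id}$ on the one-edge graph on $\{u,v\}$ with $S=\{u,v\}$, $S'=\{u\}$ satisfies the stated hypothesis, yet $\Clhat_{\fm'}(\Gr')\isom \ZZ^2/\langle(-1,1)\rangle$ admits no homomorphism to $\Clhat_\fm(\Gr)\isom\ZZ^2$ induced by $\phi^m=\mathrm{id}$. What the descent actually needs is the identity $\phi^S\circ\tr{\rho'}=\tr\rho\circ\phi^0$ (with $\phi^0$ the pullback of vertex functions), equivalently the existence of a map $\phi_L\colon \ZZ[S]\to\ZZ[S']$ with $\rho'\circ\phi_L=\phi_0\circ\rho$ --- exactly the hypothesis the paper's second abstract proposition assumes --- and in graph terms that is $\phi_V(S)\subset S'$, i.e.\ $\phi_V^{-1}(S')\supset S$, the \emph{reverse} of the printed inclusion (and the one matching the curve-side condition $f^*\fm-\fm'\ge 0$ for Picard functoriality). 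The same inclusion is what your construction in (ii) tacitly uses: to transport a rigidification $s'_j$ at $w'_j$ to one of $\abs\phi^*\cL'$ at $w_i$ you need $\phi_V(w_i)\in S'$. So the ``main obstacle'' you flagged is not mere bookkeeping: carried out, it shows the stated inclusion cannot drive your part-(i) computation, and your proofs of (i) and of the rigidification step in (ii) do not go through as written; a correct treatment must either verify and use $\phi_V(S)\subset S'$ (as the paper's abstract proposition does) or flag the hypothesis as apparently interchanged with that of Proposition \ref{LABEL1200}.
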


Let us first deal with the top square in Proposition
\ref{LABEL1210}(b). This commutes by the same argument as given after
Proposition \ref{LABEL1185}.  For the rest of the proofs, it is
convenient to work in the abstract setting of the previous
subsection. So we have a system
$\mathcal C = (C_\bullet,\partial,\partial\adj, L, \rho,\rho\adj)$ of
homomorphisms of abelian groups with symmetric integral bilinear forms
and formal adjoints
\[
  \begin{tikzcd}
    L \arrow[r, "\rho"] &
    C_0 \arrow[l, bend left, "{\rho\adj}"]
    \arrow[r, "{\partial\adj}"', bend right] & C_1 \arrow[l, "\partial"']
  \end{tikzcd}
\]
and their transposes
\[
  \begin{tikzcd}
    M = \hspace*{-3em}&L^\vee  \arrow[r, bend right, "{r\adj}"'] &
    C^0 \arrow[l, "r"'] \arrow[r, "d"] &
    C^1 \arrow[l, bend left, "{d\adj}"].
\end{tikzcd}
\]
Let $\lp\mathcal C = (\lp C_\bullet, \lp\partial,\lp\partial\adj, \lp L,
\lp\rho, \lp\rho\adj)$ be another such system. Suppose that $\phi_0$,
$\phi_m \colon C_0 \to \lp C_0$, $\phi_1\colon C_1\to \lp C_1$ are
homomorphisms such that the diagram
\[
  \begin{tikzcd}
    C_0 \arrow[r, "{\partial\adj}"] \arrow[d, "\phi_m"] &
    C_1 \arrow[r, "\partial"] \arrow[d, "\phi_1"] &
    C_0 \arrow[d, "\phi_0"] \\
    \lp C_0 \arrow[r, "{\lp\partial\adj}"] &
    \lp C_1 \arrow[r, "{\lp\partial}"] & \lp C_0
  \end{tikzcd}
\]
commutes. (We might say that $\phi_\bullet$ is a harmonic morphism
from $\mathcal C$ to $\lp\mathcal C$.) Let $\phi^i=\tr{(\phi_i)}$ for
$i\in \{0,1,m\}$. Denote by $\lp\Clb_L$, $\lp\Jb_L$ etc.~the
corresponding groups for the system $\lp C_\bullet$. Propositions
\ref{LABEL1200} and \ref{LABEL1210} then follow from the following
compatibilities (whose proofs are straightforward).

\begin{prop}
  Suppose that there exists $\mu_L \colon L \to \lp L$ such that the
  square
  \[
    \begin{tikzcd}
      C_0 \arrow[r, "\phi\adj"] \arrow[d, "\phi_m"'] &
      L \arrow[d, "\mu_L"] \\
      \lp C_0 \arrow[r, "{\lp\rho\adj}"'] & \lp L
    \end{tikzcd}
  \]
  commutes. Let $\mu^M=\tr{(\mu_L)}\colon \lp M \to M$ be its
  transpose.
  \begin{enumerate}[\upshape(i)]
  \item $\phi_0 \oplus \mu_L$ induces a homomorphism $\phi_*\colon
    \Clb_L \xrightarrow{\phi_*} \lp\Clb_L$, and
    $\phi_*(\Clb_L^0) \subset \lp\Clb_L^0$.
  \item $\phi^1\oplus \mu^M\colon \lp\tilde C^1=\lp C^1\oplus \lp M
    \to  \tilde C^1$ induces homomorphisms $(\ker d_L\adjp)^\vee \to
    (\lp d_L\adjp)^\vee$ and $\Jb_L \xrightarrow{\phi_{J,L}}
    \lp\Jb_L$.
  \item The diagram
    \[
      \begin{tikzcd}
        \Clb_L^0 \arrow[r, "\phi_*"] \arrow[d, "\AJ"', "\simeq"] &
        \lp\Clb_L^0 \arrow[d, "\AJ", "\simeq"'] \\
        \Jb_L \arrow[r, "\phi_{J,L}"] & \lp\Jb_L \\
        (\ker d_L\adjp)^\vee  \arrow[r] \arrow[u, twoheadrightarrow] &
        (\ker \lp d_L\adjp)^\vee  \arrow[u, twoheadrightarrow]
      \end{tikzcd}
    \]
    commutes.
  \end{enumerate}
\end{prop}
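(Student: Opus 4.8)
The plan is to treat the entire statement as a formal diagram chase: it has no combinatorial content beyond the hypotheses and the identities obtained by transposing them, so each of the three clauses reduces to a short computation. First I would record the two relations expressing that $\phi_\bullet$ is a harmonic morphism, $\phi_0\partial=\lp\partial\phi_1$ and $\phi_1\partial\adj=\lp\partial\adj\phi_m$, together with the standing hypothesis $\mu_L\rho\adj=\lp\rho\adj\phi_m$ (the top arrow of the hypothesis square being $\rho\adj$). Transposing these, using $\tr{(AB)}=\tr B\,\tr A$ together with $\tr{(\partial\adj)}=d\adj$ and $\tr{(\rho\adj)}=r\adj$, gives $d\phi^0=\phi^1\lp d$, $d\adj\phi^1=\phi^m\lp d\adj$ and $r\adj\mu^M=\phi^m\lp r\adj$. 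These six identities are all the argument uses.

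For (i) it is enough to check that $\phi_0\oplus\mu_L$ carries the defining subgroup $(\Delta_0,\rho\adj)C_0$ of $\Clb_L$ into $(\lp\Delta_0,\lp\rho\adj)\lp C_0$. On the generator attached to $v\in C_0$ I would compute $\phi_0\Delta_0 v=\phi_0\partial\partial\adj v=\lp\partial\phi_1\partial\adj v=\lp\partial\,\lp\partial\adj\phi_m v=\lp\Delta_0\phi_m v$, while $\mu_L\rho\adj v=\lp\rho\adj\phi_m v$ by hypothesis, so that $(\phi_0\oplus\mu_L)(\Delta_0 v,\rho\adj v)=(\lp\Delta_0,\lp\rho\adj)(\phi_m v)$ as required. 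This produces $\phi_*\colon\Clb_L\to\lp\Clb_L$, and $\phi_*(\Clb_L^0)\subset\lp\Clb_L^0$ follows from $\phi_0(\im\partial)=\lp\partial\phi_1(C_1)\subset\im\lp\partial$.

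For (ii) I would first verify that $\phi^1\oplus\mu^M$ maps $\ker\lp d_L\adjp$ into $\ker d_L\adjp$: if $\lp d\adj\lp y+\lp r\adj\lp m=0$ then the transposed identities give $d\adj(\phi^1\lp y)+r\adj(\mu^M\lp m)=\phi^m(\lp d\adj\lp y+\lp r\adj\lp m)=0$. Dualizing yields $(\ker d_L\adjp)\Zdual\to(\ker\lp d_L\adjp)\Zdual$, and to descend it to $\phi_{J,L}\colon\Jb_L\to\lp\Jb_L$ I would show it sends $\tilde\alpha(\ker\partial)$ into $\lp{\tilde\alpha}(\ker\lp\partial)$. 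Indeed, for $\gamma\in\ker\partial$ the functional $\tilde\alpha(\gamma)\colon(y,m)\mapsto\apairing[1]{\gamma}{y}$ is carried to $(\lp y,\lp m)\mapsto\apairing[1]{\gamma}{\phi^1\lp y}=\apairing[1]{\phi_1\gamma}{\lp y}$, which is $\lp{\tilde\alpha}(\phi_1\gamma)$; and $\phi_1\gamma\in\ker\lp\partial$ because $\lp\partial\phi_1\gamma=\phi_0\partial\gamma=0$.

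Finally, for (iii) the lower square commutes by construction, since $\phi_{J,L}$ is precisely the map induced on the quotient $\Jb_L$ by the dualized map of (ii). For the upper square I would evaluate both composites on a class $(\partial x,l)\in\Clb_L^0$: following $\AJ$ and then $\phi_{J,L}$ gives the class of $(\lp y,\lp m)\mapsto\apairing[1]{x}{\phi^1\lp y}+\apairing[L]{l}{\mu^M\lp m}$, whereas following $\phi_*$ (replacing the representative by $(\lp\partial\phi_1 x,\mu_L l)$ via $\phi_0\partial x=\lp\partial\phi_1 x$) and then $\AJ$ gives the class of $(\lp y,\lp m)\mapsto\apairing[1]{\phi_1 x}{\lp y}+\apairing[\lp L]{\mu_L l}{\lp m}$; these coincide by the adjunctions $\apairing[1]{x}{\phi^1\lp y}=\apairing[1]{\phi_1 x}{\lp y}$ and $\apairing[L]{l}{\mu^M\lp m}=\apairing[\lp L]{\mu_L l}{\lp m}$. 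Throughout, the only real difficulty is bookkeeping—keeping the directions of $\phi_0,\phi_1,\phi_m,\mu_L$ and of their transposes straight—so the main (mild) obstacle is organizing these transpose identities cleanly at the outset, after which each clause becomes a one-line check.
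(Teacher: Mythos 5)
Your proof is correct and is essentially the argument the paper intends: the paper states only that the proofs of these compatibilities are ``straightforward,'' and your diagram chase --- recording the harmonic-morphism identities $\phi_0\partial=\lp\partial\phi_1$, $\phi_1\partial\adj=\lp\partial\adj\phi_m$, $\mu_L\rho\adj=\lp\rho\adj\phi_m$ and their transposes, then checking each clause on representatives --- is exactly that verification. You also correctly read the statement's $\phi\adj$ as $\rho\adj$ and $(\lp d_L\adjp)^\vee$ as $(\ker\lp d_L\adjp)\Zdual$, and all the individual computations (e.g.\ $\phi_0\Delta_0=\lp\Delta_0\phi_m$, $d\adj\phi^1+r\adj\mu^M=\phi^m\circ\lp d_L\adjp$ on $\ker\lp d_L\adjp$, and the adjunction identities identifying the two composites in (iii)) check out.
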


\begin{prop}
  Suppose that there exists $\phi_L \colon L \to \lp L$ such that the
  square
  \[
    \begin{tikzcd}
      L \arrow[r, "\rho"] \arrow[d, "\phi_L"'] & C_0 \arrow[d,
      "\phi_0"] \\
      \lp L \arrow[r, "\rho"] & \lp C_0 
    \end{tikzcd}
  \]
  commutes. Let $\phi^M =\tr(\phi_L)\colon \lp M \to M$ denote its
  transpose. Then:
  \begin{enumerate}[\upshape(i)]
  \item $\phi^0\oplus \phi^M\colon \lp C^0 \oplus \lp M \to C^0 \oplus
    M$ induces a homomorphism $\phi^*\colon \lp \Clhb_L \to \Clhb_L$,
    and $\phi^*(\Clhb_L^0)\subset \Clhb_L^0$.
  \item $\phi^1\oplus \phi^M\colon \lp C^1 \oplus \lp M \to C^1 \oplus
    M$ induces a homomorphism $\lp \Pb_L \xrightarrow{\phi_{P,L}}
    \Pb_L$, and the diagram
    \[
      \begin{tikzcd}
         \lp \Clhb_L^0 \arrow[r, "\phi^*"] \arrow[d, "\lp\chi"', "\simeq"]&
         \Clhb_L \arrow[d, "\chi", "\simeq"']
        \\
        \lp \Pb_L \arrow[r, "\phi_{P,L}"] & \Pb_L
      \end{tikzcd}
    \]
  \end{enumerate}
\end{prop}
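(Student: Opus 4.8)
The plan is to reduce everything to a handful of commutation relations obtained by transposing the data, and then to verify that the relevant maps descend to the quotients and that the square commutes by a one‑line chase on representatives. Concretely, transposing the two commuting squares that define the harmonic morphism $\phi_\bull$ gives
\[
  d\adj\circ\phi^1 = \phi^m\circ\lp d\adj, \qquad
  d\circ\phi^0 = \phi^1\circ \lp d;
\]
transposing the hypothesis square $\lp\rho\circ\phi_L=\phi_0\circ\rho$ of the present Proposition gives $r\circ\phi^0 = \phi^M\circ \lp r$; and the same two squares yield the covariant identity $\phi_0\Delta_0 = \lp\Delta_0\,\phi_m$ (with $\Delta_0=\partial\partial\adj$), whose transpose is the key mixed identity $\Box_0\circ\phi^0 = \phi^m\circ\lp\Box_0$. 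Here I record that the map inducing $\phi^*$ in part~(i) should be read as $\phi^m\oplus\phi^M$ rather than $\phi^0\oplus\phi^M$ (compatibly with the concrete Proposition~\ref{LABEL1210}(i), whose codivisor slot carries the multiplicity‑weighted $\phi^m$); with $\phi^0$ in that slot the first coordinate $\phi^0\lp\Box_0\lp f$ need not land in $\im\Box_0$, so the map need not descend.

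For part (i), I would take a generator $(\lp\Box_0\lp f,\lp r\lp f)$ of the subgroup $(\lp\Box_0,\lp r)(\lp C^0)$ defining $\lp\Clhb_L$ and apply $\phi^m\oplus\phi^M$. By the mixed identity and the relation $r\phi^0=\phi^M\lp r$ its image is $(\Box_0(\phi^0\lp f),\,r(\phi^0\lp f))$, a generator of $(\Box_0,r)(C^0)$; hence $\phi^m\oplus\phi^M$ descends to $\phi^*\colon \lp\Clhb_L\to\Clhb_L$. For the degree‑zero part one checks that it carries $\im\lp d\adj\oplus\lp M$ into $\im d\adj\oplus M$: the $M$‑slot is automatic, and on the first slot $\phi^m(\lp d\adj\lp\omega)=d\adj(\phi^1\lp\omega)\in\im d\adj$ by the first transposed relation. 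This gives $\phi^*(\lp\Clhb_L^0)\subset\Clhb_L^0$.

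For part (ii), the denominator of $\Pb_L=\tilde C^1/\bigl((\ker d\adj)\oplus\{0\}+\im d_L\bigr)$ is handled in two pieces. That $\phi^1\oplus\phi^M$ sends $(\ker\lp d\adj)\oplus\{0\}$ into $(\ker d\adj)\oplus\{0\}$ follows from $d\adj\phi^1=\phi^m\lp d\adj$; that it sends $\im\lp d_L$ into $\im d_L$ follows from $d\phi^0=\phi^1\lp d$ and $r\phi^0=\phi^M\lp r$, since the generator $(\lp d\lp f,\lp r\lp f)$ is carried to $(d(\phi^0\lp f),r(\phi^0\lp f))=d_L(\phi^0\lp f)$. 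Hence $\phi^1\oplus\phi^M$ descends to $\phi_{P,L}\colon\lp\Pb_L\to\Pb_L$. Finally, evaluating the square on a representative $(\lp\omega,\lp y)$ and using that $\chi_L$, $\lp\chi_L$ of Lemma~\ref{LABEL1130} are induced by $d\adj\oplus\id$ and $\lp d\adj\oplus\id$, its commutativity reduces to the single relation $d\adj\phi^1=\phi^m\lp d\adj$, i.e.\ to the first transposed relation again.

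The only genuine obstacle is bookkeeping rather than mathematics: one must place $\phi^m$ (not $\phi^0$) in the codivisor slot and exploit the mixed identity $\Box_0\phi^0=\phi^m\lp\Box_0$, which is the one point where the two vertical maps $\phi^0$ and $\phi^m$ interact through the Laplacians. Once the maps are correctly identified, every assertion becomes a one‑line verification on representatives, so the Proposition follows as claimed.
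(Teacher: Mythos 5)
Your proof is correct, and it is exactly the routine transposition-and-descent verification that the paper itself omits (the text introduces this proposition with ``whose proofs are straightforward'' and gives no argument). Your emendation in (i) --- reading the inducing map as $\phi^m\oplus\phi^M$ rather than the printed $\phi^0\oplus\phi^M$ --- is also right: it matches the concrete Proposition \ref{LABEL1210}(i), where the codivisor slot carries the multiplicity-weighted $\phi^m$; it is forced by the mixed identity $\Box_0\circ\phi^0=\phi^m\circ\lp\Box_0$ (without which the image of the generator $(\lp\Box_0\lp f,\lp r\lp f)$ need not lie in $(\Box_0,r)(C^0)$, so $\phi^*$ would not be well defined); and it is precisely what makes the square in (ii) commute via $d\adj\circ\phi^1=\phi^m\circ\lp d\adj$.
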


One can also deduce corresponding results for harmonic morphisms of
infinite graphs with modulus, whose statements we leave to the
interested reader.

\section*{Appendix}

We give a proof of Theorem \ref{LABEL130} (including as a
special case Theorem \ref{LABEL060}).

Let $D\in\Div^0(X)$. We first compute an explicit \v Cech $1$-cocycle for
$\cO_X(-\fm)$, representing the element of $\P_\fm(X)$ which maps to the class
of $\cO_X(D)\in\Pic_\fm(X)$.

Choose a simply-connected open neighborhood $U$
of $\supp(D)$, whose closure is disjoint from $S$. Let $X=U\cup V$
be an open cover, where $U\cap V$ is a topological annulus disjoint
from $\supp(D)$. Let $f$ be any meromorphic
function on $U$ whose divisor on $U$ equals $D$. The trivializations
\[
  f^{-1} \colon \cO_U \isom \cO_X(D)|_U, \quad
  1 \colon \cO_V \isom \cO_X(D)|_V
\]
determine the \v Cech cocycle
$f^{-1}|_{U\cap V}\in C^1(\{U,V\},\cO_X^{\times,\fm})$ representing
the class $\Pic_\fm(X)$ of $(\cO_X(D),\triv)$.

Since $D$ has degree $0$, $f$ has a single-valued logarithm $\log f$
on $U\cap V$, and the class in $H^1(X,\cO_X(-\fm))$ of the \v Cech cocycle
$-\log f|_{U\cap V}\in C^1(\{U,V\},\cO_X(-\fm))$ maps under the
exponential to the class of $(\cO_X(D),\triv)$ in $\Pic_\fm(X)$.

We next compute the image of this cocycle under Serre duality. More
generally, for any holomorphic vector bundle $\cE$ on $X$, we have the
Dolbeault isomorphism
\begin{equation} \label{LABEL1220}
  H^1(X, \cE) \isom H^1_{\bar\partial}(X,\cE)
  = A^{0,1}(X,\cE)/\bar\partial A^{0,0}(X,\cE)
\end{equation}
and Serre duality $H^0(X,\Omega_X^1\otimes\cE^\vee)^* \simeq H^1(X,\cE)$ is
induced by the pairing
\[
H^0(X,\Omega_X\otimes\cE^\vee) \times A^{0,1}(X,\cE) \to \mathbb{C},
\qquad
(\omega,\eta)\mapsto \int_X \omega\wedge\eta
\]
Let $s\in \cE(U\cap V)$ be a \v Cech 1-cocycle for $\cE$ and the covering $\{U,V\}$. Let
$b\colon X \to \RR$ be a $C^\infty$-function vanishing on
$U\setminus V$ and equal to 1 on $V\setminus U$. Then $\bar\partial b$
is a $(0,1)$-form on $X$ whose support in contained in $U\cap V$.

\noindent\textbf{Claim:} Under the isomorphism \eqref{LABEL1220}, the class
of the cocycle $s\in\cE(U\cap V)$ maps to the class of $s\,\bar\partial b\in
A^{0,1}(X,\cE)$.

To see this, recall the explicit description of
\eqref{LABEL1220}. Consider the \v Cech bicomplex
\begin{equation}
  \label{LABEL1230}
  \begin{tikzcd}
    A^{0,0}(U,\cE) \oplus A^{0,0}(V,\cE) \arrow[d, "{(u,v)\mapsto u-v}"]
    \arrow[r, "{(\bar\partial,\bar\partial)}"]
    & A^{0,1}(U,\cE)\oplus A^{0,1}(V,\cE) \arrow[d,
    "{(\alpha,\beta)\mapsto \beta-\alpha}"] \\
    A^{0,0}(U\cap V,\cE) \arrow[r, "{\bar\partial}"] & A^{0,1}(U\cap V, \cE).
  \end{tikzcd}
\end{equation}
Let $C^\bull_{tot}$ be its total complex. The inclusion of
$C^\bull(\{U,V\},\cE)=[\cE(U)\oplus\cE(V) \to \cE(U\cap V)]$ into the first column 
induces the isomorphism $H^*(X,\cE) \isom H^*(C^\bull_{tot})$, and the
diagonal inclusion of $[A^{0,0}(X,\cE) \xrightarrow{\bar\partial}
A^{0,1}(X,\cE)]$ into the top row induces the isomorphism
$H^*_{\bar\partial}(X,\cE) \isom H^*(C^\bull_{tot})$. So it's enough
to observe that 
\[
	(0,0,s), \ (s\bar\partial b,s\bar\partial b,0) \in C^1_{tot}
	= A^{0,1}(U,\cE) \oplus
  A^{0,1}(V,\cE)\oplus A^{0,0}(U\cap V,\cE)
\]
are 1-cocycles in $C^\bull_{tot}$  which are cohomologous --- indeed, their
difference $(-s\bar\partial b, -s\bar\partial b, s)$ is the coboundary of
\[
  ((1-b)s,-bs)) \in A^{0,0}(U,\cE)\oplus A^{0,0}(V,\cE) = C^0_{tot}.
\]
So the claim holds. In our particular situation, we see that the
class of the \v Cech cocycle $-\log f|_{U\cap V}\in
C^1(\{U,V\},\cO_X(-\fm))$ maps under the Dolbeault isomorphism to the
class of $(-\log f)\,\bar\partial b\in A^{0,1}(X,\cO(-\fm))$.

So to complete the proof of Theorem \ref{LABEL130}, it's enough to
show that if $\gamma\in C_1(U,\ZZ)$ is a 1-chain whose boundary is
$D$, then for every
$\omega\in H^0(X,\Omega^1(\fm))\subset H^0(U,\Omega^1)$,
\begin{equation}
  \label{LABEL1240}
  \int_\gamma\omega = \int_U \omega\wedge(- \log f)\, \bar\partial b
\end{equation}
For this we may assume that $D=P-P_0$, $P_0\in U$. Let
$w\colon U \hookrightarrow \mathbb{C}$ be a chart, $w(P)=z$,
$w(P_0)=a$. Then $\log f = \log (w-z)/(w-a) + g$ with $g$ holomorphic
on $U$. The equation \eqref{LABEL1240} holds for $P=P_0$, since the
right-hand integrand is the 2-form $-\omega\wedge g\,\bar\partial b$
which is exact, and both sides are holomorphic functions of $P\in
U$. Write $\omega=h(w)\,dw$. Then the $z$-derivatives of the left- and right-hand sides of
\eqref{LABEL1240} are respectively $h(z)$ and
\[
  \frac{1}{2\pi i} \int_U \frac{h(w)\, dw\wedge
  \bar\partial b}{w-z}.
\]
and from Cauchy's integral formula \cite[p.3]{GH} for the
$C^\infty$-function $(1-b)h$, we see that they are equal.\qed

\bibliographystyle{amsxport}
\bibliography{GJGTonyGx}
\end{document}